\newcommand{\N}{\mathbb{N}}
\newcommand{\R}{\mathbb{R}}
\renewcommand{\P}{\mathbb{P}}
\newcommand{\E}{\mathbb{E}}
\newcommand{\D}{\mathbb{D}}
\theoremstyle{plain}
\newtheorem{theorem}{Theorem}[section]
\newtheorem{corollary}[theorem]{Corollary}
\newtheorem{lemma}[theorem]{Lemma}
\newtheorem{proposition}[theorem]{Proposition}
\theoremstyle{definition}
\newtheorem{definition}[theorem]{Definition}
\newtheorem{example}[theorem]{Example}
\begin{document}

\title{Asymptotics of generalized Pólya urns with non-linear feedback}
\author{Thomas Gottfried$^*$ and Stefan Grosskinsky\footnote{Stochastics and Its Applications, Institute of Mathematics, University of Augsburg}}
\date{\today}
\maketitle

\begin{abstract}
Generalized Pólya urns with non-linear feedback are an established probabilistic model to describe the dynamics of growth processes with reinforcement, a generic example being competition of agents in evolving markets. It is well known which conditions on the feedback mechanism lead to monopoly where a single agent achieves full market share, and various further results for particular feedback mechanisms have been derived from different perspectives. In this paper we provide a comprehensive account of the possible asymptotic behaviour for a large general class of feedback, and describe in detail how monopolies emerge in a transition from sub-linear to super-linear feedback via hierarchical states close to linearity. We further distinguish super- and sub-exponential feedback, which show conceptually interesting differences to understand the monopoly case, and study robustness of the asymptotics with respect to initial conditions, heterogeneities and small changes of the feedback mechanisms. 
Finally, we derive a scaling limit for the full time evolution of market shares 
in the limit of diverging initial market size, including the description of typical fluctuations and extending previous results in the context of stochastic approximation.
\end{abstract}


\section{Introduction}

In the near future, customers who intend to buy a new car will have the choice between several different technologies like modern cars powered by fossile or synthetic fuels, hydrogen or batteries. Although electric cars seem to be in the pole position in the race for the future car market, it is still open which technology will win or whether there will be a mixture of different technologies. The economist Brian R. Arthur suggests in \cite{Arthur} to model the competition between technologies as a generalized Pólya urn, which was basically introduced by Hill, Lane and Sudderth in \cite{Hill}. In this model the decision which technology to choose depends on three factors. First, it supposes that each technology has an intrinsic deterministic attractiveness or fitness. Second, the decision depends on the choice of earlier customers. For example, if many bought an electric car before, there will be a dense charging infrastructure and thus electric cars get more attractive for future customers. A second argument for this reinforcement is that high revenues in the past provide financial means for a faster technological development as well as cheaper prices because of lower production costs per unit. The resulting overall attractiveness of technology $i$ is now modeled as a hypothetical feedback-function $F_i(X_i)\ge0$ depending on the number $X_i\in\N=\{1,2,3\ldots\}$ of customers, who chose technology $i$ before. High values of $F_i(X_i)$ indicate high attractiveness of technology i. A typical example is $F_i(k)=\alpha_ik^\beta$, where $\alpha_i>0$ models the intrinsic attractiveness and $\beta>0$ the reinforcement effects in the market. The third determinant of customers decision is their personal preference, which is difficult to include in a deterministic model and probabilistic approaches are more appropriate. We assume that customers enter the market sequentially and have full information. Given the current state $(X_1 ,\ldots ,X_A )$ of the market, a customer will opt for technology $i$ with probability 
\begin{equation*}
\frac{F_i(X_i)}{F_1(X_1)+...+F_A(X_A)},
\end{equation*}
where $A\geq 2$ is the number of different technologies. The market size $X_1 +\ldots +X_A$ increases by one in each step. If $F_i(k)=k$, then this corresponds to the original Pólya urn, which was introduced by Pólya and Eggenberger in \cite{Polya}. 
Depending on the feedback function, monopoly 
may occur where one technology achieves full market share, as well as random or deterministic non-zero asymptotic market shares for several technologies. The monopolist is in general random and depends on the behaviour of the young market. 
Analyzing which feedback function leads to which regime provides an understanding of the determinants of the long-time behavior of markets.

Mathematically, this setup corresponds to a discrete-time Markov process, which is called a (generalized) non-linear Pólya urn in the following and introduced in detail in the next Section. 
Apart from the competition of technologies, many other interpretations and applications of generalized Pólya urns are possible. An obvious one is the competition of companies in the same market for new customers or the competition between regions for new companies to settle. 
The dynamics of household wealth is another growth process with reinforcement (see e.g. \cite{forbes} and references therein) that can be modelled with urns. 
\cite{Pemantle} summarizes further applications in psychology or evolutionary biology, and more recently, \cite{Tang,Rocsu} use Pólya urns in the context of cryptocurrencies. In the following we will adapt the more general terminology of agents $\{ 1,\ldots ,A\}$ instead of technologies.

Mathematical properties of non-linear Pólya urns  have been examined before, often focused on polynomial feedback functions \cite{Kearney,Drinea,Khanin,Oliveira3,Hill,Jiang,Chung,Menshikov} or homogeneous models with $F_i \equiv F$ 
\cite{Oliveira,Oliveira2,Mitzenmacher}. In applications, the feedback functions are usually a hypothetical construction that can barely be measured in real systems similar to utility functions in economic situations, thus a general mathematical understanding without restrictive conditions on $F_i$ is important. This paper investigates the long-time behavior of non-linear Pólya urns for a very general class of feedback functions. $F_i$ could even be decreasing or exponentially increasing, which reveals some surprising differences to the usually studied polynomial case. An important restriction is, however, that $F_i$ depends only on $X_i$, which excludes stationary limit cycles as studied e.g. in \cite{Costa}. 

In Section \ref{sec: model} we introduce the model, give a detailed summary of previous related results and highlight the main novelties of the paper. In the monopoly case, we present in Section \ref{sec: monopoly} an asymptotic result for large initial market sizes on the distribution of the winner, extending previous results for particular feedback functions. In the non-monopoly case we present in Section \ref{sec: non-monopoly} a novel approach to compute the deterministic long-time market shares, which do not depend on the initial condition or early dynamics. In Section \ref{sec: specialCase}, we study in detail the transition between both cases for almost linear feedback functions, which are particularly relevant in  various applications including wealth dynamics \cite{forbes}. Moreover, we derive in Section \ref{sec:dynamics} a law of large numbers for the dynamics of the process for large initial market size, which is asymptotically described by an ordinary differential equation and has previously been studied for particular feedback functions in the context of stochastic approximation \cite{Arthur2,Pemantle,Ruszel}. Extending these results, we also establish a functional central limit theorem to describe typical dynamic fluctuations by a system of SDEs in Section \ref{sec: clt}. The question of a Gaussian approximation of the dynamics of Pólya urns has also been addressed in recent research, see \cite{Borovkov} and \cite{Dean}. Predictable behaviour can only be expected for large initial market size, the behavior of very young markets is intrinsically random. While bounds on the probabilities of certain events can be obtained, we focus here mostly on asymptotic results and provide a rather complete account of the possible dynamic and long-time behaviour of generalized non-linear Pólya urns. To our knowledge this paper provides the first complete account for the generalized non-linear P\'olya urn, which is a classical model for reinforcement dynamics.

More generalisations of Pólya's urn (e.g. for infinitely many agents and more complex replacement mechanisms) have been addressed in further recent research \cite{maulik2022feedback,Costa,Bandyopadhyay,Janson,Benaim,Kolesko, Ruszel, Singh, qin}, and in \cite{Aletti1,Aletti2,Aletti3} the authors include a rescaling mechanism to inhibit long-range dynamic dependencies. The study of generalized Pólya urns is also closely related to reinforced random walks, see e.g. \cite{Davis, Toth, Cotar2}

\section{The generalized Pólya urn model}
\label{sec: model}

\subsection{Basic definitions and background}

We now formally introduce the model. All random variables are defined on some large enough probability space $[\Omega, \mathcal{A}, \P]$. Let $A\ge2$ be the number of agents and $F_i\colon\N\to(0, \infty)$ the feedback function of agent $i\in [A]\coloneqq\{ 1,\ldots, A\}$. We define a homogeneous, discrete-time Markov process $(X(n))_{n\in\N_0}=((X_1(n),\ldots, X_A(n))_{n\in\N_0}$ on the state space $\N^A$ with initial condition $X(0)=(X_1(0),\ldots,X_A(0))\in\N^A$ such that $X_i (0)\geq 1$ for all $i\in [A]$, and transition probabilities
\begin{equation}\label{eq:trapro}
\P\left(X(n+1)=X(n)+e^{(i)}\, \big|\, X(n)\right)=\frac{F_i(X_i(n))}{F_1(X_1(n))+...+F_A(X_A(n))},\, i=1,\ldots,A,
\end{equation}
where $e^{(i)}=(\delta_{i,j})_{j=1}^A$ is the $i$-th unit vector. We denote by $N\coloneqq X_1(0)+...+X_A(0)\geq A$ the initial market size. Whenever needed, we set $F_i(0)=0$, and whenever useful, we take continuously differentiable extensions $F_i :(0,\infty )\to (0,\infty )$ to the positive real line, which is supposed to be monotone on intervals of the form $[n, n+1], n\in\N$. 

We interpret $X_i(n)$ as the number of customers of agent $i$ at time $n$ and define the corresponding time-inhomogeneous Markov process $(\chi (n))_{n\in\N_0 }$ of market shares
\begin{equation*}
\chi_i(n)\coloneqq\frac{X_i(n)}{N+n}\in(0, 1),\,i=1,\ldots, A,\, n\in\N_0 \ ,
\end{equation*}
with $\chi(n)=(\chi_1(n),\ldots,\chi_A(n))\in\Delta_{A-1}^o$, where $\Delta_{A-1}^o$ is the interior of the unit simplex $\Delta_{A-1}\coloneqq\{(x_1,\ldots, x_A)\in[0, 1]^A\colon x_1+...+x_A=1\}$. Moreover, we establish the notation
$$
\chi(\infty)\coloneqq\lim_{n\to\infty}\chi(n)$$
for the long time market share whenever it exists. We will see throughout the paper that $\chi(\infty)$ is well defined in all generic situations, but it is possible to construct counterexamples (see Example \ref{ex: noConv}). For later use we introduce the notation
\begin{equation}\label{eq:transpr}
p(k, x)=(p_i(k, x))_{i\in[A]}=\left(\frac{F_i(kx_1)}{F_1(kx_1)+\ldots F_A(kx_A)}\right)_{i\in[A]}
\end{equation}
for the transition probabilities, where $k\in\N$ and $x=(x_1,\ldots,x_A)\in\Delta_{A-1}$. Figure \ref{figure: Simulation} shows three simulations of this process for different feedback functions.

\begin{figure}
  \centering
  \subfloat[][$F_i(k)=k^2,\, i=1, 2, 3$]{\includegraphics[width=0.5\linewidth]{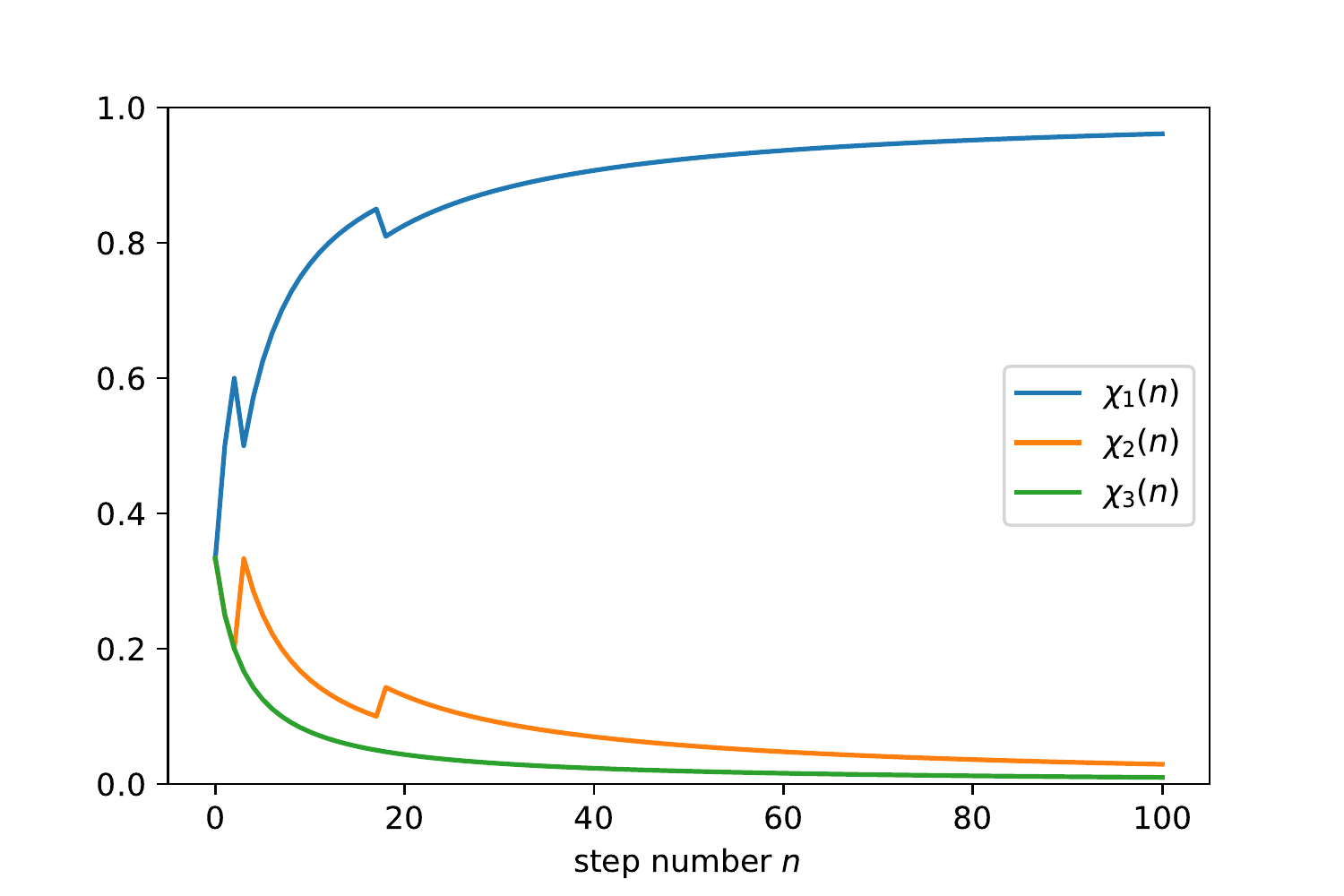}}%
  \subfloat[][$F_i(k)=k,\, i=1, 2, 3$]{\includegraphics[width=0.5\linewidth]{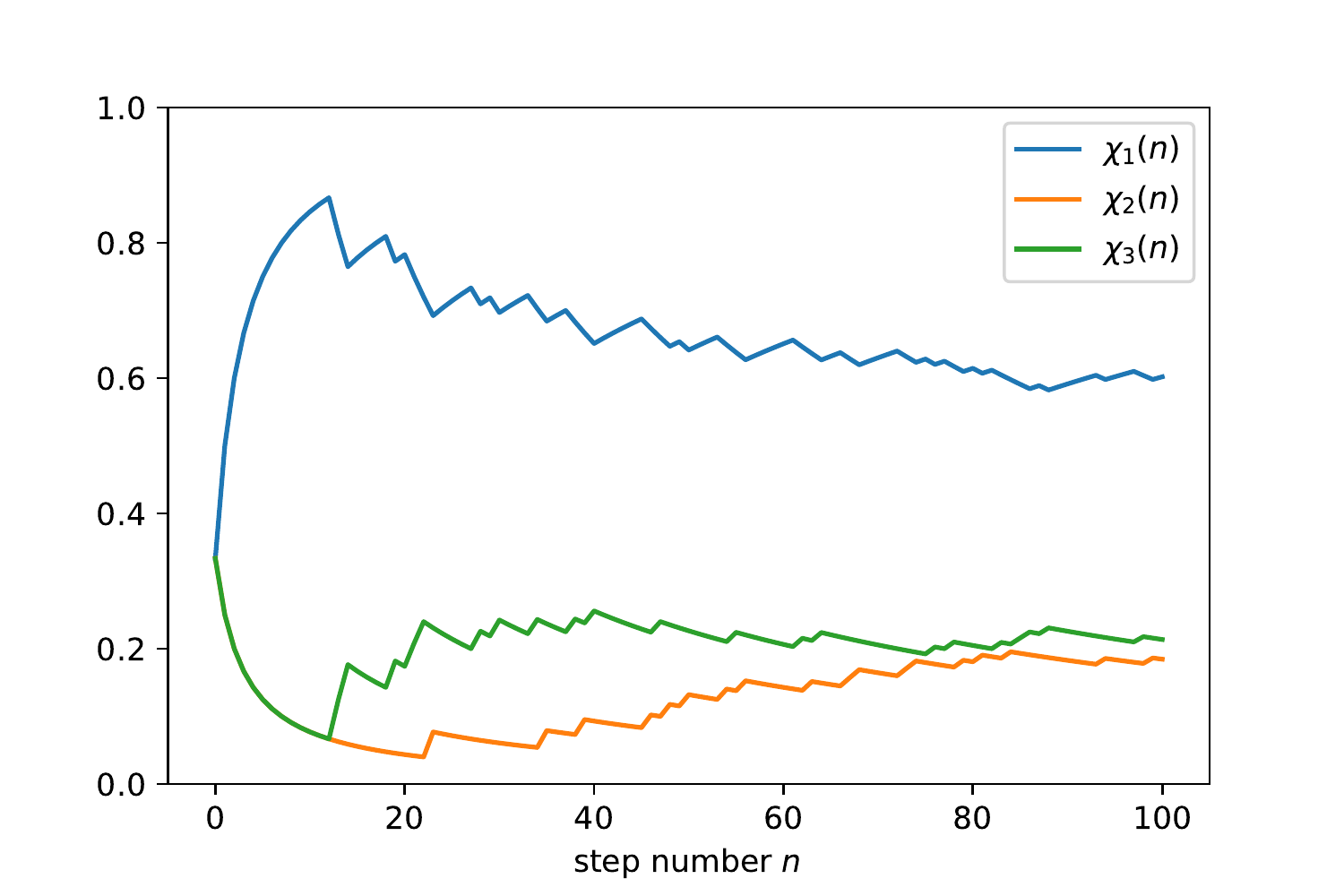}}%
  \quad
  \subfloat[][$F_i(k)=\sqrt{k},\, i=1, 2, 3$]{\includegraphics[width=0.5\linewidth]{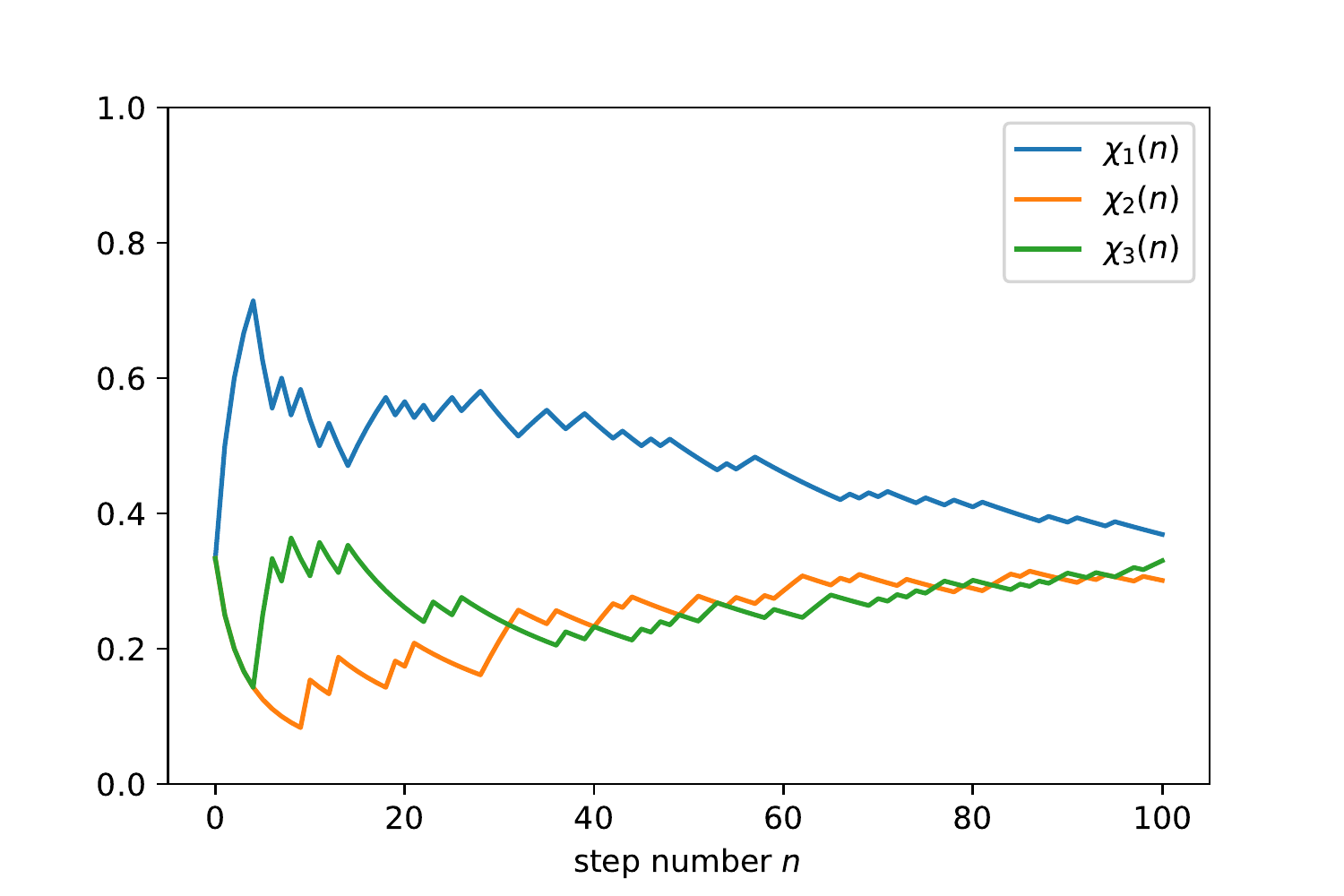}}%
  \subfloat[][$F_i(k)=k\log(k+1),\, i=1, 2, 3$]{\includegraphics[width=0.5\linewidth]{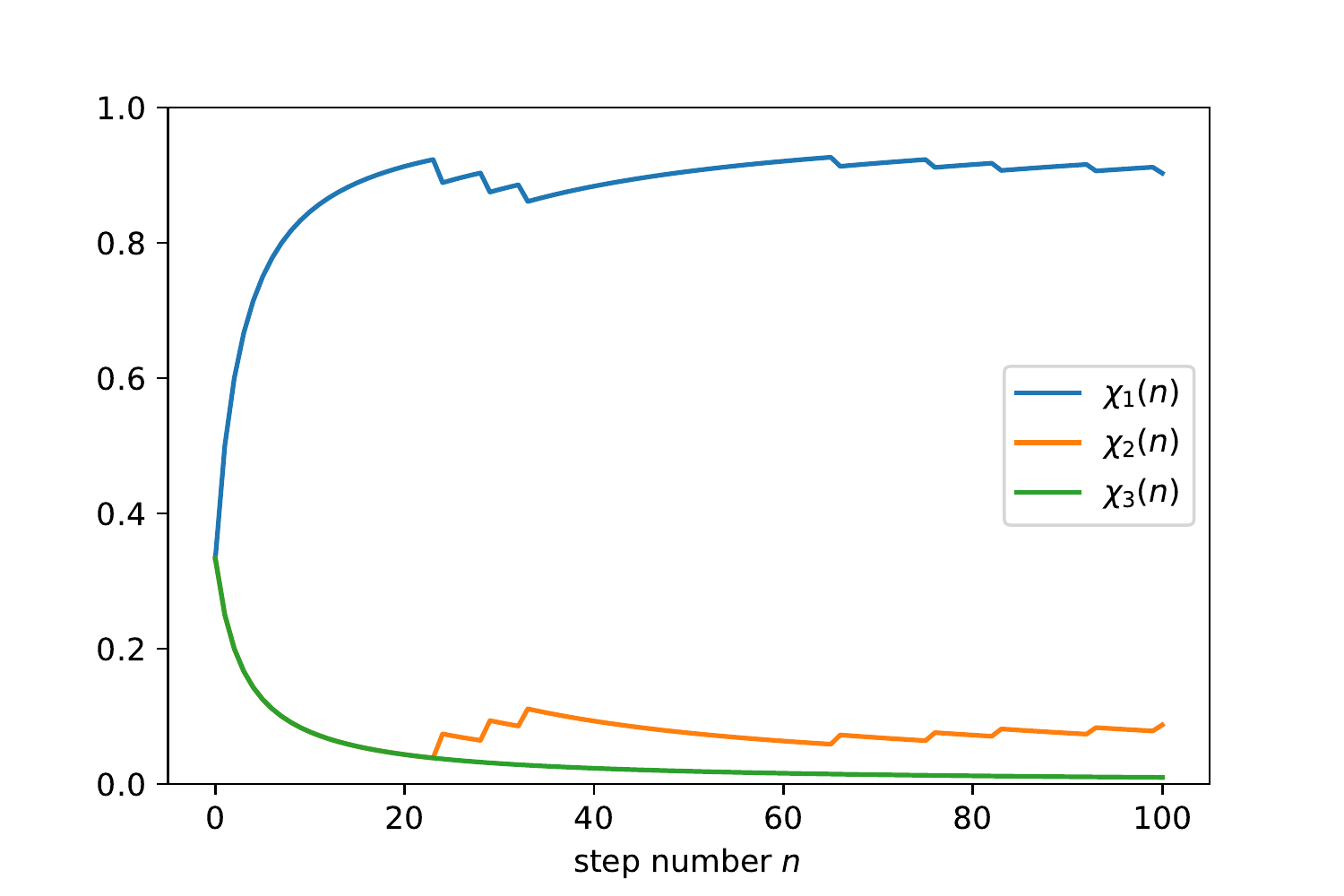}}%
  \caption{Simulated evolution of the market shares for the first 100 steps of a generalized Pólya urn with different feedback functions. Here $A=3$ and $X(0)=(1, 1, 1)$.}
  \label{figure: Simulation}
\end{figure}

A useful alternative construction of the process is provided by the so-called exponential embedding (see e.g. \cite{Oliveira} and references therein). We take independent random variables $\tau_i(k),\, i=1,\ldots, A,\, k\in\N$, where $\tau_i(k)$ is exponentially distributed with rate parameter $F_i(k)$. For each $i$ we define the corresponding continuous-time counting process $\big( \Xi_i(t)\big)_{t\geq 0}$ with
\begin{equation}\label{eq:counting}
\Xi_i(t)=\Xi_i^{(X_i(0))}(t)\coloneqq\max\left\{l\in\N_0: \sum_{k=0}^l\tau_i(X_i(0)+k)\le t\right\}+X_i(0),\, t\ge0.
\end{equation}
These are independent birth processes with $\Xi_i (0)=X_i (0)$, where the time between the $k$-th and $(k+1)$-th event of $\Xi_i$ is given by $\tau_i(X_i(0)+k)$. If $0=t_0<t_1<t_2<...$ is the sequence of jump-times of the process $\Xi(t)=(\Xi_1(t),\ldots,\Xi_A(t))$, i.e.
\begin{equation*}
t_{n+1}=\min\left\{t>t_n: \Xi(t)\ne \Xi(t_n)\right\},
\end{equation*}
then Rubin's theorem (proven in e.g. \cite{Oliveira}) states, that the jump chain $\left(\Xi(t_n)\colon n\in\N_0\right)$ has the same distribution as the process $\left(X(n)\colon n\in\N_0\right)$. Thus we can define:
\begin{equation}
\label{eq: expemb}
X(n)\coloneqq\Xi(t_n)
\end{equation}
In fact, the birth processes $\Xi_i(t)$ can explode as the sum $\sum_{k=X_i(0)}^\infty\tau_i(k)$ might be finite. We therefore define the random \textit{explosion times}
\begin{equation*}
T_i(X_i(0))\coloneqq\sum_{k=X_i(0)}^{\infty}\tau_i(k)\in(0, \infty],\, i=1,\ldots, A.
\end{equation*}

In the following we are especially interested in the occurrence of monopoly, which requires some definitions.

\begin{definition}
For $i\in[A]$ we define the events
\begin{enumerate}
\item \textit{weak monopoly}
\begin{equation*}
	wMon_i(\chi(0), N)\coloneqq\left\{\omega\in\Omega: \lim_{n\to\infty}\chi_i(n)(\omega)=1\right\} =\left\{ \lim_{n\to\infty}\chi_i(n)=1\right\},
\end{equation*}
i.e. the market share of agent $i$ converges to one;
\item \textit{strong monopoly}
\begin{equation*}
sMon_i(\chi(0), N)\coloneqq\Big\{\lim_{n\to\infty} \sum_{j\neq i}X_j (n)<\infty\Big\},
\end{equation*}
i.e. agent $i$ wins in all but finitely many steps;
\item \textit{total monopoly}
\begin{equation*}
tMon_i(\chi(0), N)\coloneqq\Big\{ \forall n\ge 0\,\forall j\in [A]\setminus\{i\}: X_j(n)=X_j(0)\Big\},
\end{equation*}
i.e. agent $i$ wins in all steps.
\end{enumerate}
\end{definition}

Obviously, a total monopoly is also a strong monopoly and a strong monopoly always implies a weak monopoly. Via exponential embedding one can express the event $sMon_i(\chi(0), N)$ by the explosion times through
\begin{equation}
\label{eq: sMonEmbed}
sMon_i(\chi(0), N)=\bigcap_{j\ne i}\Big\{T_i(X_i(0))<T_j(X_j(0))\Big\}
\end{equation}
as equality of finite explosion times has probability zero (see below). With the observation
\begin{equation*}
T_i(X_i(0))<\infty\,\Leftrightarrow\,\E T_i(X_i(0))=\sum_{k=X_i(0)}^{\infty}\frac{1}{F_i(k)}<\infty\,\Leftrightarrow\,\sum_{k=1}^\infty\frac{1}{F_i(k)}<\infty,
\end{equation*}
one can easily derive the following generally known criterion for the occurrence of strong monopoly (see e.g. \cite{Davis, Oliveira}).

\begin{theorem}
\label{thm: theorem2}
Strong monopoly occurs with probability one, i.e. 
\begin{equation*}
\P\left(\bigcup_{i=1}^AsMon_i(\chi(0), N)\right\}=1,
\end{equation*}
if and only if
\begin{equation}
\label{eq: explosion}
\sum_{k=1}^{\infty}\frac{1}{F_i(k)}<\infty\quad\mbox{for at least one }i\ ,\tag{M}
\end{equation}
otherwise the probability is zero.
\end{theorem}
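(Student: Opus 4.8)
The plan is to run the argument entirely through the exponential embedding \eqref{eq: expemb} and the identity \eqref{eq: sMonEmbed}, which reduces the theorem to a statement about the finite family of independent explosion times $T_1(X_1(0)),\ldots,T_A(X_A(0))$. Indeed, by \eqref{eq: sMonEmbed} the event $\bigcup_{i=1}^A sMon_i(\chi(0),N)$ coincides, up to a null set, with the event that one of these times is strictly smaller than all of the others. So the proof needs two facts about the $T_i$: a zero--one dichotomy for their finiteness, and the absence of ties among the finite ones.

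For the dichotomy I would use the equivalence recalled just before the statement. Since $T_i(X_i(0))=\sum_{k\ge X_i(0)}\tau_i(k)$ is a sum of independent nonnegative variables, $\{T_i(X_i(0))<\infty\}$ is a tail event, hence has probability $0$ or $1$ by Kolmogorov's zero--one law; when $\sum_k 1/F_i(k)<\infty$ one has $\E T_i(X_i(0))=\sum_{k\ge X_i(0)}1/F_i(k)<\infty$ and so $T_i(X_i(0))<\infty$ a.s., while when $\sum_k 1/F_i(k)=\infty$ the computation $\E e^{-T_i(X_i(0))}=\prod_{k\ge X_i(0)}(1+1/F_i(k))^{-1}=0$ forces $T_i(X_i(0))=\infty$ a.s. For the absence of ties, whenever $T_i(X_i(0))$ and $T_j(X_j(0))$ are both a.s.\ finite they are independent and atomless (a sum of independent exponentials contains an absolutely continuous summand, hence has a density), so $\P(T_i(X_i(0))=T_j(X_j(0)))=0$; this is precisely the fact invoked in the derivation of \eqref{eq: sMonEmbed}.

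The two directions of the equivalence then follow. If (M) holds, set $S\coloneqq\{\,i\in[A] : \sum_k 1/F_i(k)<\infty\,\}\ne\emptyset$; a.s.\ $T_i(X_i(0))<\infty$ for $i\in S$, $T_i(X_i(0))=\infty$ for $i\notin S$, and the finitely many values $\{T_i(X_i(0)):i\in S\}$ are pairwise distinct, so a.s.\ there is a unique index $i^\star$ attaining $\min_i T_i(X_i(0))$ with strict inequality $T_{i^\star}(X_{i^\star}(0))<T_j(X_j(0))$ for every $j\ne i^\star$ (against finite competitors by distinctness, against infinite ones trivially); by \eqref{eq: sMonEmbed} this event is contained in $sMon_{i^\star}(\chi(0),N)$, so $\P\big(\bigcup_i sMon_i(\chi(0),N)\big)=1$. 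If (M) fails, then a.s.\ $T_i(X_i(0))=\infty$ for every $i$, hence for each fixed $i$ the event $\bigcap_{j\ne i}\{T_i(X_i(0))<T_j(X_j(0))\}$ is contained in the null event $\{\infty<\infty\}$, so $\P(sMon_i(\chi(0),N))=0$ for all $i$ and the union is null as well.

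The step I expect to demand the most care is not any of these probabilistic estimates but the justification that \eqref{eq: sMonEmbed} really is an equality of events and not merely an inclusion: one must check that the jump chain $(\Xi(t_n))_{n\in\N_0}$ stays well defined for all $n$ even once some $\Xi_i$ has exploded, and that on $sMon_i(\chi(0),N)$ the accumulation of the jump times $t_n$ is driven solely by agent $i$ after the last move of a competitor, which is exactly where the no-ties statement and the strong Markov property of the independent birth processes $\Xi_j$ enter. With that in hand, everything else is routine bookkeeping with tail events and atomless laws.
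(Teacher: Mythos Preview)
Your proposal is correct and follows precisely the route the paper sketches: the paper does not give a self-contained proof but states that the result ``can easily derive[d]'' from the exponential embedding, the identity \eqref{eq: sMonEmbed}, and the equivalence $T_i(X_i(0))<\infty \Leftrightarrow \sum_k 1/F_i(k)<\infty$ displayed just before the theorem, citing \cite{Davis,Oliveira} for details. Your write-up fills in exactly these ingredients (Kolmogorov zero--one, atomlessness of the $T_i$, and the case split on $S$), and your closing remark about the care needed to justify \eqref{eq: sMonEmbed} as an equality of events is well taken and goes slightly beyond what the paper makes explicit.
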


If (\ref{eq: explosion}) holds, the density of the explosion time $T_i(X_i(0))$ (computed in \cite{Zhu}) as a sum of exponential variables has support on the whole positive real line for all choices of $F_i$. So the probability of $sMon_i(\chi(0), N)$ is positive if and only if agent $i$ fulfills (\ref{eq: explosion}) and the monopolist is random among all agents $i\in [A]$ that satisfy (\ref{eq: explosion}). For the polynomial case $F_i(k)=\alpha_ik^\beta,\,\alpha_i>0,\, \beta\in\R,\,i\in[A]$, Theorem \ref{thm: theorem2} implies that strong monopoly occurs if and only if $\beta>1$.\\

On the other hand, when no agent fulfills (\ref{eq: explosion}), $X_i (n)\to\infty$ almost surely for all $i\in [A]$, and we have the following consistency property.

\begin{proposition}\label{prop: partialProcess}
       Assume that none of the $F_i$ satisfies (\ref{eq: explosion}). Define a 'partial' P\'olya urn process $\tilde X(n)$ for a subset $B\subset [A]$ of agents with the same feedback functions $F_i$ and initial condition $\tilde X(0)=(X_i(0) :i\in B)$. Then the process $\big(\tilde X(n)\big)_{n\in\N_0}$ can be identified as a (random) subsequence of $\big( X_i (n):i\in B\big)_{n\in\N_0}$.
\end{proposition}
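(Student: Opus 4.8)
The plan is to use the exponential embedding \eqref{eq:counting}--\eqref{eq: expemb} together with Rubin's theorem, applied both to the full family of agents $[A]$ and to the sub-family $B$. On a common probability space fix the exponential clocks $\{\tau_i(k):i\in[A],\,k\in\N\}$ and realise the full urn as the jump chain $X(n)=\Xi(t_n)$ of $\Xi=(\Xi_i)_{i\in[A]}$. Since by hypothesis none of the $F_i$ satisfies \eqref{eq: explosion}, the equivalence $T_i(X_i(0))<\infty\Leftrightarrow\sum_k 1/F_i(k)<\infty$ gives $T_i(X_i(0))=\infty$ almost surely for every $i$; hence each birth process $\Xi_i$ is non-explosive, makes infinitely many jumps and $X_i(n)\to\infty$ a.s., and the full jump times satisfy $t_n\to\infty$ and form a locally finite set. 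Using that two independent birth processes jump simultaneously with probability zero, the successive jump times $0=s_0<s_1<s_2<\cdots$ of the reduced family $(\Xi_i)_{i\in B}$ form an a.s.\ strictly increasing sequence diverging to $\infty$, and $\{s_m\}_{m\in\N_0}\subset\{t_n\}_{n\in\N_0}$.

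Next I would apply Rubin's theorem to the sub-collection $\{\tau_i(k):i\in B\}$ in isolation: the reduced jump chain $\big((\Xi_i(s_m))_{i\in B}\big)_{m\in\N_0}$ has exactly the law prescribed by \eqref{eq:trapro} for the partial urn on $B$, so it furnishes a coupling in which $\tilde X(m)=(\Xi_i(s_m))_{i\in B}$ for all $m$. Finally, $s_m$ is precisely the $m$-th time (with $s_0=0$) at which some coordinate in $B$ increases, and the $B$-coordinates of $X$ stay constant between consecutive such times; writing $n_m$ for the unique finite index with $t_{n_m}=s_m$ we get $n_0<n_1<n_2<\cdots$ and
\[
\tilde X(m)=(\Xi_i(s_m))_{i\in B}=(\Xi_i(t_{n_m}))_{i\in B}=(X_i(n_m))_{i\in B}\qquad\text{for all }m\in\N_0,
\]
so $(\tilde X(m))_{m\in\N_0}$ is the random subsequence of $\big((X_i(n))_{i\in B}\big)_{n\in\N_0}$ indexed by $(n_m)_{m\in\N_0}$, which is the assertion of the proposition.

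The step needing the most care is the application of Rubin's theorem to $B$ \emph{inside} the larger embedding, i.e.\ checking that the reduced jump chain really has the partial-urn dynamics. Concretely one verifies that conditioning the full one-step transition \eqref{eq:trapro} on the event ``the incremented agent lies in $B$'' and on the current $B$-state $(x_i)_{i\in B}$ yields probability $F_i(x_i)/\sum_{j\in B}F_j(x_j)$ for agent $i\in B$, independently of the coordinates outside $B$ and of the past; this is immediate from the strong Markov property and is in fact an alternative, purely discrete-time route to the whole statement. The role of the failure of \eqref{eq: explosion} is to guarantee non-explosivity, which is exactly what makes the extracted index sequence $(n_m)_{m\in\N_0}$ defined for \emph{every} $m$ — i.e.\ ensures that the partial urn never runs out of steps.
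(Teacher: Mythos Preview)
Your proposal is correct and follows essentially the same route as the paper: couple both urns through the exponential embedding, take the jump times $(s_m)$ of the reduced family $(\Xi_i)_{i\in B}$, use non-explosion (from the failure of \eqref{eq: explosion}) to ensure $s_m<\infty$ for all $m$, and observe that $(s_m)$ is a subsequence of $(t_n)$. Your write-up is in fact more careful than the paper's, which simply invokes the independence of the embedding and defines $\tilde X_i(n):=\Xi_i(s_n)$ without spelling out the Rubin step for the sub-family or the discrete-time conditioning argument you mention at the end.
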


\begin{proof}
    The independence property of the exponential embedding provides a canonical coupling of the processes $\tilde X$ and $X$. For that, define recursively $s_0=0$ and
    $$s_{n+1}\coloneqq\inf\{s>s_n\colon\exists i\in B\colon \Xi_i(s)\ne\Xi_i(s_n)\}\,.$$
    Note that $s_n<\infty$ is well defined for all $n\geq 0$, since none of the $F_i$ fulfill (\ref{eq: explosion}). Then set $\tilde X_i(n)\coloneqq\Xi_i(s_n)$, which directly implies the claim since $(s_n)$ is a subsequence of $(t_n)$.
\end{proof}

In particular, if one of the limits
$$
\tilde\chi (\infty ):=\lim_{n\to\infty}\left(\frac{\tilde X_i(n)}{\tilde X_1(n)+\ldots+\tilde X_{\tilde A}(n)}\right)_{i\in B},\ \chi^B (\infty ):=\lim_{n\to\infty}\left(\frac{ X_i(n)}{ X_1(n)+\ldots+X_{\tilde A}(n)}\right)_{i\in B}
$$
exists, then so does the other and both have the same distribution. This implies further neutrality of the limit $\chi (\infty)$ in the sense of \cite{James}, so that it has a (possibly degenerate) Dirichlet distribution on $\Delta_{A-1}$, whenever it exists. In the degenerate case, the Dirichlet distribution is either deterministic or concentrated on the vertices of $\Delta_{A-1}$ with $\P\left(\bigcup_{i=1}^AwMon_i(\chi(0), N)\right)=1$. This will be discussed in several examples in Sections \ref{sec: non-monopoly} and \ref{sec: specialCase}. Note that in the case of weak monopoly this corresponds to \textbf{hierarchical states}, where the asymptotic distribution among losing agents again exhibits a weak monopolist.

\subsection{Review of previous results}
\label{sec: literature}

As already described in the introduction, generalisations of Pólya urns have been studied in numerous papers. In this section, we shortly present a selection of results related to our work. To our knowledge, the most comprehensive result concerning the long time limit of the process $(\chi(n))_n$ of market shares is the following.

\begin{theorem}\cite[Theorem 3.1]{Arthur2}
\label{thm: Arthur}
Suppose that $p(x)\coloneqq\lim_{k\to\infty}p(k, x)$ (cf.\ \eqref{eq:transpr}) exists for all $x\in\Delta_{A-1}$ and that even
\begin{equation}
\label{eq: arthur}
    \sum_{k=1}^\infty\frac{\sup_{x\in\Delta_{A-1}}\|p(k, x)-p(x)\|}{k}<\infty
\end{equation}
holds. Moreover, assume that there is a twice differentiable Lyapunov function for the vector field $(G(x))_{x\in\Delta_{A-1}}=(p(x)-x)_{x\in\Delta_{A-1}}$. Then $\chi(n)$ converges almost surely for $n\to\infty$ and the limit is either in $\{x\in\Delta_{A-1}\colon G(x)=0\}$ or the border of a connected component of this set.
\end{theorem}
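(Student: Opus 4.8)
The natural approach is to recognise the recursion induced by \eqref{eq:trapro} as a Robbins--Monro stochastic approximation scheme with step sizes $\gamma_n:=1/(N+n+1)$, and then to combine a Robbins--Siegmund supermartingale argument with the ODE method. Let $\mathcal F_n:=\sigma\big(X(0),\dots,X(n)\big)$, let $\xi_{n+1}\in\{e^{(1)},\dots,e^{(A)}\}$ be the increment $X(n+1)-X(n)$, and recall that $(N+n)\chi(n)=X(n)$, so $\E[\xi_{n+1}\mid\mathcal F_n]=p(N+n,\chi(n))$ by \eqref{eq:trapro}. A direct computation from $\chi(n)=X(n)/(N+n)$ gives the exact identity $\chi(n+1)-\chi(n)=\gamma_n\big(\xi_{n+1}-\chi(n)\big)$, which I would rewrite as
\begin{equation*}
\chi(n+1)-\chi(n)=\gamma_n\Big(G(\chi(n))+b_n+M_{n+1}\Big),\qquad b_n:=p(N+n,\chi(n))-p(\chi(n)),\quad M_{n+1}:=\xi_{n+1}-\E[\xi_{n+1}\mid\mathcal F_n],
\end{equation*}
where $G(x)=p(x)-x$ is the vector field of the theorem, $(M_{n+1})_n$ is a martingale difference sequence with $\|M_{n+1}\|\le2$, and $\|b_n\|\le\sup_{x\in\Delta_{A-1}}\|p(N+n,x)-p(x)\|$. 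Note that $\Delta_{A-1}$ is compact and forward invariant for $\dot x=G(x)$, since $\sum_iG_i\equiv0$ and, with the convention $F_i(0)=0$, each face $\{x_i=0\}$ is invariant; moreover $G$ is continuous because \eqref{eq: arthur} forces $p(k,\cdot)\to p(\cdot)$ uniformly on $\Delta_{A-1}$.

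The first substantial step is to control the two error terms. Since $\sum_n\gamma_n^2<\infty$ and $\E[\|M_{n+1}\|^2\mid\mathcal F_n]\le4$, the martingale $\big(\sum_{k\le n}\gamma_kM_{k+1}\big)_n$ is bounded in $L^2$, hence converges almost surely; and $\sum_{n\ge0}\gamma_n\|b_n\|\le\sum_{k\ge N+1}k^{-1}\sup_{x\in\Delta_{A-1}}\|p(k,x)-p(x)\|<\infty$ by \eqref{eq: arthur}. I would then exploit that $V\in C^2$: a second-order Taylor expansion along the increment (of norm at most $2\gamma_n$) yields
\begin{equation*}
\E\big[V(\chi(n+1))\mid\mathcal F_n\big]\le V(\chi(n))+\gamma_n\big\langle\nabla V(\chi(n)),G(\chi(n))\big\rangle+C\gamma_n\|b_n\|+C'\gamma_n^2
\end{equation*}
for constants $C,C'$ controlled by $\sup\|\nabla V\|$ and $\sup\|\nabla^2V\|$. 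Since $\langle\nabla V,G\rangle\le0$ by the Lyapunov property and the last two terms are summable, the Robbins--Siegmund theorem gives that $V(\chi(n))$ converges almost surely and that $\sum_n\gamma_n\big\langle-\nabla V(\chi(n)),G(\chi(n))\big\rangle<\infty$ almost surely.

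It remains to translate these facts into the stated location of the limit. From $\sum_n\gamma_n=\infty$ and the summability just obtained, $\liminf_n\big\langle-\nabla V(\chi(n)),G(\chi(n))\big\rangle=0$ along almost every path, so by compactness and continuity the limit set $L(\omega)=\bigcap_n\overline{\{\chi(m):m\ge n\}}$ intersects $\{x:\langle\nabla V(x),G(x)\rangle=0\}$; the error bounds above make $(\chi(n))_n$ an asymptotic pseudotrajectory of $\dot x=G(x)$, so $L$ is in fact non-empty, compact, connected and internally chain transitive, and since $V$ is constant on $L$ (it is monotone along the flow and $V(\chi(n))$ converges) one deduces $L\subseteq\{x\in\Delta_{A-1}:G(x)=0\}$. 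Connectedness then confines $L$ to a single connected component of $\{G=0\}$, and a final argument shows $L$ is either a singleton zero of $G$ or, in the degenerate case where $V$ fails to be constant on a continuum of equilibria, lies on the boundary of that component, giving both convergence of $\chi(n)$ and the claimed dichotomy. The main obstacle is precisely this endgame: matching the supermartingale/chain-transitivity information against the topology of the (possibly large and irregular) equilibrium set so as to exclude exotic limit sets such as limit cycles and to rule out wandering over an interior continuum of equilibria — this is exactly where a genuine Lyapunov function, rather than mere continuity of $G$, is indispensable.
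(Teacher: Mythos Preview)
The paper does not prove this theorem. Theorem~\ref{thm: Arthur} is stated in Section~\ref{sec: literature} (``Review of previous results'') as a citation of \cite[Theorem~3.1]{Arthur2}, and no proof is given anywhere in the paper; it is invoked purely as background from the literature. So there is no ``paper's own proof'' against which to compare your attempt.

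That said, your sketch follows the standard stochastic-approximation route that the original result of Arthur, Ermoliev and Kaniovski is built on: recast the urn recursion as a Robbins--Monro scheme, split the increment into drift $G(\chi(n))$, vanishing bias $b_n$ controlled by \eqref{eq: arthur}, and square-summable martingale noise, then apply a Robbins--Siegmund argument to $V(\chi(n))$. This is the right framework. You are also honest about where the real difficulty lies: once you know $V(\chi(n))$ converges and $\sum_n\gamma_n\langle-\nabla V,G\rangle(\chi(n))<\infty$, concluding that $\chi(n)$ itself converges to a single point (rather than merely that its limit set sits inside $\{G=0\}$) is the delicate step, and your paragraph waves at chain transitivity and constancy of $V$ on the limit set without closing the argument. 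In particular, connectedness of the limit set plus $L\subseteq\{G=0\}$ does not by itself force $L$ to be a singleton or to lie on the boundary of a component; one needs a further argument using the Lyapunov structure to exclude wandering over a continuum of equilibria. This is precisely the content of the original proof in \cite{Arthur2}, so if you want to complete the argument you should consult that reference directly rather than the present paper.
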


Note that a Lyapunov function does always exist in the case $A=2$ and when $p$ is differentiable with equal feedback functions for all agents. Moreover, \cite{Arthur2} shows under mild technical assumptions that each stable fixed point of the vector field $G$ is attained in the limit $n\to\infty$ with positive probability, whereas unstable fixed points are never attained. 

Theorem \ref{thm: Arthur} allows to compute the long time market shares in generic situations, like $F_i(k)=\alpha_ik^\beta$. Nevertheless, condition (\ref{eq: arthur}) is not fulfilled e.g. for $F_i(k)=\log(k)$ or $F_i(k)=e^k$.

In the monopoly case described in Theorem \ref{thm: theorem2}, the monopolist is in general random. Consequently, one is interested in the probability that a specific agent is the monopolist, at least in the limit $N\to\infty$. \cite{Mitzenmacher} derives such a result in a situation with only two symmetric agents.

\begin{theorem}\cite[Theorem 2]{Mitzenmacher}
\label{thm: oliveira}
Let $A=2$ and $F_1=F_2=F$. Assume that $F$ fulfills (\ref{eq: explosion}) and that 
\begin{equation*}
    \liminf_{x\to\infty}x\frac{d}{dx}\log F(x)>\frac{1}{2}\text{  and  } \lim_{x\to\infty}\frac{d}{dx}\log F(x)=0.
\end{equation*}
Moreover, suppose that there is a constant $C>0$ such  that for all $\epsilon\in(0, \frac{1}{2})$ and all $x>0$ large enough
\begin{equation*}
    \sup_{x\le t\le x^{1+\epsilon}}\left|\frac{t\frac{d}{dt}\log F(t)}{x\frac{d}{dx}\log F(x)}-1\right|\le C\epsilon
\end{equation*}
holds. Let $X(0)=\left(N+\lambda q(N), N-\lambda q(N)\right)$ for $N, \lambda>0$ and $q(a)\coloneqq\sqrt{\frac{a}{4a\frac{d}{da}\log F(a)-2}}$.  Then the probability of agent $1$ being the monopolist converges to $\Phi(\lambda)$ for $N\to\infty$, where $\Phi$ denotes the cumulative distribution function of the normal distribution.
\end{theorem}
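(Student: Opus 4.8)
\emph{Sketch of a proof of Theorem~\ref{thm: oliveira}.} The natural framework is the exponential embedding of \eqref{eq: expemb}. Since $F=F_1=F_2$ satisfies \eqref{eq: explosion}, Theorem~\ref{thm: theorem2} together with \eqref{eq: sMonEmbed} tells us that a.s.\ exactly one agent monopolises and that agent~$1$ wins precisely when $T_1(X_1(0))<T_2(X_2(0))$, where $T_i(X_i(0))=\sum_{k\ge X_i(0)}\tau_i(k)$ with independent $\tau_i(k)\sim\mathrm{Exp}(F(k))$. I would first write $m\coloneqq X_1(0)=N+\lambda q(N)$ (rounded to an integer) and split the sum defining $T_2(X_2(0))$ at $m$, namely $T_2(X_2(0))=S+T_2(m)$ with $S\coloneqq\sum_{k=X_2(0)}^{m-1}\tau_2(k)$. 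Then, with $D\coloneqq T_2(X_2(0))-T_1(m)=S+\bigl(T_2(m)-T_1(m)\bigr)$, agent~$1$ wins iff $D>0$, and $S$, $T_1(m)$, $T_2(m)$ are mutually independent since they involve disjoint families of the $\tau_i(k)$. The goal is then a central limit theorem for $D$ in which the centering amounts to exactly $\lambda$ standard deviations, so that $\P(D>0)\to\Phi(\lambda)$.

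Writing $\mu_j(m)\coloneqq\sum_{k\ge m}F(k)^{-j}$, one has $\E[D]=\E[S]=\sum_{k=X_2(0)}^{m-1}F(k)^{-1}$ and $\mathrm{Var}\,D=\mathrm{Var}\,S+2\mu_2(m)$. The second step is to pin down the asymptotics of these sums, which is where the hypotheses on $F$ are used. With $g(x)\coloneqq x(\log F)'(x)$, the uniform comparison bound for $t(\log F)'(t)$ on $[x,x^{1+\epsilon}]$ forces $g$ to vary slowly on logarithmic scales; together with $(\log F)'\to0$ and $\liminf g>\tfrac12$ (which makes the tail sums converge and gives $q(N)\to\infty$, $q(N)=o(N)$) this should justify the de~Haan/Karamata-type estimates
\[
\mu_j(m)\sim\int_N^\infty\frac{dx}{F(x)^j}\sim\frac{N}{\bigl(jg(N)-1\bigr)F(N)^j}\qquad(j=2,3),
\]
while $F$ being constant up to $1+o(1)$ over the length-$2\lambda q(N)$ window $[X_2(0),m)$ gives $\E[D]\sim2\lambda q(N)/F(N)$ and $\mathrm{Var}\,S\sim2\lambda q(N)/F(N)^2$. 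The definition $q(N)^2=N/(4g(N)-2)$ is engineered precisely so that $\mu_2(m)\sim2q(N)^2/F(N)^2$; hence $\mathrm{Var}\,S=O(q(N)^{-1})\mu_2(m)=o(\mu_2(m))$, $\mathrm{Var}\,D\sim4q(N)^2/F(N)^2$, and therefore $\E[D]/\sqrt{\mathrm{Var}\,D}\to\lambda$.

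The third step is the CLT $\bigl(D-\E[D]\bigr)/\sqrt{\mathrm{Var}\,D}\Rightarrow\mathcal N(0,1)$. Since $\mathrm{Var}\,S=o(\mathrm{Var}\,D)$, Slutsky reduces this to $T_2(m)-T_1(m)=\sum_{k\ge m}\bigl(\tau_2(k)-\tau_1(k)\bigr)$, a sum of independent, centred, uniformly sub-exponential summands; a Lyapunov condition should close it, using $\E\,\bigl|\tau_i(k)-\E\tau_i(k)\bigr|^3\le CF(k)^{-3}$, so that $\sum_{k\ge m}\E|\tau_2(k)-\tau_1(k)|^3\lesssim\mu_3(m)\lesssim q(N)^2/F(N)^3$, which divided by $(\mathrm{Var}\,D)^{3/2}\asymp q(N)^3/F(N)^3$ is $O(q(N)^{-1})\to0$. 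Combining the pieces, $D/\sqrt{\mathrm{Var}\,D}\Rightarrow\mathcal N(\lambda,1)$ and $\P(\text{agent }1\text{ wins})=\P(D>0)\to\Phi(\lambda)$. (A more hands-on substitute for this step would use the explicit density of $T_i(X_i(0))$ from \cite{Zhu} together with a Berry--Esseen bound.)

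I expect the main obstacle to be the sharp estimate $\mu_2(m)\sim N/\bigl((2g(N)-1)F(N)^2\bigr)$: one must control $\sum_{k\ge m}F(k)^{-2}$ up to a $(1+o(1))$ factor \emph{uniformly in the relevant range of $N$}, and it is exactly the uniform comparison of $t(\log F)'(t)$ over the ranges $[x,x^{1+\epsilon}]$ --- rather than the mere pointwise decay of $(\log F)'$ --- that makes the passage to $\int_N^\infty F(x)^{-2}\,dx$ and the evaluation of that integral legitimate across the whole tail. Everything else (the embedding reduction, the Slutsky step, the Lyapunov bound, and the analogous bookkeeping for $\mu_3$ and for $\E[D]$) should then be routine.
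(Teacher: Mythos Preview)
The paper does not supply its own proof of this theorem: it is quoted in Section~\ref{sec: literature} as \cite[Theorem~2]{Mitzenmacher} within the literature review, with no argument given. So there is no in-paper proof to compare against.

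That said, your sketch is the natural route and is essentially the strategy of the original reference: reduce via the exponential embedding \eqref{eq: sMonEmbed} to $\P\bigl(T_1(X_1(0))<T_2(X_2(0))\bigr)$, center by splitting off the common tail at $m=X_1(0)$, estimate mean and variance of the resulting difference using the regularity of $g(x)=x(\log F)'(x)$, and close with a Lyapunov/Berry--Esseen CLT for the infinite sum of centred exponentials. Your identification of the key analytic step --- the sharp asymptotic $\mu_2(m)\sim N/\bigl((2g(N)-1)F(N)^2\bigr)$, which is exactly what the slow-variation hypothesis on $g$ is designed to deliver --- is on point, and the arithmetic matching $\E[D]/\sqrt{\mathrm{Var}\,D}\to\lambda$ via the definition of $q(N)$ is correct. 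One small remark: to make the Lyapunov ratio go through cleanly you also need $\mu_3(m)\sim N/\bigl((3g(N)-1)F(N)^3\bigr)$ with the same level of justification as for $\mu_2$; this uses $\liminf g>\tfrac12>\tfrac13$ and the same Karamata-type argument, so it is not an additional obstacle, but it is worth stating rather than folding into ``routine''.
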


For $F(x)=x^\beta, \, \beta>1$ these assumptions are fulfilled and $q(a)=\frac{\sqrt{a}}{\sqrt{4\beta-2}}$. Under similar assumptions as in Theorem \ref{thm: oliveira}, \cite{Oliveira} shows that the number of steps, in which the looser wins, has a heavy tailed distribution. Moreover, if $\chi_i(0)<\frac{1}{2}$ for an agent $i$, then $\P(sMon_i(\chi(0), N)$ is exponentially decreasing in $N$, i.e. the first steps of the process decide who wins. \cite{Dunlop} provides similar results for the asymmetric case $F_i(x)=\alpha_ix^\beta,\,i\in\{1, 2\}, \beta>1, \alpha_i>0$. More recently in \cite{Menshikov}, a result for polynomial feedback with different exponents was shown.

\begin{theorem}\cite[Theorem 2.2]{Menshikov}
Let $A=2$ and $F_i(k)=k^{\beta_i}$ with $1<\beta_1\le\beta_2$. Define the critical values
\begin{equation*}
    \alpha_{cr}=\frac{\beta_1-1}{\beta_2-1}\quad\text{and}\quad\nu_{cr}=\alpha_{cr}^{\frac{1}{\beta_2-1}}.
\end{equation*}
Morover, set $X(0)=(x,\, \nu x^\alpha+o(x^\alpha))$ for $\alpha\in(0, 1), \nu>0$.
\begin{enumerate}
    \item If either $\alpha<\alpha_{cr}$ or $\alpha=\alpha_{cr}$ and $\nu<\nu_{cr}$, then $\lim_{x\to\infty}\P(sMon_1(X(0))=1$.
    \item If either $\alpha>\alpha_{cr}$ or $\alpha=\alpha_{cr}$ and $\nu>\nu_{cr}$, then $\lim_{x\to\infty}\P(sMon_2(X(0))=1$.
\end{enumerate}
\end{theorem}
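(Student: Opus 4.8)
\medskip
\noindent\textit{Proof idea.}
The plan is to exploit the exponential embedding. By \eqref{eq: sMonEmbed} with $A=2$, writing $m_1\coloneqq X_1(0)=x$ and $m_2\coloneqq X_2(0)=\nu x^\alpha+o(x^\alpha)$ for the two initial sizes,
\begin{equation*}
sMon_1(X(0))=\bigl\{T_1(m_1)<T_2(m_2)\bigr\},\qquad sMon_2(X(0))=\bigl\{T_2(m_2)<T_1(m_1)\bigr\}
\end{equation*}
up to a null event. Since $\beta_1,\beta_2>1$, condition (\ref{eq: explosion}) holds for both agents, so $T_1(m_1)$ and $T_2(m_2)$ are almost surely finite, and the whole problem reduces to understanding how these two independent explosion times compare as $x\to\infty$.

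First I would prove sharp concentration of each $T_i(m)$ around its mean. With $T_i(m)=\sum_{k\ge m}\tau_i(k)$ and $\tau_i(k)\sim\mathrm{Exp}(k^{\beta_i})$ independent, elementary integral comparison gives
\begin{equation*}
\E T_i(m)=\sum_{k\ge m}\frac{1}{k^{\beta_i}}=\frac{m^{1-\beta_i}}{\beta_i-1}\bigl(1+O(m^{-1})\bigr),\qquad\mathrm{Var}\,T_i(m)=\sum_{k\ge m}\frac{1}{k^{2\beta_i}}=\frac{m^{1-2\beta_i}}{2\beta_i-1}\bigl(1+O(m^{-1})\bigr),
\end{equation*}
so the relative fluctuation $\sqrt{\mathrm{Var}\,T_i(m)}\big/\E T_i(m)$ is of order $m^{-1/2}$. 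Since $m_1=x\to\infty$ and $m_2=\nu x^\alpha(1+o(1))\to\infty$ (this is where $\alpha>0$ is used), Chebyshev's inequality shows that for every fixed $\delta>0$ the event $\{(1-\delta)\E T_i(m_i)\le T_i(m_i)\le(1+\delta)\E T_i(m_i)\}$ has probability tending to $1$.

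It then remains to compare the two deterministic scales $\E T_1(x)\sim\frac{x^{1-\beta_1}}{\beta_1-1}$ and, using $m_2^{1-\beta_2}=\nu^{1-\beta_2}x^{\alpha(1-\beta_2)}(1+o(1))$, $\E T_2(m_2)\sim\frac{\nu^{1-\beta_2}}{\beta_2-1}\,x^{\alpha(1-\beta_2)}$. The exponents satisfy $1-\beta_1<\alpha(1-\beta_2)$ precisely when $\alpha<\alpha_{cr}=\frac{\beta_1-1}{\beta_2-1}$; in that regime $\E T_1(x)\big/\E T_2(m_2)\to0$, so on the intersection of the two concentration events $T_1(m_1)<T_2(m_2)$ for all large $x$ and hence $\P(sMon_1(X(0)))\to1$, while $\alpha>\alpha_{cr}$ gives the reverse inequality and $\P(sMon_2(X(0)))\to1$. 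In the critical case $\alpha=\alpha_{cr}$ the two powers of $x$ coincide and $\E T_1(x)\big/\E T_2(m_2)\to\nu^{\beta_2-1}/\alpha_{cr}$, which is $<1$ iff $\nu<\nu_{cr}=\alpha_{cr}^{1/(\beta_2-1)}$ and $>1$ iff $\nu>\nu_{cr}$; choosing $\delta$ smaller than this constant gap makes the two concentration windows disjoint for large $x$ and yields the claimed dichotomy.

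The step I expect to require the most care is the joint control of the perturbation $o(x^\alpha)$ in $X_2(0)$ together with the error terms in the expansions above: one must verify that they affect $\E T_2(m_2)$ only by a factor $1+o(1)$, which is harmless as soon as the limiting ratio of the two means differs from $1$, but is exactly the obstruction that prevents this argument from resolving the borderline case $\nu=\nu_{cr}$ — which explains why the theorem leaves it out. The remaining ingredients (the identity \eqref{eq: sMonEmbed}, valid because two finite explosion times coincide with probability zero, and the integral estimates) are routine.
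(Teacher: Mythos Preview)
The paper does not give its own proof of this statement: it is quoted in Section~\ref{sec: literature} as a result from \cite{Menshikov}, with no argument supplied. So there is no ``paper's proof'' to compare against directly.

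That said, your proposal is correct and is exactly in the spirit of the machinery the paper develops for its own results. The exponential embedding plus concentration of the explosion times $T_i(m)$ around their means via Chebyshev is precisely what the paper uses to prove Theorem~\ref{thm: typeP} through Lemma~\ref{lem: explosion}. Your mean and variance asymptotics are right, the exponent comparison $1-\beta_1$ versus $\alpha(1-\beta_2)$ yields the threshold $\alpha_{cr}$ correctly, and the constant comparison at $\alpha=\alpha_{cr}$ gives the secondary threshold $\nu_{cr}$ as you compute. The only structural difference from the paper's setting is that here $\chi(0)$ is not fixed (since $X_2(0)/X_1(0)\to 0$), but your argument does not need this: Lemma~\ref{lem: explosion} only requires each starting point to diverge, which $m_1=x$ and $m_2\sim\nu x^\alpha$ both do. Your remark that the $o(x^\alpha)$ perturbation only multiplies $\E T_2(m_2)$ by $1+o(1)$ and is therefore harmless off the critical line $\nu=\nu_{cr}$ is also correct and well observed.
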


In addition, \cite{Menshikov} provides a result for the critical case $\alpha=\alpha_{cr},\,\nu=\nu_{cr}$. 

For $F_i(k)=k^\beta,\,i\in[A]$ with $\beta<1$, we know from Theorem \ref{thm: Arthur} that $\lim_{n\to\infty}\chi_i(n)=\frac{1}{A}$ almost surely for all $i\in[A]$ irrespective of the initial configuration $\chi(0)$. The rate of convergence is specified in \cite{Khanin}.

\begin{theorem}\cite[Propsition 3]{Khanin}
Let $F_i(k)=k^\beta$ for all $i\in[A]$ and $\beta\in(0, 1)$. 
\begin{enumerate}
    \item If $\frac{1}{2}<\beta<1$, then
    \begin{equation*}
        n^{1-\beta}\left(\chi(n)-\frac{1}{A}\right)\xrightarrow{n\to\infty}C\quad\text{almost surely}
    \end{equation*}
    for a random, nonzero vector $C$.
    \item If $0<\beta<\frac{1}{2}$ and $i\in[A]$, then
    \begin{equation*}
        \sqrt{n}\left(\chi_i(n)-\frac{1}{A}\right)\xrightarrow{n\to\infty}\mathcal{N}\left(0,\, \frac{A-1}{A^{1+2\beta}(1-2\beta)}\right)\quad\text{in distribution,}
    \end{equation*}
    where $\mathcal{N}$ denotes a Gaussian distribution.
    \item If $\beta=\frac{1}{2}$  and $i\in[A]$, then
    \begin{equation*}
        \sqrt{\frac{n}{\log(n)}}\left(\chi_i(n)-\frac{1}{A}\right)\xrightarrow{n\to\infty}\mathcal{N}\left(0,\, \frac{A-1}{A^2}\right)\quad\text{in distribution.}
    \end{equation*}
\end{enumerate}
\end{theorem}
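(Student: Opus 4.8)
\medskip\noindent\emph{Proof sketch.} The plan is to realise the evolution of the market shares $\chi(n)$ as a Robbins--Monro stochastic approximation scheme and to read off the three regimes from the eigenvalue of the linearised drift at the symmetric fixed point $x^\ast\coloneqq(1/A,\dots,1/A)$. Writing $\xi(n+1)\coloneqq X(n+1)-X(n)\in\{e^{(1)},\dots,e^{(A)}\}$ for the increment, a one-line computation gives
\[
\chi(n+1)-\chi(n)=\frac{1}{N+n+1}\bigl(\xi(n+1)-\chi(n)\bigr).
\]
Since $F_i(k)=k^\beta$ makes $p(k,x)$ independent of $k$ (the factors $k^\beta$ cancel in \eqref{eq:transpr}), we set $p(x)\coloneqq\bigl(x_i^\beta/(x_1^\beta+\dots+x_A^\beta)\bigr)_{i\in[A]}$, $g(x)\coloneqq p(x)-x$ and $\Delta M_{n+1}\coloneqq\xi(n+1)-p(\chi(n))$ --- a bounded martingale difference --- so that, with $\gamma_n\coloneqq 1/(N+n+1)\sim 1/n$,
\[
\chi(n+1)=\chi(n)+\gamma_n\bigl(g(\chi(n))+\Delta M_{n+1}\bigr).
\]
By Theorem~\ref{thm: Arthur} (whose hypotheses hold trivially here, $p(k,\cdot)$ being independent of $k$) we already know $\chi(n)\to x^\ast$ almost surely, so only the fluctuations around $x^\ast$ remain.

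Next I would linearise $g$ at $x^\ast$. Differentiating $p_i$ gives $\partial_{x_j}p_i(x^\ast)=\beta(\delta_{ij}-1/A)$, hence $Dg(x^\ast)=\beta(\mathrm{Id}-\tfrac1A\mathbf 1\mathbf 1^\top)-\mathrm{Id}$, which on the tangent space $T\coloneqq\{u\in\R^A:\sum_i u_i=0\}$ of the simplex acts as multiplication by the scalar $\beta-1$; likewise the conditional covariance of $\Delta M_{n+1}$ converges, as $\chi(n)\to x^\ast$, to the multinomial covariance $\Gamma\coloneqq\bigl(\tfrac1A\delta_{ij}-\tfrac1{A^2}\bigr)_{i,j}$. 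Setting $u(n)\coloneqq\chi(n)-x^\ast\in T$, the scheme becomes $u(n+1)=u(n)+\gamma_n\bigl((\beta-1)u(n)+\Delta M_{n+1}+r(n)\bigr)$ with a nonlinear remainder $r(n)=g(\chi(n))-(\beta-1)u(n)=O(\|u(n)\|^2)$; unrolling the linear part, and using $\prod_{j=k}^{n}(1+(\beta-1)\gamma_j)\sim(k/n)^{1-\beta}$, yields
\[
u(n)\approx n^{\beta-1}\sum_{k\le n}k^{-\beta}\bigl(\Delta M_{k+1}+r(k)\bigr).
\]

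The trichotomy is now dictated by comparing the contraction rate $1-\beta$ with the critical value $1/2$ attached to the $1/n$ step size. If $\beta>1/2$, the martingale $\sum_k k^{-\beta}\Delta M_{k+1}$ has summable conditional variances, hence converges almost surely and in $L^2$ to a nondegenerate (and in general non-Gaussian) limit, and with the $r$-term negligible this gives $n^{1-\beta}(\chi(n)-x^\ast)\to C$ almost surely. If $\beta<1/2$, the conditional variances are no longer summable, $\sum_{k\le n}k^{-2\beta}\sim n^{1-2\beta}/(1-2\beta)$, and a martingale central limit theorem --- the Lindeberg condition being immediate from $\|\Delta M_{n+1}\|\le 2$, and the rescaled conditional covariance converging since $\chi(n)\to x^\ast$ --- gives $\sqrt n\,(\chi(n)-x^\ast)\Rightarrow\mathcal N(0,\Sigma)$, where $\Sigma$ is the unique solution on $T$ of the Lyapunov equation $(1-2\beta)\,\Sigma=\Gamma$; reading off a diagonal entry yields the Gaussian variance in the statement. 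The borderline $\beta=\tfrac12$ merely replaces $n^{1-2\beta}$ by $\log n$, which accounts for the $\sqrt{n/\log n}$ normalisation and the limiting variance $(A-1)/A^2$.

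The step that needs genuine care is controlling the nonlinear remainder at the scale of the fluctuations. Since $u(n)\to 0$, the linearisation holds with a uniform constant only after a random time, so one first localises (say on the event that $\|u(\cdot)\|$ has been below a fixed small threshold from some time on) and then runs the $L^2$ recursion $\E\|u(n+1)\|^2\le\E\|u(n)\|^2\bigl(1-2(1-\beta)\gamma_n+o(\gamma_n)\bigr)+O(\gamma_n^2)$, which yields $\E\|u(n)\|^2=O(1/n)$ for $\beta<1/2$ (respectively $O(n^{-2(1-\beta)})$ for $\beta>1/2$ and $O(\log n/n)$ for $\beta=\tfrac12$). From $\E\sum_k k^{-\beta}\|r(k)\|\le C\sum_k k^{-\beta}\E\|u(k)\|^2<\infty$ the series $\sum_k k^{-\beta}r(k)$ converges, and its contribution to $u(n)$ is only $o(n^{-1/2})$ for $\beta<1/2$ (respectively $o(\sqrt{\log n/n})$ for $\beta=\tfrac12$), so it does not affect the limit law. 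The remaining ingredients --- the asymptotics of the product $\prod_{j=k}^n(1+(\beta-1)\gamma_j)$, convergence of the conditional covariance, non-degeneracy of $C$ via a strictly positive lower bound on its covariance, and the Lindeberg check --- are routine.
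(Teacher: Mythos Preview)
The paper does not prove this statement; it is quoted in Section~\ref{sec: literature} as a result of~\cite{Khanin} and no argument is supplied. There is therefore no proof in the paper to compare your sketch against.

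On its own merits your approach is the standard stochastic-approximation route and is sound: the recursion, the linearisation $Dg(x^\ast)\big|_{T\Delta_{A-1}}=(\beta-1)\,\mathrm{Id}$, the multinomial noise covariance $\Gamma_{ij}=\tfrac1A\delta_{ij}-\tfrac1{A^2}$, and the trichotomy according to whether the contraction rate $1-\beta$ exceeds, equals, or falls below $1/2$ are all correct, and your treatment of the nonlinear remainder via the $L^2$ recursion is the right mechanism. Two points deserve attention. First, your Lyapunov equation $(1-2\beta)\Sigma=\Gamma$ yields the diagonal variance $(A-1)/\bigl(A^{2}(1-2\beta)\bigr)$, not the $(A-1)/\bigl(A^{1+2\beta}(1-2\beta)\bigr)$ printed in the statement; a sanity check at $\beta\downarrow 0$ (where the draws become i.i.d.\ uniform on $[A]$ and the CLT variance must be $(A-1)/A^2$) supports your value, so the exponent on $A$ in the quoted display is very likely a typo. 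Second, in part~1 the theorem asserts that $C$ is almost surely nonzero, not merely nondegenerate in law; a strictly positive covariance only gives the latter, so you still need to rule out an atom at the origin (for instance via the exponential embedding, which makes the limit an explicit smooth function of independent continuous variables).
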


The convergence in part 2 and 3 can be extended to the vector $\chi(n)$. Part 1 implies that the leading agent does only change finitely often. According to \cite[Theorem 1]{Oliveira2}, this happens in general if and only if
\begin{equation*}
    \sum_{k=1}^\infty\frac{1}{F(k)^2}<\infty,
\end{equation*}
where $F=F_1=\ldots=F_A$ fulfills $\liminf_{k\to\infty}F(k)>0$.

\subsection{Main contributions of this paper}

One important purpose of this paper is to provide a comprehensive approach and a complete picture for the asymptotics of the generalized P\'olya urn model, which applies for a large class of feedback functions. 
This allows us to fully characterize the emergence of monopoly in a transition from sub-linear to super-linear feedback, where the system exhibits interesting behaviour including hierarchical states and weak monopoly. We outline our main results  
for symmetric feedback $F_1=\ldots=F_A=F$, although most of the results in this paper do also hold in asymmetric situations:
\begin{enumerate}
    \item If $F$ satisfies (\ref{eq: explosion}), then the process exhibits strong monopoly (Theorem \ref{thm: theorem2} above). The monopolist is random, but can be predicted with high probability for large initial values, such that the space $\Delta_{A-1}$ can be dissected into explicitly computable attraction domains (Theorem \ref{thm: domains}).
    \item If $F$ does not satisfy (\ref{eq: explosion}), but $\lim_{k\to\infty}\frac{F(k)}{k}=\infty$ still holds, then the process exhibits weak monopoly with a random monopolist (Corollary \ref{cor: almostLinear}) and hierarchical states with weak monopoly among the losing agents (Proposition \ref{prop: partialProcess} above).
    \item If $\lim_{k\to\infty}\frac{F(k)}{k}\in(0, \infty)$, then $\chi (\infty)$ exists almost surely and has a non-degenerate Dirichlet distribution. This includes the classical Pólya urn (Corollary \ref{cor: almostLinear}).
    \item If  $F$ is sublinear, then $\chi(\infty )$ exists almost surely and is deterministic with limit given by Corollary \ref{cor: main4} (under mild, but necessary technical assumptions).
\end{enumerate}

These regimes react differently to unequal fitness of agents. For $F_i(k)=\alpha_i F(k)$ with $\alpha_i>0$ distinct, we will show the following properties:
\begin{enumerate}
    \item If $F$ satisfies (\ref{eq: explosion}), then the process still exhibits random strong monopoly with well-defined attraction domains, which continuously depend on $\alpha_i$. For exponentially increasing $F$, these domains do not depend on $\alpha_i$ (Theorem \ref{thm: typeP}, Corollary \ref{cor:44}).
    \item[2./3.] If $F$ does not satisfy (\ref{eq: explosion}), but $\lim_{k\to\infty}\frac{F(k)}{k}\in(0, \infty]$, then the agent with the largest fitness $\alpha_i$ is a deterministic weak monopolist (Proposition \ref{prop49}) and we have hierarchical states.
    \item[4.] If  $F$ is sublinear, then $\chi(\infty)$ still exists and is deterministic. The dependence of $\chi(\infty)$ on $\alpha_i$ can be either continuous (Corollar \ref{cor: main4}) or discontinuous (Proposition \ref{prop49}). For exponentially decreasing $F$, there is no dependence on $\alpha_i$ (Appendix \ref{sec: expDecreasingF}).
\end{enumerate}

Furthermore, we derive a scaling limit for the process of market shares in Theorem \ref{thm: dynamic} and characterize the fluctuations in the Functional Central Limit Theorem \ref{cor: fclt}. This part uses standard techniques from stochastic approximation, but we include it to provide a complete picture of the asymptotics of the generalized P\'olya urn.

\section{Asymptotics for the strong monopoly case}
\label{sec: monopoly}

We assume that at least one agent $i$ fulfills (\ref{eq: explosion}), so that a random strong monopoly occurs  with probability one. To characterize the asymptotics, we have to distinguish two different types of feedback functions with slightly different behavior.

\begin{definition}
Let agent $i$ (or $F_i$) fulfill (\ref{eq: explosion}). We call $i$ (or $F_i$) \textit{of type P} (for polynomial) if
\begin{equation}
\label{eq: typeP}
\lim_{k\to\infty}F_i(k)\sum_{l=k}^{\infty}\frac{1}{F_i(l)}=\infty
\end{equation}
and \textit{of type E} (for exponential) if
\begin{equation}
\label{eq: typeE}
\limsup_{k\to\infty}F_i(k)\sum_{l=k}^{\infty}\frac{1}{F_i(l)}<\infty.
\end{equation}
\end{definition}

For the rest of this section we assume that all agents with feedback functions that fulfill (\ref{eq: explosion}) are either of type P or type E. Of course it is possible to construct counter-examples (see Example \ref{counterexample}), but these two types still cover a very large range, including most previous results.

\begin{proposition}\label{prop: logConds}
If 
\begin{equation}
\label{eq: typePlog}
\frac{d}{dx}\log(F(x))\xrightarrow{x\to\infty}0
\end{equation}
then $F$ is of type P, and if 
\begin{equation}
\label{eq: typeElog}
\liminf_{x\to\infty}\frac{d}{dx}\log(F (x))>0
\end{equation}
then $F$ is of type E.
\end{proposition}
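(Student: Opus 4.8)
The plan is to analyze the quantity $g(k) \coloneqq F(k)\sum_{l=k}^\infty \frac{1}{F(l)}$ appearing in the definitions of type P and type E, by comparing the sum to an integral and using the hypothesis on $\frac{d}{dx}\log F(x) = \frac{F'(x)}{F(x)}$. Write $h(x) \coloneqq \frac{d}{dx}\log F(x)$, so that $F(x) = F(x_0)\exp\big(\int_{x_0}^x h(t)\,dt\big)$ for any fixed $x_0$. Note first that under either hypothesis \eqref{eq: typePlog} or \eqref{eq: typeElog}, we have $h(x) \to 0$ or $h$ eventually bounded below by a positive constant; in the latter case $F$ grows at least exponentially, so $\sum 1/F(l)<\infty$ and \eqref{eq: explosion} holds automatically, whereas in the type P case one must separately know (M) is assumed, as stated in the hypothesis of the proposition.

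First I would treat the type E case \eqref{eq: typeElog}: if $h(x)\ge c>0$ for all $x\ge x_0$, then for $l\ge k\ge x_0$ we get $F(l) \ge F(k)e^{c(l-k)}$ (using monotonicity of $F$ on unit intervals to pass between the continuous extension and integer values, picking up only bounded multiplicative constants). Hence $\sum_{l=k}^\infty \frac{1}{F(l)} \le \frac{C}{F(k)}\sum_{l=k}^\infty e^{-c(l-k)} = \frac{C}{F(k)}\cdot\frac{1}{1-e^{-c}}$, so $g(k)$ is bounded, giving \eqref{eq: typeE}. Next, for the type P case \eqref{eq: typePlog}, the idea is that $F$ grows subexponentially in a strong sense: since $h(x)\to 0$, for any $\varepsilon>0$ and $l\ge k$ large, $\frac{F(l)}{F(k)} = \exp\big(\int_k^l h(t)\,dt\big) \le e^{\varepsilon(l-k)}$, so $\sum_{l=k}^\infty \frac{1}{F(l)} \ge \frac{c}{F(k)}\sum_{l=k}^{k+M} e^{-\varepsilon(l-k)}$ for any fixed $M$; letting first $k\to\infty$ (so $\varepsilon$ can be taken arbitrarily small) and then $M\to\infty$ shows $\liminf_{k\to\infty} g(k) \ge M+1$ for every $M$, hence $g(k)\to\infty$, which is \eqref{eq: typeP}. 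One must be slightly careful that the sum $\sum 1/F(l)$ is actually finite here, but this is exactly the standing assumption (M) in the proposition statement, and the above lower bound only uses finitely many terms.

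The main obstacle I anticipate is the bookkeeping between the continuously differentiable extension $F:(0,\infty)\to(0,\infty)$ and the integer values $F(k)$ that appear in the sums: the hypotheses are on $\frac{d}{dx}\log F$ of the extension, while type P/E are defined via $\sum_{l=k}^\infty 1/F(l)$ over integers. The monotonicity of $F$ on each interval $[n,n+1]$ (assumed in the model setup) lets me bound $F(l)$ between $F(\lfloor l\rfloor)$ and $F(\lceil l\rceil)$ and replace integer sums by integrals $\int_k^\infty \frac{dx}{F(x)}$ up to constants, but making this fully rigorous — especially ensuring the constants do not interfere with the $\limsup$/$\lim=\infty$ conclusions — is the step requiring the most care. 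A clean way around it is to prove the integral analogue $F(k)\int_k^\infty \frac{dx}{F(x)}$ has the desired behaviour first, then transfer to sums using that $\sum_{l=k}^\infty 1/F(l)$ and $\int_k^\infty dx/F(x)$ differ by at most a bounded factor (again by the interval-monotonicity), and observe that a bounded multiplicative perturbation preserves both "$\limsup<\infty$" and "$\lim=\infty$".
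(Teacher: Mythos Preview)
Your proposal is correct and follows essentially the same approach as the paper: bound the ratios $F(l)/F(k)$ via $\exp\bigl(\int_k^l h(t)\,dt\bigr)$ and control the resulting geometric-type series. Your anticipated ``main obstacle'' is not actually an obstacle: since $F(k+1)/F(k)=\exp\bigl(\int_k^{k+1} h(t)\,dt\bigr)$ is an exact identity for integer $k$, no sum-to-integral comparison or interval-monotonicity is needed, and the paper's proof proceeds directly with this telescoping product without introducing any auxiliary constants.
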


\begin{proof}
First we assume (\ref{eq: typePlog}) and observe that
\begin{equation}
\label{eq: incrementlog}
\frac{F(k+1)}{F(k)}=\exp\left\{\int_k^{k+1}\frac{d}{dx}\log(F(x)) dx\right\}\xrightarrow{k\to\infty}1.
\end{equation}
Consequently, for any given $\epsilon>0$ there exists $k_0$ such that $\forall k\ge k_0:F(k+1)/F(k)\le1+\epsilon$. Then we get for $k\ge k_0$:
\begin{equation*}
F(k)\sum_{l=k}^{\infty}\frac{1}{F(l)}=\sum_{l=k}^{\infty}\prod_{m=k+1}^l\frac{F(m-1)}{F(m)}\ge\sum_{l=k}^{\infty}\left(\frac{1}{1+\epsilon}\right)^{l-k}=\frac{1}{1-\frac{1}{1+\epsilon}}\xrightarrow{\epsilon\to0}\infty
\end{equation*}
The result for type E follows similarly.
\end{proof}

This means that functions that grow exponentially or faster are of type E whereas functions that grow slower than exponential (like polynomials) are of type P. Note that Oliveira's "valid feedback functions" in \cite{Oliveira} or \cite{Oliveira2} are of type P, which includes furthermore all regular varying functions. 

\begin{example} \label{counterexample}
\begin{enumerate}
    \item The conditions from Proposition \ref{prop: logConds} are not necessary for being type P resp. E. For instance take any function $F$ of type E and define $\tilde F(2k)=\tilde F(2k+1)=F(k)$. Then $\tilde F$ is also of type E, but does obviously not fulfill (\ref{eq: typeElog}).
    \item A possible construction of a feedback function that is neither of type P nor type E, but satisfies (\ref{eq: explosion}), is the following. Take a function $F$ such that
    \begin{equation*}
0<\lim_{k\to\infty}F(k)\sum_{l=k}^{\infty}\frac{1}{F(l)}<\infty.
\end{equation*}
holds, e.g. $F(k)=e^k$. Then define a new feedback function $\tilde F$ by replacing each $F(k)$ by $k$ elements that all equal $kF(k)$, i.e.
\[
\big(\tilde F(1),\tilde F(2),\ldots\big) =\Big( F(1),2F(2),2F(2),3F(3),3F(3),3F(3),\ldots \Big)\ .
\]
One can easily check that $\tilde F_i$ has the desired properties.
\end{enumerate}
\end{example}

\subsection{Asymptotic attraction domains}

If at least one agents fulfills the monopoly condition (\ref{eq: explosion}), we know by Theorem \ref{thm: theorem2} that there is a strong monopoly, where all agents satisfying (\ref{eq: explosion}) have a positive probability of being the monopolist. Thus, the monopolist is in general random. Nevertheless, in most situations it is possible to predict the winner with high probability for large initial market size.

\begin{definition}\label{def: domains}
    The \textit{asymptotic attraction domain} of an agent $i\in[A]$ is defined as
    $$D_i=\left\{\chi(0)\in\Delta_{A-1}^o\colon\lim_{N\to\infty}\P(sMon_i(\chi(0), N))=1\right\}\subset\Delta_{A-1}^o.$$
\end{definition}

Obviously, the asymptotic attraction domains are disjoint, since $\P(sMon_j(\chi(0), N))\le1-\P(sMon_i(\chi(0), N))$ for $j\ne i$. The main result of this section states that the asymptotic attraction domains cover the whole simplex up to boundaries under mild regularity conditions.

\begin{theorem}
\label{thm: domains}
    Let at least one agent satisfy (\ref{eq: explosion}) and all agents satisfying (\ref{eq: explosion}) are either of type P or type E. Moreover, assume that \textbf{one of the following} conditions holds:
    \begin{enumerate}
        \item At least one agent is of type E and for all $\chi(0)\in\Delta_{A-1}^o,\,i, j\in[A]$
        $$\liminf_{N\to\infty}\frac{F_i(\chi_i(0)N)}{F_j(\chi_j(0)N)}=0\quad\mbox{and}\quad\limsup_{N\to\infty}\frac{F_i(\chi_i(0)N)}{F_j(\chi_j(0)N)}=\infty\quad\mbox{do \textbf{not} hold simultaneously}\ .$$
        \item No agent is of type E and all agents of type P (there is at least one) fulfill 
        \begin{equation}
       \label{eq: existencePcond}
        \limsup_{k\to\infty}\frac{1}{k}F_i(k)\sum_{l=k}^\infty\frac{1}{F_i(l)}<\infty.
         \end{equation}
         In addition, suppose that $\lim_{N\to\infty}\frac{F_i(\chi_i(0)N)}{F_j(\chi_j(0)N)}\in[0, \infty]$ exists for all $\chi(0)\in\Delta_{A-1}^o, i, j\in[A]$.
    \end{enumerate}
    Then the asymptotic attraction domains are polytopes that dissect the simplex up to boundaries, i.e.
   $$\bigcup_{i=1}^A\overline{D_i}=\Delta_{A-1},$$
   where $\overline{(\cdot)}$ is the topological closure. If agent $i$ does not satisfy \eqref{eq: explosion} then $D_i =\emptyset$.
\end{theorem}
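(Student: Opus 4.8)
The plan is to work through the exponential embedding, where strong monopoly of agent $i$ is the event $\bigcap_{j\ne i}\{T_i(X_i(0))<T_j(X_j(0))\}$ from \eqref{eq: sMonEmbed}. Since each explosion time $T_i$ is a sum of independent exponentials with rates $F_i(X_i(0)+k)$, we have $\E T_i(X_i(0))=\sum_{k=X_i(0)}^\infty 1/F_i(k)$ and, crucially, the variance $\operatorname{Var}T_i(X_i(0))=\sum_{k=X_i(0)}^\infty 1/F_i(k)^2$. The type P/type E dichotomy is exactly what controls how the ratio of standard deviation to mean behaves as $X_i(0)\sim\chi_i(0)N\to\infty$: for a type E agent one expects the fluctuations of $T_i$ to be comparable to $\E T_i$ (both governed by the largest term $1/F_i(X_i(0))$), so the winner is genuinely random on the relevant scale; for a type P agent satisfying \eqref{eq: existencePcond}, one expects $\sqrt{\operatorname{Var}T_i}/\E T_i\to 0$, so $T_i$ concentrates around its mean. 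First I would make these two concentration/anti-concentration statements precise, e.g. via Chebyshev in the type P case and a lower bound on the density (using that $T_i$ has a density with full support on $(0,\infty)$, as noted after Theorem \ref{thm: theorem2}) in the type E case.

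Next I would identify the competition between agents. Whether $D_i$ is nonempty and what it looks like is decided by comparing the (deterministic, to leading order) quantities $\E T_j(\chi_j(0)N)=\sum_{k=\chi_j(0)N}^\infty 1/F_j(k)$ across $j$, and these in turn are comparable to $1/F_j(\chi_j(0)N)$ up to the type-dependent factor $F_j(k)\sum_{l\ge k}1/F_j(l)$. So the boundary between $D_i$ and $D_j$ should be where $F_i(\chi_i(0)N)$ and $F_j(\chi_j(0)N)$ are of the same order, and the hypotheses in cases 1 and 2 are precisely designed so that for each $\chi(0)$ there is a well-defined ordering of the agents by explosion speed: in case 2 the limit $\lim_N F_i(\chi_i(0)N)/F_j(\chi_j(0)N)\in[0,\infty]$ exists outright; in case 1 the "do not hold simultaneously" condition rules out the pathological oscillation where neither dominates. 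The agent(s) that explode fastest (in expectation, and with the concentration from the previous step) win with probability tending to $1$ in the type P regime; in the type E regime, a fastest agent whose mean is strictly smaller than all others by more than its own fluctuation scale still wins with probability $\to 1$.

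Then I would show the attraction domains are polytopes. Taking logarithms, the comparison $F_i(\chi_i(0)N)\gtrless F_j(\chi_j(0)N)$ as $N\to\infty$ becomes, after dividing by an appropriate normalization, a comparison of $\log F_i(\chi_i(0)N)$ with $\log F_j(\chi_j(0)N)$; for type E feedback $\log F$ grows at least linearly so $\log F_i(\chi_i(0)N)\sim c_i\,\chi_i(0)N$ type asymptotics give \emph{linear} inequalities in $\chi(0)$, hence the domains are intersections of half-spaces with $\Delta_{A-1}^o$, i.e.\ (relatively open) polytopes. A point $\chi(0)$ with all relevant agents explosion-tied lies on a common boundary face of several such polytopes, so $\bigcup_i \overline{D_i}=\Delta_{A-1}$. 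Finally, if agent $i$ does not satisfy \eqref{eq: explosion} then $T_i=\infty$ a.s., so $sMon_i$ has probability zero for every $N$ and $D_i=\emptyset$.

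The main obstacle I anticipate is the type E case, item 1: there the winner is not simply "the agent with smallest $\E T_j$", because the fluctuations of the $T_j$ are of the same order as the means, so one has to argue that $\P(sMon_i)\to 1$ holds precisely on the set where agent $i$'s explosion time is \emph{stochastically} dominated in the limit — which requires controlling the joint limiting law of the rescaled vector $(T_j(\chi_j(0)N))_j$, not just its marginals, and showing the "do not hold simultaneously" hypothesis is exactly what makes this limiting comparison degenerate (one coordinate a.s.\ smallest) off a measure-zero boundary set. Handling the mixed situation where some agents are type P (concentrating) and some are type E (spread out) competing against each other, and checking that the type P agents always either clearly win or clearly lose against type E agents, will need care. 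I would also need the technical regularity \eqref{eq: existencePcond} to guarantee $\E T_i(\chi_i(0)N)$ is comparable across nearby values of $\chi_i(0)$ so that the domain boundaries are genuinely lower-dimensional.
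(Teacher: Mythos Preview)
Your type P analysis under Assumption 2 matches the paper: Lemma \ref{lem: explosion} is exactly the Chebyshev concentration $\operatorname{Var}(T_i/\E T_i)\to 0$ you describe, Theorem \ref{thm: typeP} then compares the means $\E T_j(\chi_j(0)N)=\sum_{k\ge\chi_j(0)N}1/F_j(k)$, and \eqref{eq: existencePcond} enters through Proposition \ref{prop: typePcond} to force the critical set where two means coincide to be lower-dimensional.

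Under Assumption 1 your route is genuinely different from the paper's, and the obstacle you flag is largely self-inflicted. The paper never analyses the rescaled explosion times or their joint limit law. Instead it bounds the probability of \emph{total} monopoly directly (Theorem \ref{theorem1}): $\P(tMon_i)$ is the infinite product $\prod_{k\ge 0}\bigl(1+\sum_{j\ne i}F_j(\chi_j(0)N)/F_i(\chi_i(0)N+k)\bigr)^{-1}$, and elementary inequalities sandwich it between two explicit exponentials. For type E agent $i$, Corollary \ref{cor:44} then shows $\P(tMon_i)\to 1$ is \emph{equivalent} to $F_i(\chi_i(0)N)/F_j(\chi_j(0)N)\to\infty$ for all $j\ne i$; Proposition \ref{existenceE}(1) shows this automatically holds against every type P opponent, disposing of the mixed case you worry about. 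This is more elementary and yields the stronger Theorem \ref{thm: tMonTypeE} for free. Your explosion-time route can also be pushed through, but far more cheaply than via joint limit laws: when $F_i(\chi_i(0)N)/F_j(\chi_j(0)N)\to\infty$ the \emph{scales} of $T_i$ and $T_j$ separate (both are of order $1/F_i$ resp.\ $1/F_j$ for type E), and then Markov on $T_i$ together with the crude bound $\P(T_j\le s)\le\P(\tau_j(\chi_j(0)N)\le s)\le F_j(\chi_j(0)N)\,s$ already gives $\P(T_i\ge T_j)\to 0$.

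Your polytope argument has a real gap. The asymptotic $\log F_i(\chi_i(0)N)\sim c_i\chi_i(0)N$ fails for general type E feedback (e.g.\ $F(k)=e^{k^2}$), so you cannot read off linear inequalities this way. The paper's argument is cleaner and fully general: the limit of $F_i(\chi_i(0)N)/F_j(\chi_j(0)N)$ depends only on the ratio $\rho=\chi_i(0)/\chi_j(0)$ (substitute $M=\chi_j(0)N$), and Proposition \ref{existenceE} (type E) resp.\ Proposition \ref{prop: typePcond} (type P, via \eqref{eq: existencePcond}) supplies monotonicity in $\rho$. Hence there is a single critical ratio $r_{i,j}\in[0,\infty]$ with $i$ dominating $j$ iff $\rho>r_{i,j}$, so $D_i$ agrees up to boundary with $\bigcap_{j\ne i}\{\chi_i(0)>r_{i,j}\chi_j(0)\}\cap\Delta_{A-1}^o$, an intersection of half-spaces; transitivity of the dominance relation then gives the covering $\bigcup_i\overline{D_i}=\Delta_{A-1}$.
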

\begin{figure}
    \centering
  \subfloat[][$F_1(k)=F_2(k)=F_3(k)=k^2$]{\includegraphics[width=0.3\linewidth]{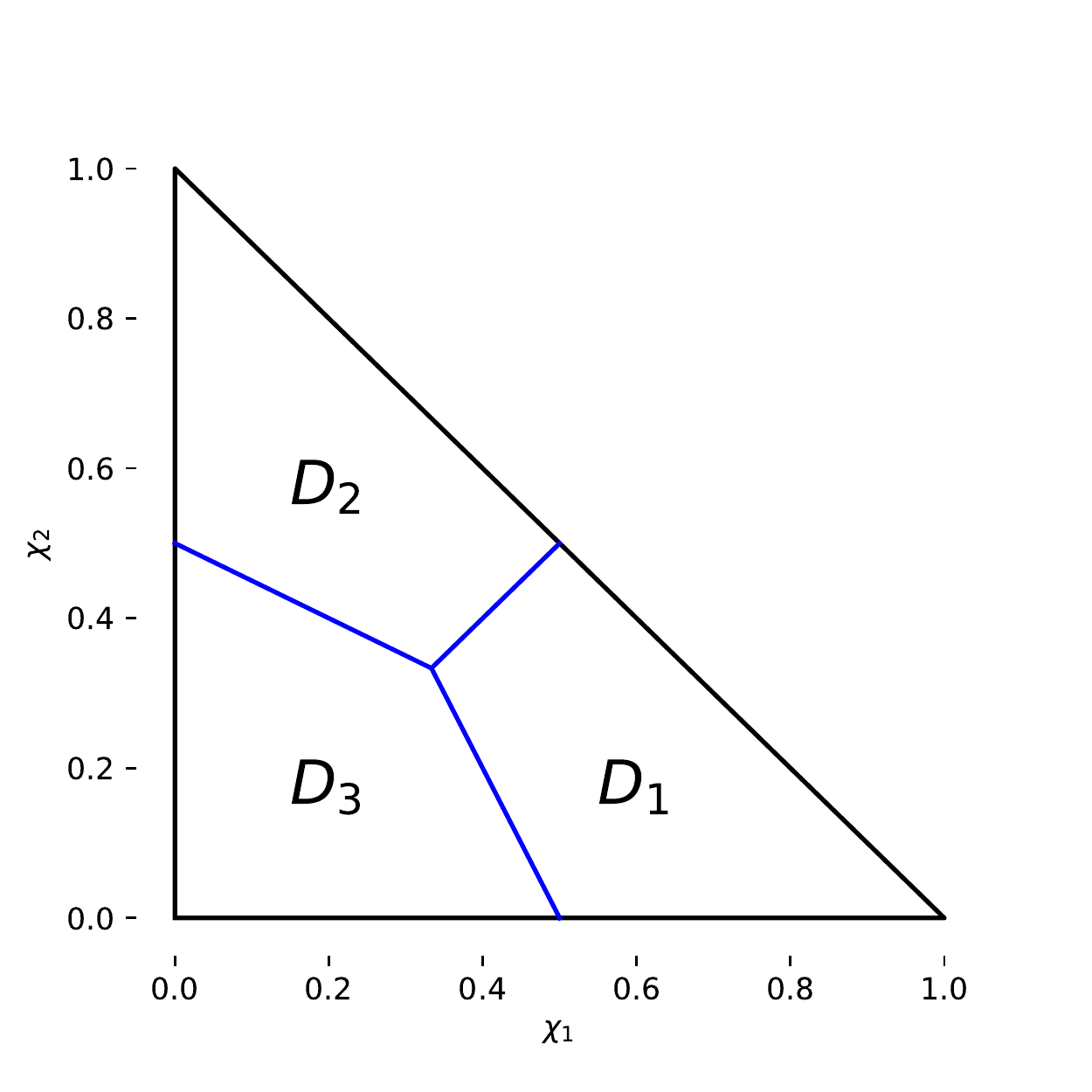}}\quad
  \subfloat[][$F_1(k)=F_2(k)=2k^2$, $F_3(k)=k^2$]{\includegraphics[width=0.3\linewidth]{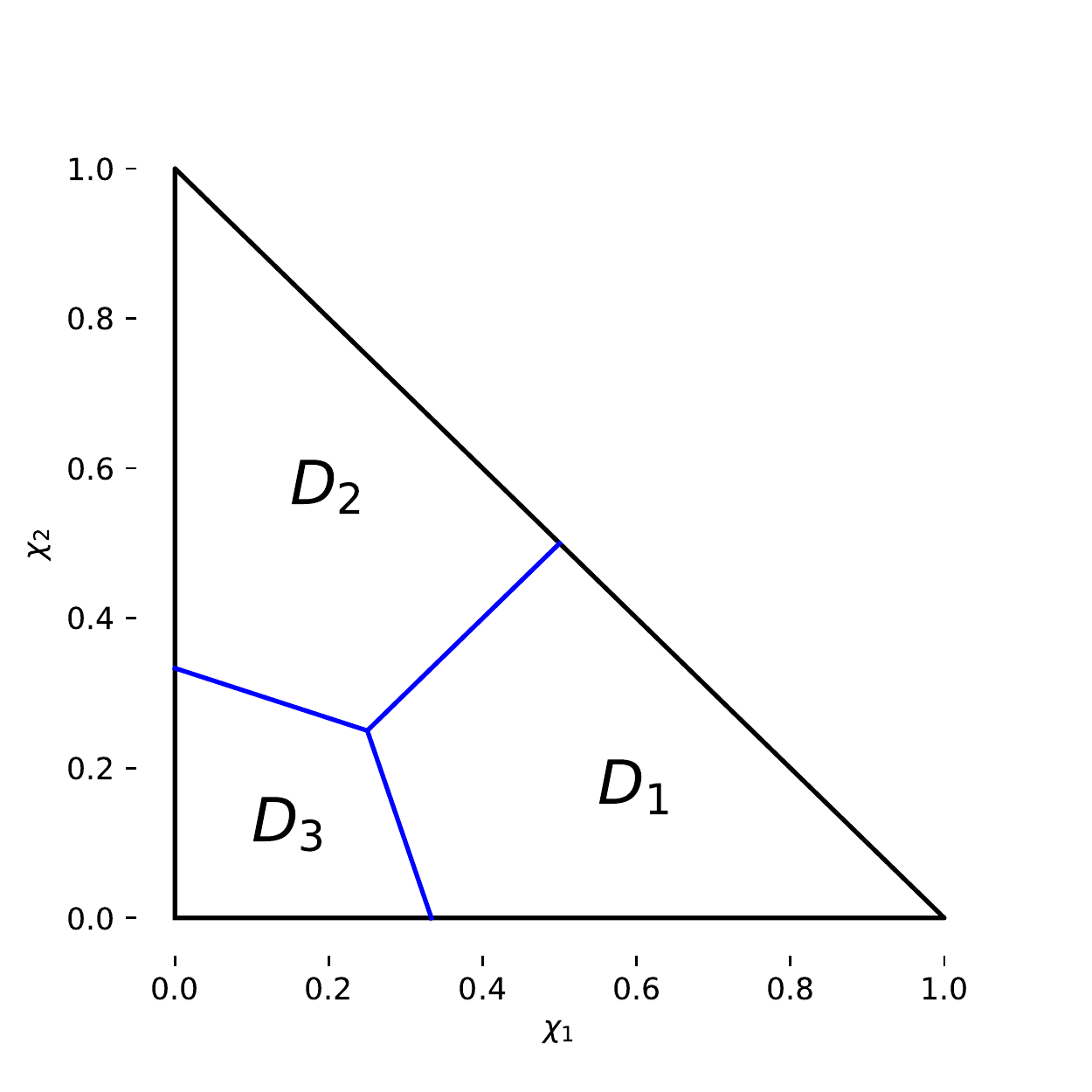}}\quad
  \subfloat[][$F_1(k)=F_2(k)=k^3$, $F_3(k)=k^2$]{\includegraphics[width=0.3\linewidth]{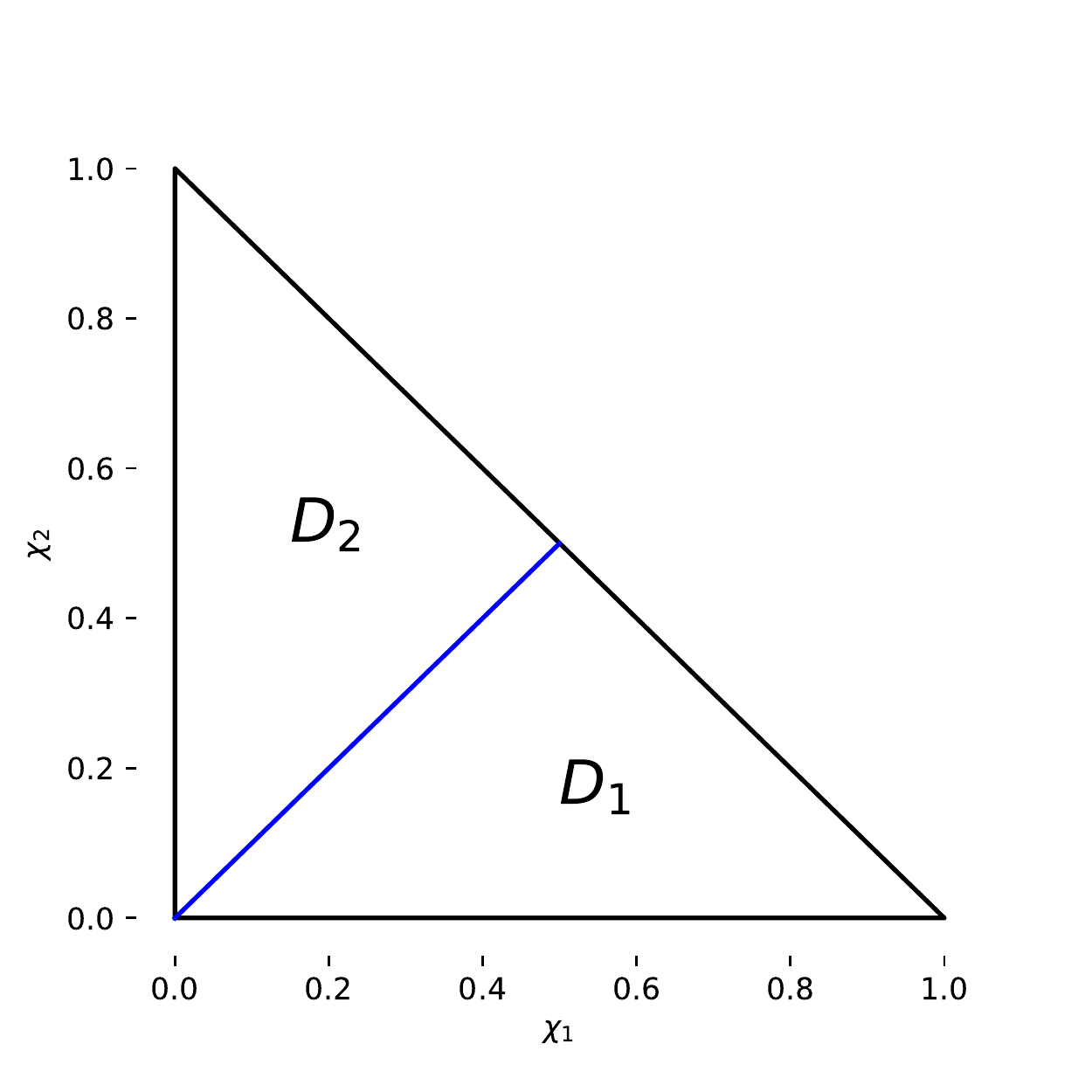}}%
  \caption{Asymptotic attraction domains in the case $A=3$ with various feedback functions.}
\end{figure}

As a direct consequence of the exponential embedding, $\P(sMon_i(\chi(0), N)=0$ for all agents that do not fulfill (\ref{eq: explosion}). Hence, their attraction domains are empty. The rest of Theorem \ref{thm: domains} basically follows from the results presented in the following subsections, where e.g. explicit conditions for
$$\lim_{N\to\infty}\P(sMon_i(\chi(0), N))=1$$
as well as bounds of $\P(sMon_i(\chi(0), N))$ are derived. The final proof of Theorem \ref{thm: domains} will be presented in subsection \ref{subsec: ProofDomain}. It will turn out that the explosion times $T_i$ from Section \ref{sec: model} concentrate on their expectations, i.e.
$$\lim_{N\to\infty}\frac{T_i(N\chi_i(0))}{\E T_i(N\chi_i(0))}=1\quad\text{almost surely,}$$
only for agents of type P, but not for type E, so we need to study these two types of feedback functions separately. The technical conditions in each case are mild and will be discussed in the following subsections. Another characteristic of type E is, that a strong monopoly is typically even a total monopoly, at least when $N$ is large.

\begin{theorem}
\label{thm: tMonTypeE}
Let Assumption 1 in Theorem \ref{thm: domains} be satisfied. If agent $i$ is of type E and $\chi (0)\in D_i^o$ is in the interior of $D_i$, then
\begin{align}
\label{eq: tMonConv}
\lim_{N\to\infty}&\P(tMon_i(\chi(0), N))=1.
\end{align}
\end{theorem}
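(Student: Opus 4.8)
The plan is to use the exponential embedding and the characterization \eqref{eq: sMonEmbed} of strong monopoly via explosion times, together with the defining property of type E feedback. The event $tMon_i(\chi(0),N)$ says that agent $i$ wins at every step, which via the embedding means that agent $i$ explodes before any competitor has a single birth: if $\tau_j^{\text{first}}\coloneqq\tau_j(X_j(0))$ denotes the waiting time for the first jump of $\Xi_j$, then
$$tMon_i(\chi(0),N)=\bigcap_{j\ne i}\bigl\{T_i(X_i(0))<\tau_j(X_j(0))\bigr\}\ .$$
So the task reduces to showing $\P\bigl(T_i(N\chi_i(0))<\tau_j(N\chi_j(0))\ \forall j\ne i\bigr)\to1$ as $N\to\infty$. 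The first step is therefore to record this reformulation carefully, noting that $\tau_j(N\chi_j(0))$ is exponential with rate $F_j(N\chi_j(0))$ and that $T_i(X_i(0))=\sum_{k\ge X_i(0)}\tau_i(k)$ is independent of all the $\tau_j$, $j\ne i$.

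The second step is to control the scale of $T_i$. We have $\E T_i(N\chi_i(0))=\sum_{k\ge N\chi_i(0)}1/F_i(k)$, and the type E condition \eqref{eq: typeE} gives $\E T_i(N\chi_i(0))\le C/F_i(N\chi_i(0))$ for some constant $C$ and all large $N$. By Markov's inequality, for any $M>0$ we get $\P\bigl(T_i(N\chi_i(0))> M C/F_i(N\chi_i(0))\bigr)\le 1/M$, so with probability close to one $T_i$ is of order at most $1/F_i(N\chi_i(0))$. The third step is to compare this with $\tau_j(N\chi_j(0))$: conditioning on the value $t$ of $T_i$,
$$\P\bigl(\tau_j(N\chi_j(0))>t\bigr)=\exp\bigl(-t\,F_j(N\chi_j(0))\bigr)\ ,$$
so on the event $\{T_i\le MC/F_i(N\chi_i(0))\}$ we have, for each $j\ne i$,
$$\P\bigl(\tau_j(N\chi_j(0))>T_i\mid T_i\bigr)\ge \exp\Bigl(-MC\,\tfrac{F_j(N\chi_j(0))}{F_i(N\chi_i(0))}\Bigr)\ .$$
Now the key input from $\chi(0)\in D_i^o$: since $\chi(0)$ lies in the interior of the attraction domain of the type E agent $i$, the analysis underlying Theorem \ref{thm: domains} (Assumption 1) forces $F_j(N\chi_j(0))/F_i(N\chi_i(0))\to 0$ for every $j\ne i$ — being in $D_i$ already rules out $\limsup F_j/F_i=\infty$, and being in the \emph{interior} rules out the borderline ratios, leaving $\lim F_j(N\chi_j(0))/F_i(N\chi_i(0))=0$. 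Hence the exponent above tends to $0$ and the conditional probability tends to $1$, uniformly on the truncation event. Taking a union bound over the finitely many $j\ne i$, then letting $N\to\infty$ and finally $M\to\infty$, yields $\P(tMon_i(\chi(0),N))\to1$.

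The main obstacle is the third step — rigorously extracting $\lim_{N\to\infty}F_j(N\chi_j(0))/F_i(N\chi_i(0))=0$ for all $j\ne i$ from the hypothesis $\chi(0)\in D_i^o$. This requires knowing precisely how $D_i$ and its interior are described in terms of the liminf/limsup conditions of Assumption 1, which is exactly what the intervening subsections and the proof of Theorem \ref{thm: domains} establish; so I would state this as a lemma extracted from that analysis (in the interior of $D_i$, agent $i$'s feedback dominates all others along the scaling $k=N\chi(0)$) and invoke it here. A minor technical point is to handle the case where some ratio $F_j/F_i$ does not converge but still has $\limsup<\infty$: then one replaces the limit argument by a $\limsup$ bound, which still drives the exponent to a quantity bounded away from the bad region, and since we are on $D_i^o$ the relevant $\limsup$ is in fact $0$. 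The truncation-plus-union-bound scheme is otherwise routine.
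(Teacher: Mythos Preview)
Your proposal is correct and follows essentially the same approach as the paper: the paper reduces Theorem~\ref{thm: tMonTypeE} to Corollary~\ref{cor:44} (which in turn rests on the lower bound of Theorem~\ref{theorem1}) together with the identification, carried out in Subsection~\ref{subsec: ProofDomain}, of $D_i^o$ with the set where condition~\eqref{eq: tMonCondE} holds, i.e.\ $F_j(\chi_j(0)N)/F_i(\chi_i(0)N)\to 0$ for all $j\ne i$. The only cosmetic difference is that the paper obtains the probability bound via the explicit product formula $\P(tMon_i)=\prod_k F_i/(F_i+\sum_j F_j)\ge\exp\{-\sum_j F_j\sum_k 1/F_i\}$, whereas you use the equivalent embedding representation $tMon_i=\bigcap_{j\ne i}\{T_i<\tau_j(X_j(0))\}$ with a Markov truncation on $T_i$; both routes hinge on the same type~E estimate $\E T_i\le C/F_i(\chi_i(0)N)$.
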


Theorem \ref{thm: tMonTypeE} is a direct consequence of Theorem \ref{theorem1} given below. As explained in Corollary \ref{cor: criticalShare}, total monopoly does in general not occur, if $\chi(0)$ is on the boundary of the attraction domain. In adddition, it turns out that in generic situations the probability of total monopoly is bounded away from one, if all agents are of type P.

\subsection{Agents of type E and total monopoly}

This subsection examines the process, when at least one agent is of type E. The following results basically imply the first part of Theorem \ref{thm: domains} as well as Theorem \ref{thm: tMonTypeE} as described in Section \ref{subsec: ProofDomain}. The main result of this subsection provides a useful lower and upper bound for the probability of total monopoly.

\begin{theorem}
\label{theorem1}
Let agent $i$ fulfill (\ref{eq: explosion}). Then for all $\chi (0)\in\Delta_{A-1}^o$ and $N\geq 1$
\begin{align*}
\prod_{j\ne i}\exp\left\{-F_j(\chi_j(0)N)\sum_{k=\chi_i(0)N}^{\infty}\frac{1}{F_i(k)}\right\}&\le\P(tMon_i(\chi(0), N))\\
&\le\prod_{j\ne i}\exp\left\{-c_NF_j(\chi_j(0)N)\sum_{k=\chi_i(0)N}^{\infty}\frac{1}{F_i(k)}\right\}
\end{align*}
where
\begin{equation*}
c_N\coloneqq\inf_{k\in\N_0}\frac{F_i(\chi_i(0)N+k)}{F_i(\chi_i(0)N+k)+\sum_{j\ne i}F_j(\chi_j(0)N)}>0\ .
\end{equation*}
\end{theorem}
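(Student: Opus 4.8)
The plan is to use the exponential embedding directly. Recall that under the embedding, $tMon_i(\chi(0),N)$ is the event that agent $i$ performs all of its (infinitely many) jumps before any other agent performs even a single jump. Since agent $i$ satisfies (\ref{eq: explosion}), its explosion time is $T_i(\chi_i(0)N)=\sum_{k=\chi_i(0)N}^\infty \tau_i(k)<\infty$ almost surely, and $tMon_i$ is precisely the event that for every $j\ne i$, the first waiting time $\tau_j(\chi_j(0)N)$ of agent $j$ exceeds $T_i(\chi_i(0)N)$. Because all the $\tau$'s are independent, conditioning on the value $T_i(\chi_i(0)N)=t$ gives
\begin{equation*}
\P(tMon_i(\chi(0),N)) = \E\left[\prod_{j\ne i}\exp\left\{-F_j(\chi_j(0)N)\, T_i(\chi_i(0)N)\right\}\right]
= \E\left[\exp\left\{-\Big(\textstyle\sum_{j\ne i}F_j(\chi_j(0)N)\Big) T_i(\chi_i(0)N)\right\}\right],
\end{equation*}
using that $\tau_j(\chi_j(0)N)$ is exponential with rate $F_j(\chi_j(0)N)$.

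**Lower bound.** For the lower bound I would apply Jensen's inequality to the convex function $t\mapsto e^{-st}$ (with $s=\sum_{j\ne i}F_j(\chi_j(0)N)$), pulling the expectation inside:
\begin{equation*}
\P(tMon_i(\chi(0),N)) \ge \exp\left\{-\Big(\textstyle\sum_{j\ne i}F_j(\chi_j(0)N)\Big)\,\E T_i(\chi_i(0)N)\right\},
\end{equation*}
and then use the identity $\E T_i(\chi_i(0)N)=\sum_{k=\chi_i(0)N}^\infty 1/F_i(k)$ from the excerpt together with the factorization $\exp\{-s\sum_k 1/F_i(k)\}=\prod_{j\ne i}\exp\{-F_j(\chi_j(0)N)\sum_k 1/F_i(k)\}$. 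That yields exactly the claimed lower bound.

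**Upper bound.** This is the step I expect to be the main obstacle, since Jensen goes the wrong way. The idea is to peel off the waiting times of agent $i$ one at a time and bound each conditional survival factor. Write $T_i(\chi_i(0)N)=\sum_{k\ge 0}\tau_i(\chi_i(0)N+k)$ and, for total monopoly, we need each $\tau_j(\chi_j(0)N)>\sum_{k\ge 0}\tau_i(\chi_i(0)N+k)$. Process the events sequentially: after agent $i$ has made $m$ jumps without any competitor jumping, the competitor $j$ still in the race has, by the memoryless property, a fresh exponential clock of rate $F_j(\chi_j(0)N)$, while agent $i$'s next clock has rate $F_i(\chi_i(0)N+m)$; the probability that agent $i$ wins this individual race against all remaining competitors is
\begin{equation*}
\frac{F_i(\chi_i(0)N+m)}{F_i(\chi_i(0)N+m)+\sum_{j\ne i}F_j(\chi_j(0)N)} \ \ge\  c_N\,,
\end{equation*}
but that over-counts — I actually want to bound, for each $j$ separately, the probability that $\tau_j$ survives all of agent $i$'s steps. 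Conditioning step by step and using memorylessness, the probability that $\tau_j(\chi_j(0)N)$ exceeds the sum of agent $i$'s first $M$ waiting times equals $\prod_{m=0}^{M-1}\frac{F_i(\chi_i(0)N+m)}{F_i(\chi_i(0)N+m)+F_j(\chi_j(0)N)}$; taking $M\to\infty$ and then, since the events $\{\tau_j>T_i\}$ are not independent across $j$ but can be handled via the single-competitor reduction, one bounds
\begin{equation*}
\P(tMon_i) \le \prod_{j\ne i}\P\big(\tau_j(\chi_j(0)N)>T_i(\chi_i(0)N)\big)^{?}
\end{equation*}
— here one must be careful, so instead I would go back to the exact formula $\P(tMon_i)=\E[e^{-sT_i}]$ with $s=\sum_{j\ne i}F_j(\chi_j(0)N)$, expand $T_i=\sum_{m\ge0}\tau_i(\chi_i(0)N+m)$ as a product of independent factors $\E[e^{-s\tau_i(\chi_i(0)N+m)}]=\frac{F_i(\chi_i(0)N+m)}{F_i(\chi_i(0)N+m)+s}$, and bound each factor using $\frac{F_i}{F_i+s}\le \exp\{-c_N\, s/F_i\}$, which holds because $\frac{x}{x+s}=\exp\{-\log(1+s/x)\}\le\exp\{-\frac{s/x}{1+s/x}\}$ and $\frac{1}{1+s/x}\ge c_N$ for $x=F_i(\chi_i(0)N+m)$ by definition of $c_N$. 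Multiplying over $m$ gives $\prod_m \exp\{-c_N s/F_i(\chi_i(0)N+m)\}=\exp\{-c_N s\sum_{k=\chi_i(0)N}^\infty 1/F_i(k)\}$, and refactoring $s=\sum_{j\ne i}F_j(\chi_j(0)N)$ back into the product over $j$ gives the stated upper bound. Finally, $c_N>0$ is immediate: the infimum over $k$ of a ratio bounded below by $F_i(\chi_i(0)N)/(F_i(\chi_i(0)N)+\sum_{j\ne i}F_j(\chi_j(0)N))$ when $F_i$ is increasing, or by a positive limit when $F_i$ decreases to a positive value, using $F_i>0$; in all cases the sequence is bounded away from $0$ because $\sum 1/F_i(k)<\infty$ forces $F_i(k)\to\infty$, so the ratio tends to $1$.
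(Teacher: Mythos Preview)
Your argument is correct and, once you strip away the false starts, coincides with the paper's proof. Both rest on the exact identity
\[
\P(tMon_i(\chi(0),N))=\prod_{m\ge 0}\frac{F_i(\chi_i(0)N+m)}{F_i(\chi_i(0)N+m)+s},\qquad s=\sum_{j\ne i}F_j(\chi_j(0)N),
\]
which you obtain as the Laplace transform $\E[e^{-sT_i}]$ and the paper writes down directly from the transition probabilities. For the lower bound your one-shot Jensen $\E[e^{-sT_i}]\ge e^{-s\E T_i}$ is the same inequality the paper applies termwise via $\tfrac{1}{1+x}\ge e^{-x}$. For the upper bound you use exactly the paper's inequality $\log(1+u)\ge \tfrac{u}{1+u}$ together with the definition of $c_N$; your justification of $c_N>0$ (since $\sum 1/F_i(k)<\infty$ forces $F_i(k)\to\infty$, so the ratios tend to $1$) is fine and actually more than the paper supplies.

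One editorial remark: the detour through ``peeling off'' waiting times and the tentative inequality $\P(tMon_i)\le\prod_j\P(\tau_j>T_i)$ should be deleted. The latter in fact goes the wrong way (the events $\{\tau_j>T_i\}$ are positively correlated through $T_i$), and you correctly abandon it; but leaving it in the writeup obscures that your final argument is short and clean.
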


\begin{proof}
Direct calculation yields
\begin{align*}
&\P(tMon_i(\chi(0), N))=\prod_{k=0}^{\infty}\frac{F_i(\chi_i(0)N+k)}{F_i(\chi_i(0)N+k)+\sum_{j\ne i}F_j(\chi_j(0)N)}\\
&\ge\exp\left\{-\sum_{k=0}^{\infty}\frac{\sum_{j\ne i}F_j(\chi_j(0)N)}{F_i(\chi_i(0)N+k)}\right\}=\prod_{j\ne i}\exp\left\{-F_j(\chi_j(0)N)\sum_{k=\chi_i(0)N}^{\infty}\frac{1}{F_i(k)}\right\}
\end{align*}
using the inequality $e^{-x}\le \frac{1}{1+x}$ for $x>-1$. For the upper bound, we estimate
\begin{align*}
&\P(tMon_i(\chi(0), N))=\prod_{k=0}^{\infty}\frac{F_i(\chi_i(0)N+k)}{F_i(\chi_i(0)N+k)+\sum_{j\ne i}F_j(\chi_j(0)N)}\\
&=\exp\left\{\sum_{k=0}^{\infty}\left[\log(F_i(\chi_i(0)N+k)-\log\left(F_i(\chi_i(0)N+k)+\sum_{j\ne i}F_j(\chi_j(0)N)\right)\right]\right\}\\
&\overset{\star\star}{\le}\exp\left\{-\sum_{k=0}^{\infty}\frac{\sum_{j\ne i}F_j(\chi_j(0)N)}{F_i(\chi_i(0)N+k)+\sum_{j\ne i}F_j(\chi_j(0)N)}\right\}\\
&=\prod_{j\ne i}\exp\left\{-F_j(\chi_j(0)N)\sum_{k=0}^{\infty}\frac{1}{F_i(\chi_i(0)N+k)+\sum_{j\ne i}F_j(\chi_j(0)N)}\right\}\\
&\le\prod_{j\ne i}\exp\left\{-c_NF_j(\chi_j(0)N)\sum_{k=0}^{\infty}\frac{1}{F_i(\chi_i(0)N+k)}\right\}
\end{align*}
using $\log(x+y)-\log(x)\ge\frac{y}{x+y}$ in $\star\star$.
\end{proof}

An immediate consequence of Theorem \ref{theorem1} is, that for any agent fulfilling (\ref{eq: explosion}) the probability of a total monopoly is positive but less than one. In addition, the theorem reveals a significant behavioural difference between agents of type E and type P: whereas total monopoly is very likely for type E agents when the initial market size $N$ is large, it is rather untypical for type P, which is explained in the following corollary and example.

\begin{corollary}
\label{cor:44}
\begin{enumerate}
\item If agent $i$ is of type E , then for all $\chi (0)\in\Delta_{A-1}^o$ the following are equivalent:
\begin{align}
\label{eq: tMonCondE}
\lim_{N\to\infty}&\frac{F_i(\chi_i(0)N)}{F_j(\chi_j(0)N)}=\infty\quad\mbox{for all }j\ne i\\
\label{eq: tMon}
\lim_{N\to\infty}&\P(tMon_i(\chi(0), N))=1.
\end{align}
\item If agent $i$ fulfills (\ref{eq: explosion}), then for all $\chi (0)\in\Delta_{A-1}^o$
 \begin{equation}
 \label{eq: tMonCondP}
F_j(\chi_i(0)N)\sum_{k=\chi_i(0)N}^{\infty}\frac{1}{F_i(k)}\xrightarrow{N\to\infty} 0\quad\mbox{for all }j\ne i
\end{equation}
is sufficient for (\ref{eq: tMon}). If in addition $F_i(k)$ is monotone for large $k$, (\ref{eq: tMonCondP}) is equivalent to (\ref{eq: tMon}).
\end{enumerate}
\end{corollary}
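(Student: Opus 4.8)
The plan is to derive everything from the two-sided estimate of Theorem~\ref{theorem1} together with one elementary observation. Write $a_N:=\sum_{j\neq i}F_j(\chi_j(0)N)$, $b_N:=F_i(\chi_i(0)N)$ and $s_N:=\sum_{k\geq\chi_i(0)N}F_i(k)^{-1}$, so that the lower, resp.\ upper, bound of Theorem~\ref{theorem1} read $\P(tMon_i(\chi(0),N))\geq e^{-a_Ns_N}$, resp.\ $\P(tMon_i(\chi(0),N))\leq e^{-c_Na_Ns_N}$. The key observation is that, retaining only the $k=0$ factor in the product representation of $\P(tMon_i(\chi(0),N))$ used in the proof of Theorem~\ref{theorem1}, one gets the crude bound
$$\P(tMon_i(\chi(0),N))\ \leq\ \frac{b_N}{b_N+a_N}\ =\ \Big(1+\tfrac{a_N}{b_N}\Big)^{-1},$$
so that \eqref{eq: tMon} already forces $a_N/b_N\to 0$, and in particular $F_j(\chi_j(0)N)/F_i(\chi_i(0)N)\to 0$ for every $j\neq i$. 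This one inequality does most of the work.

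For Part~1, the display above is exactly the implication \eqref{eq: tMon}$\Rightarrow$\eqref{eq: tMonCondE} (which does not even use that $i$ is of type~E). For the converse, I would use the defining property \eqref{eq: typeE} of type~E in the form $b_Ns_N=F_i(\chi_i(0)N)\sum_{k\geq\chi_i(0)N}F_i(k)^{-1}\leq M$ for all large $N$ and some constant $M$; then, assuming \eqref{eq: tMonCondE}, each summand satisfies $F_j(\chi_j(0)N)s_N=\tfrac{F_j(\chi_j(0)N)}{F_i(\chi_i(0)N)}\,b_Ns_N\leq M\,\tfrac{F_j(\chi_j(0)N)}{F_i(\chi_i(0)N)}\to 0$, hence $a_Ns_N\to 0$ and the lower bound gives $\P(tMon_i)\geq e^{-a_Ns_N}\to 1$.

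For Part~2, the ``sufficient'' direction is precisely the lower bound of Theorem~\ref{theorem1}: \eqref{eq: tMonCondP} says $F_j(\chi_j(0)N)s_N\to 0$ for all $j\neq i$, hence $a_Ns_N\to 0$ and $\P(tMon_i)\to 1$; no monotonicity is needed here. For the equivalence under eventual monotonicity of $F_i$, I would first note that a decreasing $F_i$ is incompatible with \eqref{eq: explosion} (the terms $F_i(k)^{-1}$ would not tend to $0$), so ``monotone for large $k$'' must mean ``eventually nondecreasing''. Then for $N$ large $\chi_i(0)N$ lies past the monotonicity threshold, $k\mapsto F_i(\chi_i(0)N+k)$ is nondecreasing, and since $x\mapsto x/(x+c)$ is increasing for $c>0$ the infimum defining $c_N$ is attained at $k=0$, i.e.\ $c_N=b_N/(b_N+a_N)$. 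Assuming \eqref{eq: tMon}, the $k=0$ bound gives $a_N/b_N\to 0$, hence $c_N\to 1$; substituting into the upper bound $\P(tMon_i)\leq e^{-c_Na_Ns_N}$ yields $c_Na_Ns_N\to 0$, and dividing by $c_N\to 1$ gives $a_Ns_N\to 0$, hence $F_j(\chi_j(0)N)s_N\to 0$ for every $j\neq i$, which is \eqref{eq: tMonCondP}.

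I do not anticipate a genuine obstacle: the statement is a corollary of Theorem~\ref{theorem1}. The only point requiring care is controlling the constant $c_N$ in the upper bound, and this is exactly why the crude single-factor bound — which forces $a_N/b_N\to 0$ and hence $c_N\to 1$ — is the right entry point; the only remaining subtlety is the small observation that, under \eqref{eq: explosion}, eventual monotonicity of $F_i$ can only be eventual monotone increase.
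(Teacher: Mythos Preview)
Your proposal is correct and follows essentially the same approach as the paper: both use the crude single-factor bound $\P(tMon_i)\le b_N/(b_N+a_N)$ for the necessity of \eqref{eq: tMonCondE}, the type~E property combined with the lower bound of Theorem~\ref{theorem1} for sufficiency in Part~1, and for Part~2 necessity both first establish $c_N\to 1$ via eventual monotonicity before invoking the upper bound. The only cosmetic difference is that the paper argues the necessity in Part~2 by contraposition while you argue it directly, and you make explicit the small observation that eventual monotonicity under \eqref{eq: explosion} must mean eventually nondecreasing.
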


\begin{proof} 1. 
If $i$ is of type E, then (\ref{eq: tMonCondE}) implies
\begin{equation*}
F_j(\chi_j(0)N)\sum_{k=\chi_i(0)N}^{\infty}\frac{1}{F_i(k)}\le F_j(\chi_j(0)N)\frac{const.}{F_i(\chi_i(0)N)}\xrightarrow{N\to\infty}0
\end{equation*}
using (\ref{eq: typeE}), and \eqref{eq: tMon} follows from the lower bound of Theorem \ref{theorem1}. The necessity of \eqref{eq: tMonCondE} follows from
\begin{equation*}
\P(tMon_i(\chi(0), N))\le \frac{F_i(\chi_i(0)N)}{\sum_{j=1}^AF_j(\chi_j(0)N)} =\Big(1+\sum_{j\neq i}\frac{F_j(\chi_i(0)N)}{ F_i(\chi_j(0)N)}\Big)^{-1} \ .
\end{equation*}

2. (\ref{eq: tMonCondP}) implies that the lower bound of Theorem \ref{theorem1} converges to one so that \eqref{eq: tMon} holds. Now we assume that (\ref{eq: tMonCondP}) does not hold. If $\frac{F_i(\chi_i(0)N)}{F_j(\chi_j(0)N)}$ does not converge to infinity for some $j\ne i$, then with 1., (\ref{eq: tMon}) cannot hold. Thus we can assume (\ref{eq: tMonCondE}) for all $j\ne i$, which implies $c_N\xrightarrow{N\to\infty}1$ for the upper bound in Theorem \ref{theorem1} due to asymptotic monotonicity of $F_i (\chi_i (0)N+k)$ as $N\to\infty$. The upper bound then implies that $\P(tMon_i(\chi(0), N))$ does not converge to one.
\end{proof}


\begin{example}
\label{tMonExample}
\begin{enumerate}
\item In the polynomial case $F_i(k)=\alpha_ik^{\beta_i}$ with $\alpha_i>0, i=1,...,A$ and $1<\beta_1\le\ldots\le\beta_A$ condition (\ref{eq: tMonCondP}) is equivalent to $\beta_A>\beta_{A-1}+1$ for all $\chi (0)\in\Delta_{A-1}^o$. If $\beta_A=\beta_{A-1}+1$, then
$$\lim_{N\to\infty}\P(tMon_A(\chi(0), N))=\prod_{j=1,\ldots,A-1\colon\atop \beta_A=\beta_j+1}\exp\left\{-\frac{\alpha_j}{\alpha_A}\left(\frac{\chi_j(0)}{\chi_A(0)}\right)^{\beta_A-1}\right\}\in(0, 1)$$
since $c_N\xrightarrow{N\to\infty}1$ and $\lim_{N\to\infty}\P(tMon_j(\chi(0), N))=0$ for $j\ne A$. If $\beta_A<\beta_{A-1}+1$, in particular if $\beta_1=\ldots=\beta_A$, then $\lim_{N\to\infty}\P(tMon_i(\chi(0), N))=0$ for all agents.

\item When $F_i(k)=\alpha_ie^{\beta_ik}$ for $\alpha_i>0, \beta_i>0, i=1,...,A$, then condition (\ref{eq: tMonCondE}) is equivalent to $\beta_i\chi_i(0)>\beta_j\chi_j(0)$.
\end{enumerate}
\end{example}

Remarkably for type E agents, if $F_i(k)=\alpha_i F(k)$ for all $i$ and a function $F$ fulfilling (\ref{eq: typeE}), then for large $N$ the almost surely deterministic monopolist does not depend on the attractiveness-parameters $\alpha_i$, but is only determined by the initial condition due to the strong feedback effect of type E functions. 

Moreover, Theorem \ref{theorem1} provides information about the rate of convergence in (\ref{eq: tMon}) and (\ref{eq: sMon}). If agent $i$ is of type E, then Theorem \ref{theorem1} states together with $1+x\le e^x$ and $\sum_{l=1}^k(1-x_l)\ge1-\sum_{l=1}^kx_i, \,x_1,\ldots x_k\ge0$
\begin{equation*}
\P(tMon_i(\chi(0), N))\ge\prod_{j\ne i}\left(1-C\frac{F_j(\chi_j(0)N)}{F_i(\chi_i(0)N)}\right)\ge1-C\sum_{j\ne i}\frac{F_j(\chi_j(0)N)}{F_i(\chi_i(0)N)},
\end{equation*}
where
\begin{equation*}
C\coloneqq\sup_{k\ge 1}F_i(k)\sum_{l=k}^\infty\frac{1}{F_i(l)}<\infty
\end{equation*}
because of (\ref{eq: typeE}). Thus the convergence can be considered as quite fast. For example for $A=3$, $F_1(k)=F_2(k)=F_3(k)=e^k$ and $X(0)=(6, 4, 4)$ the bounds in Theorem \ref{theorem1} are:
$$0.652\approx e^{-2/(e(e-1))}\le\P(tMon_1(\chi(0), N))\le e^{-2e/((e-1)(2-e^2))}\approx 0.714$$

Indeed, condition (\ref{eq: tMonCondE}) is fulfilled for an $i$ in most generic cases, when at least one agent is of type E. To be more precise: If the expression in (\ref{eq: tMonCondE}) neither tends to infinity nor to zero, then an arbitrarily small change in the initial market shares provides (\ref{eq: tMonCondE}).

\begin{proposition}
\label{existenceE}
Let agent $i$ be of type E.
\begin{enumerate}
\item If $j$ is of type P for all $j\ne i$, then (\ref{eq: tMonCondE}) holds.
\item If $j\ne i$ is of type E and
\begin{equation}
\label{eq: noConv}
\liminf_{N\to\infty}\frac{F_i(\chi_i(0)N)}{F_j(\chi_j(0)N)}>0,
\end{equation}
then for any $\epsilon>0$:
\begin{equation*}
\lim_{N\to\infty}\frac{F_i((\chi_i(0)+\epsilon)N)}{F_j(\chi_j(0)N)}=\infty
\end{equation*}
\end{enumerate}
\end{proposition}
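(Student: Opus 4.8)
Throughout write $\sigma_F(l):=\sum_{m\ge l}1/F(m)$ for the tail sum of a feedback function satisfying $(\ref{eq: explosion})$ (finite, hence well defined for type E and type P agents) and $\theta_F(l):=F(l)\sigma_F(l)$, so that $\theta_F(l)\ge1$ (since $\sigma_F(l)\ge1/F(l)$) and $\sigma_F(l+1)=\sigma_F(l)\bigl(1-1/\theta_F(l)\bigr)$. The plan is to convert the type E / type P property into two-sided, resp.\ one-sided, exponential growth bounds via this identity and then compare along the linearly growing arguments $\chi_i(0)N$. For $F_i$ of type E there is $C$ with $\theta_{F_i}(l)\le C$ for all large $l$, so $\sigma_{F_i}(l+1)\le(1-1/C)\sigma_{F_i}(l)$; thus $\sigma_{F_i}$ decays at least geometrically and $F_i(l)\ge1/\sigma_{F_i}(l)\ge c_1\rho^l$ with $\rho:=1/(1-1/C)>1$. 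For $F_j$ of type P one has $\theta_{F_j}(l)\to\infty$, hence for every $\epsilon>0$ eventually $\sigma_{F_j}(l+1)\ge(1-\epsilon)\sigma_{F_j}(l)$; thus $\sigma_{F_j}$ decays slower than any geometric rate and $F_j(l)=\theta_{F_j}(l)/\sigma_{F_j}(l)\le c_\epsilon^{-1}\theta_{F_j}(l)(1-\epsilon)^{-l}$, so that (sending $\epsilon\downarrow0$) $\tfrac1l\log F_j(l)\to0$ provided $\theta_{F_j}$ itself is sub-exponential. Granting this, part~1 is immediate: for each $j\ne i$,
\[
\frac{F_i(\chi_i(0)N)}{F_j(\chi_j(0)N)}\ \ge\ c_1\,\rho^{\chi_i(0)N}\,e^{-o(N)}\ \xrightarrow{N\to\infty}\ \infty,
\]
since $\chi_i(0)\log\rho>0$ is a fixed positive constant and $\log F_j(\chi_j(0)N)=o(N)$.

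For part~2, $F_i$ and $F_j$ are both of type E, so the extra ingredient compared to part~1 is the two-sided bound $1\le\theta_{F_i}(l)\le C$. The hypothesis $\liminf_N F_i(\chi_i(0)N)/F_j(\chi_j(0)N)>0$ gives $c>0$ with $F_j(\chi_j(0)N)\le c^{-1}F_i(\chi_i(0)N)$ for all large $N$, so it suffices to show $F_i\bigl((\chi_i(0)+\epsilon)N\bigr)/F_i(\chi_i(0)N)\to\infty$. Iterating the tail-sum identity over the $\approx\epsilon N$ integers in $[\chi_i(0)N,(\chi_i(0)+\epsilon)N)$ and using $\theta_{F_i}\le C$ there,
\[
\sigma_{F_i}\bigl((\chi_i(0)+\epsilon)N\bigr)=\sigma_{F_i}(\chi_i(0)N)\prod_l\bigl(1-1/\theta_{F_i}(l)\bigr)\ \le\ \sigma_{F_i}(\chi_i(0)N)\,(1-1/C)^{\epsilon N},
\]
whence $F_i\bigl((\chi_i(0)+\epsilon)N\bigr)\ge1/\sigma_{F_i}\bigl((\chi_i(0)+\epsilon)N\bigr)\ge(1-1/C)^{-\epsilon N}/\sigma_{F_i}(\chi_i(0)N)$, whereas $F_i(\chi_i(0)N)=\theta_{F_i}(\chi_i(0)N)/\sigma_{F_i}(\chi_i(0)N)\le C/\sigma_{F_i}(\chi_i(0)N)$; dividing yields $F_i\bigl((\chi_i(0)+\epsilon)N\bigr)/F_i(\chi_i(0)N)\ge(1-1/C)^{-\epsilon N}/C\to\infty$. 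Combining with the hypothesis proves the claim. The passage between integer and real arguments only costs bounded factors by the unit-interval monotonicity of the continuous extension of $F_i$, and I would keep it implicit.

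The hard part is the sub-exponentiality of type P feedback invoked in part~1: the type P property controls only the product $\theta_{F_j}(l)=F_j(l)\sigma_{F_j}(l)$, not $F_j$ and $\sigma_{F_j}$ separately, so a priori a wildly non-monotone $F_j$ could be of type P yet carry isolated super-exponential spikes (which wash out in the tail sum). One therefore needs a mild regularity hypothesis making $\theta_{F_j}$ sub-exponential — for instance condition $(\ref{eq: existencePcond})$, exactly the extra assumption imposed on type P feedback in Theorem~\ref{thm: domains}, or regular variation of $F_j$ — which is harmless for all feedback arising in applications. Part~2, by contrast, is unconditional, since type E already supplies the two-sided control of $\theta_{F_i}$ on which the argument rests.
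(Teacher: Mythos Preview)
Your approach coincides with the paper's: both proofs iterate the tail--sum identity $\sigma_F(l+1)=\sigma_F(l)\bigl(1-1/\theta_F(l)\bigr)$ to turn type~E into geometric decay of $\sigma_{F_i}$ and type~P into sub-geometric decay of $\sigma_{F_j}$, and then compare along the linearly growing arguments. For part~2 your argument is essentially the paper's, only organised more economically by reducing at once to the single-function ratio $F_i((\chi_i(0)+\epsilon)N)/F_i(\chi_i(0)N)$; a small bonus of your route is that it never invokes the type~E bound on $F_j$, so the hypothesis on $j$ in the statement is in fact not needed.

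Your reservation about part~1 is exactly right, and the paper's own proof has the same gap: it asserts that $\theta_{F_j}(\chi_j(0)N)\to\infty$ ``slower than exponentially'' without justification. As you observe, the type~P condition constrains only the product $\theta_{F_j}=F_j\sigma_{F_j}$, and a type~P function may still carry exponential spikes. A concrete obstruction: take $F_j(k)=k^2$ except $F_j(2^m)=e^{2^m}$; then $\sigma_{F_j}(k)\sim 1/k$, so $\theta_{F_j}(k)\to\infty$ and $F_j$ is of type~P, yet $F_j$ is exponential along $k=2^m$. Pair this with a type~E function $F_i(k)=e^{\alpha k}$ with $\alpha$ small and suitable $\chi(0)$, and the ratio $F_i(\chi_i(0)N)/F_j(\chi_j(0)N)$ oscillates between $0$ and $\infty$, so part~1 fails as stated. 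Your proposed remedy via $(\ref{eq: existencePcond})$, which forces $\theta_{F_j}(k)=O(k)$, or via the sufficient condition $(\ref{eq: typePlog})$, does close the gap and is implicitly in force in all the paper's applications.
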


\begin{proof}
1. 
By (\ref{eq: typeE}) we have for agent $i$ of type E that
\begin{equation}
\label{eq: est2}
\frac{\sum_{l=k+1}^{\infty}\frac{1}{F_i(l)}}{\sum_{l=k}^{\infty}\frac{1}{F_i(l)}}=1-\frac{1}{F_i(k)\sum_{l=k}^{\infty}\frac{1}{F_i(l)}}<1-c
\end{equation} 
for some $c\in (0,1)$ and $k$ large enough, thus the sequence $\left(\sum_{l=\chi_i(0)N}^{\infty}\frac{1}{F_i(l)}\right)_N$ converges to zero faster than $(1-c)^{N/\chi_i(0)}$. For an agent $j\ne i$ of type P we have by (\ref{eq: typeP}) for any $d>0$
\begin{equation*}
\frac{\sum_{l=k+1}^{\infty}\frac{1}{F_j(l)}}{\sum_{l=k}^{\infty}\frac{1}{F_j(l)}}=1-\frac{1}{F_j(k)\sum_{l=k}^{\infty}\frac{1}{F_j(l)}}>1-d
\end{equation*} 
for  $k$ large enough, thus the sequence $\left(\sum_{l=\chi_j(0)N}^{\infty}\frac{1}{F_j(l)}\right)_N$ converges to zero slower than $(1-d)^{N/\chi_j(0)}$. Together this yields
\begin{equation*}
\frac{\sum_{l=\chi_j(0)N}^{\infty}\frac{1}{F_j(l)}}{\sum_{l=\chi_i(0)N}^{\infty}\frac{1}{F_i(l)}}\xrightarrow{N\to\infty}\infty
\end{equation*}
exponentially fast as d is arbitrarily small. Finally (\ref{eq: tMonCondE}) follows from
\begin{equation}
\label{eq: est}
\frac{F_i(\chi_i(0)N)}{F_j(\chi_j(0)N)}\ge\frac{\sum_{l=\chi_j(0)N}^{\infty}\frac{1}{F_j(l)}}{\sum_{l=\chi_i(0)N}^{\infty}\frac{1}{F_i(l)}}\cdot\frac{1}{F_j(\chi_j(0)N)\sum_{l=\chi_j(0)N}^{\infty}\frac{1}{F_j(l)}}\xrightarrow{N\to\infty}\infty
\end{equation}
as $F_j(\chi_j(0)N)\sum_{l=\chi_j(0)N}^{\infty}\frac{1}{F_j(l)}\to\infty$ slower than exponentially.

2. Now let agent $j\ne i$ be of type E and assume (\ref{eq: noConv}). Then with (\ref{eq: typeE}):
\begin{equation*}
\liminf_{N\to\infty}\frac{\sum_{l=\chi_j(0)N}^{\infty}\frac{1}{F_j(l)}}{\sum_{l=\chi_i(0)N}^{\infty}\frac{1}{F_i(l)}}\ge \liminf_{N\to\infty}const.\frac{F_i(\chi_i(0)N)}{F_j(\chi_j(0)N)}>0
\end{equation*}
Iterated application of estimate (\ref{eq: est2}) yields
\begin{equation*}
\frac{\sum_{l=(\chi_i(0)+\epsilon)N}^{\infty}\frac{1}{F_i(l)}}{\sum_{l=\chi_i(0)N}^{\infty}\frac{1}{F_i(l)}}<(1-c)^{\lfloor\epsilon N\rfloor}\xrightarrow{N\to\infty}0\quad\mbox{for some }c\in (0,1)\ ,
\end{equation*}
and as a consequence
\begin{equation*}
\frac{\sum_{l=(\chi_i(0)+\epsilon)N}^{\infty}\frac{1}{F_i(l)}}{\sum_{l=\chi_j(0)N}^{\infty}\frac{1}{F_j(l)}}=\frac{\sum_{l=(\chi_i(0)+\epsilon)N}^{\infty}\frac{1}{F_i(l)}}{\sum_{l=\chi_i(0)N}^{\infty}\frac{1}{F_i(l)}}\cdot\frac{\sum_{l=\chi_i(0)N}^{\infty}\frac{1}{F_i(l)}}{\sum_{l=\chi_j(0)N}^{\infty}\frac{1}{F_j(l)}}\xrightarrow{N\to\infty}0
\end{equation*}
Once again, the estimate in (\ref{eq: est}) proves the claim together with (\ref{eq: typeE}).
\end{proof}

Corollary \ref{cor:44} implies that for any agent $i\in[A]$ of type E
$$\{\chi(0)\in\Delta_{A-1}^o\colon (\ref{eq: tMonCondE}) \text{ holds}\} \subseteq D_i.$$
Due to Proposition \ref{existenceE}, these sets are even equal up to boundaries under Assumption 1 of Theorem \ref{thm: domains}. Moreover, the first part of  Proposition \ref{existenceE} states that the attraction domains of all agents of type P are empty, if there is at least one agent of type E. Recall that for finite $N$ the probability of monopoly is positive for all agents satisfying (\ref{eq: explosion}).

Finally, one can ask what happens for large $N$ and critical market shares, i.e. for $\chi(0)$ lying exactly on the edge between the asymptotic attraction domains. It stands to reason that in this situation the monopolist remains random even for large $N$. Nevertheless, the exact limiting behaviour depends on whether the feedback functions grow exponentially or even super-exponentially.

\begin{corollary}
\label{cor: criticalShare}
Let all agents be of type E and consider $\chi(0)\in\Delta_{A-1}^o$, such that
\begin{equation}
\label{eq: criticalShare}
    \limsup_{N\to\infty}\frac{F_i(\chi_i(0)N)}{F_j(\chi_j(0)N)}<\infty
\end{equation}
for all $i, j\in[A]$. Then the following holds:
\begin{enumerate}
    \item For all agents $i\in[A]$ we have $\liminf_{N\to\infty}\P(tMon_i(\chi(0), N))>0$.
    \item If for all agents $i\in[A]$ we have super-exponentially growing feedback, i.e.
    \begin{equation*}
        \lim_{k\to\infty}\frac{F_i(k+1)}{F_i(k)}=\infty,
    \end{equation*}
    then $\lim_{N\to\infty}\P\left(\bigcup_{i=1}^AtMon_i(\chi(0), N)\right)=1$.
    \item If for all agents $i\in[A]$ we have at most exponentially growing feedback, i.e.
    \begin{equation*}
        \limsup_{k\to\infty}\frac{F_i(k+1)}{F_i(k)}<\infty,
    \end{equation*}
    then $\limsup_{N\to\infty}\P\left(\bigcup_{i=1}^AtMon_i(\chi(0), N)\right)<1$.
\end{enumerate}
\end{corollary}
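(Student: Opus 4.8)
The starting point is the exact product formula for $\P(tMon_i(\chi(0),N))$ from the proof of Theorem~\ref{theorem1}, together with the representation \eqref{eq: sMonEmbed} of $sMon_i$ via explosion times. First I would record the elementary observation that under assumption \eqref{eq: criticalShare} the ratios $F_i(\chi_i(0)N)/F_j(\chi_j(0)N)$ are bounded above (and, by symmetry in $i,j$, also bounded below away from zero), so that all the feedback values $F_j(\chi_j(0)N)$, $j\in[A]$, are comparable up to a constant as $N\to\infty$. Combined with the type~E property \eqref{eq: typeE}, which gives $F_i(k)\sum_{l=k}^\infty 1/F_i(l)\le C$ uniformly in $k$, the exponents appearing in the lower bound of Theorem~\ref{theorem1},
$$
F_j(\chi_j(0)N)\sum_{k=\chi_i(0)N}^\infty\frac1{F_i(k)}\ \le\ C\,\frac{F_j(\chi_j(0)N)}{F_i(\chi_i(0)N)}\ ,
$$
stay bounded. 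Hence the lower bound $\prod_{j\ne i}\exp\{-F_j(\chi_j(0)N)\sum_{k\ge\chi_i(0)N}1/F_i(k)\}$ is bounded away from zero uniformly in $N$, which is exactly part~1.

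For parts 2 and 3 the key is to control the single multiplicative factor by which $\P(tMon_i)$ fails to capture all of the probability, namely to compare $\P(tMon_i(\chi(0),N))$ with $\P(sMon_i(\chi(0),N))$, or equivalently to estimate the probability that agent $i$ wins eventually but \emph{not} from the very first step. I would condition on the first time agent $i$ ``misses'': writing $\P(sMon_i)=\sum_{m\ge 0}\P(\text{first }m\text{ steps go to }i,\text{ step }m+1\text{ goes to some }j\ne i,\ i\text{ still wins afterwards})$, and using the Markov property plus the fact that $F$ increases each time a step is taken, one sees that the ratio $\P(sMon_i)/\P(tMon_i)$ is controlled by sums of the form $\sum_{m}\bigl(\prod_{k<m}\frac{F_i(\chi_i(0)N+k)}{F_i(\chi_i(0)N+k)+\sum_{j}F_j}\bigr)\cdot\frac{F_j(\chi_j(0)N)}{F_i(\chi_i(0)N+m)+\cdots}$. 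In the super-exponential case $F_i(k+1)/F_i(k)\to\infty$, once agent $i$ has made even a single extra step its feedback dwarfs everyone else's, so with probability tending to $1$ (as $N\to\infty$, uniformly over the bounded ratios) the configuration is eventually frozen and $sMon_i$ coincides with $tMon_i$ up to an event of vanishing probability; summing over $i$ and using part~1 / the total-probability identity $\P(\bigcup_i sMon_i)=1$ gives $\P(\bigcup_i tMon_i)\to1$. In the at-most-exponential case $\limsup F_i(k+1)/F_i(k)<\infty$, the ratio $F_i(\chi_i(0)N+1)/F_i(\chi_i(0)N)$ stays bounded, so with probability bounded away from zero agent $i$ makes a step, then agent $j$ still has a non-negligible (bounded-below) chance at the next step, i.e. there is a uniformly positive probability that \emph{some} losing agent ever wins a step; this shows $\P(\bigcup_i tMon_i)$ is bounded away from $1$.

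Concretely, for part~3 I would exhibit a single bad event of probability bounded below: start from $X(0)$, let agent $i$ (the one with $\chi_i(0)N$ largest, say) take one step — probability at least $c>0$ uniformly in $N$ by the boundedness of the ratios — and then let agent $j\ne i$ take one step; since $F_i(\chi_i(0)N+1)\le K F_i(\chi_i(0)N)\le K' F_j(\chi_j(0)N)$ with $K,K'$ independent of $N$, this second step has probability at least $c'>0$ uniformly in $N$, and on this event no $tMon_\ell$ occurs. Hence $\P(\bigcup_\ell tMon_\ell(\chi(0),N))\le 1-cc'$ for all large $N$.

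\textbf{Main obstacle.} The delicate point is part~2: turning ``$F_i(k+1)/F_i(k)\to\infty$'' into a quantitative statement that, uniformly over $N$ with the ratios $F_i(\chi_i(0)N)/F_j(\chi_j(0)N)$ staying in a fixed compact subset of $(0,\infty)$, the event $sMon_i\setminus tMon_i$ has probability $o(1)$. One has to be careful that $\chi_i(0)N$ is not an integer and that the ``uniformity over bounded ratios'' is genuine; the clean way is again the exponential embedding — on $sMon_i$, $T_i<T_j$ for all $j$, and $T_i-(\text{time of first }i\text{-event})=\sum_{k\ge\chi_i(0)N+1}\tau_i(k)$ has expectation $\le C/F_i(\chi_i(0)N+1)\to 0$ relative to the $O(1/F_i(\chi_i(0)N))$ scale, forcing the first $i$-event to already beat all the (comparable-rate) $\tau_j$'s. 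Making this last comparison precise, so that the failure probability is shown to vanish rather than merely stay small, is the crux; everything else is bookkeeping with the Theorem~\ref{theorem1} bounds.
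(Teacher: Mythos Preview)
Your parts 1 and 3 are essentially the paper's argument: part~1 is exactly the lower bound of Theorem~\ref{theorem1} combined with the type~E estimate, and for part~3 exhibiting a two-step event where two different agents each win is equivalent to what the paper does (it conditions on the first step going to $j$ and then bounds $\P(tMon_j)$ from the shifted state by the probability of winning the very next step, which is bounded away from~$1$ since $F_j(\chi_j(0)N+1)\le K\,F_j(\chi_j(0)N)$).

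For part~2, your approach via the exponential embedding and $\P(sMon_i\setminus tMon_i)\to 0$ is correct in principle---super-exponential growth does give $F_i(\chi_i(0)N)\bigl(T_i-\tau_i(\chi_i(0)N)\bigr)\to 0$ in probability via Markov's inequality, and likewise for each $T_j$, so that $sMon_i$ and $tMon_i$ asymptotically coincide---but you are working harder than necessary. The paper uses the same one-step conditioning device you used in part~3: given that the first step goes to agent $j$, the event $\bigcup_i tMon_i$ becomes exactly $tMon_j$ for the configuration with $X_j(0)$ replaced by $X_j(0)+1$. Applying the \emph{lower} bound of Theorem~\ref{theorem1} to this shifted configuration yields
\[
\P\bigl(tMon_j(\text{shifted})\bigr)\ \ge\ \prod_{i\ne j}\exp\Bigl\{-C\,\frac{F_i(\chi_i(0)N)}{F_j(\chi_j(0)N)}\cdot\frac{F_j(\chi_j(0)N)}{F_j(\chi_j(0)N+1)}\Bigr\}\ \xrightarrow{N\to\infty}\ 1,
\]
since the first ratio is bounded by \eqref{eq: criticalShare} and the second tends to~$0$ by super-exponential growth. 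Summing over $j$ against $\P(X(1)-X(0)=e^{(j)})$ gives $\P(\bigcup_i tMon_i)\to 1$ directly, with no need to compare $sMon_i$ to $tMon_i$ or to invoke the embedding. This trick---condition on the first step, then reuse Theorem~\ref{theorem1} on the shifted state---is what turns your ``main obstacle'' into two lines.
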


\begin{proof}
1. This follows directly from Theorem \ref{theorem1} and (\ref{eq: typeE}):
\begin{align*}
    \P(tMon_i(\chi(0))&\ge\prod_{i\ne j}\exp\left\{-F_i(\chi_i(0)N)\sum_{k=\chi_j(0)N}^\infty\frac{1}{F_j(k)}\right\}\ge\prod_{i\ne j}\exp\left\{-const.\frac{F_i(\chi_i(0)N)}{F_j(\chi_j(0)N)}\right\}
\end{align*}

2. First, we write
\begin{align*}
    \P&\left(\bigcup_{i=1}^AtMon_i(\chi(0), N)\right)\\
    &=\sum_{j=1}^A\P\left(\bigcup_{i=1}^AtMon_i(\chi(0), N)\,\big| X(1)-X(0)=e^{(j)}\right)\P\left(X(1)-X(0)=e^{(j)}\right)\\
    &=\sum_{j=1}^A\P\left(tMon_j\left(\frac{1}{N+1}\left(\chi(0)N+e^{(j)}\right), N+1\right)\right)\P\left(X(1)-X(0)=e^{(j)}\right)
\end{align*}
and then apply Theorem \ref{theorem1} and (\ref{eq: typeE}):
\begin{align*}
    &\P\left(tMon_j\left(\frac{1}{N+1}\left(\chi(0)N+e^{(j)}\right), N+1\right)\right)\ge\prod_{i\ne j}\exp\left\{-F_i(\chi_i(0)N)\sum_{k=\chi_j(0)N+1}^\infty\frac{1}{F_j(k)}\right\}\\
    &\ge\prod_{i\ne j}\exp\left\{-const.\frac{F_i(\chi_i(0)N)}{F_j(\chi_j(0)N+1)}\right\}=\prod_{i\ne j}\exp\left\{-const.\frac{F_i(\chi_i(0)N)}{F_j(\chi_j(0)N)}\cdot\frac{F_j(\chi_j(0)N)}{F_j(\chi_j(0)N+1)}\right\}\\
    &\xrightarrow{N\to\infty}1
\end{align*}

3. Similarly to the second part, this follows from
\begin{align*}
    \P&\left(tMon_j\left(\frac{1}{N+1}\left(\chi(0)N+e^{(j)}\right), N+1\right)\right)\le\frac{F_j(\chi(0)N+1)}{F_j(\chi_i(0)N+1)+\sum_{i\ne j}F_i(\chi_i(0)N)}\\
    &=\left(1+\sum_{i\ne j}\frac{F_i(\chi_i(0))}{F_j(\chi_j(0))}\cdot\frac{F_j(\chi_j(0))}{F_j(\chi_j(0)+1)}\cdot\right)^{-1}.
\end{align*}
\end{proof}

\begin{example}
Let $F_i(k)=e^{\alpha_ik^\beta}$ for $\alpha_i>0, \beta>0$ and all $i\in[A]$. Then condition (\ref{eq: criticalShare}) is equivalent to $\alpha_i\chi_i(0)^\beta=\alpha_j\chi_j(0)^\beta$ for all $i,j\in[A]$. According to Corollary \ref{cor: criticalShare}, we have in this case $\lim_{N\to\infty}\P\left(\bigcup_{i=1}^AtMon_i(\chi(0), N)\right)=1$ for $\beta>1$ and \linebreak$\limsup_{N\to\infty}\P\left(\bigcup_{i=1}^AtMon_i(\chi(0), N)\right)<1$ for $\beta\le1$.
\end{example}

We summarize the main conclusions for total mononpoly in the limit of large initial market size $N\to\infty$: If for all agents the feedback functions grow super-exponentially, the winner of the first step will win all steps. This does not hold for any $\chi (0)\in \Delta_{A-1}^o$ if all feedback functions grow at most exponentially. In general, total monopoly of an agent $i$ can occur with probability one according to Corollary \ref{cor:44}: if $i$ is of type $E$ and \eqref{eq: tMonCondE} holds, or if \eqref{eq: tMonCondP} holds.


\subsection{Agents of type P}

Let us now turn to the more widely studied case when all agents are of type P. We already saw in Example \ref{tMonExample} that in this case a total monopoly is rather untypical. Since the definition of type P includes the monopoly condition (\ref{eq: explosion}), strong monopoly still occurs with probability one. Again, it is possible to predict the monopolist in the limit $N\to\infty$.

\begin{theorem}
\label{thm: typeP}
Let all agents be of type P or not fulfill (\ref{eq: explosion}). If there is an agent $i\in\{1,...,A\}$ of type P such that
\begin{equation}
\label{eq: sMonCondP}
\limsup_{N\to\infty}\frac{\sum_{k=\chi_i(0)N}^\infty\frac{1}{F_i(k)}}{\sum_{k=\chi_j(0)N}^\infty\frac{1}{F_j(k)}}<1\quad\mbox{for all }j\ne i\ ,
\end{equation}
then
\begin{equation}
\label{eq: sMon}
\lim_{N\to\infty}\P(sMon_i(\chi(0), N))=1\ .
\end{equation}
\end{theorem}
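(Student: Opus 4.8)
The plan is to reduce everything to the exponential embedding of Section~\ref{sec: model}, under which (up to a null set, since ties between finite explosion times are a.s.\ impossible)
$$sMon_i(\chi(0),N)=\bigcap_{j\ne i}\bigl\{T_i(\chi_i(0)N)<T_j(\chi_j(0)N)\bigr\}.$$
Here $T_i(m)=\sum_{k=m}^\infty\tau_i(k)$ with the $\tau_i(k)$ independent and $\tau_i(k)\sim\mathrm{Exp}(F_i(k))$; write $s_i(m):=\E T_i(m)=\sum_{k\ge m}1/F_i(k)$, which is finite for type-P agents and non-increasing in $m$. Then $\Omega\setminus sMon_i=\bigcup_{j\ne i}\{T_i(\chi_i(0)N)\ge T_j(\chi_j(0)N)\}$, so by a union bound over the $A-1$ competitors it suffices to show $\P\bigl(T_i(\chi_i(0)N)\ge T_j(\chi_j(0)N)\bigr)\to 0$ as $N\to\infty$ for each fixed $j\ne i$. (Throughout, $\chi_i(0)N$ is read along the values making it an integer, or as $\lfloor\chi_i(0)N\rfloor$; this affects none of the limits, and $\chi_i(0)N\to\infty$ since $\chi_i(0)>0$ is fixed.)

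The one genuine input is a concentration estimate: \textbf{for a type-P agent $i$, one has $T_i(m)/s_i(m)\to 1$ in probability as $m\to\infty$.} This follows from Chebyshev's inequality once we bound the variance. Given $\epsilon>0$, condition \eqref{eq: typeP} provides $m_0$ with $1/F_i(k)\le\epsilon\,s_i(k)$ for all $k\ge m_0$; since $s_i$ is non-increasing, $1/F_i(k)\le\epsilon\,s_i(m)$ for all $k\ge m\ge m_0$, hence
$$\mathrm{Var}\bigl(T_i(m)\bigr)=\sum_{k\ge m}\frac{1}{F_i(k)^2}=\sum_{k\ge m}\frac{1}{F_i(k)}\cdot\frac{1}{F_i(k)}\le\epsilon\,s_i(m)\sum_{k\ge m}\frac{1}{F_i(k)}=\epsilon\,s_i(m)^2.$$
Thus $\P\bigl(|T_i(m)-s_i(m)|\ge\eta\,s_i(m)\bigr)\le\epsilon/\eta^2$ for $m\ge m_0$, and letting $\epsilon\downarrow 0$ gives the claim.

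It remains to handle each $j\ne i$. If $F_j$ does not satisfy \eqref{eq: explosion}, then $T_j(\chi_j(0)N)=\infty$ a.s.\ whereas $T_i(\chi_i(0)N)<\infty$ a.s., so $\P(T_i\ge T_j)=0$. If $F_j$ is of type P, then by hypothesis \eqref{eq: sMonCondP} there exist $\delta>0$ and $N_0$ with $s_i(\chi_i(0)N)\le(1-\delta)\,s_j(\chi_j(0)N)$ for $N\ge N_0$; fix $\eta>0$ small enough that $(1+\eta)(1-\delta)<1-\eta$. Then for $N\ge N_0$, on the event $\{T_i(\chi_i(0)N)\le(1+\eta)s_i(\chi_i(0)N)\}\cap\{T_j(\chi_j(0)N)\ge(1-\eta)s_j(\chi_j(0)N)\}$ we get $T_i(\chi_i(0)N)\le(1+\eta)(1-\delta)s_j(\chi_j(0)N)<(1-\eta)s_j(\chi_j(0)N)\le T_j(\chi_j(0)N)$, and the complement of this event has probability tending to $0$ by the concentration estimate applied to $i$ and to $j$. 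Summing over $j\ne i$ yields \eqref{eq: sMon}. The main obstacle is thus confined to the concentration lemma, and its only subtlety is the step that upgrades the pointwise bound $1/F_i(k)\le\epsilon\,s_i(k)$ to the uniform tail bound $1/F_i(k)\le\epsilon\,s_i(m)$ using monotonicity of $s_i$; the rest is the embedding identity and routine bookkeeping.
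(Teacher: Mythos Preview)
Your proof is correct and follows essentially the same route as the paper: both reduce to the exponential embedding identity \eqref{eq: sMonEmbed}, prove that $T_i(m)/\E T_i(m)\to 1$ in probability for type-P agents via a variance estimate (the paper's Lemma~\ref{lem: explosion}), and then combine this concentration with hypothesis \eqref{eq: sMonCondP} to compare $T_i$ and $T_j$ pairwise. The only notable difference is that your proof of the variance bound is a direct elementary tail argument from the definition of type~P, whereas the paper passes to a continuous extension and applies de l'Hospital to the integral representation; your version avoids that detour.
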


Note that condition (\ref{eq: sMonCondP}) can be replaced by the easier, but stricter condition
$$\limsup_{N\to\infty}\frac{F_j(\chi_j(0)N)}{F_i(\chi_i(0)N)}<\frac{\chi_j(0)}{\chi_i(0)}$$
due to de l'Hospital's Theorem. This implies that for regular varying $F_i(k)=\alpha_ik^\beta L(k)$, where $\beta>1$ and $L$ is a slowly varying function, the attraction domains are equal to the polynomial case, where $F_i(k)=\alpha_ik^\beta$. Moreover, the attraction domains do not change if $F_i$ is replaced by another function $\tilde F_i$ satisfying $\lim_{k\to\infty}\frac{\tilde F_i(k)}{F_i(k)}=1$.

\begin{proof}
This is an immediate consequence of the following Lemma \ref{lem: explosion} and the exponential embedding representation (\ref{eq: sMonEmbed}) of the strong monopoly via
\begin{align*}
\P\left(\left\{T_i(\chi_i(0)N)<T_j(\chi_j(0)N)\right\}\right)&=\P\left(\frac{T_i(\chi_i(0)N)}{\E T_i(\chi_i(0)N)}\cdot\frac{\E T_j(\chi_j(0)N)}{T_j(\chi_j(0)N)}\cdot\frac{\E T_i(\chi_i(0)N)}{\E T_j(\chi_j(0)N)}<1\right)\\
&\xrightarrow{N\to\infty}1,
\end{align*}
since $\E T_i(\chi_i(0)N) =\sum_{k=\chi_i(0)N}^\infty\frac{1}{F_i(k)}$.
\end{proof}

\begin{lemma}
\label{lem: explosion}
If agent $i$ is of type P, then:
\begin{equation*}
Var\left(\frac{T_i(\chi_i(0)N)}{\E T_i(\chi_i(0)N)}\right)\xrightarrow{N\to\infty}0
\end{equation*}
\end{lemma}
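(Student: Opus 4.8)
The plan is to write $T_i(\chi_i(0)N)=\sum_{k=\chi_i(0)N}^\infty\tau_i(k)$ as a sum of independent exponential random variables and compute the variance explicitly. Since $\tau_i(k)$ is exponential with rate $F_i(k)$, we have $\E\tau_i(k)=1/F_i(k)$ and $\mathrm{Var}\,\tau_i(k)=1/F_i(k)^2$, so by independence
\begin{equation*}
\mathrm{Var}\bigl(T_i(\chi_i(0)N)\bigr)=\sum_{k=\chi_i(0)N}^\infty\frac{1}{F_i(k)^2},\qquad \E T_i(\chi_i(0)N)=\sum_{k=\chi_i(0)N}^\infty\frac{1}{F_i(k)}\ .
\end{equation*}
Writing $m\coloneqq\chi_i(0)N$, the quantity to be controlled is therefore the ratio
\begin{equation*}
R(m)\coloneqq\frac{\sum_{k\ge m}F_i(k)^{-2}}{\bigl(\sum_{k\ge m}F_i(k)^{-1}\bigr)^2}\ ,
\end{equation*}
and it suffices to show $R(m)\to 0$ as $m\to\infty$ (then specialize $m=\chi_i(0)N$, noting $\chi_i(0)>0$ so $m\to\infty$).

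For the main estimate I would bound the numerator by pulling out the largest term. Since $F_i(k)^{-2}\le F_i(m)^{-1}F_i(k)^{-1}$ is not available without monotonicity, instead use the cruder but valid bound: with $S_m\coloneqq\sum_{k\ge m}F_i(k)^{-1}$, we can write $\sum_{k\ge m}F_i(k)^{-2}\le \bigl(\sup_{k\ge m}F_i(k)^{-1}\bigr)\,S_m$, but this too needs care. The clean route is to group terms: split the tail sum into blocks and compare, or more simply observe
\begin{equation*}
\sum_{k\ge m}\frac{1}{F_i(k)^2}=\sum_{k\ge m}\frac{1}{F_i(k)}\cdot\frac{1}{F_i(k)}\le\Bigl(\sup_{k\ge m}\frac{1}{F_i(k)}\cdot\frac{1}{S_k\,}\Bigr)\cdot\sum_{k\ge m} \frac{S_k}{F_i(k)} \ ,
\end{equation*}
and then bound $\sum_{k\ge m}S_k/F_i(k)\le S_m\sum_{k\ge m}F_i(k)^{-1}=S_m^2$ since $S_k\le S_m$, giving $R(m)\le\sup_{k\ge m}\bigl(F_i(k)S_k\bigr)^{-1}$. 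Now the type-P condition \eqref{eq: typeP} states exactly that $F_i(k)\sum_{l\ge k}F_i(l)^{-1}=F_i(k)S_k\to\infty$, hence $\sup_{k\ge m}\bigl(F_i(k)S_k\bigr)^{-1}\to 0$ as $m\to\infty$, which is what we need.

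I expect the only subtlety to be justifying the chain of inequalities cleanly — in particular the step $\sum_{k\ge m}S_k/F_i(k)\le S_m^2$, which follows from $S_k\le S_m$ for $k\ge m$ together with $\sum_{k\ge m}F_i(k)^{-1}=S_m$, and the factoring of $F_i(k)^{-2}$ so that the type-P quantity $F_i(k)S_k$ appears in the denominator. Everything else is routine: independence of the $\tau_i(k)$ from the exponential embedding, the exact formulas for mean and variance of an exponential, and the fact that $\chi_i(0)N\to\infty$. No part of this requires $F_i$ to be monotone, so the lemma holds under the bare type-P hypothesis as stated. One should double-check that $S_m<\infty$, which is guaranteed since type P includes the monopoly condition \eqref{eq: explosion}.
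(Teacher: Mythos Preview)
Your argument is correct and takes a genuinely different route from the paper. The paper replaces the tail sums by integrals via a continuous extension of $F_i$ and then applies de l'Hospital's rule to the ratio $\int_m^\infty F_i^{-2}\,/\,\bigl(\int_m^\infty F_i^{-1}\bigr)^2$, obtaining the exact asymptotic $R(m)\sim\bigl(2F_i(m)S_m\bigr)^{-1}$. You instead factor $F_i(k)^{-2}=(F_i(k)S_k)^{-1}\cdot S_k/F_i(k)$, pull out the supremum of the first factor, and bound the remaining sum by $S_m^2$ using the monotonicity of $S_k$; this yields the inequality $R(m)\le\sup_{k\ge m}(F_i(k)S_k)^{-1}$, which tends to zero directly from the type-P condition. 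Your approach is entirely discrete and avoids the (somewhat glossed-over) construction of a suitable continuous extension; the paper's approach, in exchange, gives the sharper constant. Both are valid and the type-P hypothesis is used in exactly the same way at the end.
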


\begin{proof}
We can find an appropriate regular extension of $F_i$, such that for all $n\geq 1$
\begin{equation*}
\sum_{k=n}^\infty\frac{1}{F_i(k)}=\int_n^\infty\frac{dx}{F_i(x)}\quad\text{and}\quad\sum_{k=n}^\infty\frac{1}{F_i(k)^2} =\int_n^\infty\frac{dx}{F_i(x)^2} \ .
\end{equation*}
By the theorem of de L'Hospital and (\ref{eq: typeP}) this implies
\begin{align*}
\lim_{N\to\infty}Var\left(\frac{T_i(\chi_i(0)N)}{\E T_i(\chi_i(0)N)}\right)&=\lim_{N\to\infty}\frac{\sum_{k=\chi_i(0)N}^\infty\frac{1}{F_i(k)^2}}{\left(\sum_{k=\chi_i(0)N}^\infty\frac{1}{F_i(k)}\right)^2}=\lim_{N\to\infty}\frac{\int_{\chi_i(0)N}^\infty\frac{dx}{F_i(x)^2}}{\left(\int_{\chi_i(0)N}^\infty\frac{dx}{F_i(x)}\right)^2}\\
&=\lim_{N\to\infty}\frac{1}{2F_i(\chi_i(0)N)\sum_{k=\chi_i(0)N}\frac{1}{F_i(k)}}=0\ .
\end{align*}
\end{proof}

\begin{example}\label{example: sMonPolynom}
If $F_i(k)=\alpha_ik^\beta$ for all $i$ and $\beta>1$, then the condition (\ref{eq: sMonCondP}) is equivalent to $\alpha_i\chi_i(0)^{\beta-1}>\alpha_j\chi_j(0)^{\beta-1}$. Thus, in contrast to the type E case (Example \ref{tMonExample}), the attractiveness-parameters $\alpha_i$ affect the monopolist.
\end{example}

Lemma \ref{lem: explosion} uncovers another behavioral difference between type P and type E agents: For type P agents the explosion time concentrates on its expectation, whereas the variance of $T_i(\chi_i(0)N)/\E T_i(\chi_i(0)N)$ remains bounded from below for type E agents by an analogous argument, using \eqref{eq: typeE}. 
For many type P agents, including $F_i(k)=\alpha_ik^{\beta_i}$, it is possible to prove that the convergence of $T_i(\chi_i(0)N)/\E T_i(\chi_i(0)N)$ is even almost sure (see the proof of Proposition \ref{prop: prop1} together with the Lemma of Borel-Cantelli).

It is now natural to look for an analogy to Proposition \ref{existenceE} for type P agents in order to make sure that (\ref{eq: sMonCondP}) is fulfilled for almost all initial market shares $\chi(0)$. Unfortunately, this attempt is meant to fail as the example $F_i(k)=F_j(k)=k(\log k)^\alpha$ for $\alpha>1$ shows. In  this case
\begin{equation*}
\sum_{k=\lfloor\chi_i(0)N\rfloor}^\infty \frac{1}{F_i(k)}\sim\int_{\chi_i(0)N}^\infty\frac{1}{x(\log x)^\alpha}dx=\frac{1}{1-\alpha}\log (\chi_i(0)N)^{1-\alpha}\ ,
\end{equation*}
where for sequences $(a_N )_N$ and $(b_N )_N$ we write $a_N \sim b_N$ if $a_N /b_N \to 1$ for $N\to\infty$. Therefore (\ref{eq: sMonCondP}) is not fulfilled for all choices of $\chi_i(0), \chi_j(0)$, since
\begin{equation*}
\lim_{N\to\infty}\frac{\sum_{k=\chi_i(0)N}^\infty\frac{1}{F_i(k)}}{\sum_{k=\chi_j(0)N}^\infty\frac{1}{F_j(k)}}=\lim_{N\to\infty}\left(\frac{\log(N)+\log(\chi_i(0))}{\log(N)+\log(\chi_j(0))}\right)^{1-\alpha}=1.
\end{equation*}
Nevertheless, with a further condition we can find a similar result as Proposition \ref{existenceE}.

\begin{proposition}
\label{prop: typePcond}
Suppose that for some $i\neq j$
\begin{equation}
\label{eq: existenceP}
\lim_{N\to\infty}\frac{\sum_{k=\chi_i(0)N}^\infty\frac{1}{F_i(k)}}{\sum_{k=\chi_j(0)N}^\infty\frac{1}{F_j(k)}}=1.
\end{equation}
\begin{enumerate}
\item  If there exists $C<\infty$ such that for all $k\in\N$
\begin{equation*}
\frac{1}{k}F_i(k)\sum_{l=k}^\infty\frac{1}{F_i(l)}\le C,
\end{equation*}
then for all $\epsilon>0$
\begin{equation*}
\limsup_{N\to\infty}\frac{\sum_{k=(\chi_i(0)+\epsilon)N}^\infty\frac{1}{F_i(k)}}{\sum_{k=\chi_j(0)N}^\infty\frac{1}{F_j(k)}}<1\ .
\end{equation*}
\item If 
\begin{equation}
\label{eq:con2}
\lim_{k\to\infty}\frac{1}{k}F_i(k)\sum_{l=k}^\infty\frac{1}{F_i(l)}=\infty\quad \text{(i.e. $i$ is in particular of type P)}
\end{equation}
and (\ref{eq: existenceP}) holds for one choice of $\chi_i(0), \chi_j(0)$, then (\ref{eq: existenceP}) holds for all choices $\chi_i(0), \chi_j(0)\geq 0$ with $\chi_i(0)+ \chi_j(0)\leq 1$.
\end{enumerate}
\end{proposition}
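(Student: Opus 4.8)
The plan is to work throughout with the tail sums $g_i(n)\coloneqq\sum_{k=n}^{\infty}1/F_i(k)=\E T_i(n)$, so that \eqref{eq: existenceP} reads $g_i(\chi_i(0)N)/g_j(\chi_j(0)N)\to1$, and to exploit the telescoping identity $1/F_i(k)=g_i(k)-g_i(k+1)$. Both parts then reduce to a one-sided control of the ratio $g_i(\lambda n)/g_i(n)$ along $n\approx\chi_i(0)N$.

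For part 1 I would rewrite the hypothesis $\tfrac1k F_i(k)g_i(k)\le C$ as $g_i(k)-g_i(k+1)=1/F_i(k)\ge g_i(k)/(Ck)$, i.e.\ $g_i(k+1)\le g_i(k)\bigl(1-\tfrac1{Ck}\bigr)$ for all $k$. Iterating from $n=\lfloor\chi_i(0)N\rfloor$ to $m=\lfloor(\chi_i(0)+\epsilon)N\rfloor$ and using $1-x\le e^{-x}$ gives
\begin{equation*}
\frac{g_i(m)}{g_i(n)}\le\prod_{k=n}^{m-1}\Bigl(1-\tfrac1{Ck}\Bigr)\le\exp\Bigl(-\tfrac1C\sum_{k=n}^{m-1}\tfrac1k\Bigr),
\end{equation*}
and since $\sum_{k=n}^{m-1}1/k\to\log\bigl((\chi_i(0)+\epsilon)/\chi_i(0)\bigr)$ as $N\to\infty$ one obtains
\begin{equation*}
\limsup_{N\to\infty}\frac{g_i((\chi_i(0)+\epsilon)N)}{g_i(\chi_i(0)N)}\le\Bigl(\tfrac{\chi_i(0)}{\chi_i(0)+\epsilon}\Bigr)^{1/C}<1 .
\end{equation*}
Multiplying by \eqref{eq: existenceP} yields the claim; the harmonic-sum asymptotics is the only calculation and is routine.

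For part 2 the key point is that \eqref{eq:con2} says exactly $n/(F_i(n)g_i(n))\to0$, which is the Karamata logarithmic-derivative criterion and forces the tail-sum function $g_i$ to be slowly varying. I would show this directly: given $M>0$, for $k$ large $g_i(k)-g_i(k+1)=1/F_i(k)\le g_i(k)/(Mk)$, so $g_i(k)\ge g_i(k+1)\ge g_i(k)\bigl(1-\tfrac1{Mk}\bigr)$; iterating between $n$ and $\lfloor\lambda n\rfloor$ with $\lambda>1$,
\begin{equation*}
1\ \ge\ \frac{g_i(\lfloor\lambda n\rfloor)}{g_i(n)}\ \ge\ \prod_{k=n}^{\lfloor\lambda n\rfloor-1}\Bigl(1-\tfrac1{Mk}\Bigr),
\end{equation*}
whose $\liminf$ as $n\to\infty$ is at least $\lambda^{-1/M}$; letting $M\to\infty$ (and treating $\lambda<1$ symmetrically) gives $g_i(\lambda n)/g_i(n)\to1$ for all $\lambda>0$. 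Now if \eqref{eq: existenceP} holds for one admissible pair $(\chi_i(0),\chi_j(0))$, re-indexing ($N\mapsto N/\chi_j(0)$) together with slow variation of $g_i$ gives $g_j(N)\sim g_i\bigl((\chi_i(0)/\chi_j(0))N\bigr)\sim g_i(N)$, so $g_j\sim g_i$ as functions and $g_j$ is slowly varying as well. Then for any other admissible pair $(\chi_i'(0),\chi_j'(0))$,
\begin{equation*}
\frac{g_i(\chi_i'(0)N)}{g_j(\chi_j'(0)N)}=\frac{g_i(\chi_i'(0)N)}{g_i(N)}\cdot\frac{g_i(N)}{g_j(N)}\cdot\frac{g_j(N)}{g_j(\chi_j'(0)N)}\xrightarrow{N\to\infty}1 ,
\end{equation*}
which is \eqref{eq: existenceP} for that pair.

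The step I expect to be the main obstacle is the slow-variation argument in part 2: one has to recognise \eqref{eq:con2} as a slow-variation criterion for the tail-sum function $g_i$ and then propagate it to $g_j$ through the single asymptotic identity \eqref{eq: existenceP} by an appropriate rescaling of the index. The discrete-to-continuous bookkeeping (of the kind already carried out in the proof of Lemma \ref{lem: explosion}) and the harmonic-sum estimates in part 1 are routine.
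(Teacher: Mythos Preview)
Your argument is correct and follows essentially the same approach as the paper: both parts hinge on the telescoping identity $g_i(k)-g_i(k+1)=1/F_i(k)$ together with the hypothesis rewritten as $g_i(k+1)\le g_i(k)\bigl(1-\tfrac1{Ck}\bigr)$ (resp.\ $\ge$ with $C$ arbitrarily large), iterated over $\sim\epsilon N$ steps. Your presentation is slightly cleaner---you use the harmonic-sum asymptotics to obtain the sharper constant $\bigl(\chi_i(0)/(\chi_i(0)+\epsilon)\bigr)^{1/C}$ in part~1 and you make the slow-variation interpretation of \eqref{eq:con2} explicit in part~2---but the underlying mechanism is identical to the paper's proof.
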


\begin{proof} 1. We have by (\ref{eq: existencePcond})
\begin{equation}
\label{eq: estimate}
\frac{\sum_{k=\chi_i(0)N+1}^\infty\frac{1}{F_i(k)}}{\sum_{k=\chi_i(0)N}^\infty\frac{1}{F_i(k)}}=1-\frac{1}{F_i(\chi_i(0)N)\sum_{k=\chi_i(0)N}^\infty\frac{1}{F_i(k)}}\le1-\frac{1}{C\chi_i(0)N}
\end{equation}
and iterated application of this yields
\begin{equation*}
\frac{\sum_{k=(\chi_i(0)+\epsilon)N}^\infty\frac{1}{F_i(k)}}{\sum_{k=\chi_i(0)N}^\infty\frac{1}{F_i(k)}}\le\left(1-\frac{1}{C(\chi_i(0)+\epsilon)N}\right)^{\lfloor\epsilon N\rfloor}\xrightarrow{N\to\infty}e^{-\frac{\epsilon}{C(\chi_i(0)N+\epsilon)}}<1\ .
\end{equation*}
Finally, this implies
\begin{equation*}
\limsup_{N\to\infty}\frac{\sum_{k=(\chi_i(0)+\epsilon)N}^\infty\frac{1}{F_i(k)}}{\sum_{k=\chi_j(0)N}^\infty\frac{1}{F_j(k)}}=\limsup_{N\to\infty}\frac{\sum_{k=(\chi_i(0)+\epsilon)N}^\infty\frac{1}{F_i(k)}}{\sum_{k=\chi_i(0)N}^\infty\frac{1}{F_i(k)}}\cdot\frac{\sum_{k=\chi_i(0)N}^\infty\frac{1}{F_i(k)}}{\sum_{k=\chi_j(0)N}^\infty\frac{1}{F_j(k)}}<1\ .
\end{equation*}
2. The second part follows by similar arguments, using Condition \eqref{eq:con2} for an "$\geq$"-estimate in (\ref{eq: estimate}), where $C=C(N)$ is arbitrarily large.
\end{proof}


\begin{example}\label{example: klogk, beta>1}
If $F_i(k)=\alpha_iF(k)$ for all $i\in[A]$ and a feedback function $F$ fulfilling (\ref{eq:con2}), e.g. $F(k)=k\log(k)^\beta$ for $\beta>1$, then $D_i=\Delta_{A-1}^o$ if $\alpha_i>\alpha_j$ for all $j\ne i$.
\end{example}

If all agents are of type P, Theorem \ref{thm: typeP} implies that for any agent $i\in[A]$
$$\{\chi(0)\in\Delta_{A-1}^o\colon (\ref{eq: sMonCondP})\text{ holds}\}\subseteq D_i.$$
Assuming 2. in Theorem \ref{thm: domains}, we get from Proposition \ref{prop: typePcond} that the sets are equal up to boundaries.

In the situation of the second part of Proposition \ref{prop: typePcond}, the explosion times concentrate asymptotically on the same value, i.e. $T_i(\chi_i(0)N)/T_j(\chi_j(0)N)\xrightarrow{N\to\infty}1$ in distribution. Thus, it is not possible to predict the monopolist for large $N$ by the means of this section. If $\alpha_1=\ldots=\alpha_A$ and $\chi_i(0)=\frac{1}{A}$ for all $i$, then $\P(sMon_i(\chi(0), N))=\frac1A$ holds for all $N$ for symmetry reasons, i.e. $\chi(0)$ does not belong to any attraction domain as the monopolist remains random even in the limit $N\to\infty$. The following example underlines that this property does not hold in general, because in some cases the boundary between the attraction domains belongs to one of them. 

\begin{example}
Consider the process for $A=2$ and $F_1(k)=k^\beta,\,F_2(k)=\frac{k^\beta}{1+k^{-\delta}}$ for $\beta>1$ and $\delta\in(0, \frac14)$. Then we have
$$\sum_{k=\chi_2(0)N}^\infty\frac{1}{F_2(k)}=\sum_{k=\chi_2(0)N}^\infty\left(\frac{1}{k^\beta}+\frac{1}{k^{\beta+\delta}}\right)\sim\sum_{k=\chi_2(0)N}^\infty\frac{1}{k^\beta}=\sum_{k=\chi_2(0)N}^\infty\frac{1}{F_1(k)}$$
and
$$\sum_{k=\chi_2(0)N}^\infty\frac{1}{F_2(k)^2}\sim\sum_{k=\chi_2(0)N}^\infty\frac{1}{F_1(k)^2}$$
for $N\to\infty$. Moreover, set $\chi_1(0)=\chi_2(0)=\frac12$, such that (\ref{eq: existenceP}) holds, and define $\epsilon_N\coloneqq N^{\frac34-\beta}$. Chebyshev's inequality yields
$$\P\left(T_1(\chi_1(0)N)>\E T_1(\chi_1(0)N)+\epsilon_N\right)\le \frac{Var(T_1(\chi_1(0)N))}{\epsilon_N^2}=\frac{\sum_{k=N/2}^\infty\frac{1}{F_1(k)^2}}{\epsilon_N^2}\xrightarrow{N\to\infty}0$$
since $\sum_{k=N/2}^\infty\frac{1}{F_1(k)^2}\sim \frac{1}{2\beta-1}(N/2)^{1-2\beta}$ and
$$\P\left(T_2(\chi_1(0)N)<\E T_2(\chi_1(0)N)-\epsilon_N\right)\le \frac{Var(T_2(\chi_2(0)N))}{\epsilon_N^2}=\frac{\sum_{k=N/2}^\infty\frac{1}{F_2(k)^2}}{\epsilon_N^2}\xrightarrow{N\to\infty}0.$$
In addition, we have for large enough $N$ that
$$\E T_1(\chi_1(0)N)+\epsilon_N<\E T_2(\chi_2(0)N)-\epsilon_N,$$
because $\delta<\frac14$ implies
$$\E T_2(\chi_2(0)N)-\E T_1(\chi_1(0)N)=\sum_{k=N/2}^\infty\frac{1}{k^{\beta+\delta}}\sim \frac{1}{1-\beta-\delta}(N/2)^{1-\beta-\delta}<2N^{\frac34-\beta}.$$
Thus for large $N$
\begin{align*}
\P&(sMon_1(\chi_1(0), N))=\P\big( T_1(\chi_1(0)N)<T_2(\chi_2(0)N)\big)\\
&\ge\P\Big( T_1(\chi_1(0)N)<\E T_1(\chi_1(0)N)+\epsilon_N\,\wedge\,T_2(\chi_2(0)N)>\E T_2(\chi_2(0)N)-\epsilon_N \Big)\\
&=\P\Big(T_1(\chi_1(0)N)<\E T_1(\chi_1(0)N)+\epsilon_N\Big)\P\Big( T_2(\chi_2(0)N)>\E T_2(\chi_2(0)N)-\epsilon_N \Big)\\
&\xrightarrow{N\to\infty}1
\end{align*}
using the independence of $T_1(\chi_1(0)N)$ and $T_2(\chi_1(0)N)$. Hence, $\chi(0)\in D_1$.
\end{example}

We finish this subsection with a result on the rate of convergence in (\ref{eq: sMon}). \cite{Dunlop} presents a bound for $\P(sMon_i(\chi(0), N))$ in the case $F_i(k)=F_j(k)=k^\alpha$, but a straight-forward generalization of this procedure is possible.

\begin{proposition}
\label{prop: prop1}
Let all agents be of type P with monotone feedback functions, such that (\ref{eq: typePlog}) holds in addition. If (\ref{eq: sMonCondP}) holds for agent $i$, i.e. $\chi(0)\in D_i$, we have
\begin{align*}
\P(sMon_i(\chi(0), N))\ge1-\sum_{j=1}^A\exp\left(-(d_j-\epsilon)\sqrt{F_j(\chi_j(0)N)\sum_{k=\chi_j(0)N}^\infty\frac{1}{F_j(k)}}\right)
\end{align*}
for any $\epsilon>0$ and large enough $N$, where
\begin{equation*}
d_j\coloneqq g\left(\limsup_{N\to\infty}\frac{\sum_{k=\chi_i(0)N}^\infty\frac{1}{F_i(k)}}{\sum_{k=\chi_j(0)N}^\infty\frac{1}{F_j(k)}}\right)>0\quad\text{with}\quad g(x)\coloneqq\frac{1-x}{1+x}\quad\text{for }j\neq i
\end{equation*}
and $d_i\coloneqq\min_{j\ne i}d_j$.
\end{proposition}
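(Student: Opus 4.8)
The plan is to use the exponential embedding together with concentration of the explosion times $T_j(\chi_j(0)N)$ around their means, quantified via Bernstein-type tail bounds for sums of independent exponential random variables. Recall from \eqref{eq: sMonEmbed} that $sMon_i(\chi(0),N)=\bigcap_{j\ne i}\{T_i(\chi_i(0)N)<T_j(\chi_j(0)N)\}$, and that $\E T_j(\chi_j(0)N)=\sum_{k=\chi_j(0)N}^\infty 1/F_j(k)$ with $\mathrm{Var}(T_j(\chi_j(0)N))=\sum_{k=\chi_j(0)N}^\infty 1/F_j(k)^2$. Under \eqref{eq: sMonCondP}, for each $j\ne i$ we have $\limsup_N \E T_i(\chi_i(0)N)/\E T_j(\chi_j(0)N) = x_j < 1$; choosing the intermediate threshold $m_{ij}(N)$ to lie between $\E T_i$ and $\E T_j$ at relative distance governed by $g(x_j)=(1-x_j)/(1+x_j)$ (so roughly $m_{ij}(N) = \tfrac12(\E T_i(\chi_i(0)N)+\E T_j(\chi_j(0)N))$ up to the asymmetry), a union bound gives
\[
\P\big(sMon_i(\chi(0),N)^c\big)\le \P\big(T_i(\chi_i(0)N)\ge m_i(N)\big)+\sum_{j\ne i}\P\big(T_j(\chi_j(0)N)\le m_{ij}(N)\big),
\]
where $m_i(N)=\min_{j\ne i} m_{ij}(N)$. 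Each term is a deviation of $T_j$ from its mean by a factor essentially $(d_j-\epsilon)$ (for small $\epsilon$, using the $\limsup$ in the definition of $d_j$ and $d_i=\min_{j\ne i}d_j$), and it remains to bound these deviation probabilities.

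The second step is the tail estimate. For a sum $S=\sum_k E_k$ of independent exponentials with rates $\lambda_k=F_j(k)$, standard exponential-moment computations give, for $0<t<\min_k\lambda_k$, $\E e^{tS}=\prod_k \lambda_k/(\lambda_k-t)$, and optimizing yields both a lower-tail bound $\P(S\le (1-\delta)\E S)\le \exp\{-c\,\delta^2 (\E S)^2/\mathrm{Var}(S)\}$ and an analogous upper-tail bound valid for moderate $\delta$. The ratio $(\E S)^2/\mathrm{Var}(S)$ is exactly
\[
\frac{\big(\sum_{k=\chi_j(0)N}^\infty 1/F_j(k)\big)^2}{\sum_{k=\chi_j(0)N}^\infty 1/F_j(k)^2},
\]
and by the de l'Hospital computation already carried out in the proof of Lemma \ref{lem: explosion} this behaves like $2 F_j(\chi_j(0)N)\sum_{k=\chi_j(0)N}^\infty 1/F_j(k)$ as $N\to\infty$ (here monotonicity of $F_j$ and \eqref{eq: typePlog} are used to pass between sums and integrals and to control the asymptotics). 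Taking square roots produces exactly the $\sqrt{F_j(\chi_j(0)N)\sum_{k=\chi_j(0)N}^\infty 1/F_j(k)}$ appearing in the statement, and the constant $d_j-\epsilon$ absorbs both the factor $g(x_j)$ from the gap between the means and the universal constants from the Bernstein bound, for $N$ large enough.

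The main obstacle is making the tail bound uniform and quantitatively sharp enough to land the constant $d_j = g(\limsup_N \E T_i/\E T_j)$ rather than some smaller constant: the exponential sum has infinitely many summands with rates $F_j(k)\to\infty$, so the admissible range of the tilting parameter $t$ shrinks, and one must check that the optimal $t$ stays within the allowed region and that the quadratic (Gaussian) regime of the Bernstein bound — rather than the linear Poissonian tail — is the operative one for the relevant deviation scale. Condition \eqref{eq: typePlog} (equivalently $F_j(k+1)/F_j(k)\to1$, cf.\ \eqref{eq: incrementlog}) is what guarantees the summands are asymptotically ``smooth'' enough that the variance term dominates and the de l'Hospital asymptotics for $(\E S)^2/\mathrm{Var}(S)$ apply; monotonicity ensures the sum-integral comparisons. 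A secondary technical point is handling the gap between $\E T_i(\chi_i(0)N)$ and $\E T_j(\chi_j(0)N)$: since only the $\limsup$ of the ratio is controlled, one fixes $\epsilon>0$, chooses $N$ large enough that the ratio is below $x_j+\epsilon'$, and splits the gap proportionally, which is exactly where the loss $d_j\rightsquigarrow d_j-\epsilon$ and the phrase ``for large enough $N$'' enter. The almost-sure strengthening for $F_i(k)=\alpha_i k^{\beta_i}$ mentioned after Lemma \ref{lem: explosion} would then follow by combining these tail bounds (which are summable in $N$ along a geometric subsequence) with Borel–Cantelli, though that is not needed for the present proposition.
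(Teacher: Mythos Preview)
Your overall strategy---exponential embedding, a union bound over $j\ne i$ with an intermediate threshold between $\E T_i$ and $\E T_j$, and Chernoff-type tail bounds for the explosion times---is exactly the paper's. The one point that is muddled is how the square root arises. The paper does \emph{not} use the Gaussian regime of Bernstein: it fixes the tilting parameter $s=(\mathrm{Var}\,T_j)^{-1/2}$ and computes directly that $\P\big(T_j-\E T_j>t\,\sqrt{\mathrm{Var}\,T_j}\big)\le c_j(k)\,e^{-t}$ with $c_j(k)\to e$ (this is where \eqref{eq: typePlog} enters, to control $s/F_j(k)\to 0$); setting $t=(d_j-\epsilon)\,\E T_j/\sqrt{\mathrm{Var}\,T_j}$ and using monotonicity of $F_j$ to bound $\E T_j/\sqrt{\mathrm{Var}\,T_j}\ge\sqrt{F_j(\chi_j(0)N)\sum_k 1/F_j(k)}$ gives the stated exponent with the exact constant $d_j-\epsilon$. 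Your Bernstein bound $\exp\{-c\,\delta^2(\E T_j)^2/\mathrm{Var}\,T_j\}$ is actually \emph{stronger} (exponent of order $F_j\sum 1/F_j$ rather than its square root), so ``taking square roots'' is not the right description: you do not extract a square root from the exponent, rather your bound dominates the stated one once $F_j\sum 1/F_j$ is large. For the same reason, $d_j=g(x_j)$ is a fixed number and cannot ``absorb'' the universal Bernstein constant; what actually happens is that $c\,(d_j-\epsilon')^2\,F_j\sum 1/F_j\ge (d_j-\epsilon)\sqrt{F_j\sum 1/F_j}$ for $N$ large, which is where the slack $\epsilon$ and the phrase ``for large enough $N$'' do their work. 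With that clarification your sketch goes through and in fact yields more than the proposition claims.
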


This means that the rate of convergence in (\ref{eq: typeP}) gives a lower bound for the rate of convergence of $\P(sMon_i(\chi_i(0))$.

\begin{proof}
Once again, the proof uses the exponential embedding from Section 2. Let $t>0$ and $s\coloneqq\left(\sum_{l=k}^\infty\frac{1}{F_j(l)^2}\right)^{-\frac{1}{2}}$. Then the Markov-inequality and monotone convergence yield for all $j\in [A]$ and $t>0$:
\begin{align*}
&\P\left(\sum_{l=k}^\infty\tau_j(l)-\sum_{l=k}^\infty\frac{1}{F_j(l)}>t\sqrt{\sum_{l=k}^\infty\frac{1}{F_j(l)^2}}\right)\\
&\le\exp\left(-s\sum_{l=k}^\infty\frac{1}{F_j(l)}-st\sqrt{\sum_{l=k}^\infty\frac{1}{F_j(l)^2}}\right)\cdot\E\exp\left(s\sum_{l=k}^\infty\tau_j(l)\right)\\
&=\exp\left(-s\sum_{l=k}^\infty\frac{1}{F_j(l)}-t\right)\cdot\prod_{l=k}^\infty\E e^{s\tau_j(l)}=\exp\left(-s\sum_{l=k}^\infty\frac{1}{F_j(l)}-t\right)\cdot\prod_{l=k}^\infty\left(1+\frac{s}{F_j(l)-s}\right)\\
&\le\exp\left(-s\sum_{l=k}^\infty\frac{1}{F_j(l)}-t\right)\cdot\prod_{l=k}^\infty\exp\left(\frac{s}{F_j(l)-s}\right)=\exp\left(s\sum_{l=k}^\infty\left(-\frac{1}{F_j(l)}+\frac{1}{F_j(l)-s}\right)-t\right)\\
&\le\exp\left(\frac{s^2}{1-\frac{s}{F_j(k)}}\sum_{l=k}^\infty\frac{1}{F_j(l)^2}-t\right)=c_j(k)e^{-t},
\end{align*}
where $c_j(k)\coloneqq\exp\left(\frac{1}{1-\frac{s}{F_j(k)}}\right)$. Setting
\begin{equation*}
t=(d_j-\epsilon)\frac{\sum_{l=k}^\infty\frac{1}{F_j(l)}}{\sqrt{\sum_{l=k}^\infty\frac{1}{F_j(l)^2}}}
\end{equation*}
(which is positive for $\epsilon$ small enough since $d_j >0$ for all $j\in [A]$) yields
\begin{align*}
\P\left(\frac{\sum_{l=k}^\infty\tau_j(l)}{\sum_{l=k}^\infty\frac{1}{F_j(l)}}-1>(d_j-\epsilon)\right)&\le c_j(k)\exp\left(-(d_j-\epsilon)\frac{\sum_{l=k}^\infty\frac{1}{F_j(l)}}{\sqrt{\sum_{l=k}^\infty\frac{1}{F_j(l)^2}}}\right)\\
&\le c_j(k)\exp\left(-(d_j-\epsilon)\sqrt{F_j(k)\sum_{l=k}^\infty\frac{1}{F_j(l)}}\right) .
\end{align*}
The second estimate uses $F_j(l)\ge F_j(k)$ by monotonicity. Analogously, one can show
\begin{equation*}
\P\left(\frac{\sum_{l=k}^\infty\tau_j(l)}{\sum_{l=k}^\infty\frac{1}{F_j(l)}}-1<-(d_j-\epsilon)\right)\le e\cdot\exp\left(-(d_j-\epsilon)\sqrt{F_j(k)\sum_{l=k}^\infty\frac{1}{F_j(l)}}\right) ,
\end{equation*}
which will be used only for $j=i$. 
Both estimates then imply for large enough $N$ together with (\ref{eq: sMonEmbed}):
\begin{align*}
&\P(sMon_i(\chi(0), N))\ge1-\sum_{j\ne i}\Big( 1-\P\big( T_i (\chi_i(0)N)<T_j(\chi_j(0)N)\big)\Big)\\
&\ge2-A+\sum_{j\ne i}\P\left(\frac{T_i(\chi_i(0)N)}{\sum_{k=\chi_i(0)N}^\infty\frac{1}{F_i(k)}}>1-(d_j-\epsilon)\right)\cdot\P\left(\frac{T_j(\chi_j(0)N)}{\sum_{k=\chi_j(0)N}^\infty\frac{1}{F_j(k)}}<1+(d_j-\epsilon)\right)\\
&\ge2-A+\left(1-e\cdot\exp\left(-(d_i-\epsilon)\sqrt{F_i(\chi_i(0)N)\sum_{k=\chi_i(0)N}^\infty\frac{1}{F_i(k)}}\right)\right)\\
&\quad\cdot\sum_{j\ne i}\left(1-c_j(\chi_j(0)N)\exp\left(-(d_j-\epsilon)\sqrt{F_j(\chi_j(0)N)\sum_{k=\chi_j(0)N}^\infty\frac{1}{F_i(k)}}\right)\right)\\
&\ge1-A\cdot\exp\left(1-(d_i-\epsilon)\sqrt{F_i(\chi_i(0)N)\sum_{k=\chi_i(0)N}^\infty\frac{1}{F_i(k)}}\right)\\
&\quad-\sum_{j\ne i}c_j(\chi_j(0)N)\exp\left(-(d_j-\epsilon)\sqrt{F_j(\chi_j(0)N)\sum_{k=\chi_j(0)N}^\infty\frac{1}{F_i(k)}}\right)
\end{align*}
In the last inequality we use $(1-x)(1-y)\ge1-x-y$ for $x, y\ge0$. For large enough $N$ we have
\begin{align*}
1+\log(A)&<\epsilon\sqrt{F_i(\chi_i(0)N)\sum_{k=\chi_i(0)N}^\infty\frac{1}{F_i(k)}}\\
\log\left(c_j(\chi_j(0)N)\right)&<\epsilon\sqrt{F_j(\chi_j(0)N)\sum_{k=\chi_j(0)N}^\infty\frac{1}{F_j(k)}}
\end{align*}
for $j\ne i$ because of (\ref{eq: typeP}) and $c_j(k)\xrightarrow{k\to\infty}1$ due to (\ref{eq: typePlog}). Finally, this leads to
\begin{align*}
\P&(sMon_i(\chi(0), N))\ge1-\sum_{j=1}^A\exp\left(-(d_j-2\epsilon)\sqrt{F_j(\chi_j(0)N)\sum_{k=\chi_j(0)N}^\infty\frac{1}{F_j(k)}}\right)
\end{align*}
\end{proof}

For $F_i(k)=\alpha k^\beta$ we have $F_i(k)\sum_{l=k}^\infty\frac{1}{F_i(l)}\sim\frac{k}{1-\beta}$, thus the convergence of\linebreak $\P(sMon_i(\chi_i(0)N))$ can be considered as fast. Hence, $\P(sMon_i(\chi(0), N))$ is close to one even for moderate $N$, when $\chi(0)\in D_i$ is in the asymptotic attraction domain.

In the type E case we saw that a total monopoly is very likely whereas in the type P case the losers might also win in some steps. It is now a question of interest how many steps the losers win, i.e. the value of $X_j(\infty)=\lim_{n\to\infty}X_j(n)$ if agent $j$ is not the monopolist. Results on this question can be found in \cite{Oliveira} and \cite{Zhu}. It is remarkable that for polynomially growing feedback functions the distribution of $X_j(\infty)$ has heavy tails. \cite{Cotar2, Zhu} also present results on the time when the monopoly occurs. 
Further asymptotic results on strong monopoly, mainly in the type P case, can be found e.g. in \cite{Khanin,Mitzenmacher,Oliveira3,Dunlop,Drinea,Menshikov,Jiang}.

\subsection{Proof of Theorem \ref{thm: domains} and Theorem \ref{thm: tMonTypeE}}\label{subsec: ProofDomain}

Finally, we shortly explain how Theorem \ref{thm: domains} and Theorem \ref{thm: tMonTypeE} follow from the results of the previous sections. 

First, assume that Assumption 1 of Theorem \ref{thm: domains} is satisfied, i.e. at least one agent is of type E. Then Corollary \ref{cor:44} implies that for any agent $i\in[A]$ of type E
$$\tilde D_i\coloneqq\{\chi(0)\in\Delta_{A-1}^o\colon (\ref{eq: tMonCondE}) \text{ holds}\} \subseteq D_i.$$
Obviously:
$$\tilde D_i=\bigcap_{j\ne i}\left\{\chi(0)\in\Delta_{A-1}^o\colon \lim_{N\to\infty}\frac{F_i(\chi_i(0)N)}{F_j(\chi_j(0)N)}=\infty\right\}$$
Due to Proposition \ref{existenceE}, there is a ratio $r_{i, j}\in[0, \infty]$ such that
\begin{equation*}
    \lim_{N\to\infty}\frac{F_i(\chi_i(0)N)}{F_j(\chi_j(0)N)}=
    \begin{cases}
        \infty &\text{ if } \frac{\chi_i(0)}{\chi_j(0)}>r_{i, j}\\
        0 &\text{ if } \frac{\chi_i(0)}{\chi_j(0)}<r_{i, j}
    \end{cases}
\end{equation*}
for each pair $i\ne j$ of agents. Note that 
\begin{equation*}
    \lim_{N\to\infty}\frac{F_i(\chi_i(0)N)}{F_j(\chi_j(0)N)}=\lim_{N\to\infty}\frac{F_i\left(\frac{\chi_i(0)}{\chi_j(0)}N\right)}{F_j(N)}.
\end{equation*}
Hence, 
$$\tilde D_i=\bigcap_{j\ne i}\left\{\chi(0)\in\Delta_{A-1}^o\colon \frac{\chi_i(0)}{\chi_j(0)}>r_{i, j} \right\}$$
is an intersection of half-spaces and the simplex, i.e. a polytope. Moreover, $\tilde D_1,\ldots, \tilde D_A$ cover the whole simplex up to boundaries, since the "winning"-relation $\lim_{N\to\infty}\frac{F_i(\chi_i(0)N)}{F_j(\chi_j(0)N)}=\infty$ is transitive. Thus, $D_1,\ldots,D_A$ cover the simplex up to boundaries as well and $\tilde D_i$ equals $D_i$ up to boundaries. According to Corollary \ref{cor:44}, we even have $\P(tMon_i(\chi(0), N))\xrightarrow{} 1$ for $N\to\infty$, if $\chi(0)\in\tilde D_i$. Hence, Theorem \ref{thm: tMonTypeE} is proven, too.

If Assumption 2 of Theorem \ref{thm: domains} is satisfied, the proof is analogous using Theorem \ref{thm: typeP} and Proposition \ref{prop: typePcond}. Note that
$$\lim_{N\to\infty}\frac{F_i(\chi_j(0)N)}{F_j(\chi_i(0)N)}=c\in[0, \infty]\quad\Longrightarrow\quad\lim_{N\to\infty}\frac{\sum_{k=\chi_i(0)N}^\infty\frac{1}{F_i(k)}}{\sum_{k=\chi_j(0)N}^\infty\frac{1}{F_j(k)}}=c\frac{\chi_i(0)}{\chi_j(0)}$$
due to the Theorem of de l'Hospital.

In summary, for finite $N$ the monopolist is random and even disadvantageous agents can win. If the initial market size $N$ is large, it is possible to predict the winner with high probability depending on the initial market shares.


\section{The non-monopoly case}
\label{sec: non-monopoly}

Now we consider the case when no agent fulfills (\ref{eq: explosion}), such that no strong monopoly occurs. It is known that in the case of a standard Pólya urn, i.e. $F_i(k)=k$ for all agents, the limit $\chi(\infty)=\lim_{n\to\infty}\chi(n)$ exists almost surely and $\chi(\infty)$ has a Dirichlet-distribution with parameter $X(0)$ (see e.g. \cite{Freedman}). Thus, in the long run all agents have a stable, non-zero, random market share.

It is basically known (e.g. from \cite{Arthur2}) that if the feedback functions grow significantly slower than linear, then $\chi(\infty)$ is deterministic. We present an alternative approach to the sub-linear case, which allows some additional insights. For example, the case $F_i(k)=\log(k)$ is not included in the results of \cite{Arthur2}. In addition, our approach allows to construct feedback functions such that $\chi(n)$ does not even converge for $n\to\infty$. In order to get deterministic limits in our approach, we will need a condition, which ensures that the feedback functions grow slow enough. We will mainly use:
\begin{equation}
\label{eq: sublin}
\limsup_{k\to\infty}\frac{1}{k}F_i(k)\sum_{l=1}^k\frac{1}{F_i(l)}<\infty
\end{equation}

Note that this already implies that $i$ does not fulfill (\ref{eq: explosion}). We add some examples to gain an understanding of this restriction.

\begin{example}
\begin{enumerate}
\item For $F_i(k)=k(\log(k+1))^\alpha$ with $\alpha\in\R$ (\ref{eq: sublin}) is not fulfilled as $\sum_{l=1}^k\frac{1}{F_i(l)}\sim(\log k)^{1-\alpha}$.
\item If $F_i(k)=\alpha k^{\beta}$ for $\alpha>0, \beta<1$, then (\ref{eq: sublin}) is fulfilled as $\sum_{l=1}^k\frac{1}{F_i(l)}\sim\frac{k^{1-\beta}}{\alpha(1-\beta)}$.
\item For $F_i(k)=\log(k+1)$ (\ref{eq: sublin}) is fulfilled as $\sum_{l=1}^k\frac{1}{F_i(l)}\sim\frac{k}{\log(k)}$.
\item (\ref{eq: sublin}) is fulfilled if $\liminf_{k\to\infty}F_i(k)>0$ and $\limsup_{k\to\infty}F_i(k)<\infty$.
\end{enumerate}
\end{example}
In fact, condition (\ref{eq: sublin}) contains a monotonicity in the following sense.

\begin{proposition}
\label{prop: monotonicity}
If $F_i$ fulfills (\ref{eq: sublin}) and for some $j\neq i$
\begin{equation*}
\limsup_{x\to\infty}\frac{\frac{d}{dx}\log(F_j(x))}{\frac{d}{dx}\log(F_i(x))}<\infty,
\end{equation*}
then $F_j$ fulfills (\ref{eq: sublin}), too.
\end{proposition}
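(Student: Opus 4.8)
The plan is to pass to the integral version of (\ref{eq: sublin}) and then use that the hypothesis forces $\log F_j$ to grow essentially no faster than a fixed multiple of $\log F_i$. First I would replace $F_i,F_j$ by the continuously differentiable extensions that are monotone on unit intervals, chosen as in the proof of Lemma~\ref{lem: explosion} so that, up to a bounded multiplicative error,
\[
\frac1k F(k)\sum_{l=1}^k\frac1{F(l)}\ \asymp\ R_F(k):=\frac{F(k)}{k}\int_1^k\frac{dt}{F(t)} .
\]
This reduces the claim to: $\limsup_{k\to\infty}R_{F_i}(k)<\infty$ implies $\limsup_{k\to\infty}R_{F_j}(k)<\infty$. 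Two remarks simplify the task. If $F_j$ is eventually non-increasing then $1/F_j$ is eventually non-decreasing, so $\int_1^k dt/F_j(t)\le \mathrm{const}+k/F_j(k)$ and $\limsup_k R_{F_j}(k)\le 1$; hence one may assume otherwise. Moreover, since $R_{F_i}$ is bounded and $\int_1^k dt/F_i(t)\ge\int_1^2 dt/F_i(t)=:c_0>0$ for $k\ge2$, we get the a priori bound $F_i(k)/k\le R_{F_i}(k)/c_0=O(1)$.

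Next I would exploit the hypothesis. Writing $\phi_F(x):=\tfrac{d}{dx}\log F(x)$, finiteness of $\limsup_{x\to\infty}\phi_{F_j}(x)/\phi_{F_i}(x)$ provides $C\ge1$ and $x_0$ with $\phi_{F_j}(x)\le C\,\phi_{F_i}(x)$ for $x\ge x_0$. Then $x\mapsto\log F_j(x)-C\log F_i(x)$ is non-increasing on $[x_0,\infty)$, giving the key growth comparison
\[
\frac{F_j(x)}{F_j(t)}\ \le\ \Big(\frac{F_i(x)}{F_i(t)}\Big)^{C}\qquad(x_0\le t\le x),
\]
and in particular $F_j(k)\le\mathrm{const}\cdot F_i(k)^{C}$. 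Splitting $\int_1^k dt/F_j(t)$ at $x_0$ and using this comparison on $[x_0,k]$, one obtains
\[
R_{F_j}(k)\ \le\ \mathrm{const}\cdot\frac{F_i(k)^{C}}{k}\ +\ \frac1k\int_{x_0}^k\Big(\frac{F_i(k)}{F_i(t)}\Big)^{C}dt\ \le\ \mathrm{const}\cdot\frac{F_i(k)^{C}}{k}\ +\ R_{F_i^{\,C}}(k)\ +\ o(1),
\]
the $o(1)$ accounting for the sum/integral conversion. When the $\limsup$ in the hypothesis is at most $1$ one may take $C=1$, and the right-hand side becomes $\mathrm{const}+R_{F_i}(k)+o(1)$, bounded by assumption — no monotonicity or regularity of $F_i$ is needed here.

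It remains to handle a general finite $\limsup$, i.e. to transfer (\ref{eq: sublin}) from $F_i$ to the power $F_i^{\,C}$ (and to bound $F_i(k)^C/k$). The tool is the differential inequality coming from (\ref{eq: sublin}): with $u(k)=\int_1^k dt/F_i(t)$ and $M:=1+\limsup_k R_{F_i}(k)$ one has $u'(k)/u(k)\ge 1/(Mk)$ for large $k$, hence $u(k)\gtrsim k^{1/M}$ and $F_i(k)\le u(k)^{-1}Mk\lesssim k^{1-1/M}$; feeding this into a dyadic decomposition $\int_1^k dt/F_i(t)^C=\sum_{r\ge0}\int_{k2^{-r-1}}^{k2^{-r}}dt/F_i(t)^C$ and using monotonicity of $F_i$ on each dyadic block should yield $R_{F_i^{\,C}}(k)=O(1)$ and $F_i(k)^C/k=O(1)$. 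I expect this transfer step to be the main obstacle: it is precisely the point where one must argue that (\ref{eq: sublin}) keeps $F_i$ uniformly away from linear growth on all scales (so that its $C$-th power is still sub-linear), carry the sum/integral comparison through uniformly in $k$, and accommodate feedback functions that are not eventually monotone on large scales. Everything else — the manipulations with $\log F$, the dyadic sums, and the appeals to de l'Hospital's rule — is of the routine type already used repeatedly in the paper.
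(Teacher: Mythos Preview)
The paper's proof is a two-line argument: it asserts that the hypothesis gives the \emph{linear} ratio comparison $\frac{F_i(k)}{F_i(l)}\ge\mathrm{const.}\cdot\frac{F_j(k)}{F_j(l)}$ for all $l\le k$, and then (\ref{eq: sublin}) for $F_j$ is immediate via $\frac1k\sum_{l=1}^k\frac{F_j(k)}{F_j(l)}\le\mathrm{const.}^{-1}\cdot\frac1k\sum_{l=1}^k\frac{F_i(k)}{F_i(l)}$. Your more careful route is actually the right one: integrating $\phi_j\le C\phi_i$ only yields the \emph{power} comparison $\frac{F_j(k)}{F_j(l)}\le\big(\frac{F_i(k)}{F_i(l)}\big)^C$, not the paper's linear one, and your instinct that the transfer step to $F_i^{\,C}$ is the main obstacle is spot on---because that step cannot be carried out in general, and the proposition as stated is in fact false. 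Take $F_i(k)=\sqrt{k}$ and $F_j(k)=k$: then $\phi_j(x)/\phi_i(x)\equiv 2<\infty$ and $F_i$ satisfies (\ref{eq: sublin}) (the expression tends to $2$), yet $\frac1k\cdot k\cdot\sum_{l=1}^k\frac1l\sim\log k\to\infty$, so $F_j$ does not. In your notation this is $M\approx2$, $C=2$, giving $F_i(k)^C\sim k$---exactly the linear borderline where your dyadic argument must break down.

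The statement becomes correct (and both arguments go through) under the stronger hypothesis $\limsup_{x\to\infty}\phi_j(x)/\phi_i(x)\le 1$: then one may take $C=1$, and your bound $R_{F_j}(k)\le\mathrm{const.}+R_{F_i}(k)+o(1)$ already finishes the proof, coinciding with the paper's computation once the ``$\mathrm{const.}$'' there is read as coming from the finite initial segment $[1,x_0]$. This is presumably the intended reading of ``monotonicity'': $F_j$ grows no faster than $F_i$ in the sense of logarithmic derivatives.
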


\begin{proof}
The assumption implies via (\ref{eq: incrementlog})
\begin{equation*}
\frac{F_i(k+l)}{F_i(k)}\ge const. \frac{F_j(k+l)}{F_j(k)}
\end{equation*}
for all $k, l\in\N$ and hence:
\begin{equation*}
\frac{1}{k}F_i(k)\sum_{l=1}^k\frac{1}{F_i(l)}=\frac{1}{k}\sum_{l=1}^k\frac{F_i(k)}{F_i(l)}\ge const.\frac{1}{k}\sum_{l=1}^k\frac{F_j(k)}{F_j(l)}= const.\frac{1}{k}F_j(k)\sum_{l=1}^n\frac{1}{F_j(l)}
\end{equation*}
\end{proof}

In general, our approach even allows feedback functions that converge to zero as long as this convergence is not to fast, which is ensured by the condition
\begin{equation}
\label{eq: sublin2}
\liminf_{k\to\infty}\frac{1}{k^p}F_i(n)\sum_{l=1}^k\frac{1}{F_i(l)}>0\quad\text{for some }p>\frac{1}{2}\ .
\end{equation}
Note that (\ref{eq: sublin2}) is fulfilled for any feedback function with $\liminf_{k\to\infty}F_i(k)>0$ as well as for $F_i(k)=k^{-\alpha}, \alpha>0$, but not for $F_i(k)=e^{-k}$. In analogy to Proposition \ref{prop: monotonicity} we get a monotonicity here in the sense that if $F_i$ fulfills (\ref{eq: sublin2}) and
\begin{equation*}
\limsup_{x\to\infty}\frac{\frac{d}{dx}\log(F_i(x))}{\frac{d}{dx}\log(F_j(x))}<\infty,
\end{equation*}
then $F_j$ fulfills (\ref{eq: sublin2}), too. We are now prepared for the main result of this section regarding the counting processes \eqref{eq:counting} of the exponential embedding from Section 2.

\begin{theorem}
\label{thm: asymptoticXi}
Let $F_i$ fulfill (\ref{eq: sublin}) and (\ref{eq: sublin2}). Then
\begin{equation*}
\frac{\Xi_i(t)}{a_i^{-1}(t)}\xrightarrow{t\to\infty}1\quad\text{almost surely}\ ,
\end{equation*}
where $a_i^{-1}$ denotes the inverse function of $a_i(t)\coloneqq\int_1^t\frac{dx}{F_i(x)}$.
\end{theorem}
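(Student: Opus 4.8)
The plan is to study the jump times of the birth process $\Xi_i$ from \eqref{eq:counting} directly and then invert. Write $S_m\coloneqq\sum_{k=0}^{m-1}\tau_i(X_i(0)+k)$ for the time of the $m$-th jump of $\Xi_i$; the summands are independent with $\E\tau_i(X_i(0)+k)=1/F_i(X_i(0)+k)$ and $\operatorname{Var}\tau_i(X_i(0)+k)=1/F_i(X_i(0)+k)^2$. Set $b_m\coloneqq\E S_m=\sum_{k=0}^{m-1}1/F_i(X_i(0)+k)$ and $\Sigma(k)\coloneqq\sum_{l=1}^k 1/F_i(l)$. Since \eqref{eq: sublin} implies that $i$ does not satisfy \eqref{eq: explosion}, we have $\Sigma(k)\to\infty$ and hence $b_m\uparrow\infty$; and since \eqref{eq: sublin2} gives $F_i(k)\Sigma(k)\ge c\,k^p$ for large $k$ with some $p>\tfrac12$, the relative increments satisfy
\[
\frac{b_m-b_{m-1}}{b_m}=\frac{1}{F_i(X_i(0)+m-1)\,b_m}\ \sim\ \frac{1}{F_i(X_i(0)+m-1)\,\Sigma(X_i(0)+m-1)}\ =\ O(m^{-p})\xrightarrow{m\to\infty}0 .
\]
In particular $b_{m+1}/b_m\to1$ and $\sum_m\operatorname{Var}\tau_i(X_i(0)+m-1)/b_m^2=\sum_m\bigl((b_m-b_{m-1})/b_m\bigr)^2=O\!\bigl(\sum_m m^{-2p}\bigr)<\infty$ because $2p>1$. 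By Kolmogorov's convergence criterion the series $\sum_m\bigl(\tau_i(X_i(0)+m-1)-1/F_i(X_i(0)+m-1)\bigr)/b_m$ converges almost surely, and Kronecker's lemma then gives $S_m/b_m\to1$ almost surely, whence also $S_m\to\infty$ a.s.

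Next I would pass from $S_m$ to the counting process by a renewal-type inversion. Let $M(t)\coloneqq\sup\{m\ge0:S_m\le t\}$, so that $\Xi_i(t)=X_i(0)+M(t)$ and $M(t)\to\infty$ a.s. From $S_{M(t)}\le t<S_{M(t)+1}$, combined with $S_m/b_m\to1$ and $b_{m+1}/b_m\to1$, one obtains $b_{M(t)}/t\to1$ almost surely. Choosing, exactly as in the proof of Lemma \ref{lem: explosion}, a regular extension of $F_i$ for which $\int_l^{l+1}dx/F_i(x)=1/F_i(l)$ for every integer $l$ — so that $a_i(n)=\Sigma(n-1)$ and hence $b_m=a_i(X_i(0)+m)-a_i(X_i(0))$ — and using that $a_i(X_i(0))$ is a fixed constant while $a_i(\Xi_i(t))\to\infty$, we conclude
\[
\frac{a_i(\Xi_i(t))}{t}\xrightarrow{t\to\infty}1\quad\text{almost surely}.
\]

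It remains to upgrade this time-scale statement to the state-scale statement $\Xi_i(t)/a_i^{-1}(t)\to1$, and here \eqref{eq: sublin} enters a second time: it forces $a_i$ to grow fast enough that asymptotic equality of $a_i$-values transfers to asymptotic equality of the arguments. Indeed \eqref{eq: sublin} yields a constant $C$ with $1/F_i(l)\ge\Sigma(l)/(Cl)$ for large $l$, so that for $n'\ge(1+\delta)n$ with $n$ large,
\[
\frac{\Sigma(n')}{\Sigma(n)}=1+\frac{1}{\Sigma(n)}\sum_{l=n+1}^{n'}\frac{1}{F_i(l)}\ \ge\ 1+\frac1C\sum_{l=n+1}^{n'}\frac1l\ \ge\ 1+\frac{\log(1+\delta)}{2C}>1 .
\]
If, along some sequence $t_k\to\infty$, one had $\Xi_i(t_k)\ge(1+\delta)a_i^{-1}(t_k)$ (the reverse inequality being symmetric), then with $u_k\coloneqq a_i^{-1}(t_k)\to\infty$, monotonicity of $a_i$ and $a_i(n)\sim\Sigma(n)$, the displayed bound would force $a_i(\Xi_i(t_k))/t_k=a_i(\Xi_i(t_k))/a_i(u_k)\ge1+c$ for all large $k$ and some $c>0$, contradicting the previous display. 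Hence $\Xi_i(t)/a_i^{-1}(t)\to1$ almost surely, as claimed.

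The delicate point is this last passage from $a_i(\Xi_i(t))\sim t$ to $\Xi_i(t)\sim a_i^{-1}(t)$, and it is exactly where \eqref{eq: sublin} is indispensable. For $F_i(k)=k$, which violates \eqref{eq: sublin}, one still has $S_m/b_m\to1$ a.s. and $a_i(\Xi_i(t))=\log\Xi_i(t)\sim t$ a.s., yet $\Xi_i(t)/a_i^{-1}(t)=e^{-t}\Xi_i(t)$ converges to a non-degenerate random limit (the Yule martingale limit) rather than to $1$ — the reason being that $a_i=\log$ varies too slowly for an estimate of the form $\Sigma(n')/\Sigma(n)\ge1+c$. So \eqref{eq: sublin} is precisely what makes the inversion go through; by comparison the strong law for $S_m$ and the bookkeeping $M(t)\leftrightarrow b_{M(t)}$ are routine, and the only other technical care needed is the choice of a $C^1$-extension reproducing the discrete partial sums, as in Lemma \ref{lem: explosion}.
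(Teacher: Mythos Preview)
Your proof is correct and follows essentially the same route as the paper: first a Kolmogorov-criterion/Kronecker-lemma argument yields $S_m/b_m\to1$ and hence $a_i(\Xi_i(t))/t\to1$ (this is exactly the paper's Lemma~\ref{lem: lemma3}), and then condition~\eqref{eq: sublin} is used to upgrade to $\Xi_i(t)/a_i^{-1}(t)\to1$. The only stylistic difference is in this last step: the paper writes $\Xi_i(t)=a_i^{-1}(t+o(t))$ and computes $\log\bigl(a_i^{-1}(t+o(t))/a_i^{-1}(t)\bigr)=\int_t^{t+o(t)}F_i(a_i^{-1}(x))/a_i^{-1}(x)\,dx\to0$ directly from~\eqref{eq: sublin}, whereas you run the equivalent contrapositive ratio estimate $\Sigma((1+\delta)n)/\Sigma(n)\ge 1+c$.
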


Note that $a_i^{-1}$ exists as $a_i$ is strictly monotone. The asymptotics of birth processes have been studied in the literature before, e.g. in \cite{Barbour}. One main result of \cite{Barbour} will be used for a special case in Section \ref{sec: specialCase} to abandon condition (\ref{eq: sublin}). The following lemma provides the first step of the proof of Theorem \ref{thm: asymptoticXi}, using standard ideas from renewal theory.

\begin{lemma}
\label{lem: lemma3}
If $F_i$ fulfills (\ref{eq: sublin2}) for $p>\frac12$, then
\begin{equation*}
\frac{a_i(\Xi_i(t))}{t}\xrightarrow{t\to\infty} 1\quad\text{almost surely}\ .
\end{equation*}
\end{lemma}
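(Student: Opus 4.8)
\textbf{Proof plan for Lemma \ref{lem: lemma3}.}

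The plan is to realize $\Xi_i$ as a renewal-type counting process and to apply a strong law of large numbers to the partial sums of the holding times $\tau_i(k)$, then transfer the resulting asymptotics through the deterministic change of time $a_i$. Write $S_m \coloneqq \sum_{k=X_i(0)}^{X_i(0)+m} \tau_i(k)$ for the time of the $(m+1)$-st jump of $\Xi_i$; by \eqref{eq:counting} we have $\Xi_i(t) = \max\{m : S_{m-1}\le t\} + X_i(0) - 1$ (up to an index shift), so that $S_{\Xi_i(t)-X_i(0)-1}\le t < S_{\Xi_i(t)-X_i(0)}$. Since $\tau_i(k)$ is exponential with rate $F_i(k)$, we have $\E S_m = \sum_{k=X_i(0)}^{X_i(0)+m}\frac{1}{F_i(k)}$, which is comparable to $a_i(X_i(0)+m)=\int_1^{X_i(0)+m}\frac{dx}{F_i(x)}$ up to a bounded additive error coming from the regular extension of $F_i$ (the difference between the sum and the integral is controlled on each unit interval by monotonicity of $F_i$). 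The first step is therefore to show $S_m/\E S_m \to 1$ almost surely; the second is to show that applying $a_i$ to the sandwiching inequality above yields $a_i(\Xi_i(t))/t\to1$.

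For the first step I would use a variance estimate plus Borel--Cantelli along a subsequence, together with monotonicity to fill the gaps. We have $\operatorname{Var}(S_m)=\sum_{k=X_i(0)}^{X_i(0)+m}\frac{1}{F_i(k)^2}$, and condition \eqref{eq: sublin2} with exponent $p>\tfrac12$ gives $\E S_m = \sum \frac{1}{F_i(k)} \gtrsim m^p/F_i(m)$, hence
\begin{equation*}
\frac{\operatorname{Var}(S_m)}{(\E S_m)^2} \le \frac{\frac{1}{F_i(m)}\sum_{k}\frac{1}{F_i(k)}}{(\E S_m)^2} = \frac{1}{F_i(m)\,\E S_m} \lesssim \frac{1}{m^p}\,,
\end{equation*}
using $1/F_i(k)^2 \le \frac{1}{F_i(m)}\cdot\frac{1}{F_i(k)}$ only where $F_i(k)\ge F_i(m)$ — one must be slightly careful if $F_i$ is not monotone, but on the relevant range the regular extension can be taken monotone on unit intervals and the discrepancy is lower order. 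Chebyshev then gives $\P(|S_m/\E S_m - 1|>\varepsilon)\lesssim m^{-p}$, which is summable along $m_j=j^{\lceil 2/p\rceil}$ (or any subsequence with $\sum m_j^{-p}<\infty$). By Borel--Cantelli, $S_{m_j}/\E S_{m_j}\to1$ a.s.; since $m\mapsto S_m$ and $m\mapsto \E S_m$ are nondecreasing and consecutive ratios $\E S_{m_{j+1}}/\E S_{m_j}\to1$ (because $\E S_m$ grows polynomially, not geometrically, being squeezed between powers of $m$ up to $F_i$-factors that vary subexponentially), monotone interpolation upgrades this to $S_m/\E S_m\to1$ a.s.

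For the second step, note $\Xi_i(t)\to\infty$ a.s. (guaranteed because the $F_i$ in this regime do not satisfy \eqref{eq: explosion}, so $T_i=\infty$), and abbreviate $M(t)\coloneqq \Xi_i(t)-X_i(0)$. The sandwich $S_{M(t)-1}\le t< S_{M(t)}$ combined with $S_m\sim \E S_m\sim a_i(X_i(0)+m)$ gives $a_i(\Xi_i(t)-1)\lesssim t \lesssim a_i(\Xi_i(t))$ in the ratio sense; since $a_i$ is continuous, strictly increasing and slowly varying at infinity in the relevant sense — more precisely $a_i(k+1)/a_i(k)\to1$, which follows from \eqref{eq: sublin2} forcing $a_i(k)\gtrsim k^p/F_i(k)$ to grow at least polynomially while its increment $1/F_i(k+1)$ is comparatively negligible — we get $a_i(\Xi_i(t)-1)/a_i(\Xi_i(t))\to1$, and hence $a_i(\Xi_i(t))/t\to1$ almost surely.

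\textbf{Main obstacle.} The delicate point is handling non-monotone feedback $F_i$: both the variance bound and the sum-versus-integral comparison $\E S_m \asymp a_i(X_i(0)+m)$ implicitly use that $F_i(k)$ does not oscillate wildly. The paper's standing convention that $F_i$ admits a regular extension which is monotone on each interval $[n,n+1]$ controls the sum-integral gap by a bounded additive constant, which is absorbed since $\E S_m\to\infty$; but the step $1/F_i(k)^2\le \frac{1}{F_i(m)F_i(k)}$ genuinely needs $\inf_{k\ge m}F_i(k)\ge F_i(m)$ or a substitute. I would instead bound $\operatorname{Var}(S_m)=\sum_{k}\frac1{F_i(k)^2}$ directly by $\bigl(\sup_{k\le X_i(0)+m}\tfrac1{F_i(k)}\bigr)\sum_k\tfrac1{F_i(k)}$ and show this supremum is $o\bigl((\E S_m)^{?}\bigr)$ using \eqref{eq: sublin2} again, or simply invoke that condition in the form that makes $\operatorname{Var}(S_m)/(\E S_m)^2\to0$ — this is exactly the content of the $p>\tfrac12$ threshold, mirroring the role of the type-P condition in Lemma \ref{lem: explosion}. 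Once the variance ratio vanishes, everything else is routine renewal-theoretic bookkeeping.
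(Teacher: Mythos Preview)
Your overall strategy—prove a strong law $S_m/\E S_m\to1$ for the jump times and then sandwich $t$ between $S_{\Xi_i(t)-X_i(0)-1}$ and $S_{\Xi_i(t)-X_i(0)}$—is exactly what the paper does. The second step is fine and matches the paper essentially verbatim. The difference, and the source of your difficulties, is in how the SLLN is obtained.

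The paper does not go through Chebyshev, Borel--Cantelli along a subsequence, and monotone interpolation. Instead it observes that \eqref{eq: sublin2} says precisely $F_i(l)a_i(l)\gtrsim l^p$, hence
\[
\sum_{l}\operatorname{Var}\!\left(\frac{\tau_i(l)}{a_i(l)}\right)=\sum_l\frac{1}{F_i(l)^2a_i(l)^2}\lesssim\sum_l l^{-2p}<\infty
\]
for $p>\tfrac12$, and then applies the Kolmogorov criterion for the SLLN with normalizing sequence $b_l=a_i(l)$. This gives $S_i(k)/a_i(k)\to1$ almost surely in one stroke, with no monotonicity assumption on $F_i$ whatsoever.

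Your route has two genuine soft spots that the paper's avoids. First, the variance bound $\sum_k F_i(k)^{-2}\le F_i(m)^{-1}\sum_k F_i(k)^{-1}$ really does need $F_i(k)\ge F_i(m)$ on the range of summation; for decreasing feedback such as $F_i(k)=k^{-\alpha}$ (explicitly allowed by \eqref{eq: sublin2}) this fails, and your proposed fix via $\sup_{k\le m}1/F_i(k)$ is not controlled by \eqref{eq: sublin2} either. Second, the interpolation step needs $\E S_{m_{j+1}}/\E S_{m_j}\to1$, but \eqref{eq: sublin2} only gives $(\log a_i)'(k)\lesssim k^{-p}$, which permits $a_i(k)\asymp\exp(k^{1-p})$; with your choice $m_j=j^{\lceil 2/p\rceil}$ the ratio $a_i(m_{j+1})/a_i(m_j)$ need not tend to~$1$ when $\tfrac12<p\le\tfrac23$. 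A subsequence exponent $q\in(1/p,\,1/(1-p))$ would work, but this window was not identified. The Kolmogorov-criterion argument sidesteps both issues entirely.
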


\begin{proof}
(\ref{eq: sublin2}) implies
\begin{equation*}
\lim_{k\to\infty}\sum_{l=1}^kVar\left(\frac{\tau_i(l)}{a_i(l)}\right)=\lim_{k\to\infty}\sum_{l=1}^k\frac{1}{F_i(l)^2a_i(l)^2}\le\lim_{k\to\infty}\sum_{l=1}^k\frac{const.}{l^{2p}}<\infty\ ,
\end{equation*}
using $a_i(k)\sim\sum_{l=1}^k\frac{1}{F_i(l)}$ for $k\to\infty$. According to the Kolmogorov criterion (see e.g.\ \cite{Henze}, Section 6.2) this is sufficient for
\begin{equation*}
\frac{S_i(k)}{a_i(k)}\xrightarrow{k\to\infty}1\quad\text{almost surely},
\end{equation*}
where $S_i(k)\coloneqq\sum_{l=1}^k\tau_i(l)$. We use this and $\Xi_i (t)\to\infty\ a.s.$ for the final estimate:
\begin{align*}
1&=\lim_{t\to\infty}\frac{a_i(\Xi_i(t))}{S_i(\Xi_i(t))-S_i(X_i(0)-1)}\le\lim_{t\to\infty}\frac{a_i(\Xi_i(t))}{t}\le\lim_{t\to\infty}\frac{a_i(\Xi_i(t))}{S_i(\Xi_i(t)+1)-S_i(X_i(0)-1)}\\
&\le\lim_{t\to\infty}\frac{a_i(\Xi_i(t)+1)}{S_i(\Xi_i(t)+1)}=1
\end{align*}
\end{proof}

Now Theorem \ref{thm: asymptoticXi} is easy to prove.

\begin{proof}
Lemma \ref{lem: lemma3} states
\begin{equation*}
a_i(\Xi_i(t))=t+o(t)\quad\Leftrightarrow\quad\Xi_i(t)=a_i^{-1}(t+o(t))\quad\text{almost surely}
\end{equation*}
(using the Landau o-notation). It thus remains to show that
\begin{equation}\label{eq:toshowlem}
\lim_{t\to\infty}\frac{a_i^{-1}(t+o(t))}{a_i^{-1}(t)}=1.
\end{equation}
The condition (\ref{eq: sublin}) implies using $a_i(k)\sim\sum_{l=1}^k\frac{1}{F_i(l)}$ for $k\to\infty$
\begin{equation*}
\lim_{t\to\infty}o(a_i(t))\frac{F_i(t)}{t}=0
\end{equation*}
and hence, replacing $t$ by $a_i^{-1}(t)$ (note: $a_i^{-1}(t)\to \infty$), we get:
\begin{equation*}
\lim_{t\to\infty}o(t)\frac{F_i(a_i^{-1}(t))}{a_i^{-1}(t)}=0
\end{equation*}
Finally,
\begin{align*}
\log\left(\frac{a_i^{-1}(t+o(t))}{a_i^{-1}(t)}\right)&=\int_t^{t+o(t)}\frac{d}{dx}\log\left(a_i^{-1}(x)\right)dx=\int_t^{t+o(t)}\frac{\frac{d}{dx}a_i^{-1}(x)}{a_i^{-1}(x)}dx\\
&=\int_t^{t+o(t)}\frac{F_i(a_i^{-1}(t))}{a_i^{-1}(t)}dx\xrightarrow{t\to\infty}0\ ,
\end{align*}
which includes \eqref{eq:toshowlem}.
\end{proof}

Theorem \ref{thm: asymptoticXi} implies that the market shares in the exponential embedding are asymptotically given by
\begin{equation*}
\frac{\Xi_i(t)}{\Xi_1(t)+...+\Xi_A(t)}\sim\frac{a_i^{-1}(t)}{a_1^{-1}(t)+...+a_A^{-1}(t)} \quad\text{for } t\to\infty.
\end{equation*}

Via (\ref{eq: expemb}) we can now conclude for the discrete-time urn model.

\begin{corollary}
\label{cor: main4}
Let all agents fulfill (\ref{eq: sublin}) and (\ref{eq: sublin2}). If the limit
\begin{equation}
\label{eq: limshare}
\chi_i(\infty)\coloneqq\lim_{t\to\infty}\frac{a_i^{-1}(t)}{a_1^{-1}(t)+...+a_A^{-1}(t)}\in[0, 1]
\end{equation}
exists for an $i\in [A]$, then 
\begin{equation*}
\chi_i(n)\xrightarrow{n\to\infty}\chi_i(\infty)\quad\text{almost surely}\ .
\end{equation*}
If the limit in (\ref{eq: limshare}) does not exist, then $\chi_i(n)$ does not converge for $n\to\infty$.\\
If the limit in (\ref{eq: limshare}) exists for all $i\in [A]$, then $\chi (n)\xrightarrow{n\to\infty}\chi (\infty)\in\Delta_{A-1}$ almost surely.
\end{corollary}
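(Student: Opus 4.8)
The plan is to run everything through the exponential embedding of Section~\ref{sec: model} and reduce the statement to Theorem~\ref{thm: asymptoticXi}. First I would record the standing facts implied by the hypotheses: condition (\ref{eq: sublin}) forces $\sum_{l\ge 1}1/F_i(l)=\infty$ for every $i$ (otherwise $\sum_{l=1}^k 1/F_i(l)$ would stay bounded below by a positive constant, so boundedness of $\frac1k F_i(k)\sum_{l=1}^k 1/F_i(l)$ would force $F_i(k)=O(k)$, which contradicts convergence of $\sum 1/F_i$), so no agent satisfies (\ref{eq: explosion}). Hence none of the birth processes $\Xi_i$ explodes, each $\Xi_i(t)\to\infty$ almost surely, $a_i^{-1}$ is defined on all of $[0,\infty)$, and the jump times $t_n$ of $\Xi$ are finite with $t_n\to\infty$ almost surely; by construction $X(n)=\Xi(t_n)$ and $\Xi_1(t_n)+\dots+\Xi_A(t_n)=N+n$, so that
\begin{equation*}
\chi_i(n)=\frac{\Xi_i(t_n)}{\Xi_1(t_n)+\dots+\Xi_A(t_n)}\qquad\text{for all }n\ .
\end{equation*}

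Next I would invoke Theorem~\ref{thm: asymptoticXi}, which under (\ref{eq: sublin}) and (\ref{eq: sublin2}) gives, on one common almost-sure event, $\Xi_j(t)=a_j^{-1}(t)\,\varepsilon_j(t)$ with $\varepsilon_j(t)\to 1$ for each of the finitely many $j\in[A]$. Writing $r_j(t)\coloneqq a_j^{-1}(t)\big/\sum_k a_k^{-1}(t)\in(0,1)$, so that $\sum_j r_j(t)=1$, one obtains
\begin{equation*}
\frac{\Xi_i(t)}{\sum_j\Xi_j(t)}=\frac{r_i(t)\,\varepsilon_i(t)}{\sum_j r_j(t)\,\varepsilon_j(t)}\ ,
\end{equation*}
and since $\min_j\varepsilon_j(t)\le\sum_j r_j(t)\varepsilon_j(t)\le\max_j\varepsilon_j(t)$ the denominator tends to $1$ almost surely. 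Thus the continuous-time market share $\Xi_i(t)/\sum_j\Xi_j(t)$ equals $r_i(t)(1+o(1))$ almost surely, and since $\Xi$ is constant on each $[t_n,t_{n+1})$ the same holds for $\chi_i(n)$ along the jump times.

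From here the three assertions follow quickly. If the limit (\ref{eq: limshare}) exists for a given $i$, then $r_i(t)\to\chi_i(\infty)$, and the displayed identity gives $\chi_i(n)\to\chi_i(\infty)$ almost surely. If (\ref{eq: limshare}) does not exist, observe that $\chi_i(n)$ is exactly the value taken by $t\mapsto \Xi_i(t)/\sum_j\Xi_j(t)$ on $[t_n,t_{n+1})$ and $t_n\to\infty$, so $\chi_i(n)$ converges iff $\Xi_i(t)/\sum_j\Xi_j(t)$ converges as $t\to\infty$; but from the identity $r_i(t)=\big(\Xi_i(t)/\sum_j\Xi_j(t)\big)\cdot\big(\sum_j r_j(t)\varepsilon_j(t)\big)/\varepsilon_i(t)$ with the last factor tending to $1$, convergence of the share would force convergence of $r_i(t)$, i.e.\ existence of $\chi_i(\infty)$, a contradiction. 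Finally, if (\ref{eq: limshare}) exists for all $i$, then $\chi_i(n)\to\chi_i(\infty)$ for every $i$, and since $\sum_i\chi_i(n)=1$ for all $n$ we get $\sum_i\chi_i(\infty)=1$, so $\chi(\infty)\in\Delta_{A-1}$ and $\chi(n)\to\chi(\infty)$ almost surely.

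The only genuinely delicate point is the bookkeeping of the almost-sure error terms: one must use that $[A]$ is finite so the statements $\varepsilon_j(t)\to1$ can be intersected and combined uniformly to control $\sum_j r_j(t)\varepsilon_j(t)$ from above and below on a single almost-sure event, and that $r_i(t)>0$ so that the equivalence between convergence of the $a^{-1}$-shares and of the embedded shares is two-sided (which is what makes the non-convergence statement work). Everything else is an immediate consequence of Theorem~\ref{thm: asymptoticXi} together with the embedding identity $X(n)=\Xi(t_n)$.
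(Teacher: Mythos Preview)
Your proposal is correct and follows essentially the same route as the paper: reduce to the exponential embedding, apply Theorem~\ref{thm: asymptoticXi} to get $\Xi_j(t)\sim a_j^{-1}(t)$ for each $j$, and read off the shares. The paper states the corollary as an immediate consequence of the displayed asymptotic $\Xi_i(t)/\sum_j\Xi_j(t)\sim a_i^{-1}(t)/\sum_j a_j^{-1}(t)$ without spelling out the non-convergence direction or the bookkeeping you supply; your argument for the non-convergence claim (via the step-function identity and the two-sided equivalence through $\varepsilon_i$) is exactly what is needed there, and your derivation of $\chi(\infty)\in\Delta_{A-1}$ from $\sum_i\chi_i(n)=1$ is a cleaner alternative to the paper's compactness remark.
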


Note that the $a_i^{-1}$ do not depend on $N$ and $\chi(0)$, thus the long time behavior of market shares $(\chi (n))_{n\in\N}$ in the generalized Pólya urn does not depend on initial conditions if (\ref{eq: sublin}) and (\ref{eq: sublin2}) are satisfied. If the limit in (\ref{eq: limshare}) exists, a market modeled by a Pólya urn under the assumptions of the corollary reveals stable and deterministic market shares in the long run and these market shares do not depend on the current market situation and can also take values in $(0,1)$. If the limit $\chi (\infty )$ exists it is in $\Delta_{A-1}$, since $\Delta_{A-1}$ is compact and therefore the laws of $\chi (n)$ form a tight sequence. The corollary provides a way to explicitly calculate these long-time market shares.

\begin{example}\label{ex46}
\begin{enumerate}
\item If $F_i(k)=\alpha_ik^\beta$ with $\alpha_i>0, \beta<1, i=1,...,A$, then
\begin{equation*}
a_i^{-1}(t)=\left(\alpha_i(1-\beta)t+1\right)^{\frac{1}{1-\beta}}
\end{equation*}
and hence:
\begin{equation*}
\chi_i(\infty)=\frac{\alpha_i^{\frac{1}{1-\beta}}}{\alpha_1^{\frac{1}{1-\beta}}+...+\alpha_A^{\frac{1}{1-\beta}}} \in (0,1)
\end{equation*}
Consequently, the impact of the fitness parameters $\alpha_i$ in the long-time limit increases with $\beta$, where
\begin{equation*}
    \chi_i (\infty)\to \frac{1}{A}\quad\text{for }\beta \to -\infty\quad\mbox{and}\quad\chi_i(\infty)=\frac{\alpha_i}{\alpha_1+...+\alpha_A}\quad\text{for }\beta =0\ .
\end{equation*}
The limiting case $\beta\to 1$ will be discussed later in Proposition \ref{prop49}.

\item If $F_i(k)=\alpha_i\log(k+1)$ with $\alpha_i>0, i=1,...,A$, then
\begin{equation*}
a_i^{-1}(t)\sim\alpha_it\log(\alpha_it)\quad\text{for   }t\to\infty
\end{equation*}
and thus:
\begin{equation*}
\chi_i(\infty)=\frac{\alpha_i}{\alpha_1+...+\alpha_A}
\end{equation*}
Note that this is the same asymptotic market share as if the customers' decisions were independent (with constant feedback functions as for $\beta =0$ above), so that the strong law of large numbers applies.
\end{enumerate}
\end{example}

It is also possible to find examples where the limit (\ref{eq: limshare}) does not exist. In the following situation the market share of the agents oscillates with constant amplitude but increasing period.

\begin{example}
\label{ex: noConv}
Take $A=2$ and set
\begin{equation*}
a_1^{-1}(t)=t^2\left(\sin(\log(t))+2\right)\quad\text{and}\quad a_2^{-1}(t)=t^2.
\end{equation*}
This corresponds to $F_2(t)=\sqrt{t}$ and $F_1(t)=\left(\frac{d}{dt}(a_1^{-1})^{-1}(t)\right)^{-1}$, which is well defined due to $\frac{d}{dt}a_1^{-1}(t)=t\left(2\sin(\log(t))+\cos(\log(t))+4\right)>0$. Then Theorem \ref{thm: asymptoticXi} implies
\begin{equation*}
\frac{\Xi_1(t)}{\Xi_1(t)+\Xi_2(t)}\sim\frac{\sin(\log(t))+2}{\sin(\log(t))+3}
\end{equation*}
and hence $\chi_1(n)$ oscillates between $1/2$ and $3/4$.
\end{example}

We now add a criterion that ensures the existence of the limit in (\ref{eq: limshare}).

\begin{corollary}\label{cor: LimShare}
Suppose that for an agent $i$ the following tightening of (\ref{eq: sublin}) holds,
\begin{equation}
\label{eq: sublin+}
\frac{F_i(k)}{k}\sum_{l=1}^k\frac{1}{F_i(l)}\xrightarrow{k\to\infty}c\in(0, \infty)\ ,
\end{equation}
and that the limits
\begin{equation}
\label{eq: relationasym}
\lim_{k\to\infty}\frac{F_i(k)}{F_j(k)} =c_{j}\in[0, \infty]\quad\text{exist for all }j\neq i\ .
\end{equation}
Then the limit in (\ref{eq: limshare}) exists and 
\begin{equation*}
\chi_i(\infty)=\left(1+\sum_{j\ne i}c_j^{-c}\right)^{-1} \ .
\end{equation*}
In particular, $\P(wMon_i(\chi(0), N))=1$ if and only if all $c_j$ are infinity, otherwise \linebreak $\P(wMon_i(\chi(0), N))=0$. If all $c_j$ are one, then the condition (\ref{eq: sublin+}) can be replaced by (\ref{eq: sublin}) and $\chi_j(\infty)=1/A$ for all $j=1,...,A$.
\end{corollary}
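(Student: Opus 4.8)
The strategy is to reduce the whole statement to the asymptotics of the deterministic functions $a_i^{-1}$. By Corollary~\ref{cor: main4} (whose standing hypotheses — all agents satisfying \eqref{eq: sublin} and \eqref{eq: sublin2}, and in particular all $a_k$ strictly increasing to $\infty$ since no agent fulfils \eqref{eq: explosion} — are in force), it suffices to show that the limit in \eqref{eq: limshare} exists and equals $\bigl(1+\sum_{j\ne i}c_j^{-c}\bigr)^{-1}$; this in turn reduces to the single relation
$$\frac{a_j^{-1}(t)}{a_i^{-1}(t)}\xrightarrow{\,t\to\infty\,}c_j^{-c}\qquad\text{for every }j\ne i,$$
with the conventions $0^{-c}=\infty$ and $\infty^{-c}=0$, because then $\chi_i(\infty)=\lim_t\bigl(1+\sum_{j\ne i}a_j^{-1}(t)/a_i^{-1}(t)\bigr)^{-1}$ by \eqref{eq: limshare}. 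The $wMon_i$ dichotomy then follows at once: $\chi_i(\infty)$ being deterministic, $\P(wMon_i(\chi(0),N))\in\{0,1\}$ and it equals $1$ exactly when $\chi_i(\infty)=1$, i.e. when $c_j^{-c}=0$ for all $j\ne i$, i.e. when every $c_j=\infty$.

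To establish the key relation I would first record two ingredients. Since $a_j'(t)/a_i'(t)=F_i(t)/F_j(t)\to c_j$ and $a_i(t)\to\infty$, de l'H\^opital — in its one-sided forms when $c_j\in\{0,\infty\}$ — gives $a_j(t)/a_i(t)\to c_j$. Secondly, \eqref{eq: sublin+}, read in the continuous form $F_i(t)a_i(t)/t\to c$ for a suitable differentiable extension of $F_i$ (as in the proof of Lemma~\ref{lem: explosion}), yields $\tfrac{d}{dt}\log a_i(t)=\tfrac1{F_i(t)a_i(t)}\sim\tfrac1{ct}$, so integrating between $t$ and $\lambda t$ gives $a_i(\lambda t)/a_i(t)\to\lambda^{1/c}$ for each fixed $\lambda>0$; thus $a_i$ is regularly varying of index $1/c$ and, by the standard inversion for regularly varying functions, $a_i^{-1}$ is regularly varying of index $c$. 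Now fix $\epsilon\in(0,1)$: for $c_j\in(0,\infty)$ the estimate $(1-\epsilon)c_j a_i(t)\le a_j(t)\le(1+\epsilon)c_j a_i(t)$ valid for large $t$, together with monotonicity of $a_i^{-1}$, sandwiches $a_j^{-1}(s)$ between $a_i^{-1}\!\bigl(\tfrac{s}{(1+\epsilon)c_j}\bigr)$ and $a_i^{-1}\!\bigl(\tfrac{s}{(1-\epsilon)c_j}\bigr)$; dividing by $a_i^{-1}(s)$ and using regular variation of $a_i^{-1}$ yields
$$\bigl((1+\epsilon)c_j\bigr)^{-c}\le\liminf_{s\to\infty}\frac{a_j^{-1}(s)}{a_i^{-1}(s)}\le\limsup_{s\to\infty}\frac{a_j^{-1}(s)}{a_i^{-1}(s)}\le\bigl((1-\epsilon)c_j\bigr)^{-c},$$
and $\epsilon\downarrow0$ gives the claim. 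The cases $c_j=\infty$ (keep only the bound $a_j(t)\ge M a_i(t)$, let $M\to\infty$) and $c_j=0$ (keep only $a_j(t)\le\delta a_i(t)$, let $\delta\downarrow0$) are the evident one-sided variants. Plugging back into \eqref{eq: limshare} and invoking Corollary~\ref{cor: main4} then proves $\chi_i(n)\to\chi_i(\infty)=\bigl(1+\sum_{j\ne i}c_j^{-c}\bigr)^{-1}$ almost surely.

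For the final assertion, when all $c_j=1$ one has $F_j\sim F_i$, hence $a_j\sim a_i$ for every $j$ by de l'H\^opital; but now only \eqref{eq: sublin} is assumed, so $a_i$ need not be regularly varying and the inversion above is not directly available. Instead I would exploit the one-sided regularity already contained in \eqref{eq: sublin}: there is $C<\infty$ with $a_j'(t)=1/F_j(t)\ge a_j(t)/(Ct)$ for large $t$, whence $a_j(t')/a_j(t)\ge1+\tfrac1C\log(t'/t)$ for $t'>t$ (and the reciprocal bound for $t'<t$). Taking $t=a_i^{-1}(s)$ and $t'=a_j^{-1}(s)$, so that $a_j(t')/a_j(t)=s/a_j(t)\to1$, this forces $t'/t\to1$, i.e. $a_j^{-1}(s)\sim a_i^{-1}(s)$ for every $j$; hence the limit in \eqref{eq: limshare} is $1/A$ and Corollary~\ref{cor: main4} gives $\chi_j(\infty)=1/A$ for all $j$.

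I expect the asymptotic-inversion step to be the main obstacle: transferring the exact constant $c_j^{-c}$ from $a_j\sim c_j a_i$ genuinely needs the regular-variation structure drawn from \eqref{eq: sublin+} — a crude bound such as $a_i(t)=t^{1/c+o(1)}$ only yields $a_j^{-1}(s)/a_i^{-1}(s)=s^{o(1)}$ and destroys the constant — and the degenerate ratios $c_j\in\{0,\infty\}$ as well as the weaker-hypothesis case $c_j=1$ each call for their own one-sided argument. A minor recurring technicality, handled exactly as elsewhere in the paper, is passing from the integer-indexed conditions \eqref{eq: sublin+}/\eqref{eq: sublin} to their continuous analogues for a differentiable extension of $F_i$.
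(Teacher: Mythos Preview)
Your proposal is correct and follows essentially the same route as the paper: both reduce to $a_j^{-1}(t)/a_i^{-1}(t)\to c_j^{-c}$, obtain $a_j/a_i\to c_j$ via de l'H\^opital, and then invert using \eqref{eq: sublin+} in the form $F_i(a_i^{-1}(x))/a_i^{-1}(x)\sim c/x$ --- the paper computes $\log\bigl(a_i^{-1}(\delta(t)t)/a_i^{-1}(t)\bigr)=\int_t^{\delta(t)t}F_i(a_i^{-1}(x))/a_i^{-1}(x)\,dx\sim c\log\delta(t)$ directly with a varying multiplier $\delta(t)\to 1/c_j$, which is exactly your regular-variation statement for $a_i^{-1}$ plus your $\epsilon$-sandwich. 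The only cosmetic differences are that the paper handles $c_j\in\{0,\infty\}$ by stochastic comparison of the birth processes $\Xi_j$ (replacing $F_j$ by $CF_i$) rather than your purely deterministic bound on $a_j^{-1}/a_i^{-1}$, and that the paper leaves the final ``all $c_j=1$ under \eqref{eq: sublin}'' clause implicit in the same integral computation (the integrand is merely $O(1/x)$ but $\delta(t)\to 1$), whereas you spell it out separately.
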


\begin{proof}
Recall that $a_i(t)=\int_1^t\frac{dx}{F_i(x)}$ is strictly increasing. For a fixed $j\ne i$ we show that $\Xi_j(t)/\Xi_i(t)$ converges to $c_j^{-1}$. First, we assume $0<c_j<\infty$, such that agents $i$ and $j$ fulfill (\ref{eq: sublin}) and (\ref{eq: sublin2}). (\ref{eq: relationasym}) implies via the theorem of de l'Hospital $a_j(t)/a_i(t)\to c_j$ for $t\to\infty$ and consequently $a_j^{-1}(t)=a_i^{-1}(\delta(t)t)$ for a function $\delta$ with $\delta(t)\xrightarrow{t\to\infty}1/c_j<\infty$. In combination with Theorem \ref{thm: asymptoticXi} it remains to show that $a_i^{-1}(\delta(t)t)/a_i^{-1}(t)$ converges to $c_j^{-1}$ for $t\to\infty$. For this we consider
\begin{align*}
\log\left(\frac{a_i^{-1}(\delta(t)t)}{a_i^{-1}(t)}\right)&=\int_t^{\delta(t)t}\frac{d}{dx}\log a_i^{-1}(x)dx=\int_t^{\delta(t)t}\frac{F_i(a_i^{-1}(x))}{a_i^{-1}(x)}dx\\
&\sim \int_t^{\delta(t)t}\frac{c}{x}dx=c\log(\delta(t))
\end{align*}
as (\ref{eq: sublin+}) implies via time-shift
\begin{equation*}
\frac{F_i(a_i^{-1}(t))}{a_i^{-1}(t)}\sim\frac{c}{t}\quad\text{for   }t\to\infty.
\end{equation*}
Thus:
\begin{equation*}
\frac{a_j^{-1}(t)}{a_i^{-1}(t)}=\frac{a_i^{-1}(\delta(t)t)}{a_i^{-1}(t)}\sim\delta (t)^c\xrightarrow{t\to\infty}c_j^{-c}
\end{equation*}
For agents $j$ with $c_j=0$ the asymptotic market share is for sure bigger than in a situation where $F_j$ is replaced by $ CF_i, C>0$,  i.e. $\Xi_j(t)/\Xi_i(t)$  is for $t\to\infty$ larger than any $C$. Hence, it converges to infinity. Similarly for agents with $c_j=\infty$.
\end{proof}

Note that in the case $c=1$ (including e.g.\ feedback functions such as $\log k$, $1/\log k$ or functions converging in $(0,\infty)$)  the limit $\chi_i (\infty )$ is equal to the case $F_i(k)=const.$, i.e. draws from the urn are independent and the usual strong law of large numbers applies. So this weak reinforcement does not play any role in the long run.\\

So far, we did not consider cases near the classical P\'olya urn with $F_i(k)=k$, where random limits $\chi_i (\infty )$ are possible. Nevertheless, as Lemma \ref{lem: lemma3} does not require (\ref{eq: sublin}), our approach provides some insight into such asymmetric cases as well. 
The symmetric case with feedback functions close to the classical Pólya urn is treated in Section \ref{sec: specialCase}.

\begin{proposition}\label{prop49}
Let an agent $i$ fulfill
\begin{equation}
\label{eq: lin}
\frac{F_i(k)}{k}\sum_{l=1}^k\frac{1}{F_i(l)}\xrightarrow{k\to\infty}\infty,
\end{equation}
but not (\ref{eq: explosion}), i.e.\quad $\sum_{k=1}^\infty \frac{1}{F_i (k)}=\infty$. If 
\begin{equation}\label{eq: lin2}
\limsup_{k\to\infty}\frac{F_j(k)}{F_i(k)}<1\quad\text{for all agents }j\ne i\ ,
\end{equation}
then $\P(wMon_i(\chi(0), N))=1$.
\end{proposition}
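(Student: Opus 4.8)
My plan is to run the whole argument through the exponential embedding of Section~\ref{sec: model}. Recall that $X(n)\stackrel{d}{=}\Xi(t_n)$ for the jump times $(t_n)$ of the combined birth process, that $\sum_{k}\Xi_k(t_n)=N+n$, and that $t_n\to\infty$ almost surely since $\limsup_k F_j(k)/F_i(k)<1$ forces $\sum_k 1/F_j(k)=\infty$ for every $j$, so no agent explodes. Hence $\P(wMon_i(\chi(0),N))=1$ will follow once I show
\[
\frac{\Xi_j(t)}{\Xi_i(t)}\xrightarrow{t\to\infty}0\qquad\text{almost surely, for each }j\ne i .
\]
I would obtain this by sandwiching: a lower bound on $\Xi_i$ and an upper bound on $\Xi_j$, both phrased through $a_i^{-1}$.

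For $\Xi_i$: condition~\eqref{eq: lin} is exactly \eqref{eq: sublin2} with $p=1$ (and $a_i(k)\to\infty$ because $i$ fails \eqref{eq: explosion}), so Lemma~\ref{lem: lemma3} applies and yields $a_i(\Xi_i(t))/t\to1$ a.s., hence $\Xi_i(t)\ge a_i^{-1}\!\big((1-\varepsilon)t\big)$ for all large $t$, a.s., for any fixed $\varepsilon\in(0,1)$. I will also need the elementary fact that \eqref{eq: lin} makes $a_i^{-1}$ grow fast enough that $a_i^{-1}(\kappa s)/a_i^{-1}(s)\to0$ for every fixed $\kappa\in(0,1)$: with the regular extension, $\frac{d}{dx}\log a_i^{-1}(x)=F_i(a_i^{-1}(x))/a_i^{-1}(x)$, and \eqref{eq: lin} says that $w(x):=x\,F_i(a_i^{-1}(x))/a_i^{-1}(x)\to\infty$, so $\log\!\big(a_i^{-1}(s)/a_i^{-1}(\kappa s)\big)=\int_{\kappa s}^{s} w(x)\,\frac{dx}{x}\ge\big(\inf_{[\kappa s,s]}w\big)\log(1/\kappa)\to\infty$.

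For $\Xi_j$: using \eqref{eq: lin2}, fix $c<1$ and $k_0$ with $F_j(k)\le cF_i(k)$ for $k\ge k_0$. On the probability space carrying $(\tau_j(l))_l$ set $\hat\tau(l):=\tau_j(l)\,F_j(l)/(cF_i(l))\le\tau_j(l)$ for $l\ge k_0$; the $\hat\tau(l)$ are independent with $\hat\tau(l)\sim\mathrm{Exp}(cF_i(l))$. Since the time for $\Xi_j$ to reach level $m$ is at least $\sum_{l=k_0\vee X_j(0)}^{m-1}\tau_j(l)\ge\sum_{l=k_0\vee X_j(0)}^{m-1}\hat\tau(l)$, and the Kolmogorov-criterion computation from the proof of Lemma~\ref{lem: lemma3} applied with the rates $F_i$ (again using \eqref{eq: lin}$\Rightarrow$\eqref{eq: sublin2}) gives $\sum_{l=1}^{m}\hat\tau(l)\sim c^{-1}a_i(m)$ a.s., one concludes $\Xi_j(t)\le a_i^{-1}\!\big(c(1-\varepsilon)^{-1}t\big)$ for all large $t$, a.s. Combining this with the lower bound on $\Xi_i$ and choosing $\varepsilon$ so small that $\kappa:=c(1-\varepsilon)^{-2}<1$,
\[
\frac{\Xi_j(t)}{\Xi_i(t)}\le\frac{a_i^{-1}\!\big(\kappa(1-\varepsilon)t\big)}{a_i^{-1}\!\big((1-\varepsilon)t\big)}\xrightarrow{t\to\infty}0\qquad\text{a.s.},
\]
which finishes the argument.

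The hard part is the upper bound on $\Xi_j$: since $F_j$ only needs $\sum_k 1/F_j(k)=\infty$ it may violate \eqref{eq: sublin2} (for instance it could decay exponentially), so Lemma~\ref{lem: lemma3} cannot be invoked for agent $j$ directly. The way around it is that only a \emph{one-sided} lower bound on the hitting times of $\Xi_j$ is required, and that bound is transferred to the well-behaved comparison process with rates $cF_i$, which does satisfy \eqref{eq: sublin2}; the resulting estimate on $\Xi_j$ is very crude but still $o(\Xi_i)$ precisely because \eqref{eq: lin} forces $a_i^{-1}$ to grow faster than linearly, as recorded in the elementary estimate above. The remaining points — the negligibility of the finitely many early waiting times, and the passage from the $\Xi$-level statement to $\chi_i(n)\to1$ via $t_n\to\infty$ — are routine.
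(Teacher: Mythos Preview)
Your proof is correct and follows the same overall route as the paper: reduce to $\Xi_j(t)/\Xi_i(t)\to 0$ via the exponential embedding, invoke Lemma~\ref{lem: lemma3} for agent $i$ (noting that \eqref{eq: lin} implies \eqref{eq: sublin2} with $p=1$), and exploit \eqref{eq: lin} to show that $a_i^{-1}(\kappa s)/a_i^{-1}(s)\to 0$ for any fixed $\kappa<1$. The one genuine difference is in how you control $\Xi_j$. The paper splits into two cases according to whether $F_j$ itself satisfies \eqref{eq: sublin2}: if it does, Lemma~\ref{lem: lemma3} is applied to $j$ directly and then $a_j^{-1}(t)\le a_i^{-1}(ct)$ is used; if it does not, the paper argues that $F_j$ must be bounded and dominates $\Xi_j$ by a homogeneous Poisson process. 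Your coupling $\hat\tau(l)=\tau_j(l)\,F_j(l)/(cF_i(l))\sim\mathrm{Exp}(cF_i(l))$ is a neater device: it handles both cases simultaneously because the Kolmogorov-criterion step is run against the rates $cF_i$, for which \eqref{eq: sublin2} is already secured by \eqref{eq: lin}. This sidesteps the paper's somewhat implicit claim that failure of \eqref{eq: sublin2} for $j$ forces $F_j$ to be bounded, and yields the same comparison $\Xi_j(t)\le a_i^{-1}(c't)$ with $c'<1$ in one stroke.
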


\begin{proof}
First note that via exponential embedding, the event $ wMon_i(\chi(0), N)$ is equivalent to $\Xi_i (t)/\Xi_j (t)\xrightarrow{t\to\infty}\infty$ for all $j\neq i$. 
Obviously, agent $i$ fulfills (\ref{eq: sublin2}). First, we assume that agent $j$ does, too. Define
\begin{equation*}
\psi(t)\coloneqq\int_0^t\frac{e^x}{F_i(e^x)}dx=\int_1^{e^t}\frac{1}{F_i(x)}dx=a_i(e^t)
\end{equation*}
and thus $a_i^{-1}(t)=e^{\psi^{-1}(t)}$. Assumption \eqref{eq: lin2} implies that for any $j\ne i$ there is a constant $c<1$ with $a_i(t)\le ca_j(t)$ for large enough $t$ and consequently $a_j^{-1}(t)\le a_i^{-1}(ct)$. Lemma \ref{lem: lemma3} states that $\Xi_i(t)=a_i^{-1}(t+o(t))$ and $\Xi_j(t)=a_j^{-1}(t+o(t))\le a_i^{-1}(ct+o(t))$ almost surely. Thus it remains to show that
\begin{equation*}
\frac{a_i^{-1}(t+o(t))}{a_i^{-1}(ct+o(t))}\xrightarrow{t\to\infty}\infty,
\end{equation*}
which is equivalent to $\psi^{-1}(t+o(t))-\psi^{-1}(ct+o(t))\xrightarrow{t\to\infty}\infty$. It is sufficient that
\begin{equation*}
t\cdot\frac{d}{dt}\psi^{-1}(t)=t\cdot\frac{F_i\left(e^{\psi^{-1}(t)}\right)}{e^{\psi^{-1}(t)}}=t\frac{F_i\left(a_i^{-1}(t)\right)}{a_i^{-1}(t)}\xrightarrow{t\to\infty}\infty,
\end{equation*}
which follows since $a_i^{-1} (t)\xrightarrow{t\to\infty}\infty$ and Assumption \eqref{eq: lin} is equivalent to $\frac{F_i(t)}{t}a_i(t)\xrightarrow{t\to\infty}\infty$.

If agent $j$ does not fulfill (\ref{eq: sublin2}), then $F_j$ is bounded from above and hence $\Xi_j$ is stochastically dominated by a homogeneous Poisson process (with constant rate). Consequently, $\Xi_j (t)$ grows asymptotically not faster than linear and hence $\Xi_i (t)/\Xi_j (t)\to\infty$ almost surely.
\end{proof}

Condition \eqref{eq: lin} includes feedback functions of the form $F_i(k)=\alpha_i k(\log k)^\beta$ for all $\beta\le 1$, including the linear case $F_i (k)=\alpha_ik$ for $\beta =0$. If in addition \eqref{eq: lin2} holds, i.e. $\alpha_i>\alpha_j$ for an agent $i$ and all $j\ne i$, then we have an almost sure weak monopoly for agent $i$. This is consistent with the strong monopoly for $\beta>1$ as described in Example \ref{example: klogk, beta>1}. Note that the weak monopoly in Proposition \ref{prop49} is almost sure even for finite $N$, in contrast to the results on strong monopoly derived in Section \ref{sec: monopoly}, where the strong monopolist is random and can only be predicted in the limit $N\to\infty$.

On the other hand, condition (\ref{eq: sublin}) includes sublinear feedback functions of the form $F_i (k) =\alpha_i k^\beta$ with $\beta <1$, which have positive long-time market shares for all agents as discussed in Example \ref{ex46}.


Exponentially decreasing feedback functions were not taken into account so far as they do not fulfill (\ref{eq: sublin2}). Since such cases do not seem to be of great importance for the mentioned interpretations of the model, we are content with an example, which we discuss in Appendix \ref{sec: expDecreasingF} using the method of stochastic approximation.

We conclude the presentation with a short overview of further related results. 
\cite{Oliveira4,Oliveira3,Kearney} discuss another change of behaviour that is not apparent from our approach. Consider the case
$$
F_1=F_2=...=F_A\ ,\quad\liminf_{k\to\infty}F_i(k)>0\quad\mbox{and}\quad\sum_{k=1}^\infty\frac{1}{F_i(k)}=\infty\ .
$$
If $\sum\limits_{k=1}^\infty\frac{1}{F_i(k)^2}<\infty$, e.g. for $F_i (k)=k$, then the leading agent changes only finitely often with probability one, whereas in the case $\sum\limits_{k=1}^\infty\frac{1}{F_i(k)^2}=\infty$ this probability is zero.

For $F_1(k)=...=F_A(k)=k^\beta$,  it is shown in \cite{Khanin} that $\chi_i(n)$ converges to $1/A$ at rate $n^{\beta-1}$ for $1/2<\beta<1$ (almost surely), at rate $n^{-1/2}$ for $0<\beta<1/2$ and at rate $\sqrt{\log(n)/n}$ for $\beta=1/2$ (in a weak sense). For $0<\beta\le1/2$, a central limit theorem holds.

In the case $A=2$ and $F_i(k)=\alpha_ik^\beta$ \cite{Jiang} derives the tail distributions of the number and last times of ties $X_1(n)=X_2(n)$.


\section{Feedback functions close to the classical Pólya urn}
\label{sec: specialCase}

We know from Theorem \ref{thm: theorem2} that a generalized Pólya urn reveals strong monopoly if and only if at least one feedback function grows significantly faster than linear, i.e. fulfills (\ref{eq: explosion}). As described in Section \ref{sec: non-monopoly}, linear feedback functions imply random long-time market shares, whereas a deterministic limit occurs for feedback functions growing significantly slower than linear, i.e. those fulfilling (\ref{eq: sublin}). Nevertheless, some feedback functions that are close to linear (like $F_i(k)=k(\log k)^\beta,\,\beta\ne0$) are not covered by our results so far. To our knowledge, the literature does not provide results on the long time behaviour of a generalized Pólya urn with almost linear feedback. For instance, if $F_i(k)=kL(k)$ for a slowly varying function $L$, then Theorem \ref{thm: Arthur} does not determine the long-time limit, since $\lim_{N\to\infty}p(N, x)=x$ for all $x\in\Delta_{A-1}$. We approach this question exploiting general results on birth processes, which require that $F_i$ does not fulfill (\ref{eq: explosion}) but inverted squares are summable, i.e.
\begin{equation}
\label{eq: squareSum}
    \sum_{k=1}^\infty\frac{1}{F_i(k)}=\infty\quad\mbox{and}\quad\sigma_i^2\coloneqq\sum_{k=X_i(0)}^\infty\frac{1}{F_i(k)^2}<\infty .
\end{equation}
Recall the exponential embedding from Section \ref{sec: model} and notations introduced therein. For this section, it is convenient to adapt previous definitions using
\begin{equation}\label{eq:aidef}
a_i(t)\coloneqq\int_{X_i(0)}^{X_i(0)+t}\frac{dx}{F_i(x)}\quad\text{and}\quad S_i(k)\coloneqq\sum_{l=X_i(0)}^{X_i(0)+k}\tau_i(l),
\end{equation}
and to extend $F_i$ on $(0, \infty)$ by a right-continuous step function. The key to the desired results is provided by the following result in \cite{Barbour}.

\begin{theorem}
\label{thm: Barbour}
\cite[Theorem 3.3', Theorem 3.4, Lemma 3.1]{Barbour}
Assume that $F_i$ fulfills (\ref{eq: squareSum}). 
Then $t-a_i(\Xi_i(t))$ and $S_i(k)-a_i(k)$  converge almost surely for $t\to\infty$ resp. $k\to\infty$ to the same random variable $U_i \in\R$. Moreover, $\sigma_i^2$ is the variance of $U_i$.
\end{theorem}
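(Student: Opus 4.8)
The plan is to derive both statements from standard facts about sums of independent random variables — Kolmogorov's convergence theorem and $L^2$-convergence of square-integrable martingales — together with an elementary renewal-type sandwich, and then to pin the continuous-time statement to the discrete one through the jump structure of the exponential embedding. (This reproves the cited results of \cite{Barbour} in the present notation.)

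First I would handle the sequence $S_i(k)-a_i(k)$. Setting $M_k\coloneqq\sum_{l=X_i(0)}^{X_i(0)+k}\bigl(\tau_i(l)-F_i(l)^{-1}\bigr)$, this is a sum of independent, centred summands with variances $F_i(l)^{-2}$, which are summable by \eqref{eq: squareSum}. Kolmogorov's theorem then gives $M_k\to U_i$ almost surely for some $U_i\in\R$, and $L^2$-boundedness of the martingale $(M_k)_k$ gives in addition that $M_k\to U_i$ in $L^2$, so $\E U_i=0$ and $\mathrm{Var}(U_i)=\sum_{l\ge X_i(0)}F_i(l)^{-2}=\sigma_i^2$. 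With the right-continuous step-function extension of $F_i$ one checks $a_i(k)=\sum_{l=X_i(0)}^{X_i(0)+k-1}F_i(l)^{-1}$, hence $S_i(k)-a_i(k)=M_k+F_i(X_i(0)+k)^{-1}$; since $\sum_k F_i(k)^{-2}<\infty$ forces $F_i(k)\to\infty$, the correction term vanishes and $S_i(k)-a_i(k)\to U_i$ almost surely as well.

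Next I would rule out explosion: $\E S_i(k)=\sum_{l=X_i(0)}^{X_i(0)+k}F_i(l)^{-1}\to\infty$ by the first half of \eqref{eq: squareSum}, while $S_i(k)-\E S_i(k)=M_k$ converges to a finite limit, so $S_i(k)\to\infty$ almost surely and therefore $\Xi_i(t)\to\infty$ as $t\to\infty$. For the continuous-time limit I would use that $\Xi_i(t)=X_i(0)+n$ is equivalent to $S_i(n)\le t<S_i(n+1)$; subtracting $a_i(n)$ then sandwiches $t-a_i(\Xi_i(t))$ between $S_i(n)-a_i(n)$ and $\bigl(S_i(n+1)-a_i(n+1)\bigr)+F_i(X_i(0)+n)^{-1}$. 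Along $n=n(t)\to\infty$ both bounds tend to $U_i$ — the differences $S_i(n)-a_i(n)$ and $S_i(n+1)-a_i(n+1)$ by the first step, and the increment $a_i(n+1)-a_i(n)=F_i(X_i(0)+n)^{-1}$ because $F_i\to\infty$ — so $t-a_i(\Xi_i(t))\to U_i$ almost surely, the same random variable.

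The genuinely routine parts are the two classical limit theorems; the points needing care are the bookkeeping with the step-function extension of $F_i$ and the off-by-one indices in the embedding, and the verification that the increments of $a_i$ really vanish, which is exactly where the hypothesis $\sum_k F_i(k)^{-2}<\infty$ (equivalently $F_i(k)\to\infty$) enters. I expect the sandwich in the final step to be the most delicate thing to write cleanly, since it must transfer the discrete-time convergence of the first step to the continuous-time process without introducing spurious effects near the jump times.
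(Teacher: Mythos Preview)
The paper does not prove this theorem; it is quoted verbatim as a citation of Barbour \cite[Theorem~3.3', Theorem~3.4, Lemma~3.1]{Barbour} and used as a black box in Section~\ref{sec: specialCase}. So there is no ``paper's own proof'' to compare against.

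Your argument is correct and self-contained. The martingale step (Kolmogorov's criterion plus $L^2$-boundedness) gives exactly the almost sure and $L^2$ convergence of $M_k$ to a limit with variance $\sigma_i^2$; the identification $S_i(k)-a_i(k)=M_k+F_i(X_i(0)+k)^{-1}$ is the right bookkeeping for the step-function extension; and the renewal sandwich $S_i(n)-a_i(n)\le t-a_i(n)<\bigl(S_i(n+1)-a_i(n+1)\bigr)+F_i(X_i(0)+n)^{-1}$ transfers the limit cleanly to $t-a_i(\Xi_i(t))$, since $\sum_k F_i(k)^{-2}<\infty$ forces $F_i(k)\to\infty$ and hence $a_i(n+1)-a_i(n)\to 0$. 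The one notational wrinkle is that the paper writes $a_i(\Xi_i(t))$ while in Section~\ref{sec: specialCase} the argument of $a_i$ is an increment, so strictly this should be read as $a_i(\Xi_i(t)-X_i(0))$; you handle this correctly by setting $\Xi_i(t)=X_i(0)+n$ and working with $a_i(n)$.
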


We can now apply this general result in our situation.

\begin{corollary}
\label{cor: specialCase1}
Assume that $F_i$ fulfills (\ref{eq: squareSum}). 
Then:
\begin{enumerate}
    \item If $\lim_{k\to\infty}\frac{F_i(k)}{k}=0$, then
    \begin{equation*}
        \frac{\Xi_i(t)}{a_i^{-1}(t)}\xrightarrow{t\to\infty}1\quad\text{almost surely.}
    \end{equation*}
    \item If $\lim_{k\to\infty}\frac{F_i(k)}{k}=c\in(0, \infty)$, then
    \begin{equation*}
        \frac{\Xi_i(t)}{a_i^{-1}(t)}\xrightarrow{t\to\infty}e^{-cU_i}\quad\text{almost surely.}
    \end{equation*}
    \item If $\lim_{k\to\infty}\frac{F_i(k)}{k}=\infty$, then
    \begin{equation*}
       \frac{\Xi_i (t)}{a_i^{-1}(t)}=\exp\left(\int_t^{t-U_i-o(1)}h_i(s)ds\right)
    \end{equation*}
    for a (deterministic) function $h_i$ with $\lim_{s\to\infty}h_i(s)=\infty$.
\end{enumerate}
\end{corollary}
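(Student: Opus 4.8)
The plan is to derive all three parts from Theorem~\ref{thm: Barbour} by a single computation followed by a case split. Since $F_i$ satisfies (\ref{eq: squareSum}), the integrand $1/F_i$ in (\ref{eq:aidef}) is a positive step function that is locally bounded, and $\sum_k\frac{1}{F_i(k)}=\infty$; hence $a_i$ is a locally Lipschitz, strictly increasing bijection of $[0,\infty)$, so $a_i^{-1}$ is well defined, locally Lipschitz and strictly increasing with $a_i^{-1}(s)\to\infty$, and moreover $\Xi_i(t)\to\infty$ almost surely because non-explosion is equivalent to $\sum_k\frac{1}{F_i(k)}=\infty$. I would first invoke Theorem~\ref{thm: Barbour}: on an event of full probability, $t-a_i(\Xi_i(t))\to U_i$ with $U_i$ finite, i.e.\ $a_i(\Xi_i(t))=t-U_i-o(1)$ and hence $\Xi_i(t)=a_i^{-1}\bigl(t-U_i-o(1)\bigr)$ by strict monotonicity of $a_i$, where ``$o(1)$'' denotes a random quantity tending to $0$ almost surely as $t\to\infty$. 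Setting $h_i(s):=\frac{d}{ds}\log a_i^{-1}(s)=\frac{(a_i^{-1})'(s)}{a_i^{-1}(s)}=\frac{F_i\bigl(X_i(0)+a_i^{-1}(s)\bigr)}{a_i^{-1}(s)}$ --- the derivative exists for a.e.\ $s$ and $\log a_i^{-1}$ is locally absolutely continuous, so the fundamental theorem of calculus applies --- this yields almost surely
\begin{equation*}
\log\frac{\Xi_i(t)}{a_i^{-1}(t)}=\log a_i^{-1}\bigl(t-U_i-o(1)\bigr)-\log a_i^{-1}(t)=\int_t^{\,t-U_i-o(1)}h_i(s)\,ds .
\end{equation*}
The function $h_i$ is deterministic, and since $a_i^{-1}(s)\to\infty$ the factorisation $h_i(s)=\frac{F_i(X_i(0)+m)}{X_i(0)+m}\cdot\frac{X_i(0)+m}{m}$ with $m=a_i^{-1}(s)$ transfers $\lim_k F_i(k)/k\in\{0,c,\infty\}$ directly to $\lim_{s\to\infty}h_i(s)\in\{0,c,\infty\}$.

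It then remains to evaluate the integral in each case. Case~3 is immediate: $\lim_{s\to\infty}h_i(s)=\infty$, and the displayed identity is already the assertion. In case~2, $h_i(s)\to c$, so $h_i$ is bounded on some half-line $[T,\infty)$; splitting $\int_t^{t-U_i-o(1)}=\int_t^{t-U_i}+\int_{t-U_i}^{t-U_i-o(1)}$, the second integral is $O(1)\cdot o(1)\to0$, while $\int_t^{t-U_i}h_i(s)\,ds=-cU_i+\int_t^{t-U_i}\bigl(h_i(s)-c\bigr)\,ds$ and $\bigl|\int_t^{t-U_i}(h_i(s)-c)\,ds\bigr|\le|U_i|\sup_{s\ge t-|U_i|}|h_i(s)-c|\to0$, so $\Xi_i(t)/a_i^{-1}(t)\to e^{-cU_i}$ almost surely. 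In case~1, $h_i(s)\to0$, and the same splitting gives $\bigl|\int_t^{t-U_i-o(1)}h_i(s)\,ds\bigr|\le(|U_i|+1)\sup_{s\ge t-|U_i|-1}h_i(s)\to0$, so the ratio tends to $1$ almost surely.

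I do not expect a genuine obstacle --- the statement is essentially bookkeeping on top of Theorem~\ref{thm: Barbour} --- but three points need care. First, the ``$o(1)$'' terms are random, so every estimate above must be read pathwise for $\omega$ in the full-measure event on which $U_i(\omega)$ is finite and $\Xi_i(t)\to\infty$; this is why a fixed (random) bound such as $|U_i(\omega)|+1$ suffices inside the suprema. Second, $F_i$ is only a right-continuous step function, so $a_i$ and $a_i^{-1}$ are merely piecewise linear; but both are locally absolutely continuous, which is all that the integral representation of $\log a_i^{-1}$ requires. Third, the shift by $X_i(0)$ in the definition (\ref{eq:aidef}) of $a_i$ enters $h_i$ only through the factor $(X_i(0)+m)/m\to1$ and is therefore asymptotically irrelevant.
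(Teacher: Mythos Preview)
Your proof is correct and follows essentially the same route as the paper's: invoke Theorem~\ref{thm: Barbour} to get $\Xi_i(t)=a_i^{-1}(t-U_i-o(1))$, write $\log\bigl(\Xi_i(t)/a_i^{-1}(t)\bigr)$ as $\int_t^{t-U_i-o(1)}h_i(s)\,ds$ with $h_i(s)=F_i(X_i(0)+a_i^{-1}(s))/a_i^{-1}(s)$, observe that $\lim_{s\to\infty}h_i(s)=\lim_k F_i(k)/k$, and conclude by case distinction. If anything, your treatment of the three cases is more explicit than the paper's (which simply asserts that ``all parts of the corollary follow directly from their assumptions''), and your remarks on absolute continuity and the pathwise reading of the $o(1)$ term address points the paper leaves implicit.
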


\begin{proof}
Theorem \ref{thm: Barbour} implies $t-a_i(\Xi_i(t))=U_i+o(1)$ and hence 
$$\Xi_i(t)=a_i^{-1}(t-U_i+o(1))$$
Using $\frac{d}{dt}a_i^{-1}(t)=F_i(a_i^{-1}(t) +X_i(0))$ in the logarithmic derivative yields
\begin{align}\label{eq: a_iderivative}
    a_i^{-1}(t)=\exp\left(\int_0^t\frac{d}{ds}\log\left(a_i^{-1}(t)\right)ds\right)=\exp\left(\int_0^t\frac{F_i(a_i^{-1}(s)+X_i(0))}{a_i^{-1}(s)}ds\right)
\end{align}
and consequently
\begin{align*}
    \frac{\Xi_i (t)}{a_i^{-1}(t)} =\frac{a_i^{-1}(t-U_i-o(1))}{a_i^{-1}(t)}=\exp\left(\int_t^{t-U_i-o(1)}\frac{F_i(a_i^{-1}(s)+X_i(0))}{a_i^{-1}(s)}ds\right) .
\end{align*}
Now, be aware that $\lim_{t\to\infty}a_i^{-1}(t)=\infty$ as $F_i$ does not fulfill (\ref{eq: explosion}) and that the limit of $F(k+const.)/k$ for $k\to\infty$ is equal to the limit of $F(k)/k$. Then all parts of the corollary follow directly from their assumptions. 
\end{proof}

Like in Section \ref{sec: non-monopoly}, we can now conclude from the exponential embedding to the evolution of market shares in the Pólya urn via
\begin{equation*}
    \lim_{n\to\infty}\frac{\chi_i(n)}{\chi_1(n)+\ldots\chi_A(n)}=\lim_{t\to\infty}\frac{\Xi_i(t)}{\Xi_1(t)+\ldots\Xi_A(t)} ,
\end{equation*}
provided that the limit exists.

We are now particularly interested in cases with equal feedback functions for all agents, since agents with different attractiveness are already covered by Proposition \ref{prop49}. 

\begin{corollary}\label{cor: almostLinear}
Assume that all agents have the same feedback function $F_i \equiv F$ and that $F$ fulfills (\ref{eq: squareSum}). 
Then for all $\chi (0)\in\Delta_{A-1}^o$:
\begin{enumerate}
    \item If $\lim_{k\to\infty}\frac{F(k)}{k}=0$, then
    \begin{equation*}
        \chi_i(n)\xrightarrow{n\to\infty}\frac{1}{A}\quad\text{almost surely, for all }i\in [A]\ .
    \end{equation*}
    \item If $\lim_{k\to\infty}\frac{F(k)}{k}=c\in(0, \infty)$, then the limit $\chi (\infty )=\lim_{n\to\infty}\chi(n)$ exists almost surely and has a non-degenerate Dirichlet distribution on $\Delta_{A-1}$.
    \item If $\lim_{k\to\infty}\frac{F(k)}{k}=\infty$, then $\chi (\infty )=\lim_{n\to\infty}\chi(n)$ exists almost surely and the process exhibits a weak monopoly, i.e
    $$\P\left(\bigcup_{i=1}^AwMon_i(\chi(0), N)\right)=1\quad\mbox{such that}\quad \P\left( wMon_i(\chi(0), N)\right) >0\mbox{ for all }i\in [A].$$
\end{enumerate}
\end{corollary}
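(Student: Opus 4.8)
The plan is to deduce all three parts from the exponential embedding \eqref{eq: expemb} together with the three asymptotics in Corollary~\ref{cor: specialCase1}, which already reduce $\Xi_i$ to the deterministic function $a_i^{-1}$. Since no agent satisfies \eqref{eq: explosion}, none of the birth processes explodes, so the jump times $t_n$ of $\Xi$ satisfy $t_n\to\infty$ almost surely and
$$\chi_i(n)=\frac{\Xi_i(t_n)}{\Xi_1(t_n)+\dots+\Xi_A(t_n)}\ ;$$
hence it suffices to compute $\lim_{t\to\infty}\Xi_i(t)\big/\sum_j\Xi_j(t)$. The only genuinely new ingredient beyond Corollary~\ref{cor: specialCase1} is a comparison of $a_i^{-1}(t)$ across agents, which differ only through $X_i(0)$. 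Writing $b$ for an antiderivative of $1/F$ (the right-continuous step function fixed in Section~\ref{sec: specialCase}), one has $a_i^{-1}(t)\sim b^{-1}\!\big(t+b(X_i(0))\big)$, so for $i\ne j$
$$\log\frac{a_i^{-1}(t)}{a_j^{-1}(t)}\ \sim\ \int_{t+b(X_j(0))}^{t+b(X_i(0))}\frac{F\big(b^{-1}(s)\big)}{b^{-1}(s)}\,ds\ ,$$
an integral over a bounded interval whose integrand converges to $\lim_{k}F(k)/k$. In case~1 this is $0$, so $a_i^{-1}(t)\sim a_j^{-1}(t)$, and with Corollary~\ref{cor: specialCase1}(1) we get $\Xi_i(t)\sim\frac1A\sum_j\Xi_j(t)$, hence $\chi_i(n)\to1/A$ almost surely (this strictly extends Corollary~\ref{cor: main4}, which misses e.g.\ $F(k)=k/\log k$).

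For part~2 ($F(k)/k\to c\in(0,\infty)$) the displayed limit equals $c\,(b(X_i(0))-b(X_j(0)))$, so $a_i^{-1}(t)/a_j^{-1}(t)\to e^{c(b(X_i(0))-b(X_j(0)))}$, and Corollary~\ref{cor: specialCase1}(2) gives that
$$\chi_i(\infty)=\frac{e^{c\,b(X_i(0))-cU_i}}{\sum_{j}e^{c\,b(X_j(0))-cU_j}}$$
exists almost surely, where the $U_j$ are independent (distinct $\tau_j$), almost surely finite by Theorem~\ref{thm: Barbour}, and non-constant since $\operatorname{Var}(U_j)=\sigma_j^2>0$. By the neutrality argument following Proposition~\ref{prop: partialProcess}, $\chi(\infty)$ has a Dirichlet law on $\Delta_{A-1}$; it is non-degenerate because each $\chi_i(\infty)\in(0,1)$ almost surely (a ratio of strictly positive, a.s.\ finite terms), ruling out concentration on the vertices, and $\chi(\infty)$ is non-constant, ruling out a point mass. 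As a consistency check, for $F(k)=k$ we have $b(y)=\log y$ and $X_i(0)e^{-U_i}$ is the almost sure limit of $e^{-t}\Xi_i(t)$ for a Yule process started at $X_i(0)$, i.e.\ $\operatorname{Gamma}(X_i(0),1)$, recovering $\operatorname{Dir}(X_1(0),\dots,X_A(0))$.

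For part~3 ($F(k)/k\to\infty$) I would set $V_i:=U_i-b(X_i(0))$. Combining Corollary~\ref{cor: specialCase1}(3) with $\Xi_i(t)\sim b^{-1}(t-V_i+o(1))$, the exponent of $\Xi_i(t)/\Xi_j(t)$ is an integral of $F(b^{-1}(s))/b^{-1}(s)\to\infty$ over an interval of asymptotic length $V_j-V_i$, so $\Xi_i(t)/\Xi_j(t)\to\infty$ whenever $V_i<V_j$. The $V_i$ are independent with continuous laws, so there is almost surely a unique minimiser $i^{\ast}$, whence $\chi(n)\to e^{(i^{\ast})}$ almost surely; this shows $\chi(\infty)$ exists and $\P\big(\bigcup_i wMon_i\big)=1$. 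To get $\P(wMon_i)>0$ for each fixed $i$ I would argue by restart: with positive probability agent $i$ wins the first $m$ steps, after which (Markov property) the process is a fresh urn with initial condition $X(0)+m\,e^{(i)}$, on which $wMon_i$ is the same tail event. For this restarted urn the deciding quantity is $V_i^{(m)}=U_i^{(m)}-b(X_i(0)+m)$, where $\operatorname{Var}(U_i^{(m)})=\sum_{k\ge X_i(0)+m}1/F(k)^2\to0$ while $b(X_i(0)+m)\to\infty$ because $\sum_k 1/F(k)=\infty$; hence $V_i^{(m)}\to-\infty$ in probability, the $V_j$ for $j\ne i$ being unchanged and a.s.\ finite, so $\P\big(V_i^{(m)}<\min_{j\ne i}V_j\big)>0$ for $m$ large, giving $\P(wMon_i)>0$.

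The main obstacle is exactly this last positivity claim in part~3: parts~1--2 and the $\P(\bigcup_i wMon_i)=1$ statement are a routine reading of Corollary~\ref{cor: specialCase1} plus the elementary comparison of the $a_i^{-1}$, but showing that \emph{every} agent can be the weak monopolist forces control of the Barbour limit law — equivalently, that the partial sums $\sum_l(\tau_i(l)-1/F(l))$ can be driven arbitrarily far negative, which is precisely where $\sum_k 1/F(k)=\infty$ is used (either through the restart device above or directly). A secondary point to watch is that all identities for $a_i^{-1}$ and $b$ must be carried out with the right-continuous step extension of $F$ from Section~\ref{sec: specialCase}, so that relations such as $a_i(k)=\sum_{l=X_i(0)}^{X_i(0)+k-1}1/F(l)$ hold on the nose.
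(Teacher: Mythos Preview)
Your proof is correct and follows the paper's overall strategy: reduce via the exponential embedding to Corollary~\ref{cor: specialCase1} and then compare the deterministic functions $a_i^{-1}$. Your device of introducing a single antiderivative $b$ of $1/F$ so that $a_i^{-1}(t)=b^{-1}(t+b(X_i(0)))-X_i(0)$ is exactly what the paper encodes in \eqref{eq:paff}, and Parts~1--2 are essentially identical to the paper's argument.

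The genuine difference is in Part~3, specifically the positivity claim $\P(wMon_i)>0$. The paper first reduces to the symmetric start $X(0)=(1,\dots,1)$ via Lemma~\ref{lem: wMon} (so that all $a_i$ coincide and the monopolist is simply $\arg\min_j U_j$), and then appeals to Lemma~\ref{lem: cgfU}: the CGF of $U_i$ has finite radius of convergence $\min_{k\ge X_i(0)}F(k)$, hence each $U_j$ is unbounded above, which forces $\P(U_i<\min_{j\ne i}U_j)>0$. Your restart argument is a different and more elementary route: by letting agent $i$ win $m$ steps you shift $V_i^{(m)}=U_i^{(m)}-b(X_i(0)+m)$ to $-\infty$ in probability (using $\E U_i^{(m)}=0$, $\operatorname{Var}U_i^{(m)}\to 0$ from \eqref{eq: squareSum}, and $b(X_i(0)+m)\to\infty$ from $\sum 1/F(k)=\infty$), while the other $V_j$ keep a fixed a.s.\ finite law. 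This avoids the CGF computation entirely and uses only the two halves of \eqref{eq: squareSum} directly; on the other hand, the paper's CGF lemma yields extra distributional information that is reused later in Theorem~\ref{thm: almostLinear2}. A second minor difference is that you handle general $\chi(0)$ directly through the shifted variables $V_i=U_i-b(X_i(0))$, whereas the paper first passes to the symmetric initial condition via Lemma~\ref{lem: wMon}; both are fine, and your formulation is arguably cleaner here.
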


In other words: If the feedback function grows any slower than the identity, then the market shares converge to a deterministic limit as time tends to infinity, and the limit does not depend on the initial condition. If the feedback functions grow any faster than the identity, the process exhibits weak monopoly, which is not strong as (\ref{eq: explosion}) is necessary in Theorem \ref{thm: theorem2}. The weak monopoly can been seen in the Simulation shown in Figure \ref{figure: Simulation} (d). In contrast to the non-symmetric situation of Proposition \ref{prop49}, the monopolist is random with probability depending on the initial condition $\chi (0)$.

\begin{proof}
Note that the $U_i$ from Theorem \ref{thm: Barbour} are independent with distribution depending on $\chi(0)$ and $N$. In addition, their distribution is continuous as $U_i$ emerges from a sum of independent, centered exponentially distributed random variables. 
By definition \eqref{eq:aidef} we get $a_i(t)=a_j (t+const.)+const.$ with constants depending on the initial conditions and $F$, and after inversion we have $a_i^{-1}(t)=a_j^{-1}(t+const.)+const.$ for all $i, j\in[A]$. With (\ref{eq: a_iderivative}) this implies
\begin{equation}\label{eq:paff}
a_i^{-1} (t) =a_j^{-1} (t) \exp\left(\int_t^{t+const.}\frac{F(a_j^{-1}(s)+X_j(0))}{a_j^{-1}(s)}ds\right) +const.\ ,    
\end{equation}
and note that $a_j^{-1} (t)\to\infty$ in all cases.

1. In this case \eqref{eq:paff} implies that $a_i^{-1}(t)\sim a_j^{-1}(t)$. Then the claim follows directly from Corollary \ref{cor: specialCase1} via
\begin{equation*}
    \lim_{n\to\infty}\frac{\chi_i(n)}{\chi_1(n)+\ldots\chi_A(n)}=\lim_{t\to\infty}\frac{\Xi_i(t)}{\Xi_1(t)+\ldots\Xi_A(t)}=\lim_{t\to\infty}\frac{a_i^{-1}(t)}{a_1^{-1}(t)+\ldots a_A^{-1}(t)}=\frac{1}{A} \ .
\end{equation*}

2. Here, again with \eqref{eq:paff}, $a_i^{-1}(t)/a_j^{-1}(t)$ converges to a finite, non-zero constant for all $i, j\in[A]$, such that Corollary \ref{cor: specialCase1} yields
\begin{align*}
   \lim_{t\to\infty}\frac{\Xi_i(t)}{\Xi_1(t)+\ldots\Xi_A(t)}&=\lim_{t\to\infty}\frac{e^{-cU_i}a_i^{-1}(t)}{e^{-cU_1}a_1^{-1}(t)+\ldots e^{-cU_A}a_A^{-1}(t)}\\
   &=\left(1+\sum_{j\ne i}e^{c(U_i-U_j)}\lim_{t\to\infty}\frac{a_j^{-1}(t)}{a_i^{-1}(t)}\right)^{-1}.
\end{align*}
Hence, $\lim_{n\to\infty}\chi(n)$ exists almost surely and has a continuous distribution, which is of Dirichlet type according to Proposition \ref{prop: partialProcess} and \cite{James}.

3. Due to Lemma \ref{lem: wMon}, we can assume $X(0)=(1,\ldots, 1)$, so that $a_i=a_j$ and $h_i=h_j$. Then:
\begin{align*}
   \lim_{t\to\infty}&\frac{\Xi_i(t)}{\Xi_1(t)+\ldots\Xi_A(t)}=\lim_{t\to\infty}\left(1+\sum_{j\ne i}\frac{\Xi_j(t)}{\Xi_i(t)}\right)^{-1}\\
   &=\lim_{t\to\infty}\left(1+\sum_{j\ne i}\exp\left(\int_t^{t-U_j-o(1)}h_j(s)ds-\int_t^{t-U_i-o(1)}h_i(s)ds\right)\right)^{-1}\\
   &=\lim_{t\to\infty}\left(1+\sum_{j\ne i}\exp\left(\int_{t-U_i-o(1)}^{t-U_j-o(1)}h_i(s)ds\right)\right)^{-1}=
   \begin{cases}
       1 &\text{if }U_i<U_j\text{ for all }j\ne i\\
       0 &\text{else}
   \end{cases}
\end{align*}
Recall that $\lim_{s\to\infty}h_i(s)=\infty$. Again by Lemma \ref{lem: cgfU}, the unboundedness of the $U_i$ implies that $\P\left( wMon_i(\chi(0), N)\right)>0$ for all $i\in[A]$.
\end{proof}

\begin{lemma}\label{lem: wMon}
    For all choices of $F_1,\ldots,F_A$, we have
    \begin{align*}
        &\P\left(\bigcup_{i=1}^A wMon_i\left(\left(\frac1A,\ldots,\frac1A\right), A\right)\right)=1\\
        &\Leftrightarrow\quad\P\left(\bigcup_{i=1}^AwMon_i(\chi(0), N)\right)=1\text{ for all }\chi(0)\in\Delta_{A-1}^o,\,N\in\N.
    \end{align*}
\end{lemma}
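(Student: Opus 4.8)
The plan is to establish both implications separately. The direction ``$\Leftarrow$'' is trivial: if the displayed probability equals one for every initial configuration, then in particular it equals one for $\chi(0)=(1/A,\dots,1/A)$ with $N=A$. So the content is entirely in the direction ``$\Rightarrow$''. Here I would argue that a weak monopoly from the symmetric start propagates to every other starting configuration, using a coupling via the exponential embedding together with the consistency/neutrality property already developed in Proposition \ref{prop: partialProcess}.

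First I would use the Markov property to reduce an arbitrary starting configuration $X(0)$ with market size $N$ to the fully symmetric one. The key observation is that, with positive probability, the urn started from $(1,\dots,1)$ passes through the state $X(0)$ after exactly $N-A$ steps: indeed each transition probability in \eqref{eq:trapro} is strictly positive, so any finite path has positive probability, and one can choose a path that realizes the increments $X(0)-(1,\dots,1)$ in some order. Conversely, by the same argument, from $X(0)$ the urn reaches $(1,\dots,1)+\text{(any permutation pattern)}$... but that direction is not available since coordinates only increase. So instead I would argue in the forward direction only: the event $\bigcup_i wMon_i$ is a tail event (it depends only on the limit of $\chi(n)$, hence is unaffected by any finite initial segment), so by the Markov property its probability started from $X(0)$ equals the conditional probability of that tail event given $X(N-A)=X(0)$ in the chain started from $(1,\dots,1)$. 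If $\P_{(1,\dots,1)}(\bigcup_i wMon_i)=1$, then this conditional probability is also $1$ for $\P_{(1,\dots,1)}$-almost every value of $X(N-A)$, and in particular for the value $X(0)$, which has positive probability. This gives $\P(\bigcup_i wMon_i(\chi(0),N))=1$.

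The only subtlety is showing the step from ``symmetric start $(1,\dots,1)$'' to ``symmetric start of size $A$ but I want \emph{all} $N$ and all $\chi(0)$'' without circularity — in particular handling the case where $N < A$, which cannot occur since $X_i(0)\ge 1$ forces $N\ge A$, and the case where one wants to go to a \emph{larger} symmetric configuration like $(2,\dots,2)$: that is again reached from $(1,\dots,1)$ with positive probability in finitely many steps, so the same tail-event argument applies. The main obstacle I anticipate is making the ``positive probability value of $X(N-A)$'' argument fully rigorous: one must note that $X(N-A)$ takes only finitely many values (all compositions of $N$ into $A$ positive parts), each with strictly positive probability under $\P_{(1,\dots,1)}$, so that $\P_{(1,\dots,1)}(\bigcup_i wMon_i)=1$ forces the conditional probability to be $1$ on each of them; this is just the law of total probability, $1=\sum_x \P(X(N-A)=x)\,\P(\bigcup_i wMon_i\mid X(N-A)=x)$ with all weights positive. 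Once this is in place, combining with the homogeneous Markov property (shifting time by $N-A$ and using $X(N-A)=X(0)$ in distribution for the shifted chain) yields exactly $\P(\bigcup_i wMon_i(\chi(0),N))=1$, completing the proof.
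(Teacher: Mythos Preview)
Your argument is correct and is essentially the same as the paper's: both reduce to the observation that from the symmetric start $(1,\dots,1)$ the chain reaches any given state $Mx\in\N^A$ with positive probability in $M-A$ steps, and then the Markov property together with the tail nature of $\bigcup_i wMon_i$ forces the conditional probability (hence the probability from start $Mx$) to equal one. Your mention of the exponential embedding and Proposition~\ref{prop: partialProcess} in the plan is a red herring---neither is needed, and the paper does not invoke them either.
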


\begin{proof}
    The implication $\Leftarrow$ is trivial. Thus, assume that the process $X(n)$ starts in $X(0)=(1,\ldots, 1)$ and that $\P\left(\bigcup_{i=1}^A wMon_i\left(\left(\frac1A,\ldots,\frac1A\right), A\right)\right)=1$. Moreover, take any $x\in\Delta_{A-1}^o,\,M\in\N$. Then the claim follows directly from the Markov property,
    $$1=\P\left(\bigcup_{i=1}^A wMon_i\left(\left(\frac1A,\ldots,\frac1A\right), A\right)\,\big|\, X(M-A)=Mx\right)=\P\left(\bigcup_{i=1}^AwMon_i(x, M)\right)\ ,$$
since $\P \left( X(M-A)=Mx\right) >0$\ .
\end{proof}

The following example presents a class of feedback functions, for which four different regimes are possible.

\begin{example}
Let $F_i(k)=k(\log k)^\beta$ for all $i\in[A]$ and $\beta\in\R$. Depending on $\beta$, four different regimes occur for $n\to\infty$:
\begin{enumerate}
    \item For $\beta<0$, $\chi_i (n)$ for each agent $i$ converges almost surely to $\frac{1}{A}$ independently of $\chi (0)$.
    \item For $\beta=0$, the market shares $\chi (n)$ converge almost surely to a random limit $\chi (\infty )\in\Delta_{A-1}$, which is not a corner point and its distribution depends on the initial condition $\chi (0)$.
    \item For $\beta\in(0, 1]$, the process exhibits a weak monopoly which is not strong, i.e. all agents win in infinitely many steps, but the market share of one agent converges to one. The monopolist is random, and the distribution of $\chi (\infty )$ on the corner points of $\Delta_{A-1}$ depends on $\chi (0)$.
    \item For $\beta>1$, there is a strong monopoly. The monopolist is random and the distribution of $\chi (\infty )$ on the corner points of $\Delta_{A-1}$ depends on the initial condition $\chi (0)$ as well.
\end{enumerate}
\end{example}

According to Theorem \ref{thm: Barbour}, we have $t_n-a_i(X_i(n))\xrightarrow{n\to\infty}U_i$ by definition of the exponential embedding with jump times $t_n$. If $\lim_{k\to\infty}F(k)/k=\infty$, this convergence can be specified by replacing $t_n$ by a deterministic function and by computing the distribution of $U_i$.

\begin{theorem}\label{thm: almostLinear2}
    Assume that $F_i \equiv F$ does not fulfill (\ref{eq: explosion}) and that $F(k)/k\xrightarrow{k\to\infty}\infty$ holds. Then there exist independent random variables $U_1,\ldots, U_A$ such that
    $$\left(a_i(n)-a_i(X_i(n))\right)_{i\in[A]}\xrightarrow{n\to\infty}\left(U_i-\sum_{k=1}^{X_i(0)-1}\frac{1}{F(k)}-\min_{j\in[A]}\left(U_j-\sum_{k=1}^{X_j(0)-1}\frac{1}{F(k)}\right)\right)_{i\in[A]}$$
     almost surely. Moreover, the cumulant generating function (CGF) of each $U_i$ is given by
    \begin{align}\label{eq: CGF}
    \lambda\mapsto\log\left(\E e^{\lambda U_i}\right)=\sum_{l=2}^\infty\frac{\lambda^l}{l}\sum_{k=X_i(0)}^\infty\frac{1}{F(k)^l}
    \end{align}
    and the radius of convergence is $\min_{k\ge X_i(0)}F(k)$.
\end{theorem}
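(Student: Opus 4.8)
The plan is to combine the almost-sure convergence statement from Theorem \ref{thm: Barbour} with the explicit structure of the jump times $t_n$ in the exponential embedding. First I would recall that, by definition of the embedding via Rubin's theorem, the $n$-th jump time satisfies $t_n = \min_{j\in[A]} \big(S_j(X_j(n) - X_j(0))\big)$ in an appropriate sense — more precisely, at time $t_n$ the process $X(n)$ has been reached, and the "clock" of agent $i$ shows $S_i(X_i(n)-X_i(0))$, while $t_n$ itself is governed by whichever agent most recently jumped. The cleaner route is: for each $i$, Theorem \ref{thm: Barbour} gives that $S_i(k) - a_i(k) \to U_i$ a.s.\ as $k\to\infty$ (with $a_i$, $S_i$ as in \eqref{eq:aidef}, and here the $U_i$ are centered with variance $\sigma_i^2$). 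Since $t_n$ equals the value of agent $i$'s clock at the moment agent $i$'s count equals $X_i(n)$ — i.e.\ $t_n = S_i\big(X_i(n)-X_i(0)-1\big) + (\text{a leftover piece} \le \tau_i(X_i(n)))$ — and since $\tau_i(X_i(n))\to 0$ a.s.\ (because $F(k)/k\to\infty$ forces $F(k)\to\infty$), we get $t_n - S_i(X_i(n)-X_i(0)) \to 0$ a.s. Combining, $t_n - a_i(X_i(n)-X_i(0)) \to U_i$ a.s. Rewriting via $a_i(m) = \sum_{l=X_i(0)}^{X_i(0)+m} 1/F(l)$ and $a_i(X_i(n)) $ in the sense of \eqref{eq:aidef}, and using $a_i(X_i(n)) = a_i(n) - (\text{difference of partial sums of }1/F)$ together with the fact that $a_i(n) - a_i(X_i(n))$ differs from $a_j(n)-a_j(X_j(n))$ only by the deterministic shifts $\sum_{k=1}^{X_i(0)-1}1/F(k)$, I would obtain that $\big(a_i(n)-a_i(X_i(n))\big)_i$ converges a.s.\ to $\big(U_i - \sum_{k=1}^{X_i(0)-1}1/F(k) - C\big)_i$ for a common random constant $C$; matching the requirement that the minimal component be zero (since $X_j(n)\le n$ forces $a_j(n)-a_j(X_j(n))\ge 0$, with equality for the leading agent infinitely often) identifies $C = \min_j\big(U_j - \sum_{k=1}^{X_j(0)-1}1/F(k)\big)$.

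Next I would compute the CGF. Since $U_i = \sum_{l=X_i(0)}^\infty \big(\tau_i(l) - \tfrac{1}{F(l)}\big)$ as an a.s.-convergent sum of independent centered random variables, and $\tau_i(l)\sim \mathrm{Exp}(F(l))$, I have $\E e^{\lambda(\tau_i(l)-1/F(l))} = e^{-\lambda/F(l)}\cdot \frac{F(l)}{F(l)-\lambda}$ for $\lambda < F(l)$, whose logarithm is $-\lambda/F(l) - \log(1-\lambda/F(l)) = \sum_{m\ge 2}\frac{\lambda^m}{m F(l)^m}$. Summing over $l\ge X_i(0)$ and interchanging the (absolutely convergent, for $|\lambda| < \inf_{l\ge X_i(0)}F(l) = \min_{k\ge X_i(0)}F(k)$, the last equality because $F(k)\to\infty$ so the infimum is attained) double sum gives exactly \eqref{eq: CGF}. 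The radius of convergence of the resulting power series in $\lambda$ is $\min_{k\ge X_i(0)}F(k)$: it cannot exceed that value since the $l$ for which $F(l)$ is minimal contributes a term with geometric coefficients $1/F(l)^m$, and convergence of $\sum_m \lambda^m/(m F(l)^m)$ at that single $l$ already requires $|\lambda|\le F(l)$; conversely for $|\lambda|$ strictly below that minimum every term converges and a dominated-convergence / comparison argument gives convergence of the whole series, and one checks it equals $\log\E e^{\lambda U_i}$ by monotone/dominated convergence applied to the partial products.

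The independence of $U_1,\dots,U_A$ is immediate from the independence of the families $\{\tau_i(k):k\}$ across $i$ in the exponential embedding. The main obstacle I anticipate is the bookkeeping in the first paragraph: carefully relating $t_n$, the clocks $S_i$, and $a_i(X_i(n))$ with the correct index shifts coming from the initial conditions $X_i(0)$, and justifying that the leftover piece $t_n - S_i(X_i(n)-X_i(0)-1)$ vanishes a.s.\ uniformly enough to pass to the limit — this needs $\tau_i(k)\to 0$ a.s., which follows since $\sum_k 1/F(k)^2 <\infty$ (by \eqref{eq: squareSum}) forces $1/F(k)\to 0$, hence $F(k)\to\infty$, hence $\tau_i(k)\to 0$ a.s.\ by Borel–Cantelli. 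Everything else (the CGF computation, the radius of convergence, independence) is a routine but careful power-series and interchange-of-summation exercise, and the identification of the common constant $C$ as the minimum is forced by nonnegativity of $a_j(n)-a_j(X_j(n))$ together with the leading agent attaining $X_j(n)=n$ infinitely often.
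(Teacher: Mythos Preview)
Your CGF computation and the independence argument are correct and match the paper's Lemma~\ref{lem: cgfU} almost verbatim. The overall strategy for the convergence part is also right: you use Barbour's theorem to get $t_n - a_i(X_i(n)-X_i(0)) \to U_i$ (your leftover-piece argument is correct, though one could also read this off directly from the continuous-time statement $t - a_i(\Xi_i(t)) \to U_i$ in Theorem~\ref{thm: Barbour}), and then you want to subtract off the limit of $t_n - a_i(n)$.

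The gap is in that last step. Your justification that the minimal component of the limit vector is zero --- ``equality for the leading agent attaining $X_j(n)=n$ infinitely often'' --- is false: in the weak-monopoly regime every agent wins infinitely often, so no agent satisfies $X_j(n)-X_j(0)=n$ beyond the first few steps, and $a_j(n)-a_j(X_j(n))$ is strictly positive for all $j$ and all large $n$. More fundamentally, you have not established that $t_n - a_i(n)$ converges at all; from your argument you only know that the pairwise \emph{differences} $[a_i(n)-a_i(X_i(n))]-[a_j(n)-a_j(X_j(n))]$ converge (to $(U_i-\text{shift}_i)-(U_j-\text{shift}_j)$), which by itself gives neither existence of the individual limits nor the value of the common constant $C$.

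The paper closes this gap with a direct sandwich for $t_n - a_i(n)$: since $t_n$ is the time of the $n$-th jump overall, one has $\min_j S_j(n/A) \le t_n \le \min_j S_j(n)$ (the upper bound because any single agent having $n$ jumps forces at least $n$ total jumps; the lower bound because some agent must have had at least $n/A$ jumps). Combined with $S_j(k)-a_j(k)\to U_j$ and the crucial estimate
\[
a_j(n)-a_j(n/A)=\int_{X_j(0)+n/A}^{X_j(0)+n}\frac{ds}{F(s)}\xrightarrow{n\to\infty} 0
\]
(this is precisely where the hypothesis $F(k)/k\to\infty$ enters), both bounds converge to $\min_j(U_j - c_{i,j})$, yielding existence and value simultaneously. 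Your route can be salvaged by instead invoking Corollary~\ref{cor: almostLinear} to get a weak monopolist $j^*$ and then proving $a_{j^*}(n)-a_{j^*}(X_{j^*}(n)-X_{j^*}(0))=\int_{X_{j^*}(n)}^{X_{j^*}(0)+n}\frac{dx}{F(x)}\to 0$ from $X_{j^*}(n)/n\to 1$ and $F(x)/x\to\infty$ --- but that is exactly the estimate you glossed over.
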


In particular, there is exactly one agent, namely the weak monopolist, such that the limit of $a_i(n)-a_i(X_i(n))$ is zero. 
For the proof, we characterize the distribution of $U_i$ by computing its CGF. For that, we exploit that $U_i$ is also the limit of $S_i(k)-a_i(k)$  for $k\to\infty$ according to Theorem \ref{thm: Barbour}.

\begin{lemma}\label{lem: cgfU}
    Assume that $F_i \equiv F$ fulfills (\ref{eq: squareSum}). Then the CGF of $U_i$ is given by (\ref{eq: CGF}) and the radius of convergence is $\min_{k\ge X_i(0)}F(k)$.
\end{lemma}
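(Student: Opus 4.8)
The plan is to compute the cumulant generating function of $U_i$ directly from its representation as an almost surely convergent series of independent centered exponentials, and then to pass to the limit inside the expectation. First I would unwind the notation: by \eqref{eq:aidef} and the right-continuous step-function extension of $F$ one has $a_i(k)=\sum_{j=X_i(0)}^{X_i(0)+k-1}\frac1{F(j)}$ for integer $k\ge1$, so that
\begin{equation*}
S_i(k)-a_i(k)=V_{k-1}+\tau_i(X_i(0)+k),\qquad V_n\coloneqq\sum_{j=X_i(0)}^{X_i(0)+n}\Big(\tau_i(j)-\tfrac1{F(j)}\Big).
\end{equation*}
The increments $\tau_i(j)-\frac1{F(j)}$ are independent, centered, with variances $F(j)^{-2}$ summing to $\sigma_i^2<\infty$ by \eqref{eq: squareSum}, so Kolmogorov's one-series theorem gives that $V_n$ converges a.s.; in particular its terms tend to $0$, and since $\frac1{F(j)}\to0$ (forced by $\sum F(j)^{-2}<\infty$) also $\tau_i(X_i(0)+k)\to0$ a.s. Together with Theorem \ref{thm: Barbour} this identifies $U_i=\lim_nV_n=\sum_{j\ge X_i(0)}\big(\tau_i(j)-\frac1{F(j)}\big)$. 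I would then record the elementary fact that for $\tau\sim\mathrm{Exp}(\mu)$ and $\lambda<\mu$,
\begin{equation*}
\log\E e^{\lambda(\tau-1/\mu)}=-\frac{\lambda}{\mu}-\log\!\Big(1-\frac{\lambda}{\mu}\Big)=\sum_{l=2}^\infty\frac1l\Big(\frac{\lambda}{\mu}\Big)^l,
\end{equation*}
the last step being the Taylor series of $-\log(1-x)$.

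Set $F_{\min}\coloneqq\min_{k\ge X_i(0)}F(k)\in(0,\infty)$, the minimum being attained because $F\to\infty$. For real $\lambda$ with $|\lambda|<F_{\min}$, independence gives $\log\E e^{\lambda V_n}=\sum_{j=X_i(0)}^{X_i(0)+n}\sum_{l\ge2}\frac{\lambda^l}{l\,F(j)^l}$, and since $\sum_{l\ge2}\frac{|\lambda|^l}{lF(j)^l}=-\frac{|\lambda|}{F(j)}-\log\!\big(1-\frac{|\lambda|}{F(j)}\big)$ is $O(F(j)^{-2})$ uniformly over $|\lambda|\le\frac12F_{\min}$, say, it is summable in $j$ by \eqref{eq: squareSum}; exchanging the two sums and letting $n\to\infty$ yields
\begin{equation*}
\lim_{n\to\infty}\log\E e^{\lambda V_n}=\sum_{l=2}^\infty\frac{\lambda^l}{l}\sum_{j\ge X_i(0)}\frac1{F(j)^l},
\end{equation*}
which is the right-hand side of \eqref{eq: CGF}. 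To replace $V_n$ by $U_i$, I would use that $V_n\to U_i$ a.s. together with an $L^q$-bound for some $q>1$: for $\lambda\le0$ one has $\sup_n\E e^{q\lambda V_n}<\infty$ for every $q$ (the product formula turns this into the series $\sum_j[\,q|\lambda|/F(j)-\log(1+q|\lambda|/F(j))\,]$, which converges since the summand is $O(F(j)^{-2})$), and for $0<\lambda<F_{\min}$ one picks $q>1$ with $q\lambda<F_{\min}$ and bounds $\E e^{q\lambda V_n}$ by $\exp\big(\sum_{l\ge2}\frac{(q\lambda)^l}{l}\sum_{j\ge X_i(0)}F(j)^{-l}\big)<\infty$. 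Hence $(e^{\lambda V_n})_n$ is uniformly integrable, $\E e^{\lambda V_n}\to\E e^{\lambda U_i}$, and taking logarithms gives \eqref{eq: CGF}. (One can also run the computation directly on $S_i(k)-a_i(k)$, which converges to the same $U_i$; the boundary term contributes only $-\log(1-\lambda/F(X_i(0)+k))\to0$.)

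For the radius of convergence, write the series in \eqref{eq: CGF} as $\sum_{l\ge2}c_l\lambda^l$ with $c_l=\frac1l\sum_{j\ge X_i(0)}F(j)^{-l}$. Convergence for $|\lambda|<F_{\min}$ was shown above, so the radius is at least $F_{\min}$; conversely $c_l\ge\frac1lF_{\min}^{-l}$ forces the general term not to vanish for $|\lambda|\ge F_{\min}$, while $c_l\le\frac{\sigma_i^2F_{\min}^2}{l}F_{\min}^{-l}$ gives $\limsup_lc_l^{1/l}=F_{\min}^{-1}$, so the radius is exactly $\min_{k\ge X_i(0)}F(k)$. The only genuinely non-mechanical step is the interchange of limit and expectation, i.e.\ the uniform-integrability estimate, which has to be handled on both sides of $0$ — delicate near $F_{\min}$ on the positive side, and on the negative side requiring one to recognize the cancellation between $q|\lambda|\,a_i(n)$ and $\sum_j\log(1+q|\lambda|/F(j))$ as a convergent $O(F(j)^{-2})$-series; the step-function bookkeeping for $a_i$, the identification of $U_i$, and the Taylor expansion of $-\log(1-x)$ are routine.
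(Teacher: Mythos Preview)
Your proof is correct and follows essentially the same route as the paper: identify $U_i$ as the a.s.\ limit of partial sums of independent centered exponentials, compute the CGF of the partial sums via independence and the Taylor expansion of $-\log(1-x)$, swap the double sum, and read off the radius of convergence via a root-test estimate on the coefficients. The only substantive difference is that you supply an explicit uniform-integrability argument to pass from $\E e^{\lambda V_n}$ to $\E e^{\lambda U_i}$, whereas the paper simply asserts that the CGF of the limit is the pointwise limit of the CGFs; your version is the more careful one on this point.
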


\begin{proof}
The CGF of the limit $U_i=\lim_{k\to\infty}S_i(k)-a_i(k)=\lim_{k\to\infty}\sum_{l=X_i(0)}^k\left(\tau_i(l)-\frac{1}{F(l)}\right)$ is the pointwise limit of the CGFs:
\begin{align*}
    \log\left(\E e^{\lambda U_i}\right)&=\lim_{k\to\infty} \log\left(\E\exp\left(\lambda\sum_{l=X_i(0)}^k\left(\tau_i(l)-\frac{1}{F(l)}\right)\right)\right)\\
    &= \log\left(\prod_{k=X_i(0)}^\infty e^{-\lambda/F(k)}\E e^{\lambda\tau_i(k)}\right)=\sum_{k=X_i(0)}^\infty\left[\log\left(\E e^{\lambda\tau_i(k)}\right)-\frac{\lambda}{F(k)}\right]\\
    &=\sum_{k=X_i(0)}^\infty\left[\log\left(\frac{F(k)}{F(k)-\lambda}\right)-\frac{\lambda}{F(k)}\right]=-\sum_{k=X_i(0)}^\infty\left[\log\left(1-\frac{\lambda}{F(k)}\right)+\frac{\lambda}{F(k)}\right]
\end{align*}
We now use the series representation of $x\mapsto\log(1+x)$ and change the order of summation due to absolut convergence:
\begin{align*}
     \log\left(\E e^{\lambda U_i}\right)&=\sum_{k=X_i(0)}^\infty\left[\sum_{l=1}^\infty\frac{1}{l}\left(\frac{\lambda}{F(k)}\right)^l-\frac{\lambda}{F(k)}\right]=\sum_{k=X_i(0)}^\infty\sum_{l=2}^\infty\frac{1}{l}\left(\frac{\lambda}{F(k)}\right)^l\\
     &=\sum_{l=2}^\infty\frac{\lambda^l}{l}\sum_{k=X_i(0)}^\infty\frac{1}{F(k)^l}
\end{align*}

Note that $\sum_{k=X_i(0)}^\infty\frac{1}{F(k)^l}<\infty$ for $l\ge 2$. Now, define $M\coloneqq\{k\ge X_i(0)\colon F(k)=\min_{l\ge X_i(0)}F(l)\}$ and let $k_0\in M$. The radius of convergence of the power series representation of the CGF is given by
\begin{align*}
    \liminf_{l\to\infty}\left(\frac 1l\sum_{k=X_i(0)}^\infty\frac{1}{F(k)^l}\right)^{-1/l}=F(k_0) \liminf_{l\to\infty}\left(\# M+\sum_{k\ge X_i(0),\, k\notin M}\frac{F(k_0)^l}{F(k)^l}\right)^{-1/l}=F(k_0)
\end{align*}
since $\sum_{k\ge X_i(0),\, k\notin M}\frac{F(k_0)^l}{F(k)^l}\xrightarrow{l\to\infty}0$ and $\# M<\infty$ if $F(k)/k\xrightarrow{k\to\infty}\infty$.
\end{proof}

In particular, $\E U_i=0$ since the first term in the series is $\lambda^2$, and the $l$-th cumulant of $U_i$ is $(l-1)!\sum_{k=X_i(0)}^\infty\frac{1}{F(k)^l}$ for $l\ge2$. For the proof of Theorem \ref{thm: almostLinear2}, it remains to show that $t_n-a_i(n)$ converges as desired.

\begin{lemma}
In the situation of Theorem \ref{thm: almostLinear2} we have
    $$t_n-a_i(n)\xrightarrow{n\to\infty}\min_{j\in[A]}(U_j-c_{i, j})\,,$$
    where $c_{i, j}\coloneqq\sum_{k=1}^{X_j(0)-1}\frac{1}{F(k)}-\sum_{k=1}^{X_i(0)-1}\frac{1}{F(k)}$ for $i, j\in[A]$.
\end{lemma}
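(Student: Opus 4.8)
The plan is to read Theorem~\ref{thm: Barbour} along the jump times $(t_n)$ and then exploit that $F(k)/k\to\infty$ forces a single agent's counting process to asymptotically exhaust the whole market. Note first that in this situation \eqref{eq: squareSum} holds (since $F(k)\ge k$ eventually, $\sum_k F(k)^{-2}<\infty$), so Theorem~\ref{thm: Barbour} and Lemma~\ref{lem: cgfU} apply. I would introduce the count-indexed integral $\hat a_i(m):=\int_{X_i(0)}^m \frac{dx}{F(x)}$, so that $a_i(n)=\hat a_i(X_i(0)+n)$ in the notation of \eqref{eq:aidef} and, since $\sum_k 1/F(k)=\infty$, $\hat a_i$ is a strictly increasing continuous bijection of $[X_i(0),\infty)$ onto $[0,\infty)$; because all agents share $F$, one has the exact identity $\hat a_j=\hat a_i-c_{i,j}$ for all $i,j$. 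Evaluating Theorem~\ref{thm: Barbour} at $t=t_n$, where $\Xi_j(t_n)=X_j(n)$ (absorbing the vanishing boundary correction of order $1/F(X_j(n))$ coming from the indexing convention), gives $t_n-\hat a_j(X_j(n))\to U_j$ a.s.\ for every $j$, with the $U_j$ independent and having the continuous laws of Lemma~\ref{lem: cgfU}. Equivalently $\hat a_i(X_j(n))=t_n+c_{i,j}-U_j+o(1)$, and inverting, $X_j(n)=\hat a_i^{-1}\!\big(t_n+c_{i,j}-U_j+o(1)\big)$; here $t_n\to\infty$ since no agent satisfies \eqref{eq: explosion}, so no birth process explodes.

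The heart of the argument is a winner-takes-all step. Since $(\hat a_i^{-1})'(s)=F(\hat a_i^{-1}(s))$ and $F(m)/m\to\infty$, we get $\frac{d}{ds}\log\hat a_i^{-1}(s)=F(\hat a_i^{-1}(s))/\hat a_i^{-1}(s)\to\infty$, hence $\hat a_i^{-1}(s+\delta)/\hat a_i^{-1}(s)\to\infty$ for every fixed $\delta>0$. Let $j^\star$ be the a.s.\ unique minimiser of $U_j-c_{i,j}$ over $j\in[A]$ (unique because these are independent with atomless distributions). For $j\ne j^\star$ the previous step gives $\hat a_i(X_{j^\star}(n))-\hat a_i(X_j(n))\to (U_j-c_{i,j})-(U_{j^\star}-c_{i,j^\star})>0$ a.s., so this gap eventually exceeds a fixed positive $\delta_j$; since all counts diverge ($\hat a_i(X_j(n))\to\infty$), monotonicity of $\hat a_i$ and the super-exponential bound give $X_{j^\star}(n)/X_j(n)\ge \hat a_i^{-1}(\hat a_i(X_j(n))+\delta_j)/\hat a_i^{-1}(\hat a_i(X_j(n)))\to\infty$. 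Together with $\sum_j X_j(n)=N+n$ this forces $X_{j^\star}(n)/(N+n)\to 1$; in particular $j^\star$ is precisely the weak monopolist of Corollary~\ref{cor: almostLinear}.

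To finish I would estimate the relevant integrals. Using $F(x)\ge x$ for large $x$, $X_{j^\star}(n)\le N+n$, and $X_{j^\star}(n)\sim N+n$,
\[
0\le \hat a_i(N+n)-\hat a_i(X_{j^\star}(n))=\int_{X_{j^\star}(n)}^{N+n}\frac{dx}{F(x)}\le \int_{X_{j^\star}(n)}^{N+n}\frac{dx}{x}=\log\frac{N+n}{X_{j^\star}(n)}\xrightarrow{n\to\infty}0,
\]
and likewise $0\le\hat a_i(N+n)-\hat a_i(X_i(0)+n)\le\log\frac{N+n}{X_i(0)+n}\to 0$. Hence $a_i(n)=\hat a_i(X_i(0)+n)=\hat a_i(X_{j^\star}(n))+o(1)$, and inserting the $j=j^\star$ case of $\hat a_i(X_j(n))=t_n+c_{i,j}-U_j+o(1)$ yields $a_i(n)=t_n+c_{i,j^\star}-U_{j^\star}+o(1)$, so that $t_n-a_i(n)=U_{j^\star}-c_{i,j^\star}+o(1)=\min_{j\in[A]}(U_j-c_{i,j})+o(1)$, which converges a.s.\ to $\min_{j\in[A]}(U_j-c_{i,j})$.

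The main obstacle is the winner-takes-all passage: converting the bounded, eventually positive gaps between the $\hat a_i(X_j(n))$ into a diverging ratio of the raw counts, which is exactly where the hypothesis $F(k)/k\to\infty$ of Theorem~\ref{thm: almostLinear2} is essential, since it makes $\hat a_i^{-1}$ grow faster than any exponential. Everything else is the elementary estimate $\int_a^b\frac{dx}{F(x)}\le\int_a^b\frac{dx}{x}=\log(b/a)$ valid once $F(x)\ge x$; the only points needing care are keeping the error terms genuinely $o(1)$ rather than $O(1)$ and recording the atomlessness of the $U_j$ (via Lemma~\ref{lem: cgfU}) so that $j^\star$ is a.s.\ unique.
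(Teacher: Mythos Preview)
Your proof is correct but takes a genuinely different route from the paper. The paper sandwiches $t_n$ directly: since at time $t_n$ the joint process has made exactly $n$ jumps, one has $\min_{j}S_j(n/A)\le t_n\le \min_j S_j(n)$ (up to harmless index shifts). Subtracting $a_i(n)$, writing $S_j(k)-a_i(n)=(S_j(k)-a_j(k))+(a_j(k)-a_j(n))+(a_j(n)-a_i(n))$, and using Theorem~\ref{thm: Barbour} together with $a_j(n)-a_i(n)\to -c_{i,j}$ and the single estimate $a_j(n)-a_j(n/A)=\int_{X_j(0)+n/A}^{X_j(0)+n}F^{-1}\to 0$ (this is exactly where $F(k)/k\to\infty$ enters) gives both bounds converging to $\min_j(U_j-c_{i,j})$.

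Your argument instead \emph{identifies} the minimiser $j^\star$ as the weak monopolist, proves $X_{j^\star}(n)\sim N+n$ via the super-exponential growth of $\hat a_i^{-1}$, and then links $a_i(n)$ to $t_n$ through $\hat a_i(X_{j^\star}(n))$. This is longer but more informative: it explains \emph{why} the minimum appears (it is the monopolist's index, consistent with Corollary~\ref{cor: almostLinear} and the statement of Theorem~\ref{thm: almostLinear2}), and it makes transparent that $j^\star$ does not depend on $i$ since $U_j-c_{i,j}=\tilde U_j+\text{const}(i)$. The paper's sandwich buys brevity and avoids having to re-derive the winner-takes-all step; your route buys a cleaner conceptual picture and a direct identification of the monopolist within the same argument. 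Both use $F(k)/k\to\infty$ in the same spirit---you through $\hat a_i^{-1}(s+\delta)/\hat a_i^{-1}(s)\to\infty$ and the $\log$-integral bound, the paper through $a_j(n)-a_j(n/A)\to 0$.
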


\begin{proof}
    By definition of $t_n$ and $a_i(n)$ and by Theorem \ref{thm: Barbour}, we have
    \begin{align*}
        t_n-a_i(n)&\le \min_{j\in[A]}S_j(n)-a_i(n)\\
        &=\min_{j\in[A]}\left(S_j(n)-a_j(n)+a_j(n)-a_i(n)\right)\xrightarrow{n\to\infty}\min_{j\in[A]}(U_j-c_{i, j})
    \end{align*}
as $a_j(n)-a_i(n)\xrightarrow{n\to\infty}c_{i, j}$. Furthermore, 
 \begin{align*}
        &t_n-a_i(n)\ge \min_{j\in[A]}S_j(n/A)-a_i(n)\\
        &= \min_{j\in[A]}\left(S_j(n/A)-a_j(n/A)+a_j(n/A)-a_j(n)+a_j(n)-a_i(n)\right)\xrightarrow{n\to\infty}\min_{j\in[A]}(U_j-c_{i, j})\,.
    \end{align*}
This holds because
\begin{align*}
    a_j(n)-a_j(n/A)=\int_{X_j(0)+n/A}^{X_j(0)+n}\frac{1}{F(s)}ds= \left(n-\frac nA\right)\frac{1}{F(m_n)}\xrightarrow{n\to\infty}0
\end{align*}
for a mean value $m_n\in(X_j(0)+n/A,\, X_j(0)+n)$ using $F(k)/k\xrightarrow{k\to\infty}\infty$.
\end{proof}

According to Theorem \ref{thm: almostLinear2}, $X_i(n)$ is asymptotically well described by 
$$X_i(n)\approx a_i^{-1}\left(a_i(n)-\tilde U_i+\min_{j\in[A]}\tilde U_j\right)\,,$$
where $\tilde U_j\coloneqq U_j-\sum_{k=1}^{X_j(0)-1}\frac{1}{F(k)}$ for all $j\in[A]$. Now, consider two distinct agents $i, j$ and assume for simplicity of notation that $X_i(0)=X_j(0)$, such that $a_i(k-X_i(0))=a_j(k-X_j(0))=:a(k)$. Then Theorem \ref{thm: almostLinear2} states that
$$a(X_i(n))-a(X_j(n))\xrightarrow{n\to\infty}U_j-U_i\quad\text{almost surely.}$$
Moreover, the CGF of $U_j-U_i$ is the sum of the CGFs of $U_j$ and $U_i$ due to independence. Hence, $\E e^{\lambda(U_j-U_i)}$ is finite if and only if  $|\lambda|<\min_{k\ge X_i(0)}F(k)$. 
Thus, the distribution of $U_j-U_i$ has exponential tails, and these findings can be used as follows.

\begin{example}
    Let $F_i (k)\equiv F(k)=k\log(k)$ and $X_i(0)=X_j(0)=1$ for two agents $i, j\in[A]$, so that $a_i(t)=a_j(t)=\log\log t$. Then the continuous mapping theorem yields
    $$\frac{\log X_i(n)}{\log X_j(n)}\xrightarrow{n\to\infty}e^{U_j-U_i}\quad\text{almost surely,}$$
    where $e^{U_j-U_i}$ has a power-law distribution due to the explanations above. 
    Remarkably, the log-ratios $\frac{\log X_i(n)}{\log X_j(n)}$ and $\frac{\log X_{i'}(n)}{\log X_{j'}(n)}$ are asymptotically also independent for distinct pairs of agents $(i, j), (i', j')$.
\end{example}

An important application of Theorem \ref{thm: almostLinear2} is its implication for the rate of convergence. In fact, the convergence of the process of market shares $\chi(n)$ to an edge of the simplex can be considered as logarithmically slow.

\begin{corollary}\label{cor: almostLinear2}
    Assume that $F_i =F$ and $L(k)\coloneqq F(k)/k$ is increasing, but (\ref{eq: explosion}) does not hold. Then there is a random constant $c>0$ such that 
    $$\chi_i(n)\ge e^{-cL(n)}\quad\mbox{for all $n\geq 1$ and $i\in [A]$.}$$
\end{corollary}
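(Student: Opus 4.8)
The plan is to bound each population $X_i(n)$ from below by pushing Barbour's fine asymptotics for birth processes (Theorem \ref{thm: Barbour}) through the exponential embedding, and then to use the monotonicity of $L$ to convert that (additive) control into the desired exponential lower bound on $\chi_i(n)=X_i(n)/(N+n)$.

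First I would record the standing facts. Since $L$ is increasing, $L(k)\ge L(1)=F(1)>0$, so $F(k)=kL(k)$ is increasing and $\sum_k F(k)^{-2}\le L(1)^{-2}\sum_k k^{-2}<\infty$; together with the hypothesis $\sum_k F(k)^{-1}=\infty$ this gives \eqref{eq: squareSum}, so Theorem \ref{thm: Barbour} applies. As no agent satisfies \eqref{eq: explosion}, no $\Xi_j$ explodes, hence $t_n\to\infty$ and $X_j(n)=\Xi_j(t_n)\to\infty$ a.s.\ for every $j$. Put $\tilde a(m):=\int_1^m\frac{dx}{F(x)}$, which differs from the functions $a_j$ of \eqref{eq:aidef} (read as functions of the population) only by additive constants. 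Evaluating the a.s.\ limit of Theorem \ref{thm: Barbour} along $t_n$ gives that $t_n-\tilde a(X_j(n))$ converges a.s.\ to a finite random variable $\hat U_j$, for each $j$; subtracting, $\tilde a(X_i(n))-\tilde a(X_j(n))\to\hat U_j-\hat U_i$ a.s., so
$$D:=\sup_{n\ge1}\max_{k,\ell\in[A]}\bigl|\tilde a(X_k(n))-\tilde a(X_\ell(n))\bigr|<\infty\quad\text{a.s.}$$

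Next I would bound $\tilde a(N+n)-\tilde a(X_i(n))$. With $X_{\max}(n):=\max_j X_j(n)$ we have $X_i(n)\le X_{\max}(n)\le N+n\le A\,X_{\max}(n)$, and monotonicity of $\tilde a$ and $F$ yields
$$0\le\tilde a(N+n)-\tilde a(X_{\max}(n))\le\int_{X_{\max}(n)}^{A X_{\max}(n)}\frac{dx}{F(x)}\le\frac{(A-1)X_{\max}(n)}{F(X_{\max}(n))}=\frac{A-1}{L(X_{\max}(n))}\le\frac{A-1}{L(1)}=:C_1\,,$$
a deterministic constant, while $X_{\max}(n)=X_{j_0}(n)$ for some $j_0\in[A]$ gives $0\le\tilde a(X_{\max}(n))-\tilde a(X_i(n))\le D$; hence $0\le\tilde a(N+n)-\tilde a(X_i(n))\le C_1+D=:D''<\infty$ a.s. Now fix $n$. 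If $X_i(n)>n$ then $\chi_i(n)>\frac{n}{N+n}\ge\frac1{N+1}$. If $X_i(n)\le n$, then $\tilde a(n)-\tilde a(X_i(n))\le D''$ and, since $L(x)\le L(n)$ for $x\le n$,
$$D''\ \ge\ \int_{X_i(n)}^{n}\frac{dx}{F(x)}\ =\ \int_{X_i(n)}^{n}\frac{dx}{x\,L(x)}\ \ge\ \frac{1}{L(n)}\log\frac{n}{X_i(n)}\,,$$
so $X_i(n)\ge n\,e^{-D''L(n)}$ and $\chi_i(n)\ge\frac{n}{N+n}e^{-D''L(n)}\ge\frac1{N+1}e^{-D''L(n)}$. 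As $D''\ge0$, both cases give $\chi_i(n)\ge\frac1{N+1}e^{-D''L(n)}$ for every $i$ and every $n\ge n_0$, where $n_0=n_0(\omega)<\infty$ a.s.\ is chosen so that the convergences above have brought all relevant quantities within distance $1$. For $n<n_0$ we use $\chi_i(n)\ge\frac{X_i(0)}{N+n}\ge\frac1{N+n_0}$. Taking the random constant
$$c:=\max\Bigl(D''+\tfrac{\log(N+1)}{L(1)},\ \tfrac{\log(N+n_0)}{L(1)}\Bigr)>0$$
and using $L(n)\ge L(1)$ one checks that $\chi_i(n)\ge e^{-cL(n)}$ for all $n\ge1$ and all $i\in[A]$.

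The main obstacle is the passage from Barbour's additive control of $\tilde a(X_i(n))$ to a multiplicative, i.e.\ exponential, lower bound on $X_i(n)$: this is precisely where $L$ increasing enters, and one must accept that the constant $c$ is random (it depends on the $\hat U_j$) while remaining a.s.\ finite, and one must patch the asymptotic estimate so that it holds for all $n\ge1$. The remaining ingredients — tracking the additive constants from unequal $X_i(0)$, the factor $A$ between $X_{\max}(n)$ and $N+n$, and transferring the continuous-time limit to the jump chain — are routine. (When $L$ is bounded, i.e.\ $F(k)/k\not\to\infty$, the statement amounts to $\inf_{n,i}\chi_i(n)>0$ a.s.; in the regime $F(k)/k\to\infty$ one could alternatively read the bound off the representation $X_i(n)\approx a_i^{-1}(a_i(n)-\tilde U_i+\min_j\tilde U_j)$ from Theorem \ref{thm: almostLinear2}, but the argument above treats all cases uniformly.)
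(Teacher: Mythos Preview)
Your argument is correct and follows the same core strategy as the paper: obtain a finite (random) upper bound on $\int_{X_i(n)}^{\,\sim n}\frac{dx}{F(x)}$ from Barbour's asymptotics, then use the monotonicity of $L$ to turn this into $\frac{1}{L(n)}\log\!\frac{n}{X_i(n)}\le D''$, which rearranges to the claimed exponential lower bound. The paper reaches the bound on $a_i(n)-a_i(X_i(n))$ by quoting Theorem~\ref{thm: almostLinear2} (whose hypothesis is $F(k)/k\to\infty$), whereas you invoke only Theorem~\ref{thm: Barbour}, compare agents pairwise to get $D$, and then bridge from $X_{\max}(n)$ to $N+n$ with the deterministic bound $(A-1)/L(1)$. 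This makes your version uniform over bounded and unbounded $L$, which is a small gain; the paper's route is slightly shorter once Theorem~\ref{thm: almostLinear2} is available. Your patching with $n_0$ is unnecessary---since $D$ is already the supremum over all $n\ge 1$ and a convergent sequence is bounded, the estimate $\tilde a(N+n)-\tilde a(X_i(n))\le D''$ holds for every $n$---but it does no harm.
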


\begin{proof}
   Since the limit in Theorem \ref{thm: almostLinear2} is finite, there is a constant $c>0$ such that
   \begin{align*}
       c&\ge\int_{X_i(n)}^{n+X_i(0)}\frac{1}{F(s)}ds=\int_{\chi_i(n)}^{1+X_i(0)/n}\frac{1}{sL(ns)}ds\ge\frac{1}{L(n)}\int_{\chi_i(n)}^{1+X_i(0)/n}\frac{1}{s}ds\\
       &=\frac{\log(1+X_i(0)/n)}{L(n)}-\frac{\log(\chi_i(n))}{L(n)}\,.
   \end{align*}

   Since $\lim_{n\to\infty}\frac{\log(1+X_i(0)/n)}{L(n)}=0$, we have
   $$c\ge -\frac{\log(\chi_i(n))}{L(n)}$$
   for an updated constant $c$, which proves the claim.
\end{proof}

In particular, $\chi_i(n)$ converges to zero slower than any polynomial when\linebreak $\lim_{n\to\infty}L(n)/\log(n)=0$. The following example discusses that bound in a generic situation.

\begin{example}
    Let $F_i (k)\equiv F(k)=k(\log k)^\beta$ for $\beta\ge0$. For $\beta=0$, the lower bound $e^{-cL(n)}$ is constant since $\chi_i(n)$ does converge to a non-zero limit. For $\beta\in (0, 1)$, the bound converges to zero slower than any polynomial, whereas it is of order $n^{-c}$ for $\beta=1$. Note that $c$ is random and unbounded. Finally for $\beta>1$, the process reveals strong monopoly such that $\chi(n)$ converges to an edge of the simplex at rate $1/n$. In that specific case for $\beta\le1$, we can also derive an upper bound for $\chi_i(0)$, provided that agent $i$ is not the monopolist. Since the limit in Theorem \ref{thm: almostLinear2} is non-zero and $a_i(t)\sim(\log t)^{1-\beta}$, there is a positive constant such that
    \begin{align*}
        0<const.&\le (\log n)^{1-\beta}-(\log(X_i(n)))^{1-\beta}
    \end{align*}
    and consequently
    $$\log(X_i(n))\le\left( (\log n)^{1-\beta}-const.\right)^{\frac{1}{1-\beta}}\,.$$
    Defining $\epsilon(n)=\frac1n\left( n^{1-\beta}-const.\right)^{\frac{1}{1-\beta}}$ yields:
    \begin{align*}
        X_i(n)\le e^{(\log n)\epsilon(\log n)}\quad\Leftrightarrow\quad \frac{X_i(n)}{n}&\le e^{(\log n)\epsilon(\log n)-\log n}= e^{-(\log n)(1-\epsilon(\log n))}
    \end{align*}
    Note that $1-\epsilon(n)>0$ converges to zero at rate $1/n^{1-\beta}$, so that we finally get the following estimate:
    $$\chi_i(n)\le const. e^{-const.(\log n)^\beta}$$
    Thus, the bound in Corollary \ref{cor: almostLinear2} can be considered as sharp.
\end{example}

If the second part of (\ref{eq: squareSum}) is not fulfilled, i.e. $\sigma_i^2 =\infty$, then $\frac{S_i(k)-a(k)}{\sum_{l=1}^k\frac{1}{F_i(l)^2}}$ fulfills the Lindeberg condition. Hence, Theorem \ref{thm: Barbour} and its implications are wrong if we drop the condition $\sigma_i^2 <\infty$. As already described at the end of Section \ref{sec: non-monopoly}, \cite{Khanin} derives a central limit theorem for polynomial feedback functions with $\sigma_i^2 =\infty$. Moreover, \cite{Oliveira4,Oliveira3,Kearney} present another transition between functions satisfying this condition and those who do not. 

Another remarkable property is the following: The proof of part 3 of Corollary \ref{cor: almostLinear} reveals that  $\frac{X_i (n)}{X_j (n)}\to 0$ or $\infty$ for $n\to\infty$ for all $i\neq j$. This corresponds to a hierarchical structure of asymptotic market shares consistent with weak monopoly and the consistency property in Proposition \ref{prop: partialProcess}, such that within each subset of agents a weak monopolist has full relative market share. Such hierarchical structures are often observed at phase transition points, in our case the transition between strong monopoly and deterministic limit shares.


\section{A Law of Large Numbers for the dynamics}\label{sec:dynamics}

So far our investigations focused on the analysis of the long-time behavior of a generalized Pólya urn. This section examines the dynamics of the process in the limit for large initial market size $N$, based on the concept of stochastic approximation (see e.g. \cite{Arthur2, Pemantle,Ruszel, Borkar}), which traces back to \cite{Monro}. 
Note that $X(n)$ and $\chi(n)$ depend on $N$, thus we establish the notation $X^{(N)}(n)=\left(X_1^{(N)}(n),..., X_A^{(N)}(n)\right)=X(n)$ and $\chi^{(N)}(n)=\left(\chi_1^{(N)}(n),..., \chi_A^{(N)}(n)\right)=\chi(n)$ for this section and assume that $\chi^{(N)}(0)$ is equal for all $N$ (up to roundings).

\begin{theorem}
\label{thm: dynamic}
Define for $x\in\Delta_{A-1}$
\begin{equation}\label{eq:gdef}
G(k, x)=p(k, x)-x\quad\text{and}\quad G(x)=\lim_{k\to\infty}G(k, x),
\end{equation}
where we assume that $G(k, (\cdot))$ converges for $k\to\infty$ uniformly to a Lipschitz-continuous function $G$ on an open neighborhood $D\subset\Delta_{A-1}$ of the image of the solution $Z\colon (0, \infty)\to\Delta_{A-1}$ of the differential equation 
\begin{equation}
\label{eq: ODE}
\frac{d}{dt}Z(t)=\frac{G(Z(t))}{1+t}\quad\text{with}\quad Z(0)=\chi(0).
\end{equation}
Moreover, we define the following sequence of stochastic processes in $\Delta_{A-1}$:
\begin{equation*}
(Z^{(N)} )_N \coloneqq\left(Z^{(N)}(t)\colon t\ge0\right)_N\coloneqq\left(\chi^{(N)}(\lfloor Nt\rfloor)\colon t\ge0\right)_N
\end{equation*}
Then: $Z^{(N)}$ converges to $Z$ weakly on the Skorochod space $\D([0, \infty), \Delta_{A-1})$.
\end{theorem}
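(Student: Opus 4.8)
The plan is to recognise \eqref{eq: ODE} as the fluid limit of a Robbins--Monro recursion and to run the classical ODE method of stochastic approximation, exploiting the Lipschitz continuity of $G$ to obtain a direct Gronwall-type estimate (convergence in probability, uniform on compact time intervals), which then yields weak convergence on $\D([0,\infty),\Delta_{A-1})$ without a separate tightness argument.

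First I would derive the recursion. If $I_{n+1}\in[A]$ denotes the agent drawn at step $n+1$, so that $X^{(N)}(n+1)=X^{(N)}(n)+e^{(I_{n+1})}$, a one-line computation gives $\chi^{(N)}(n+1)-\chi^{(N)}(n)=\tfrac{1}{N+n+1}\big(e^{(I_{n+1})}-\chi^{(N)}(n)\big)$. Taking conditional expectations with respect to $\mathcal F_n=\sigma(X^{(N)}(0),\dots,X^{(N)}(n))$ and using $\E[e^{(I_{n+1})}\mid\mathcal F_n]=p(N+n,\chi^{(N)}(n))$, this becomes, with $M_{n+1}\coloneqq e^{(I_{n+1})}-p(N+n,\chi^{(N)}(n))$ a bounded martingale difference ($\|M_{n+1}\|\le 2$),
\begin{equation*}
\chi^{(N)}(n+1)-\chi^{(N)}(n)=\frac{1}{N+n+1}\Big(G\big(N+n,\chi^{(N)}(n)\big)+M_{n+1}\Big)\,.
\end{equation*}
Summing from $0$ to $\lfloor Nt\rfloor-1$ and using $Z^{(N)}(k/N)=\chi^{(N)}(k)$ writes $Z^{(N)}(t)$ as $\chi(0)$ plus a drift sum plus a martingale sum $\mathcal M^{(N)}(t)=\sum_{k<\lfloor Nt\rfloor}\tfrac{M_{k+1}}{N+k+1}$.

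The martingale term is routine: by orthogonality of increments and Doob's maximal inequality, $\E\sup_{t\le T}\|\mathcal M^{(N)}(t)\|^2\le C_T\sum_{k=0}^{\lfloor NT\rfloor}(N+k+1)^{-2}=O(1/N)$, so it vanishes uniformly on $[0,T]$ in probability. For the drift sum I would use the assumed uniform convergence $\sup_{x\in D}\|G(m,x)-G(x)\|\to 0$ (applicable since $N+k\ge N\to\infty$), the boundedness of $G$ on $D$, and the elementary Riemann-sum estimate $\sum_{k<\lfloor Nt\rfloor}\tfrac{\phi(k/N)}{N+k+1}\to\int_0^t\tfrac{\phi(s)}{1+s}\,ds$ for continuous $\phi$, to show that, up to the exit time $\sigma_N=\inf\{t:Z^{(N)}(t)\notin D\}$,
\begin{equation*}
Z^{(N)}(t\wedge\sigma_N)=\chi(0)+\int_0^{t\wedge\sigma_N}\frac{G\big(Z^{(N)}(s)\big)}{1+s}\,ds+R^{(N)}(t)
\end{equation*}
with $\sup_{t\le T}\|R^{(N)}(t)\|\to 0$ in probability. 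Subtracting the integral form of \eqref{eq: ODE} and applying Gronwall's inequality (cleanest after the substitution $u=\log(1+t)$, in which $G$ acts autonomously with Lipschitz constant $L$) gives $\sup_{t\le T\wedge\sigma_N}\|Z^{(N)}(t)-Z(t)\|\le (1+T)^{L}\sup_{t\le T}\|R^{(N)}(t)\|\to 0$ in probability. Since $Z([0,T])$ is compact inside the open set $D$ and hence at positive distance from $\partial D$, this forces $\P(\sigma_N\le T)\to 0$, so $\sup_{t\le T}\|Z^{(N)}(t)-Z(t)\|\to 0$ in probability for every $T$; uniform convergence in probability on compacts to the continuous deterministic path $Z$ implies $Z^{(N)}\Rightarrow Z$ on $\D([0,\infty),\Delta_{A-1})$.

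The main obstacle will be the localisation to $D$: the hypotheses only provide Lipschitz continuity and uniform convergence of $G(k,\cdot)$ on a neighbourhood $D$ of the limit trajectory, so the Gronwall estimate must be run up to $\sigma_N$, and one then needs a short bootstrap argument — using the closeness of $Z^{(N)}$ to $Z$ already established on $[0,T\wedge\sigma_N]$ together with $\mathrm{dist}(Z([0,T]),\partial D)>0$ — to conclude that the process does not actually leave $D$ before time $T$ with probability tending to one. A minor additional care is that the Riemann sum is evaluated along the random path $Z^{(N)}$ rather than a fixed function, which is handled by the uniform bound on $\|G\|$ over $D$ and the mesh size $1/N$.
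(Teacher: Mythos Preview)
Your proposal is correct and shares the same Doob--Meyer decomposition and martingale estimate with the paper, but the identification of the limit proceeds differently. The paper first proves tightness of $(Z^{(N)})_N$ from the elementary increment bound $\|Z^{(N)}(t)-Z^{(N)}(s)\|\le (N|t-s|+1)/N$, extracts an arbitrary weakly convergent subsequence with limit $\hat Z$, passes to almost sure convergence via the Skorokhod representation theorem, and then shows that the predictable part $H^{(N)}(\lfloor Nt\rfloor)$ converges to $\int_0^t G(\hat Z(u))/(1+u)\,du$, so that $\hat Z$ satisfies the integral form of \eqref{eq: ODE}; uniqueness of the ODE then forces $\hat Z=Z$. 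Your route is more direct: you compare $Z^{(N)}$ to the known solution $Z$ via a Gronwall estimate on $[0,T\wedge\sigma_N]$, obtaining convergence in probability uniformly on compacts, and then bootstrap to remove the localisation. This buys you a quantitative bound (of order $(1+T)^L\sup_{t\le T}\|R^{(N)}(t)\|$) and avoids both the Skorokhod representation and the subsequence extraction; the paper's compactness argument is softer and does not explicitly track the exit time $\sigma_N$, instead defining $t_0$ as the first exit time of the \emph{limit} $\hat Z$ from $D$ and concluding $t_0=\infty$ a posteriori from $\hat Z=Z$.
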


Similar ODE approximations of the generalized Pólya urn model have also been derived in \cite{Schreiber, Benaim2}, but they rather  focus on the embedded process $\Xi(t)$ from the exponential embedding and on the limit $t\to\infty$ instead of $N\to\infty$.

\begin{proof}
By construction, we have $\|Z^{(N)}(t)-Z^{(N)}(s)\|\le\frac{N|t-s|+1}{N}$ for all $t, s\ge0$, where $\|\cdot\| =\|\cdot\|_\infty$ denotes the supremum norm. This implies by \cite[Proposition VI.3.26]{Jacod} that the sequence $(Z^{(N)})_N$ is tight in $\D([0, \infty), \Delta_{A-1})$, with the additional property that all weak limits of converging subsequences are concentrated on the subspace of continuous functions. We now take any converging subsequence and show that the limit solves (\ref{eq: ODE}). As the solution of (\ref{eq: ODE}) is unique due to the assumed Lipschitz-continuity of $G$, this implies the claim. For simplicity of notation assume that the subsequence is $(Z^{(N)})_N$ itself. Then we can write the increments as
\begin{align*}
\chi^{(N)}(n+1)-\chi^{(N)}(n)&=\frac{X^{(N)}(n+1)}{N+n+1}-\chi^{(N)}(n)\\
&=\frac{(N+n)\chi^{(N)}(n)+X^{(N)}(n+1)-X^{(N)}(n)}{N+n+1}-\chi^{(N)}(n)\\
&=\frac{1}{N+n+1}\left(-\chi^{(N)}(n)+X^{(N)}(n+1)-X^{(N)}(n)\right)\\
&=\frac{1}{N+n+1}\left(G(N+n, \chi^{(N)}(n))+\xi^{(N)}(n)\right)
\end{align*}
with $\xi^{(N)}(n)\coloneqq X^{(N)}(n+1)-X^{(N)}(n)-G(N+n, \chi^{(N)}(n))-\chi^{(N)}(n)$. Note that $\xi^{(N)}(n)$ is $\mathcal{F}_{n+1}^{(N)}$-measurable, where $(\mathcal{F}_n^{(N)})_{n\ge0}$ is the filtration generated by the process $(\chi^{(N)}(n))_{n\ge0}$. Furthermore, 
\begin{equation*}
\E\left[\xi^{(N)}(m) \mid\mathcal{F}_n^{(N)}\right]=0\quad\text{for }m\ge n
\end{equation*}
since with \eqref{eq:gdef} $\E\left[ X^{(N)}(n+1)-X^{(N)}(n) \mid\mathcal{F}_n^{(N)}\right] =G(N+n, \chi^{(N)}(n))+\chi^{(N)}(n)$\ .
The $\xi^{(N)}(n)$ are also uncorrelated, as for $m>n$
\begin{align}
\label{eq: uncor}
\E\left[\xi_i^{(N)}(n)\xi_j^{(N)}(m)\right]=
\E\left[\xi_i^{(N)}(n)\,\E\left[\xi_j^{(N)}(m) \mid \mathcal{F}_{n+1}^{(N)}\right]\right]=0\quad\text{for all }i,j\in [A]\ .
\end{align}
Summing up the increments yields the standard Doob-Meyer decomposition
\begin{equation*}
\chi^{(N)}(n)=\chi^{(N)}(0)+H^{(N)}(n)+M^{(N)}(n)
\end{equation*}
with predictable and martingale part, respectively
\begin{equation}\label{hmdef}
H^{(N)}(n)\coloneqq\sum_{k=0}^{n-1}\frac{G(N+k, \chi^{(N)}(k))}{N+k+1}\quad\text{and}\quad M^{(N)}(n)\coloneqq\sum_{k=0}^{n-1}\frac{1}{N+k+1}\xi^{(N)}(k)\ .
\end{equation}
With uncorrelated and centered increments $(M^{(N)}(n))_{n\ge0}$ is a centered martingale with respect to the filtration $(\mathcal{F}_n^{(N)})_{n\ge0}$, thus Doob's inequality yields for any $\epsilon, t>0$:
\begin{align}
\label{eq: martingaleConv}
&\P\left(\exists s\le t : \|M^{(N)}(\lfloor Ns\rfloor)\|\ge\epsilon\right)\le\frac{A}{\epsilon^2}\E\left[\|M^{(N)}(\lfloor Nt\rfloor)\|^2\right]\\
&=\frac{A}{\epsilon^2}\sum_{k=0}^{\lfloor Nt\rfloor-1}\frac{1}{(N+k+1)^2}\E\left[\|\xi^{(N)}(k)\|^2\right]\le\frac{A}{\epsilon^2}\sum_{k=0}^{\lfloor Nt\rfloor-1}\frac{1}{(N+k+1)^2}\xrightarrow{t, N\to\infty}0
\end{align}
since $\|\xi^{(N)}(k)\| \leq 1$ almost surely by definition. 
Hence, the sequence $\left(M^{(N)}(Nt)\colon t\ge0\right)_N$ of stochastic processes converges to zero weakly on $\D([0, \infty), \R^A)$. 

Now we turn to the predictable part $H^{(N)}$. By the Skorochod representation theorem we can find a probability space such that the convergence of $(Z^{(N)})_N$ is almost sure. Then for fixed $\omega\in\Omega$ $(Z^{(N)})_N$ converges with respect to the Skorochod norm to a process $\hat Z$ on $\Delta_{A-1}$. As $\hat Z$ is continuous, the convergence is uniform on bounded time intervals. Denote $t_0\in(0, \infty]$ the stopping time, when $\hat Z$ first leaves $D$. Then for any $t<t_0$ and large enough $N=N(t)$ we have $Z^{(N)}(t)\in D$ and consequently
\begin{align*}
H^{(N)}(\lfloor Nt\rfloor)&=\sum_{k=0}^{\lfloor Nt\rfloor-1}\frac{G(N+k, \chi^{(N)}(k))}{N+k+1}=\sum_{k=0}^{\lfloor Nt\rfloor-1}\frac{1}{N}\cdot\frac{G(N+k, \chi^{(N)}(N\cdot\frac{k}{N}))}{1+\frac{k}{N}+\frac{1}{N}}\\
&\xrightarrow{N\to\infty}\int_0^t\frac{G(Z(u))}{1+u}du
\end{align*}
as the sequence $\left(u\mapsto\frac{G(N+k, \chi^{(N)}(Nu))}{1+u+\frac{1}{N}}\right)_N$ of functions converges uniformly to $u\mapsto\frac{G(Z(u))}{1+u}$ on bounded time intervals. Thus, we have for $t<t_0$ that $(Z^{(N)})_N$ converges weakly on $\D([0, \infty), \Delta_{A-1})$ to $\hat Z(t)=\chi(0)+\int_0^t\frac{G(Z(x))}{1+x}dx$ which fulfills (\ref{eq: ODE}) and by uniqueness of solutions we have $\hat Z=Z$ and $t_0 =\infty$. 
\end{proof}

\begin{figure}
  \centering
  \subfloat[][$F_1(k)=F_2(k)=k^2,\, F_3(k)=2k^2$\\$G(x)=\left(\frac{F_i(x_i)}{x_1^2+x_2^2+2x_3^2}-x_i\right)_{i=1, 2, 3},\, x\in\Delta_2$]{\includegraphics[width=0.5\linewidth]{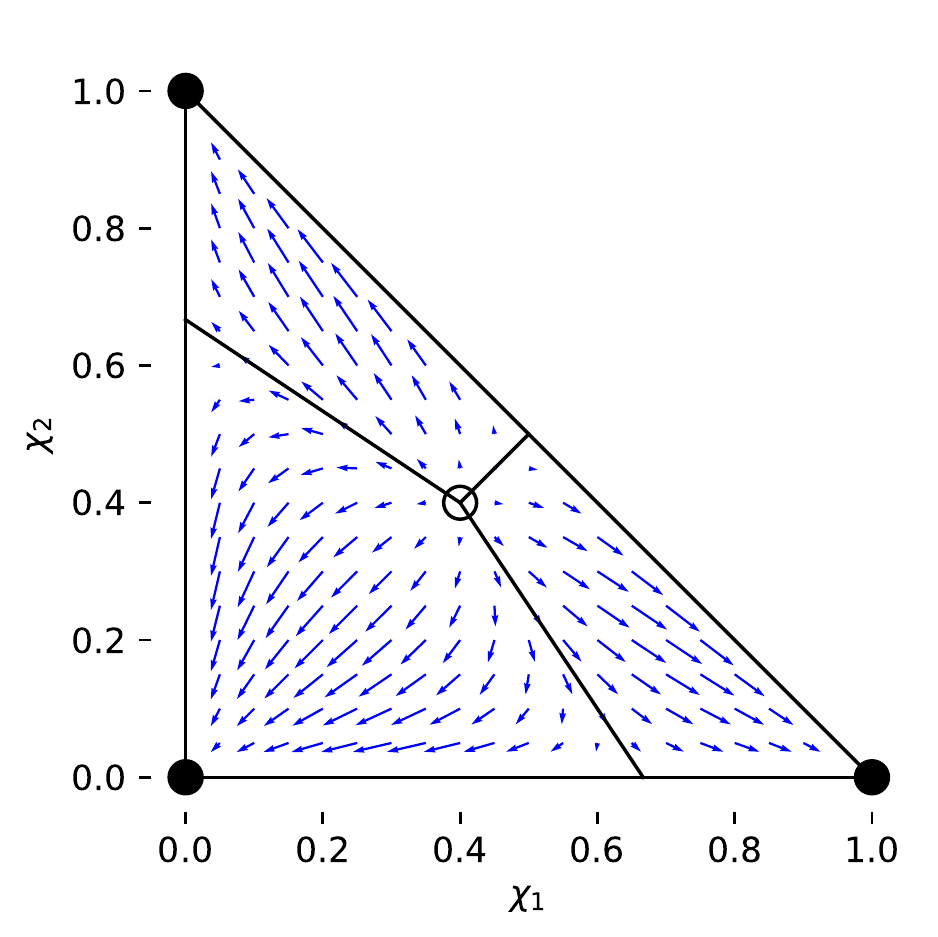}}
  \subfloat[][$F_1(k)=F_2(k)=F_3(k)=\sqrt{k}$\\$G(x)=\left(\frac{\sqrt{x_i}}{\sqrt{x_1}+\sqrt{x_2}+\sqrt{x_3}}-x_i\right)_{i=1, 2, 3},\, x\in\Delta_2$]{\includegraphics[width=0.5\linewidth]{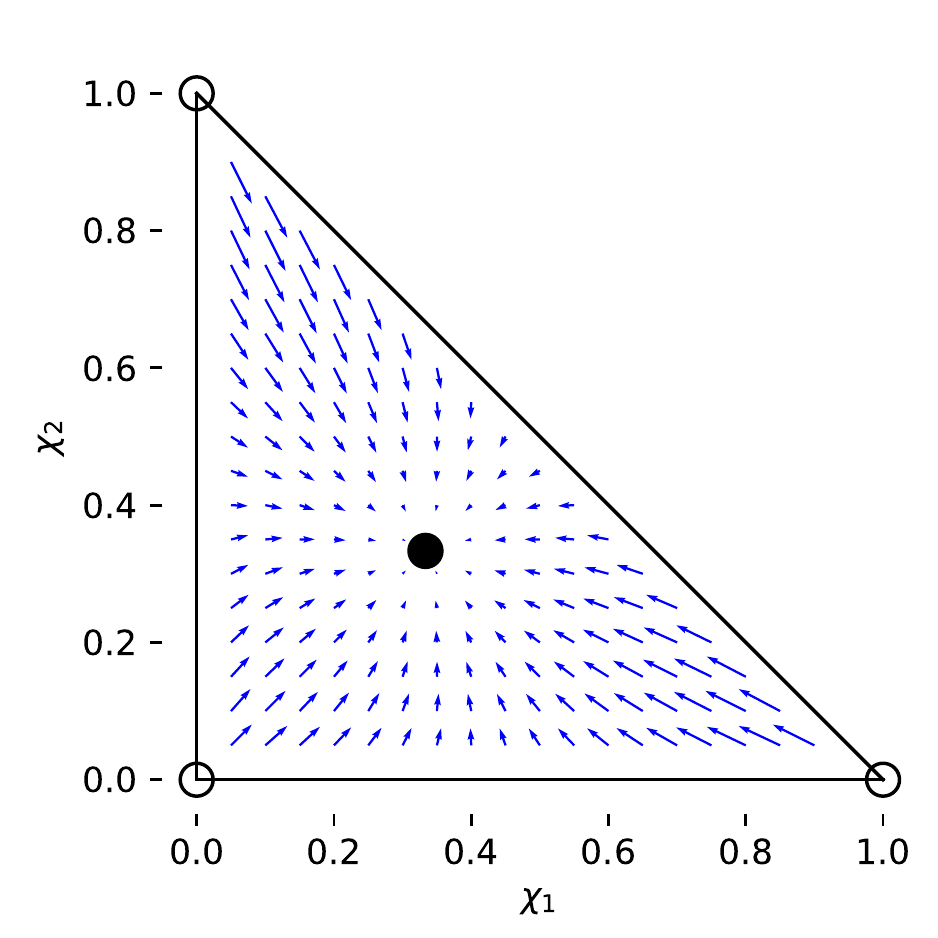}}\\
  \subfloat[][$F_1(k)=F_2(k)=k,\,F_3(k)=2k$\\$G(x)=\left(\frac{F_i(x_i)}{x_1+x_2+2x_3}-x_i\right)_{i=1, 2, 3},\, x\in\Delta_2$]{\includegraphics[width=0.5\linewidth]{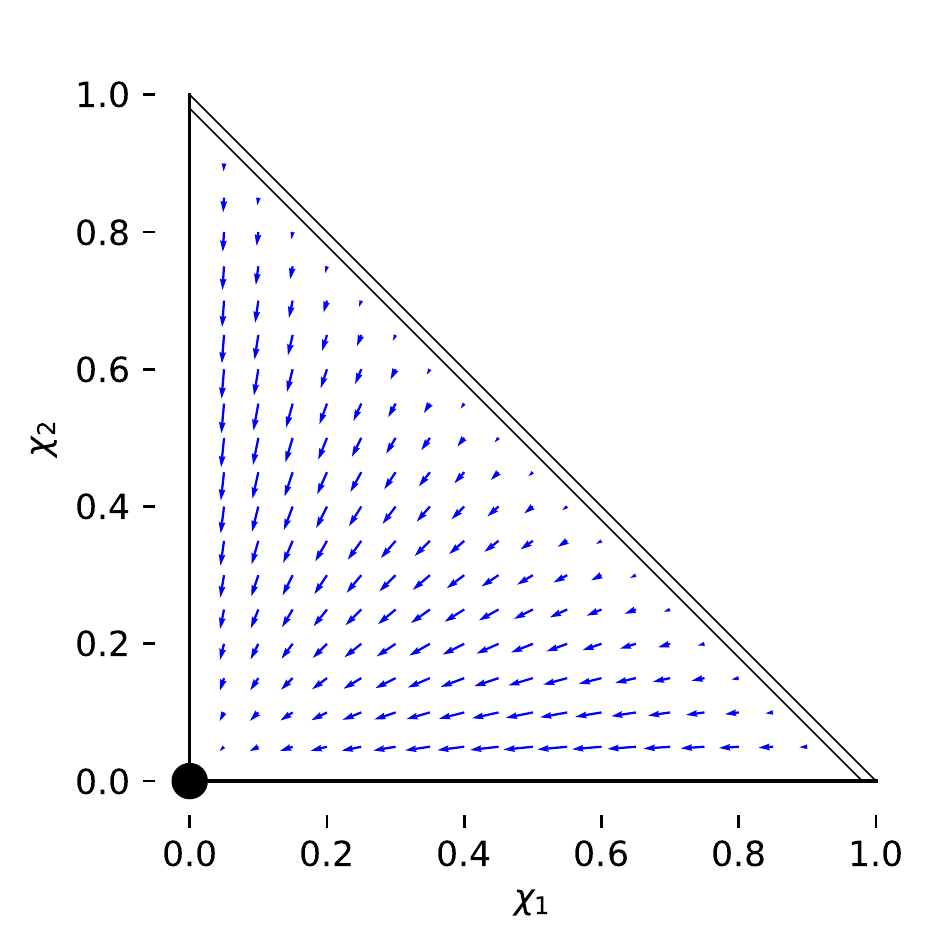}}
  \subfloat[][$F_1(k)=F_2(k)=2k, \,F_3(k)=k$\\$G(x)=\left(\frac{F_i(x_i)}{2x_1+2x_2+x_3}-x_i\right)_{i=1, 2, 3},\, x\in\Delta_2$]{\includegraphics[width=0.5\linewidth]{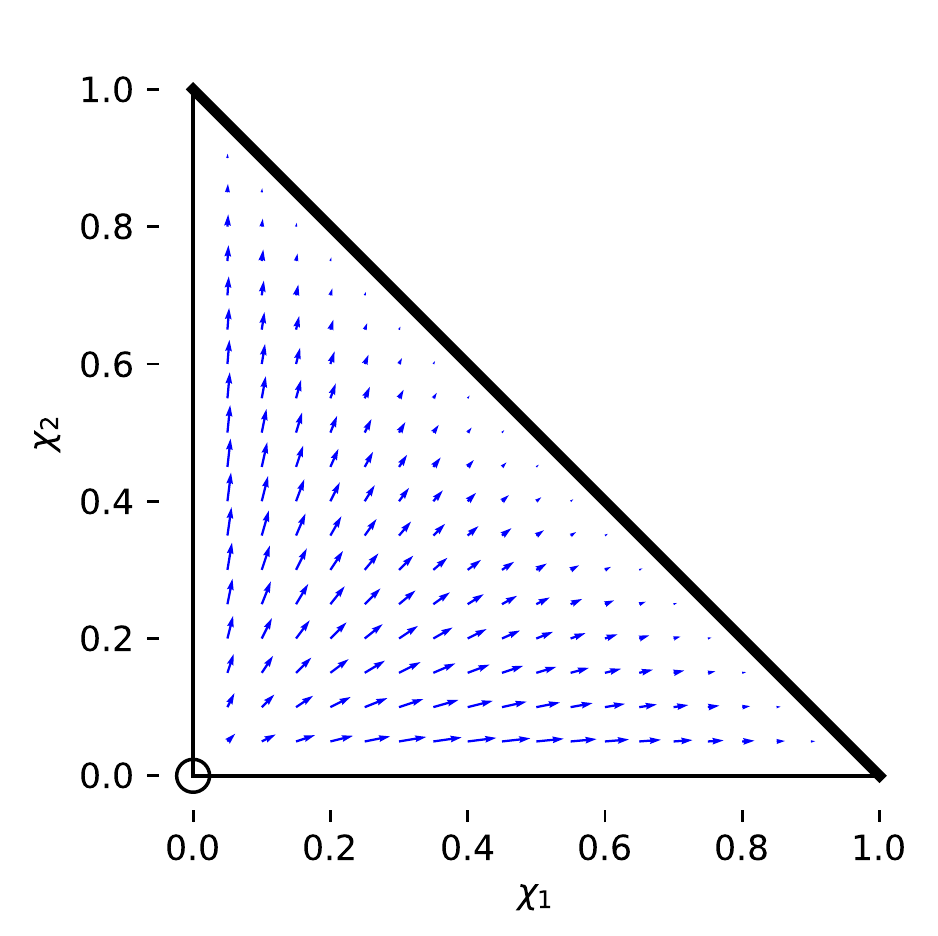}}
  \caption{The vector field $G$ for different feedback functions and $A=3$. Here $\bullet$ marks the stable and $\circ$ the unstable fixed points of the dynamics \eqref{eq: ODE}. In addition, Figure (a) shows the asymptotic attraction domains as derived in Example \ref{example: sMonPolynom}.}
  \label{fig: G}
\end{figure}

This means, that $(\chi^{(N)}(n))_{n\ge0}$ is asymptotically deterministic and driven by the vector-field $(G(x))_{x\in\Delta_{A-1}}$ modulo a time change. 
Let $Y\colon [0, \infty)\to\Delta_{A-1}$ be the solution of the time-homogeneous differential equation
\begin{equation}
\label{eq: ODEhom}
\frac{d}{dt}Y(t)=G(Y(t))\quad\text{with}\quad Y(0)=\chi(0)\ ,
\end{equation}
so that $Z(t)=Y(\log (1+t))$. 
Then for large $N$ the process $(\chi^{(N)}(n))_{n\ge0}$ is approximately given by $\left(Y\left(\log\left( 1+\frac{n}{N}\right)\right)\right)_{n\ge0}$. 
We can use this result e.g. to estimate the number of steps until the process reaches a given neighborhood of its long-time limit for large $N$.

\begin{corollary}
In the situation above let $D\subset\Delta_{A-1}$ be an open neighborhood of $\lim_{t\to\infty}Y(t)$ and define the following last entrance times:
\begin{align*}
t^*&\coloneqq\sup\{t\ge0\colon Y(t)\notin D\}\\
t_N&\coloneqq\sup\{n\ge0: \chi^{(N)}(n)\notin D\}
\end{align*}
Then we have
\begin{equation*}
\frac{t_N}{N}\xrightarrow{N\to\infty}e^{t^*}-1\quad\text{in probability}\ .
\end{equation*}
\end{corollary}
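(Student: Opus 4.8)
The plan is to rewrite $t_N/N$ as a last-exit-time functional of the rescaled process $Z^{(N)}$ from Theorem~\ref{thm: dynamic}, identify the limiting value through the deterministic time change between $Z$ and $Y$, and then promote the functional convergence $Z^{(N)}\Rightarrow Z$ to convergence of the exit times. Since $\chi^{(N)}(n)=Z^{(N)}(n/N)$ and $Z^{(N)}$ is constant on intervals $[k/N,(k+1)/N)$, the quantities $t_N/N$ and $\tau(Z^{(N)})$ with $\tau(\phi):=\sup\{t\ge 0:\phi(t)\notin D\}$ differ by at most $1/N$, so it is enough to study $\tau(Z^{(N)})$. On the deterministic side, the identity $Z(t)=Y(\log(1+t))$ recorded after Theorem~\ref{thm: dynamic} gives $Z(t)\notin D\iff \log(1+t)\in\{s:Y(s)\notin D\}$; since $s\mapsto e^{s}-1$ is a continuous increasing bijection of $[0,\infty)$, this yields $\tau(Z)=e^{t^*}-1$, which is finite because $Y(t)\to y^*:=\lim_{t\to\infty}Y(t)\in D$ and $D$ is open. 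Thus the claim reduces to $\tau(Z^{(N)})\to e^{t^*}-1$ in probability.

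By the Skorochod representation theorem, and since the weak limit $Z$ is continuous, I may pass to a probability space on which $Z^{(N)}\to Z$ almost surely, uniformly on compact time intervals. Fix $\varepsilon>0$. For the upper bound, note first that $K:=\overline{Z\big([e^{t^*}-1+\varepsilon/2,\infty)\big)}$ is a compact subset of $D$: by definition of $t^*$ all these values of $Z$ lie in $D$, the only additional limit point is $y^*\in D$, and $\Delta_{A-1}$ is compact; hence $\eta:=\operatorname{dist}(K,\partial D)>0$. Pick a small compact neighbourhood $K_0\subset D$ of the equilibrium $y^*$ that is forward-invariant for the flow of $G$, using the Lyapunov structure near $y^*$ as in Theorem~\ref{thm: Arthur}, and then $R>e^{t^*}-1+\varepsilon$ large enough that $Z(R)\in\operatorname{int}K_0$. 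On the window $[e^{t^*}-1+\varepsilon,R]$, uniform convergence forces $Z^{(N)}(t)$ into $D$ once $N$ is large; on the tail $n\ge m:=\lfloor NR\rfloor$ I would use the Doob--Meyer decomposition of $\chi^{(N)}$ from the proof of Theorem~\ref{thm: dynamic}: Doob's $L^2$-inequality together with $\|\xi^{(N)}\|\le 1$ bounds $\E\big[\sup_{n\ge m}\|M^{(N)}(n)-M^{(N)}(m)\|^2\big]$ by $A\sum_{k\ge m}(N+k+1)^{-2}=O\big((NR)^{-1}\big)$, so the martingale tail is uniformly small with high probability, and a standard trapping argument (the drift $G$ points into $K_0$) then keeps $\chi^{(N)}(n)\in K_0\subset D$ for all $n\ge m$ with probability tending to $1$, using that $\chi^{(N)}(m)=Z^{(N)}(R)\in K_0$ with high probability. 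Combining the two ranges gives $\P\big(\tau(Z^{(N)})>e^{t^*}-1+\varepsilon\big)\to 0$.

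For the lower bound, by definition of $t^*$ there are times $s\le t^*$ arbitrarily close to $t^*$ with $Y(s)\notin D$, equivalently times $t\le e^{t^*}-1$ arbitrarily close to $e^{t^*}-1$ with $Z(t)\notin D$; if moreover $Z(t)$ lies in the interior of $\Delta_{A-1}\setminus D$ (i.e. the trajectory is not merely tangent to $\partial D$ there), then uniform convergence on a compact interval containing $t$ forces $Z^{(N)}(t)\notin D$ for all large $N$, whence $\P\big(\tau(Z^{(N)})<e^{t^*}-1-\varepsilon\big)\to 0$. Letting $\varepsilon\downarrow 0$ and combining with the upper bound completes the proof; the mild non-tangency needed here is an implicit genericity assumption on the pair $(Y,D)$, and the statement is otherwise true only modulo such boundary effects.

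The main obstacle is the tail of the trajectory at times $n\gg N$: the functional convergence supplied by Theorem~\ref{thm: dynamic} controls $Z^{(N)}$ only on compact time windows, so ruling out late excursions of $\chi^{(N)}$ out of $D$ cannot be read off directly and instead needs the separate martingale estimate plus stability/trapping argument above — the point where the structure of $G$ near its stable limit $y^*$ (essentially the Lyapunov hypothesis already present in Theorem~\ref{thm: Arthur}) is genuinely used. The identification of the limit $e^{t^*}-1$ and the finite-horizon estimate are routine once the time change and the Doob--Meyer decomposition are in hand.
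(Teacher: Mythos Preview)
The paper disposes of this corollary in a single sentence: ``This follows directly from Theorem~\ref{thm: dynamic} via the continuous mapping theorem.'' Your route is quite different: you unfold what the continuous mapping theorem would actually require and correctly observe that the last-exit functional $\tau(\phi)=\sup\{t\ge0:\phi(t)\notin D\}$ is \emph{not} continuous on $\D([0,\infty),\Delta_{A-1})$ at $Z$, because Skorokhod convergence only controls paths on compact time windows and says nothing about late excursions of $Z^{(N)}$ out of $D$. Your concern is legitimate; the paper's one-liner does sweep this issue under the rug.

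Your remedy --- the uniform-in-time martingale bound (which is indeed available, since $\sum_{k\ge0}(N+k+1)^{-2}\le 1/N$ already appears in the proof of Theorem~\ref{thm: dynamic}) combined with a trapping argument near the attractor $y^*$ --- is the standard stochastic-approximation device for handling such tails. The cost is that you import hypotheses absent from the corollary as stated: asymptotic stability of $y^*$ for the flow of $G$ (to obtain a forward-invariant $K_0\subset D$) and a non-tangency condition on $(Y,\partial D)$ for the lower bound. You flag both honestly. The trapping step itself is only sketched; completing it needs a discrete Gr\"onwall-type estimate showing that a small-noise perturbation of the flow started in $K_0$ cannot leave $K_0$, which is routine but not automatic.

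In short: the paper's proof is brief but, read literally, incomplete on exactly the point you isolate; your approach supplies the missing tail control at the price of extra (natural) structural assumptions, and is the more careful of the two.
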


This follows directly from the Theorem \ref{thm: dynamic} via the continuous mapping theorem.

Another interesting consequence of Theorem \ref{thm: dynamic} is the following. In the monopoly case described in Section \ref{sec: monopoly}, we may start our process in an unstable fixed point $\chi (0)$ of the vector field $G$. Although we know that the process exhibits strong monopoly, we have $Z(t)\equiv\chi(0)$ for all times $t\ge0$ in Theorem \ref{thm: dynamic}. This implies that a linear scaling of time is not sufficient to capture the escape from an unstable equilibrium.

\begin{corollary}
\label{cor: escape}
In the situation of Theorem \ref{thm: dynamic}, let $G(\chi(0))=0$. For $\epsilon>0$ define the escape time 
\begin{equation*}
    t_N(\epsilon)\coloneqq\inf\{n\ge0\colon \|\chi(n)-\chi(0)\|\ge\epsilon\}.
\end{equation*}
with the convention $\inf\emptyset=\infty$. Then
\begin{equation*}
    \frac{t_N(\epsilon)}{N}\xrightarrow{N\to\infty}\infty\quad\text{in probability}\ .
\end{equation*}
\end{corollary}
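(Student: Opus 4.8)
The plan is to fix $\epsilon>0$ and any $T>0$ and show that $\P\left(t_N(\epsilon)\le NT\right)\to 0$ as $N\to\infty$; since $T$ is arbitrary, this gives $t_N(\epsilon)/N\to\infty$ in probability. First I would observe that $t_N(\epsilon)\le NT$ is exactly the event that the rescaled process $Z^{(N)}=\left(\chi^{(N)}(\lfloor Nt\rfloor)\right)_{t\ge 0}$ leaves the closed ball $\overline{B_\epsilon(\chi(0))}$ at some time $t\in[0,T]$. By Theorem \ref{thm: dynamic}, $Z^{(N)}\Rightarrow Z$ on $\D([0,\infty),\Delta_{A-1})$, and since $G(\chi(0))=0$ the unique solution of \eqref{eq: ODE} with $Z(0)=\chi(0)$ is the constant path $Z(t)\equiv\chi(0)$. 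The constant path stays at distance $0<\epsilon$ from $\chi(0)$ for all time, so it lies in the open set $\{ \gamma\in\D([0,T],\Delta_{A-1})\colon \sup_{t\le T}\|\gamma(t)-\chi(0)\|<\epsilon\}$, whose complement $A_{\epsilon,T}=\{\gamma\colon \sup_{t\le T}\|\gamma(t)-\chi(0)\|\ge\epsilon\}$ is closed in the Skorochod topology. Then the portmanteau theorem gives
\begin{equation*}
\limsup_{N\to\infty}\P\left(t_N(\epsilon)\le NT\right)\le\limsup_{N\to\infty}\P\left(Z^{(N)}\in A_{\epsilon,T}\right)\le\P\left(Z\in A_{\epsilon,T}\right)=0,
\end{equation*}
which is the claim.

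The main point to be careful about is that closure of $A_{\epsilon,T}$ and the application of portmanteau must be stated with respect to the right topology: one should either work on $\D([0,T],\Delta_{A-1})$ with the Skorochod $J_1$-topology (noting that the restriction map $\D([0,\infty))\to\D([0,T])$ is continuous at paths continuous at $T$, and the limit $Z$ is continuous), or equivalently use that $Z$ is concentrated on continuous paths so that Skorochod convergence to $Z$ is the same as uniform convergence on $[0,T]$. In the continuous-limit case the functional $\gamma\mapsto\sup_{t\le T}\|\gamma(t)-\chi(0)\|$ is continuous at $Z$, so the set $\{\gamma\colon \sup_{t\le T}\|\gamma(t)-\chi(0)\|\ge\epsilon\}$ is a $\P_Z$-continuity set, and one even gets $\P(Z^{(N)}\in A_{\epsilon,T})\to\P(Z\in A_{\epsilon,T})=0$ directly. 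This is the step I expect to require the most care, but it is standard; no new estimates beyond Theorem \ref{thm: dynamic} are needed.

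Finally, to conclude convergence in probability to $+\infty$: for every $T>0$ we have shown $\P(t_N(\epsilon)/N\le T)\to 0$, which is precisely the definition of $t_N(\epsilon)/N\xrightarrow{N\to\infty}\infty$ in probability. (The convention $\inf\emptyset=\infty$ only helps here, since on the event that the process never escapes we have $t_N(\epsilon)/N=\infty>T$ trivially.) I would remark that this shows linear time scaling cannot capture the escape from the unstable equilibrium, consistent with the discussion preceding the corollary; pinning down the true (logarithmic-in-$N$) escape scale would require a finer analysis of the fluctuations, e.g. via the functional CLT in Section \ref{sec: clt}, and is not attempted here.
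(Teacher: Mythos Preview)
Your proposal is correct and follows essentially the same approach as the paper: both identify that $Z\equiv\chi(0)$ since $G(\chi(0))=0$, then use the weak convergence $Z^{(N)}\Rightarrow Z$ from Theorem~\ref{thm: dynamic} to conclude that $\P(t_N(\epsilon)/N>T)\to 1$ for each fixed $T$. The paper's version is terse (it simply writes $\P(\sup_{s\le t}\|Z^{(N)}(s)-Z(s)\|<\epsilon)\to 1$ without further comment), whereas you spell out the portmanteau/continuity-set justification more carefully; this extra care is appropriate but does not constitute a different argument.
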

\begin{proof}
This follows from Theorem \ref{thm: dynamic} via
\begin{align*}
    \P\left(\frac{t_N(\epsilon)}{N}>t\right)&=\P\left(\|Z^{(N)}(s)-\chi(0)\|<\epsilon\text{ for all }s\le t\right)\\
    &=\P\left(\sup_{0\le s\le t}\|Z^{(N)}(s)-Z(s)\|<\epsilon\right)\xrightarrow{N\to\infty}1
\end{align*}
for all $t>0$ since $Z(s)\equiv \chi (0)$.
\end{proof}

Simulations for $F_i(k)=k^\beta,\,\beta>1$ indicate that the escape from an unstable equilibrium is faster the larger $\beta$ is. Recall that for superexponential feedback functions (see Corollary \ref{cor: criticalShare}) the winner of the first step wins in all further steps with high probability if $N$ is large. Hence, it only takes $O(N)$ time to escape from an unstable equilibrium in this case. Nevertheless, this does not pose a contradiction to  Corollary \ref{cor: escape} since the convergence of $G(k, (\cdot))$ to $G$ is not uniform in an unstable equilibrium. Thus, Theorem \ref{thm: dynamic} is not applicable and the assumption of uniform convergence can not be removed. 

Figure \ref{fig: G} shows the dynamics of the process $(\chi(n))_n$ in various generic situations. The fixed points of the dynamics, i.e. the zeros of the vector-field $G$, are the long-time market-shares of our generalized Pólya-urn, but only the stable fixed points are attained with positive probability. Figure (a), (b) and (c) comply with the properties found in the sections before, i.e. monopoly in the superlinear case and stable, non-zero market-shares in the sublinear case. Figure (d) underlines that the set of stable fixed points is not necessarily discrete. Note that when $F_i(k)=kL(k)$ for all agents $i\in[A]$ and a slowly varying function $L$, then the field $G$ is constantly zero, such that all points are fixed points. In particular, this holds for the original Pólya urn, where $L$ is a constant function. If $L$ diverges, then the process exhibits weak monopoly resp. deterministic limits  for finite $N$ (see Section \ref{sec: specialCase}), which is again not captured by Theorem \ref{thm: dynamic} as it takes more than $O(N)$ steps to reach the long-time limit.

Moreover, the assumptions of Theorem \ref{thm: dynamic} are not fulfilled for exponential feedback, since $G$ is not continuous. Nevertheless, the dynamics in the limit $N\to\infty$ are already described by Corollary \ref{cor:44}, which states that all steps are won by the same agents as long as $\chi(0)$ is not on the boundary between the attraction domains. Note that this is consistent with Theorem \ref{thm: dynamic}, i.e. (\ref{eq: ODE}) still holds.

Since $F_i$ only depends on $X_i$ and not $X_j$, $j\neq i$ there are no limit cycles and the dynamics tends to a fixed point, as opposed to models discussed in \cite{Costa}.


\section{A  Functional Central Limit Theorem for the dynamics}\label{sec: clt}

In Section \ref{sec:dynamics} we derived a functional law of large numbers for the process of market shares for large initial values, which states that the time-scaled process $Z^{(N)}$ can be well approximated by a deterministic process $Z$ for large $N$. In order to gain an understanding of the fluctuations around this limit, we prove a corresponding functional central limit theorem in this section. Let us first state our main result. We use the notations introduced in Section \ref{sec:dynamics} and establish furthermore the notation
\begin{equation}\label{eq: p}
   p(x)=(p_i(x))_{i\in[A]}=\lim_{k\to\infty} p(k, x)\ ,
\end{equation}
for all $x\in\Delta_{A-1}$. Note that the existence of $p$ is equivalent to the existence of $G$. Denote by 
\begin{equation}\label{eq:tspace}
T\Delta_{A-1}\coloneqq\left\{(x_1,\ldots,x_A)\in\R^A\colon \sum_{i=1}^Ax_i=0\right\}    
\end{equation}
the tangent space of $\Delta_{A-1}$.

\begin{theorem}\label{cor: fclt}
Suppose that 
\begin{equation}\label{eq: condThmH}
    \lim_{k\to\infty}\sqrt{k}\sup_{x\in\Delta_{A-1}}\|G(k, x)-G(x)\|=0.
\end{equation}
Moreover, let $G$ be continuously differentiable on $\Delta_{A-1}^o$. Then we have
\begin{equation*}
    \sqrt{N}\left(Z^{(N)}(t)-Z(t)\right)_{t\ge0}\xrightarrow{N\to\infty}(\tilde Z(t))_{t\ge0}\quad\text{weakly on }\mathbb{D}([0, \infty), T\Delta_{A-1}),
\end{equation*}
where $\tilde Z$ is the solution of the system of stochastic differential equations
\begin{equation}\label{eq: CLT}
    d\tilde Z_i(t)=\frac{DG_i(Z(t))}{1+t}\bullet\tilde Z(t)dt+\sum_{j\ne i}\frac{\sqrt{ p_i(Z(t)) p_j(Z(t))}}{1+t}dB_{i, j}(t),\quad i\in[A].
\end{equation}
Here, $DG_i$ denotes the differential operator of $G_i$ and $B_{i,j}$ is a standard Brownian motion, which is independent of $B_{k, l}$ if $\{i, j\}\ne\{k, l\}$ and $B_{j,i}=-B_{i, j}$ for $i\ne j$.
\end{theorem}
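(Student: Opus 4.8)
The plan is to run the classical diffusion-approximation scheme of stochastic approximation on top of the analysis of Theorem~\ref{thm: dynamic}: refine the Doob decomposition of $\chi^{(N)}$, rescale the centred fluctuations by $\sqrt N$, identify the rescaled martingale part as a continuous Gaussian martingale via a martingale functional CLT, and linearise the drift so that the scaling limit is forced to solve \eqref{eq: CLT}. Set $W^{(N)}(t)\coloneqq\sqrt N\big(Z^{(N)}(t)-Z(t)\big)=\sqrt N\big(\chi^{(N)}(\lfloor Nt\rfloor)-Z(t)\big)$, which takes values in $T\Delta_{A-1}$ since $\chi^{(N)}$ and $Z$ both live on $\Delta_{A-1}$. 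From $\chi^{(N)}(n)=\chi^{(N)}(0)+H^{(N)}(n)+M^{(N)}(n)$ with $H^{(N)},M^{(N)}$ as in \eqref{hmdef} and $Z(t)=\chi(0)+\int_0^t\frac{G(Z(u))}{1+u}\,du$, I would split
\begin{equation*}
W^{(N)}(t)=\sqrt N\big(\chi^{(N)}(0)-\chi(0)\big)+\sqrt N\,M^{(N)}(\lfloor Nt\rfloor)+\sqrt N\Big(H^{(N)}(\lfloor Nt\rfloor)-\int_0^t\tfrac{G(Z(u))}{1+u}\,du\Big).
\end{equation*}
The first term is $O(N^{-1/2})$ by the rounding convention on $\chi^{(N)}(0)$ adopted in Section~\ref{sec:dynamics}, so the limit will satisfy $\tilde Z(0)=0$.

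For the martingale part, write $\mathcal M^{(N)}(t)\coloneqq\sqrt N\,M^{(N)}(\lfloor Nt\rfloor)$. Since $\xi^{(N)}(k)=X^{(N)}(k+1)-X^{(N)}(k)-p(N+k,\chi^{(N)}(k))$ equals $e^{(I)}-p(N+k,\chi^{(N)}(k))$ for a random index $I$ with $\P\big(I=i\mid\mathcal F^{(N)}_k\big)=p_i(N+k,\chi^{(N)}(k))$, its conditional covariance is $\operatorname{diag}\!\big(p(N+k,\chi^{(N)}(k))\big)-p(N+k,\chi^{(N)}(k))\,p(N+k,\chi^{(N)}(k))^{\mathsf T}$, whence
\begin{equation*}
\big\langle\mathcal M^{(N)}\big\rangle_t=\sum_{k=0}^{\lfloor Nt\rfloor-1}\frac{N}{(N+k+1)^2}\Big(\operatorname{diag}\!\big(p(N+k,\chi^{(N)}(k))\big)-p(N+k,\chi^{(N)}(k))\,p(N+k,\chi^{(N)}(k))^{\mathsf T}\Big).
\end{equation*}
Using Theorem~\ref{thm: dynamic} with a Skorochod realisation, so that $\chi^{(N)}(\lfloor N\cdot\rfloor)\to Z(\cdot)$ uniformly on compacts, together with \eqref{eq: p} and continuity of $p$, this converges uniformly on compacts to $\int_0^t\frac{\operatorname{diag}(p(Z(u)))-p(Z(u))p(Z(u))^{\mathsf T}}{(1+u)^2}\,du$. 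The jumps of $\mathcal M^{(N)}$ are bounded by $2\sqrt N/(N+1)=O(N^{-1/2})$, so the conditional Lindeberg condition is trivial and the martingale functional CLT (\cite[Ch.\ VIII]{Jacod}) yields $\mathcal M^{(N)}\Rightarrow\mathcal M$, the continuous Gaussian martingale with that quadratic variation. One then checks that $\mathcal M$ admits the representation $\mathcal M_i(t)=\sum_{j\ne i}\int_0^t\frac{\sqrt{p_i(Z(u))p_j(Z(u))}}{1+u}\,dB_{i,j}(u)$ with the antisymmetric family $B_{j,i}=-B_{i,j}$: its $i$-th diagonal entry equals $\sum_{j\ne i}p_ip_j=p_i-p_i^2$, and for $i\ne l$ only $B_{i,l}=-B_{l,i}$ is shared by $\mathcal M_i$ and $\mathcal M_l$, so the $(i,l)$-entry is $-p_ip_l$, matching $\operatorname{diag}(p)-pp^{\mathsf T}$.

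For the predictable part I would write $G(N+k,\chi^{(N)}(k))=\big(G(N+k,\chi^{(N)}(k))-G(\chi^{(N)}(k))\big)+G(\chi^{(N)}(k))$; after multiplying by $\sqrt N$ and summing against $1/(N+k+1)$, the first bracket is at most $C\sup_x\sqrt N\,\|G(N+k,x)-G(x)\|$, which vanishes uniformly on compacts by the key hypothesis \eqref{eq: condThmH}; replacing $\sqrt N\sum_k\frac{G(Z(k/N))}{N+k+1}$ by $\sqrt N\int_0^t\frac{G(Z(u))}{1+u}\,du$ costs $O(N^{-1/2})$ by Lipschitz continuity of $u\mapsto G(Z(u))/(1+u)$ on compacts; and a first-order Taylor expansion of $G$ (using $G\in C^1(\Delta_{A-1}^o)$ and uniform continuity of $DG$ on compacts) gives
\begin{equation*}
\sqrt N\sum_{k=0}^{\lfloor Nt\rfloor-1}\frac{G(\chi^{(N)}(k))-G(Z(k/N))}{N+k+1}=\int_0^t\frac{DG(Z(u))\bullet W^{(N)}(u)}{1+u}\,du+o_{\mathbb P}(1).
\end{equation*}
Collecting the contributions, $W^{(N)}$ solves, up to a remainder uniformly negligible on compacts, the linear integral equation $W^{(N)}(t)=\int_0^t\frac{DG(Z(u))\bullet W^{(N)}(u)}{1+u}\,du+\mathcal M^{(N)}(t)$. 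A localisation-plus-Gronwall argument for this equation, combined with Doob's $L^2$-inequality for $\mathcal M^{(N)}$, delivers an a priori bound on $\sup_{s\le t}\|W^{(N)}(s)\|$ (which in turn legitimates the $o_{\mathbb P}(1)$), and hence joint tightness of $(W^{(N)},\mathcal M^{(N)})_N$ in $\D([0,\infty),T\Delta_{A-1}\times T\Delta_{A-1})$ with continuous limit points. Every limit point $(\tilde Z,\mathcal M)$ satisfies $d\tilde Z(t)=\frac{DG(Z(t))}{1+t}\bullet\tilde Z(t)\,dt+d\mathcal M(t)$ with $\tilde Z(0)=0$, i.e.\ \eqref{eq: CLT}; for a fixed realisation of $\mathcal M$ this is a linear integral equation with coefficients bounded on any time interval on which $Z$ stays in $\Delta_{A-1}^o$, hence pathwise unique (variation of constants), so the whole sequence converges weakly to $\tilde Z$.

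The main obstacle is the self-referential nature of the drift: the linearised term $DG(Z(u))\bullet W^{(N)}(u)$ cannot be estimated before $W^{(N)}$ itself is controlled, and this circularity is broken only by the localisation/Gronwall step on the integral equation for $W^{(N)}$. That step also needs $DG(Z(\cdot))$ to be bounded on the time horizon considered, i.e.\ that the image of $Z$ stays inside the open simplex $\Delta_{A-1}^o$; when $Z(t)$ runs into the boundary (as in the strong-monopoly regime with $F_i(k)=\alpha_ik^\beta$, $\beta>1$) one must either restrict to finite horizons before $Z$ exits the interior, or impose a uniform positivity bound on $p(Z(t))$. A secondary but conceptually important point is that the rate in \eqref{eq: condThmH} is exactly what is required to annihilate the deterministic bias $\sqrt N\,(G(k,\cdot)-G(\cdot))$ after $\sqrt N$-rescaling, so the hypothesis is essentially sharp; it holds trivially whenever $p(k,\cdot)$ does not depend on $k$, as for $F_i(k)=\alpha_ik^\beta$.
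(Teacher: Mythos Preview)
Your proposal is correct and follows the same overall strategy as the paper: Doob decomposition, a martingale functional CLT for the noise, linearisation of the drift via the mean value theorem/Taylor, and a Gronwall argument to close the loop. The paper organises the argument slightly differently: it proves convergence of $\sqrt N\,M^{(N)}(\lfloor N\cdot\rfloor)$ and of $\sqrt N\big(\chi(0)+H^{(N)}(\lfloor N\cdot\rfloor)-Z\big)$ as two separate theorems (the limit of the latter being the solution of a random ODE driven by the already-identified martingale limit $M$), and only then adds them. For the martingale part the paper works by hand through the martingale problem (Taylor-expanding $f(\sqrt N M^{(N)})$ and passing to the limit in the compensator), whereas you invoke the predictable-quadratic-variation criterion of \cite[Ch.~VIII]{Jacod} directly; your route is shorter and more standard, while the paper's is more self-contained. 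For the drift the paper obtains the a priori bound (your ``localisation-plus-Gronwall'') as a stand-alone lemma bounding increments of $H^{(N)}-Z$ purely in terms of $\sup_u\|M^{(N)}(\lfloor Nu\rfloor)\|$ via Lipschitz continuity of $G$, which cleanly decouples tightness from the Taylor step; your integrated treatment of $W^{(N)}$ achieves the same thing but makes the circularity you flag a bit more visible. Both approaches rely on $DG(Z(\cdot))$ being bounded on compact time intervals, which the paper leaves implicit.
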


The differential operator $DG_i(z)\colon T\Delta_A\to\R,\,\tilde z\mapsto T\Delta_{A-1}(z)\bullet\tilde z$ for $z\in\Delta_{A-1}^o$ is the product with the gradient $\nabla G_i(z)$, when $G$ is defined on an open neighbourhood of $T\Delta_{A-1}$ in $\R^A$. \cite{Borkar} presents a central limit theorem  in a general stochastic approximation setting. Further functional central limit theorems in the context of Pólya urns have recently been studied in \cite{Borovkov} and \cite{Dean}.

For the proof, we use again the method of stochastic approximation. In the Doob decomposition (\ref{hmdef}), we prove separately a limit theorem for the martingale part $M^{(N)}$ in Subsection \ref{subsec: martingale} and for the predictable part $H^{(N)}$ in Subsection \ref{subsec: H}, which directly imply Theorem \ref{cor: fclt} by summing up both. Note that Theorem \ref{thm: CLT} for the martingale does not use the rather restrictive condition (\ref{eq: condThmH}). Within these Subsections, we discuss in detail the properties and interpretation of the diffusion part and the drift part of (\ref{eq: CLT}).

Figure \ref{fig: Z^N} shows the process $Z^{(N)}-Z$ for large $N$. We can observe that $Z^{(N)}(t)-Z(t)$ is close to zero for large $t$. Indeed, this complies with formula (\ref{eq: RODE}). 

\begin{proposition}\label{prop: tildeZ}
  In the situation of Theorem \ref{cor: fclt}, assume that $Z(\infty)\coloneqq\lim_{t\to\infty}Z(t)$ exists and that $DG(Z(\infty))$ is a negative definite operator. Then
  $$\tilde Z(t)\xrightarrow{t\to\infty}0\quad\text{in }L^2\text{ and almost surely}.$$
\end{proposition}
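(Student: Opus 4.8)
The plan is to control $\tilde Z$ through its squared Euclidean norm, using the strict dissipativity coming from the negative definiteness of $DG(Z(\infty))$ and the fact that the diffusion coefficient in \eqref{eq: CLT} decays like $1/(1+t)$. First I would collect the structural ingredients. Since $DG(Z(\infty))$ is negative definite on the tangent space $T\Delta_{A-1}$ (in particular $Z(\infty)\in\Delta_{A-1}^o$, so that $DG(Z(\infty))$ is defined), there is $\lambda>0$ with $\langle v,DG(Z(\infty))v\rangle\le-2\lambda\|v\|^2$ for all $v\in T\Delta_{A-1}$, and then by continuity of $DG$ on $\Delta_{A-1}^o$ and $Z(t)\to Z(\infty)$ there is $t_0$ with $\langle v,DG(Z(t))v\rangle\le-\lambda\|v\|^2$ for $t\ge t_0$. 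This uses that $Z$ stays in a compact subset of $\Delta_{A-1}^o$: the faces $\{x_i=0\}$ are invariant for \eqref{eq: ODE} because $G_i(x)=-x_i$ there, so $Z(0)\in\Delta_{A-1}^o$ forces $Z(t)\in\Delta_{A-1}^o$ for all $t$, and the closure of the trajectory together with its limit is compact in $\Delta_{A-1}^o$; there $DG$ and the $p_i$ are bounded. A short computation (using $B_{j,i}=-B_{i,j}$) shows the quadratic covariation of $\tilde Z$ is $\frac{1}{(1+t)^2}\Sigma(Z(t))\,dt$ with $\Sigma_{ii}=p_i-p_i^2$, $\Sigma_{ik}=-p_ip_k$, hence $\langle v,\Sigma(Z(t))v\rangle\le\|v\|^2$ and $\mathrm{tr}\,\Sigma(Z(t))\le1$.

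Then Itô's formula applied to $V(t):=\|\tilde Z(t)\|^2$ would give
\begin{equation*}
dV(t)=\frac{2}{1+t}\langle\tilde Z(t),DG(Z(t))\tilde Z(t)\rangle\,dt+\frac{\mathrm{tr}\,\Sigma(Z(t))}{(1+t)^2}\,dt+dM(t),
\end{equation*}
with $M$ a continuous local martingale obeying $d[M]_t\le\frac{4V(t)}{(1+t)^2}\,dt$, so for $t\ge t_0$ one has the differential inequality $dV(t)\le\big(-\tfrac{\lambda}{1+t}V(t)+\tfrac{1}{(1+t)^2}\big)dt+dM(t)$. For the $L^2$ assertion I would take expectations: standard a priori bounds for the linear SDE \eqref{eq: CLT} (whose coefficients are bounded on each finite time interval) give $\E V(t)<\infty$ and make $M$ a genuine martingale there, so $v(t):=\E V(t)$ is absolutely continuous with $v'(t)\le-\tfrac{\lambda}{1+t}v(t)+\tfrac{1}{(1+t)^2}$. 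The integrating factor $(1+t)^\lambda$ then yields $v(t)=O\big((1+t)^{-\min(\lambda,1)}\log(1+t)\big)\to0$, i.e. $\tilde Z(t)\to0$ in $L^2$.

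For the almost sure convergence, the bound $\sup_t\E V(t)<\infty$ makes $M$ an $L^2$-bounded martingale, so $M(t)$ converges a.s.; integrating the differential inequality and using $V\ge0$ gives $\int_{t_0}^\infty\frac{V(u)}{1+u}\,du<\infty$ a.s. This in turn shows that the drift part in the Itô decomposition of $V$ has a.s. finite total variation on $[0,\infty)$ (it is dominated by $\frac{C\,V(u)}{1+u}+\frac{1}{(1+u)^2}$, both integrable), and the martingale part converges, so $V(t)$ converges a.s. to some $V(\infty)\ge0$; finally $\int^\infty\frac{V(u)}{1+u}\,du<\infty$ together with $\int^\infty\frac{du}{1+u}=\infty$ forces $\liminf V=0$, hence $V(\infty)=0$ and $\tilde Z(t)\to0$ a.s.

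The Itô computation and the Gronwall-type estimate are routine; the point requiring care is the interplay between the local martingale and its moments — namely justifying $\E V(t)<\infty$ and that $M$ is a true (resp. $L^2$-bounded) martingale — which I would handle via the standard energy estimate for linear SDEs with locally bounded coefficients. A secondary issue, already noted above, is checking that the negative-definiteness hypothesis puts $Z(\infty)$ in the interior, so that the compactness/boundedness input for $DG$ and the $p_i$ is legitimate.
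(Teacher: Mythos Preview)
Your $L^2$ argument is essentially the paper's: both apply It\^o (equivalently, the generator and Dynkin's formula) to $\|\tilde Z\|^2$, use the eventual dissipativity $\langle v,DG(Z(t))v\rangle\le-\lambda\|v\|^2$ for $t\ge t_0$, and close with a Gr\"onwall estimate.

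For the almost sure part your route genuinely differs from the paper's. The paper exploits the decomposition $\tilde Z=H+M$ already established in Theorem~\ref{thm: H}, where $M$ is the martingale limit of Theorem~\ref{thm: CLT_A-dim.} and $H$ solves the RODE~\eqref{eq: RODE}: fixing a path with $m:=\lim_{t\to\infty}M(t)$, it differentiates $\|H(t)+m\|^2$ pathwise and uses $\|DG(Z(t))(M(t)-m)\|\to0$ to force $\|H(t)+m\|\to0$, whence $\tilde Z=(H+m)+(M-m)\to0$. You instead work directly with the It\^o martingale of $\|\tilde Z\|^2$: show it is $L^2$-bounded via $d[M]_t\le 4V(t)(1+t)^{-2}\,dt$ and the $L^2$ bound on $V$, deduce $\int^\infty\frac{V(u)}{1+u}\,du<\infty$ a.s., and conclude $V(t)\to0$ from convergence of the martingale part, finite total variation of the drift, and divergence of $\int\frac{du}{1+u}$. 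The paper's argument leverages structure built earlier in the section; yours is a self-contained stochastic-Lyapunov argument that would apply to any linear SDE with the same dissipativity and coefficient decay.

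One caveat: your assertion that $Z(\infty)\in\Delta_{A-1}^o$ and that the trajectory closure is compact in the interior is not correct in general---in the monopoly case of Example~\ref{ex: negDefinit}, $Z(\infty)=e^{(i)}$ is a corner. What is actually needed (and what the paper also uses implicitly) is that $DG(Z(t))\to DG(Z(\infty))$ and stays bounded along the trajectory; this is built into the hypothesis that $DG(Z(\infty))$ is defined and negative definite. The remainder of your argument is unaffected.
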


\begin{proof}
    As explained in Subsection \ref{subsec: martingale}, the generator of $\tilde Z$ is given by
    \begin{align*}
     L_tf(x)&=\sum_{i=1}^A\frac{DG_i(Z(t))\cdot x}{1+t}\cdot\frac{\partial}{\partial x_i}f(x)+\sum_{i=1}^A\frac{p_i(Z(t))(1-p_i(Z(t)))}{2(1+t)^2}\frac{\partial^2}{\partial x_i^2}f(x)\\
     &+\sum_{i=1\atop i\ne j}^A\frac{p_i(Z(t))p_j(Z(t))}{(1+t)^2}\frac{\partial^2}{\partial x_i\partial x_j}f(x)
    \end{align*}
    for $x=(x_1,\ldots,x_A)\in T\Delta_{A-1}$. Thus, for $f(x)=x_1^2+\ldots+x_A^2$ we have
    \begin{align*}
       L_tf(x)&=\sum_{i=1}^A\frac{DG_i(Z(t))\cdot x}{1+t}\cdot2x_i+\sum_{i=1}^A\frac{p_i(Z(t))(1-p_i(Z(t)))}{(1+t)^2}\\
        &=\frac{2}{1+t}\langle DG(Z(t))x, x\rangle +\frac{b(t)}{(1+t)^2}
    \end{align*}
    for a bounded function $b(t)$. Since $t\mapsto DG(Z(t))$ is continuous and $DG(Z(\infty))$ is negative definite, $DG(Z(t))$ is also negative definite for $t\ge t_0$, when $t_0>0$ is large enough. Thus, there is $\lambda>0$ such that
    $$\langle DG(Z(t))x, x\rangle\le-\lambda\|x\|^2$$
    for all $x\in\R^A$ and $t\ge t_0$. In summary, we get
    $$L_tf(x)\le-\frac{2\lambda}{1+t}\|x\|^2+\frac{b(t)}{(1+t)^2}$$
     for $t\ge t_0$. Now, applying Dynkin's formula yields
    \begin{align*}
    \frac{d}{dt}\E\|\tilde Z(t)\|^2=\E L_tf(\tilde Z(t))\le-\frac{2\lambda}{1+t}\E\|\tilde Z(t)\|^2+\frac{b(t)}{(1+t)^2}.
    \end{align*}
    for $t\ge t_0$. Finally, the claim follows from Grönwall's inequality:
    \begin{align*}
     \E\|\tilde Z(t)\|^2\le\left(\int_{t_0}^t\frac{b(s)}{(1+s)^2}ds+\E\|\tilde Z(t_0)\|^2\right)\exp\left(\int_{t_0}^t-\frac{2\lambda}{1+s}ds\right)\xrightarrow{t\to\infty}0
    \end{align*}
    For the almost sure convergence we fix a realisation $\omega\in\Omega$, such that $m\coloneqq\lim_{t\to\infty}M(t)(\omega)$ exists. Then we get from (\ref{eq: RODE}) and the Cauchy-Schwarz inequality that
    \begin{align*}
        &\frac{d}{dt}\|H(t)+m\|^2\\
        &=\frac{2}{1+t}\big(\langle DG(Z(t))(H(t)+m), H(t)+m)\rangle+\langle H(t)+m, DG(Z(t))(M(t)-m)\rangle\big)\\
        &\le \frac{2}{1+t}\big(-\lambda\|H(t)+m\|^2+\|H(t)+m)\|\cdot\|DG(Z(t))(M(t)-m)\|\big)
    \end{align*}
    for $t\ge t_0$. Hence
    $$ \frac{d}{dt}\|H(t)+m\|^2>0\quad\Longrightarrow\quad \frac{\|DG(Z(t))(M(t)-m)\|}{\lambda}>\|H(t)+m\|,$$
    which implies $\|H(t)+m\|\xrightarrow{t\to\infty}0$ as $\|DG(Z(t))(M(t)-m)\|\xrightarrow{t\to\infty}0$.
\end{proof}

In generic examples one can show that $DG(Z(\infty))$ is indeed negative definite, but it is also possible to find a counterexample.

\begin{figure}
  \centering
    \subfloat[][$F_1(k)=F_2(k)=F_3(k)=\sqrt{k}$,\\$\chi(0)=\left(\frac{1}{10}, \frac{1}{10}, \frac{4}{5}\right)$]{\includegraphics[width=0.5\linewidth]{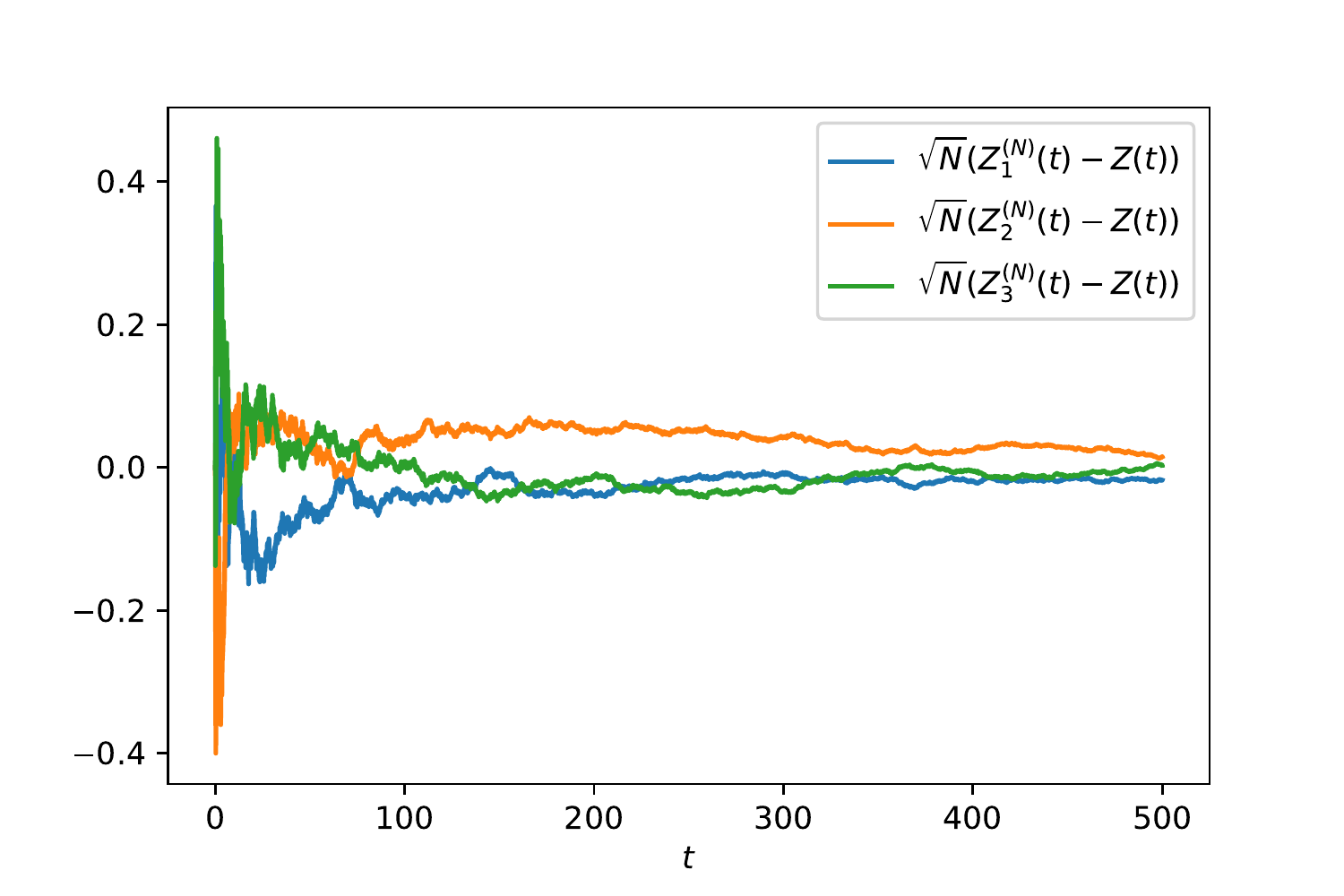}}
  \subfloat[][$F_1(k)=F_2(k)=F_3(k)=k^2$,\\$\chi(0)=\left(\frac{3}{10}, \frac{3}{10}, \frac{2}{5}\right)$]{\includegraphics[width=0.5\linewidth]{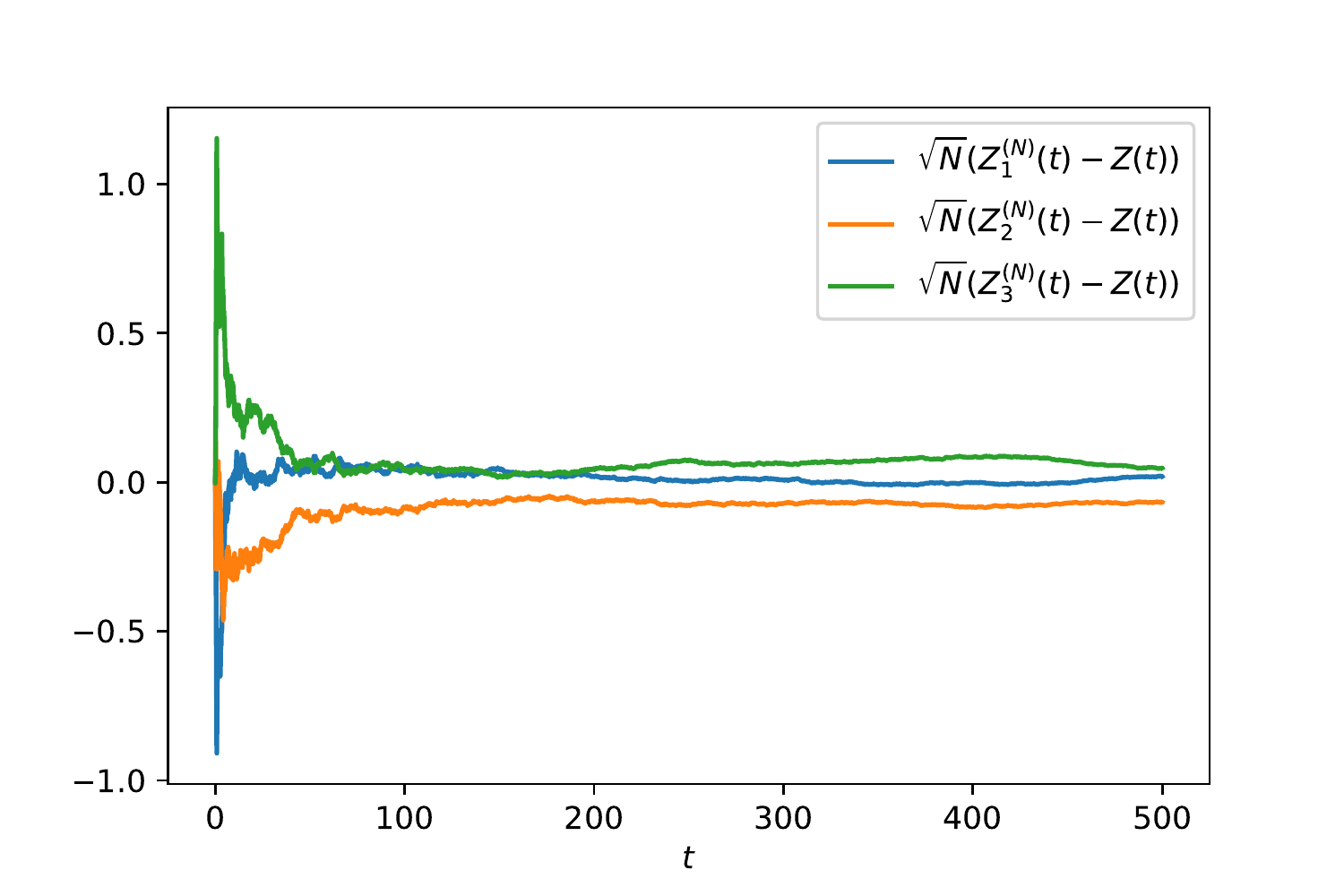}}
  \caption{The processes $\sqrt{N}\left(Z^{(N)}(t)-Z(t)\right)$ for $A=3$ and $N=10.000$.}
  \label{fig: Z^N}
\end{figure}

\begin{example}\label{ex: negDefinit}
Let $F_i(k)=\alpha_ik^\beta$ for $\alpha_i>0,\,\beta>0$, such that
$$G_i(x)=\frac{\alpha_ix_i^\beta}{\alpha_1x_1^\beta+\ldots\alpha_Ax_A^\beta}-x_i\quad\text{for all }x\in\Delta_{A-1}\ .$$
Since there is an obvious extension of $G$ to $\R^A$, the operator $DG(x)$ is negative definite if and only if the well-defined differential matrix $\left(\frac{\partial}{\partial x_j}G_i(x)\right)_{i,j=1,\ldots,A}$ is negative definite.
\begin{enumerate}
    \item  Consider the monopoly case $\beta>1$. Moreover, let $\chi(0)$ be in the attraction domain of agent $i$, i.e. $Z(t)\xrightarrow{t\to\infty}e^{(i)}$. A simple computation shows $\nabla G_j(e^{(i)})=(-\delta_{l, j})_{l=1,\ldots,A}$ for all $j\in[A]$, where $\delta_{i, j}$ denotes the Kronecker delta. Hence, $DG(e^{(i)})$ is negative definite. 
   \item In the monopoly case $\beta>1$ assume that $\chi(0)$ is the unique unstable fixpoint of the vector field $G$. Then $Z(\infty)=\chi(0)$ and $DG(Z(\infty))$ is positive definite. Thus, $\E\|\tilde Z(t)\|^2\xrightarrow{t\to\infty}\infty$ follows by similar argumentation.
   \item For $\beta=1$, we have $H(t)\equiv0$ since $G(x)\equiv0$. In this case $\tilde Z(t)$ does not converge to zero for $t\to\infty$.  This is due to the fact that for $\beta=1$ and large (but finite) $N$ the time-limit $\lim_{n\to\infty}\chi^{(N)}(n)$ is close to $\chi(0)$, but still random. For $\beta\ne1$, the long-time limit can be predicted precisely for large $N$ (at least with high probability).
    \item Now, let $\beta<1$. For simplicity, assume $\alpha_i=1$ for all $i\in[A]$, but a similar argument is possible in a non-symmetric situation. Then $Z(\infty)\coloneqq\lim_{t\to\infty}Z(t)=\left(\frac{1}{A}\right)_{i=1,\ldots,A}$. It can be shown that $\nabla G_i(Z(\infty))=(c\delta_{i, j}+d(1-\delta_{i, j}))_{j=1,\ldots,A}$ for some $c<d<0$, i.e. $DG(Z(\infty))$ is negative definite.
\end{enumerate}
\end{example}

Note that the time-change factor $\frac{1}{1+t}$ in (\ref{eq: CLT}) does not change the long-time limit of the dynamics, but slows down the rate of convergence. The Grönwall estimate in the proof of Proposition \ref{prop: tildeZ} implies that $\tilde Z(t)$ converges to zero at least at rate $t^{-2\lambda}$. For the classical P\`olya urn we have $\lambda=0$, such that there is no convergence to zero.

As we can see, the first steps of our process are of particular interest. In order to put focus on this, Appendix \ref{appendix: beta} examines the limiting behaviour of $(\chi(\lfloor N^\beta t\rfloor))_{t\ge 0}$ for $N\to\infty$ and non-linear time scale $\beta\in(0, 1)$.

\subsection{Convergence of the martingale part}\label{subsec: martingale}

This subsection examines the martingale $M^{(N)}=(M_1^{(N)},\ldots, M_A^{(N)})$ as defined in (\ref{hmdef}). We have already seen in Section \ref{sec:dynamics} that $M^{(N)}$ vanishes for $N\to\infty$. Under appropriate scaling, we can yield the following central limit theorem, which accounts for the diffusion part of (\ref{eq: CLT}). For simplicity we will at first only consider one fixed agent (without loss of generality agent 1) while keeping $A\ge2$ general.

\begin{theorem}
\label{thm: CLT}
We assume that the convergence (\ref{eq: p}) is uniform on an open neighborhood of the image of $Z$ and that $p$ is a Lipschitz continuous function on this neighbourhood. Moreover, denote by $(M_1(t))_{t\ge0}$ a time-inhomogeneous Markov process with generator
\begin{align*}
    L_sf\coloneqq\frac{f''}{2(1+s)^2}p_1(Z(s))(1-p_1(Z(s))),\quad s\ge 0
\end{align*}
and $M_1(0)=0$. Then
\begin{equation*}
    \sqrt{N}\left(M_1^{(N)}(\lfloor Nt\rfloor)\right)_{t\ge0}\xrightarrow{N\to\infty}(M_1(t))_{t\ge0}\quad\text{weakly on }\mathbb{D}([0, \infty), \mathbb{R})\ .
\end{equation*}
\end{theorem}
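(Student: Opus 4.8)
\textbf{Proof plan for Theorem \ref{thm: CLT}.}
The plan is to establish the functional CLT for the rescaled martingale $\sqrt{N}\, M_1^{(N)}(\lfloor N\cdot\rfloor)$ via the standard martingale functional central limit theorem (see e.g.\ \cite[Theorem VIII.3.11]{Jacod}, Jacod--Shiryaev). The target process $M_1$ is a continuous Gaussian martingale (a time-changed Brownian motion) whose predictable quadratic variation at time $t$ is the deterministic quantity $\int_0^t \frac{p_1(Z(s))(1-p_1(Z(s)))}{(1+s)^2}\,ds$, since $L_s$ above has no drift and diffusion coefficient $\frac{1}{(1+s)^2}p_1(Z(s))(1-p_1(Z(s)))$. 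The three things to verify are: (i) convergence of the predictable quadratic variation of the rescaled discrete martingale to this deterministic limit; (ii) a Lindeberg-type conditional negligibility of the jumps; and (iii) the fact that the limit being continuous and deterministic in its bracket makes the convergence automatically hold in the Skorokhod space and upgrades to uniform convergence on compacts.

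First I would record the elementary increment identity. Writing $\Delta M_1^{(N)}(k) = \frac{1}{N+k+1}\xi_1^{(N)}(k)$ with $\xi_1^{(N)}(k) = \mathbf{1}\{X(k+1)-X(k)=e^{(1)}\} - p_1(N+k,\chi^{(N)}(k))$ (recall from the proof of Theorem \ref{thm: dynamic} that $\E[\xi^{(N)}(k)\mid\mathcal F_k^{(N)}]=0$ and that the $i$-th coordinate of $X(k+1)-X(k)$ is Bernoulli with parameter $p_i(N+k,\chi^{(N)}(k))$). Hence the conditional variance of a single increment is $\frac{1}{(N+k+1)^2}\, p_1(N+k,\chi^{(N)}(k))\big(1-p_1(N+k,\chi^{(N)}(k))\big)$. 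The predictable quadratic variation of $\sqrt{N}\,M_1^{(N)}$ up to time $\lfloor Nt\rfloor$ is therefore
\begin{equation*}
\langle \sqrt{N}\,M_1^{(N)}\rangle_{\lfloor Nt\rfloor} = \sum_{k=0}^{\lfloor Nt\rfloor-1}\frac{N}{(N+k+1)^2}\, p_1(N+k,\chi^{(N)}(k))\big(1-p_1(N+k,\chi^{(N)}(k))\big).
\end{equation*}
I would then argue that this is a Riemann-sum approximation of $\int_0^t \frac{1}{(1+u)^2} p_1(Z(u))(1-p_1(Z(u)))\,du$: rescaling the index as $u=k/N$, the factor $\frac{N}{(N+k+1)^2}=\frac1N\cdot\frac{1}{(1+k/N+1/N)^2}$ supplies the $\frac1N \cdot \frac{1}{(1+u)^2}$, while $p_1(N+k,\chi^{(N)}(k)) \to p_1(Z(u))$ follows by combining (a) the uniform convergence $p(k,\cdot)\to p(\cdot)$ assumed in the statement, (b) the Lipschitz continuity of $p$, and (c) Theorem \ref{thm: dynamic}, which gives $\chi^{(N)}(\lfloor Nu\rfloor)\to Z(u)$ uniformly on compacts (after passing to the Skorokhod representation space as in that proof, where the convergence is almost sure). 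Dominated convergence (the summands are bounded by $\frac{N}{4(N+k+1)^2}$) then yields $\langle \sqrt{N}\,M_1^{(N)}\rangle_{\lfloor Nt\rfloor}\to \int_0^t \frac{p_1(Z(u))(1-p_1(Z(u)))}{(1+u)^2}\,du$ in probability, for each $t$, and in fact uniformly on compact time intervals since the limit is continuous and increasing (a Dini-type argument).

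For the Lindeberg condition (the conditional negligibility of large jumps), this is immediate: $|\Delta(\sqrt{N}\,M_1^{(N)})(k)| = \frac{\sqrt N}{N+k+1}|\xi_1^{(N)}(k)| \le \frac{\sqrt N}{N+k+1}\le \frac{1}{\sqrt N}$ almost surely for $k\ge 0$, so $\sup_k |\Delta(\sqrt N M_1^{(N)})(k)|\to 0$ deterministically; in particular $\sum_k \E[|\Delta M_1^{(N)}(k)|^2\mathbf 1\{|\Delta M_1^{(N)}(k)|>\varepsilon\}\mid \mathcal F_k^{(N)}]=0$ for $N$ large. With (i) the bracket convergence and (ii) the jump bound in hand, the martingale FCLT applies and gives $\sqrt N\,M_1^{(N)}(\lfloor N\cdot\rfloor)\Rightarrow M_1$ weakly on $\mathbb D([0,\infty),\mathbb R)$, where $M_1$ is the continuous Gaussian martingale with the stated bracket, i.e.\ precisely the time-inhomogeneous diffusion with generator $L_s$.

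The main obstacle I anticipate is item (i), and specifically justifying $p_1(N+k,\chi^{(N)}(k)) \to p_1(Z(u))$ with enough uniformity to push it through the sum: one needs the uniform convergence of $p(k,\cdot)$ to $p(\cdot)$ to control the ``index shift'' $N+k$ versus the limiting $p$, and simultaneously the uniform-on-compacts convergence $\chi^{(N)}(\lfloor Nu\rfloor)\to Z(u)$ from Theorem \ref{thm: dynamic} to control the spatial argument, and these must be combined on the (random, but eventually contained in the neighbourhood of the image of $Z$) trajectory. The Skorokhod representation trick used in the proof of Theorem \ref{thm: dynamic} is the clean way around this. Everything else — the Lindeberg bound, identification of the limit, and the upgrade to functional convergence — is routine given the general martingale CLT.
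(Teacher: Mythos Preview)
Your proposal is correct and complete, but it takes a genuinely different route from the paper. The paper proceeds in two separate steps: first it proves tightness of the sequence $\big(\sqrt{N}M_1^{(N)}(\lfloor N\cdot\rfloor)\big)_N$ directly via an Aldous-type criterion (bounding conditional second moments of increments), and then it identifies the limit of any convergent subsequence by showing it solves the martingale problem for $(L_s)_{s\ge 0}$, using a second-order Taylor expansion of test functions $f$ to compute the conditional increments of $f(\sqrt{N}M_1^{(N)})$ and passing to the limit in the resulting Riemann sums. Your approach instead packages everything into a single invocation of the martingale FCLT (Jacod--Shiryaev VIII.3.11): you verify convergence of the predictable bracket to the deterministic function $t\mapsto\int_0^t\frac{p_1(Z(s))(1-p_1(Z(s)))}{(1+s)^2}\,ds$ and the trivial Lindeberg condition from the uniform jump bound $1/\sqrt{N}$, and the theorem delivers both tightness and identification at once. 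The substantive analytic work---controlling $p_1(N+k,\chi^{(N)}(k))\to p_1(Z(u))$ uniformly on compacts via the assumed uniform convergence of $p(k,\cdot)$, the Lipschitz continuity of $p$, and the LLN for $\chi^{(N)}$ from Theorem~\ref{thm: dynamic} on a Skorokhod-representation space---is essentially the same in both proofs; you feed it into the bracket, the paper feeds it into the martingale-problem sum. Your route is shorter and more standard for readers familiar with the martingale FCLT; the paper's route is more self-contained and avoids citing a heavy black-box theorem.
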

Alternatively, the inhomogeneous Markov-process $M_1$ is characterized as the solution of the stochastic differential equation
\begin{equation*}
    dM_1(t)=\frac{\sqrt{p_1(Z(t))(1-p_1(Z(t)))}}{1+t}dB(t),\quad M_1(0)=0,
\end{equation*}
where $B$ denotes a standard Brownian motion. Thus, $M_1$ is a time-changed Brownian motion. To be more precise, $M_1(t)=B(\langle M\rangle_t)$, where
\begin{equation*}
    t\mapsto\langle M_1\rangle (t)\coloneqq\int_0^t\frac{p_1(Z(s))(1-p_1(Z(s)))}{(1+s)^2}dt\le \int_0^t\frac{1}{4(1+s)^2}ds<\frac{1}{4}
\end{equation*}
is the quadratic variation process of $M_1$. Note that $\langle M_1\rangle (t)$ is deterministic and monotone increasing in $t$, and thus $M_1(t)$ converges almost surely for $t\to\infty$ and the limit has a centered Gaussian distribution with variance $\lim_{t\to\infty}\langle M_1\rangle (t)$.

For the proof of Theorem \ref{thm: CLT}, we first show tightness of the sequence \linebreak $\left(\sqrt{N}M_1^{(N)}(\lfloor Nt\rfloor)\colon t\ge0\right)_N$ on $\mathbb{D}([0, \infty),\R )$ and then prove that the limit of any converging subsequence is a Markov-process with generator $(L_s)_{s>0}$. For later use in Appendix \ref{appendix: beta}, we keep the tightness result a bit more general than necessary.

\begin{lemma}
\label{lemma: CLT1}
The sequence of martingales $\left(N^{1-\frac{\beta}{2}}M_1^{(N)}(\lfloor N^\beta t\rfloor)\colon t\ge0\right)_N$ is tight for all $\beta\in(0, 1]$.
\end{lemma}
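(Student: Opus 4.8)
The plan is to verify the Aldous–Rebolledo tightness criterion for the sequence of martingales $\bigl(N^{1-\beta/2}M_1^{(N)}(\lfloor N^\beta t\rfloor)\bigr)_N$ on $\mathbb D([0,\infty),\R)$. Since these are martingales with uniformly bounded jumps, it suffices to control their predictable quadratic variations: by the Aldous criterion it is enough to show that (i) for each fixed $t$ the family $N^{2-\beta}\langle M_1^{(N)}\rangle(\lfloor N^\beta t\rfloor)$ is tight in $\R$, and (ii) for every $\varepsilon>0$ and every bounded sequence of stopping times $\sigma_N\le t$, the increments $N^{2-\beta}\bigl(\langle M_1^{(N)}\rangle(\lfloor N^\beta(\sigma_N+\delta)\rfloor)-\langle M_1^{(N)}\rangle(\lfloor N^\beta\sigma_N\rfloor)\bigr)$ can be made small, uniformly in $N$, by choosing $\delta$ small. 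The jump sizes of $N^{1-\beta/2}M_1^{(N)}$ are of order $N^{1-\beta/2}\cdot\frac1{N+k}\le N^{-\beta/2}\to 0$, which takes care of the remaining part of the criterion (the $C$-tightness / no-big-jump condition).

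First I would compute the predictable quadratic variation. From the Doob decomposition in \eqref{hmdef}, $\xi^{(N)}(k)$ is a bounded, centered increment, so
\begin{equation*}
\langle M_1^{(N)}\rangle(n)=\sum_{k=0}^{n-1}\frac{1}{(N+k+1)^2}\,\E\bigl[\xi_1^{(N)}(k)^2\mid\mathcal F_k^{(N)}\bigr],
\end{equation*}
and the conditional variance of the single-draw indicator is $p_1(N+k,\chi^{(N)}(k))\bigl(1-p_1(N+k,\chi^{(N)}(k))\bigr)\le\frac14$. Hence
\begin{equation*}
\langle M_1^{(N)}\rangle(n)\le\frac14\sum_{k=0}^{n-1}\frac{1}{(N+k+1)^2}\le\frac14\Bigl(\frac1N-\frac1{N+n}\Bigr).
\end{equation*}
Substituting $n=\lfloor N^\beta t\rfloor$ gives $N^{2-\beta}\langle M_1^{(N)}\rangle(\lfloor N^\beta t\rfloor)\le\frac14 N^{2-\beta}\bigl(\tfrac1N-\tfrac1{N+N^\beta t}\bigr)=\frac14\,\frac{N^{1-\beta}\,N^\beta t}{N+N^\beta t}\cdot\frac{1}{1}\le\frac{t}{4}$ (using $\beta\le 1$ so that $N^\beta\le N$), which is a deterministic bound; this immediately yields (i), and the same telescoping estimate applied on the interval $[\sigma_N,\sigma_N+\delta]$ gives
\begin{equation*}
N^{2-\beta}\bigl(\langle M_1^{(N)}\rangle(\lfloor N^\beta(\sigma_N+\delta)\rfloor)-\langle M_1^{(N)}\rangle(\lfloor N^\beta\sigma_N\rfloor)\bigr)\le\frac14\,N^{2-\beta}\sum_{k=\lfloor N^\beta\sigma_N\rfloor}^{\lfloor N^\beta(\sigma_N+\delta)\rfloor-1}\frac{1}{(N+k+1)^2},
\end{equation*}
which is bounded by a constant times $\delta$ uniformly in $N$ and in $\sigma_N\le t$, giving (ii). Together with the vanishing jump sizes, Aldous–Rebolledo (see e.g. \cite{Jacod}) yields tightness in $\mathbb D([0,\infty),\R)$, with all limit points concentrated on continuous paths.

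The main obstacle, such as it is, is purely bookkeeping: one must be careful that the time change $n\mapsto\lfloor N^\beta t\rfloor$ interacts correctly with the $\frac1{(N+k)^2}$ weights so that the scaling exponent $2-\beta$ is exactly the right one — for $\beta<1$ the quadratic variation on a fixed window is of order $N^{\beta}\cdot N^{-2}=N^{\beta-2}$, matched precisely by the prefactor $N^{2-\beta}$, whereas for $\beta=1$ the sum $\sum_{k}\frac1{(N+k)^2}$ over $k\le Nt$ is of order $N^{-1}$, again matched. No genuine analytic difficulty arises because the conditional variances are bounded by $\tfrac14$ irrespective of the state; the uniformity-of-convergence and Lipschitz hypotheses on $p$ are not needed for tightness and will only enter when identifying the limit in the proof of Theorem \ref{thm: CLT}.
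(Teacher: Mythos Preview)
Your proof is correct and follows essentially the same route as the paper's. Both arguments reduce to the single deterministic estimate
\[
N^{2-\beta}\sum_{k=m}^{m+\lfloor N^\beta\delta\rfloor}\frac{1}{(N+k+1)^2}\le const.\,\delta+o(1),
\]
which controls both boundedness and the modulus of continuity. The only cosmetic difference is the wrapper: the paper invokes the Aldous criterion directly, establishing stochastic boundedness of the supremum via Doob's $L^2$ inequality and then bounding the conditional $L^2$ increments, whereas you invoke Aldous--Rebolledo and bound the predictable quadratic variation (using the slightly sharper $p_1(1-p_1)\le\tfrac14$ in place of the paper's $|\xi_1|\le1$). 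Your observation that the jump sizes are $O(N^{-\beta/2})$ gives $C$-tightness for free, which the paper does not state explicitly here but uses later. Your closing remark that the Lipschitz and uniform-convergence hypotheses on $p$ are not needed for tightness is also correct and matches the paper's usage.
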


\begin{proof}
According to a version of the Aldous criterion in \cite[Lemma 3.11]{Whitt}, the following two properties are sufficient for the tightness.

1. Stochastic Boundedness: For $C, T>0$ we have by Doob's inequality and (\ref{eq: uncor})
\begin{align*}
   \P&\left(\sup_{0<t\le T} N^{1-\frac{\beta}{2}}\big|M_1^{(N)}(\lfloor N^\beta t\rfloor)\big|>C\right)\le\frac{N^{2-\beta}}{C^2}\E\left(M_1^{(N)}(\lfloor N^\beta T\rfloor)^2\right)\\
   &=\frac{N^{2-\beta}}{C^2}\sum_{k=0}^{\lfloor N^\beta T\rfloor-1}\frac{1}{(N+k+1)^2}\E\left(\xi_1^{(N)}(k)^2\right)\le\frac{N^{2-\beta}}{C^2}\sum_{k=0}^{\lfloor N^\beta T\rfloor-1}\frac{1}{(N+k+1)^2}\\
   &\le\frac{N^{2-\beta}}{C^2}\int_N^{\lfloor N^\beta T\rfloor+N}\frac{1}{s^2}ds=\frac{N^{2-\beta}}{C^2}\left(\frac{1}{N}-\frac{1}{\lfloor N^\beta T\rfloor+N}\right)\le \frac{N^{2-\beta}}{C^2}\cdot\frac{\lfloor N^\beta T\rfloor}{N^2}\\
   &\le const.(T)/C^2 \xrightarrow{C\to\infty}0
\end{align*}
uniformly in $N$.

2. Similarly, we get for $0<t\le T$ and $0<u\le\delta$:

\begin{align*}
    &\E\left[\left(N^{1-\frac{\beta}{2}}M_1^{(N)}(\lfloor N^\beta (t+u)\rfloor)-N^{1-\frac{\beta}{2}}M_1^{(N)}(\lfloor N^\beta t\rfloor)\right)^2\big|\mathcal{F}_{\lfloor N^\beta t\rfloor}^{(N)}\right]\\
    &\le N^{2-\beta}\sum_{k=\lfloor N^\beta t\rfloor}^{\lfloor N^\beta (t+u)\rfloor-1}\frac{1}{(N+k+1)^2}\E\left[\xi_1^{(N)}(k)^2\big| \mathcal{F}_{\lfloor N^\beta t\rfloor}^{(N)}\right]\le N^{2-\beta}\sum_{k=\lfloor N^\beta t\rfloor}^{\lfloor N^\beta (t+\delta)\rfloor-1}\frac{1}{(N+k+1)^2}\\
    &\le N^{2-\beta}\int_{\lfloor N^\beta t\rfloor+N}^{\lfloor N^\beta(t+\delta)\rfloor+N}\frac{1}{s^2}ds=N^{2-\beta}\left(\frac{1}{\lfloor N^\beta t\rfloor+N}-\frac{1}{\lfloor N^\beta (t+\delta)\rfloor+N}\right)\\
    &\le N^{2-\beta}\cdot\frac{\lfloor N^\beta (t+\delta)\rfloor-\lfloor N^\beta t\rfloor}{N^2}\le const.(\delta)\xrightarrow{\delta\to0}0
\end{align*}
uniformly in $N$.
\end{proof}

By the definition of tightness and Theorem \ref{thm: dynamic}, we also get tightness of the joint sequence $(Z^{(N)}, N^{1-\frac{\beta}{2}}M_1^{(N)}(\lfloor N^\beta (\cdot)\rfloor))_N$. Before we turn to the proof of Theorem \ref{thm: CLT}, we add another helpful lemma.

\begin{lemma}
\label{lemma: CLT2}
With $p$ as defined in (\ref{eq: p}) and $\beta\in(0, 1]$, we have for all smooth test-functions $f\colon\R\to\R$ with compact support
\begin{align*}
    &\E\left[f\left(N^{1-\frac{\beta}{2}}M_1^{(N)}(k+1)\right)-f\left(N^{1-\frac{\beta}{2}}M_1^{(N)}(k)\right)\big|\mathcal{F}_k^{(N)}\right]\\
    &=\frac{N^{2-\beta}}{2(N+k+1)^2}f''\left(N^{1-\frac{\beta}{2}}M_1^{(N)}(k)\right)p_1(N+k, \chi^{(N)}(k))\left(1-p_1(N+k, \chi^{(N)}(k))\right)+o\left(N^{-\beta}\right)
\end{align*}
as $N\to\infty$.
\end{lemma}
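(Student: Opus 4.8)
The plan is a standard second-order Taylor expansion of $f$ around the previous value $N^{1-\frac{\beta}{2}}M_1^{(N)}(k)$, conditioned on $\mathcal{F}_k^{(N)}$, together with a careful bookkeeping of the magnitudes of the remainder terms. By the definition of $M_1^{(N)}$ in \eqref{hmdef}, the increment is
$$
N^{1-\frac{\beta}{2}}M_1^{(N)}(k+1)-N^{1-\frac{\beta}{2}}M_1^{(N)}(k)=\frac{N^{1-\frac{\beta}{2}}}{N+k+1}\,\xi_1^{(N)}(k)=:\Delta_k\ ,
$$
where $|\xi_1^{(N)}(k)|\le 1$ almost surely, so that $|\Delta_k|\le N^{1-\frac{\beta}{2}}/(N+k+1)=O(N^{-\frac{\beta}{2}})$ uniformly. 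First I would write, by Taylor's theorem with Lagrange remainder,
$$
f\big(N^{1-\frac{\beta}{2}}M_1^{(N)}(k)+\Delta_k\big)-f\big(N^{1-\frac{\beta}{2}}M_1^{(N)}(k)\big)=f'\big(N^{1-\frac{\beta}{2}}M_1^{(N)}(k)\big)\Delta_k+\tfrac12 f''\big(N^{1-\frac{\beta}{2}}M_1^{(N)}(k)\big)\Delta_k^2+R_k\ ,
$$
with $|R_k|\le \tfrac16\|f'''\|_\infty|\Delta_k|^3=O(N^{-\frac{3\beta}{2}})=o(N^{-\beta})$ uniformly (here we use that $f$ is smooth with compact support, so $f'$, $f''$, $f'''$ are bounded). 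Now take $\E[\,\cdot\mid\mathcal{F}_k^{(N)}]$: the first-order term vanishes because $\E[\xi_1^{(N)}(k)\mid\mathcal{F}_k^{(N)}]=0$ (this is exactly the centering property established in Section~\ref{sec:dynamics}). The remainder contributes at most $o(N^{-\beta})$.

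The remaining work is to evaluate $\E[\Delta_k^2\mid\mathcal{F}_k^{(N)}]$. We have
$$
\E[\Delta_k^2\mid\mathcal{F}_k^{(N)}]=\frac{N^{2-\beta}}{(N+k+1)^2}\,\E\big[\big(\xi_1^{(N)}(k)\big)^2\mid\mathcal{F}_k^{(N)}\big]\ ,
$$
and since the increment $X_1^{(N)}(k+1)-X_1^{(N)}(k)$ is a Bernoulli variable taking value $1$ with probability $p_1(N+k,\chi^{(N)}(k))$, a direct calculation gives its conditional variance as $p_1(N+k,\chi^{(N)}(k))(1-p_1(N+k,\chi^{(N)}(k)))$, which equals $\E[(\xi_1^{(N)}(k))^2\mid\mathcal{F}_k^{(N)}]$ by definition of $\xi_1^{(N)}(k)$ in Section~\ref{sec:dynamics} (the first coordinate of $\xi^{(N)}(k)$ is precisely the centered Bernoulli increment minus its mean $G_1(N+k,\cdot)+\chi_1^{(N)}$, whose square has the stated conditional expectation). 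Substituting,
$$
\tfrac12\,\E\big[f''\big(N^{1-\frac{\beta}{2}}M_1^{(N)}(k)\big)\Delta_k^2\mid\mathcal{F}_k^{(N)}\big]=\frac{N^{2-\beta}}{2(N+k+1)^2}f''\big(N^{1-\frac{\beta}{2}}M_1^{(N)}(k)\big)p_1(N+k,\chi^{(N)}(k))\big(1-p_1(N+k,\chi^{(N)}(k))\big)\ ,
$$
which is exactly the claimed leading term; note $f''(N^{1-\frac{\beta}{2}}M_1^{(N)}(k))$ is $\mathcal{F}_k^{(N)}$-measurable, so it can be pulled out of the conditional expectation. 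Collecting the pieces yields the statement.

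The only point requiring a little care — and the main (mild) obstacle — is verifying that the cubic Taylor remainder is genuinely $o(N^{-\beta})$ \emph{uniformly in $k$ over the relevant range} $0\le k\le \lfloor N^\beta t\rfloor$, rather than merely for fixed $k$; this is immediate here since $|\Delta_k|\le N^{1-\frac{\beta}{2}}/(N+k+1)\le N^{-\frac{\beta}{2}}$ uniformly for $k\ge 0$, so $|R_k|\le\tfrac16\|f'''\|_\infty N^{-\frac{3\beta}{2}}$, and $N^{-\frac{3\beta}{2}}=o(N^{-\beta})$ because $\beta>0$. No appeal to the uniform convergence \eqref{eq: p} of $p(k,\cdot)$ to $p$ is needed for this lemma — that hypothesis enters only later, when one replaces $p_1(N+k,\chi^{(N)}(k))$ by $p_1(Z(s))$ and passes to the generator $L_s$ in the proof of Theorem~\ref{thm: CLT}.
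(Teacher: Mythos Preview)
Your proof is correct and follows essentially the same approach as the paper: a Taylor expansion of $f$ around $N^{1-\frac{\beta}{2}}M_1^{(N)}(k)$, using the martingale property to kill the first-order term and the Bernoulli variance to evaluate the second-order term. The only cosmetic difference is that the paper expands to first order with Lagrange remainder in the second-order term (so $f''$ is evaluated at an intermediate point $m^{(N)}(k)$ and then replaced by $f''\big(N^{1-\frac{\beta}{2}}M_1^{(N)}(k)\big)+o(1)$), whereas you expand one order further and put the Lagrange remainder in the cubic term; your version is marginally cleaner and your uniformity remark in $k$ is a useful observation.
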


\begin{proof}
Taylor-expansion of $f$  with Lagrange's remainder yields:
\begin{align*}
  &E\left[f\left(N^{1-\frac{\beta}{2}}M_1^{(N)}(k+1)\right)-f\left(N^{1-\frac{\beta}{2}}M_1^{(N)}(k)\right)\big|\mathcal{F}_k^{(N)}\right]\\
  &=N^{1-\frac{\beta}{2}}f'\left(N^{1-\frac{\beta}{2}}M_1^{(N)}(k)\right)\E\left[M_1^{(N)}(k+1)-M_1^{(N)}(k)\big|\mathcal{F}_k^{(N)}\right]\\
  &\quad+\frac{N^{2-\beta}}{2}\E\left[f''\left(m^{(N)}(k)\right)\left(M_1^{(N)}(k+1)-M_1^{(N)}(k)\right)^2\big|\mathcal{F}_k^{(N)}\right]\\
  &=\frac{N^{2-\beta}}{2(N+k+1)^2}\biggl[\E\left[f''\left(N^{1-\frac{\beta}{2}}M_1^{(N)}(k)\right)\xi_1^{(N)}(k)^2\big|\mathcal{F}_k^{(N)}\right]+o(1)\biggr]\\
  &=\frac{N^{2-\beta}}{2(N+k+1)^2}f''\left(N^{1-\frac{\beta}{2}}M_1^{(N)}(k)\right)\Big(\left(1-p_1(N+k, \chi^{(N)}(k))\right)^2p_1(N+k, \chi^{(N)}(k))\\
  &\quad+p_1(N+k, \chi^{(N)}(k))^2\left(1-p_1(N+k, \chi^{(N)}(k))\right)\Big)+o\left(N^{-\beta}\right)\\
  &=\frac{N^{2-\beta}}{2(N+k+1)^2}f''\left(N^{1-\frac{\beta}{2}}M_1^{(N)}(k)\right)p_1(N+k, \chi^{(N)}(k))\left(1-p_1(N+k, \chi^{(N)}(k))\right)+o\left(N^{-\beta}\right)
\end{align*}
Here, $m^{(N)}(k)$ denotes a (random) intermediate value between $N^{1-\frac{\beta}{2}}M_1^{(N)}(k)$ and \linebreak $N^{1-\frac{\beta}{2}}M_1^{(N)}(k+1)$. Note that $m^{(N)}(k)-N^{1-\frac{\beta}{2}}M_1^{(N)}(k)\xrightarrow{N\to\infty}0$ at rate $N^{-\frac{\beta}{2}}$.
\end{proof}

Now we are well prepared for the proof of Theorem \ref{thm: CLT}.

\begin{proof}
We show that for any limit $(Z, M_1)$ of a convergent subsequence of\linebreak $(Z^{(N)}, \sqrt{N}M_1^{(N)}(\lfloor N(\cdot)\rfloor))_N$, $M_1$ is a Markov process with generator $(L_s)_{s>0}$. For simplicity of notation, assume that the sequence is convergent itself.

Take a smooth test-function $f\colon\R\to\R$ with compact support. Then for each $N$ 
\begin{equation}
\label{eq: martingaleSeq}
    f\left(\sqrt{N}M_1^{(N)}(\lfloor Nt\rfloor)\right)-f(0)-\sum_{k=0}^{\lfloor Nt\rfloor-1}\E\left[f\left(\sqrt{N}M_1^{(N)}(k+1)\right)-f\left(\sqrt{N}M_1^{(N)}(k)\right)\big|\mathcal{F}_k^{(N)}\right],\quad
\end{equation}
is a martingale in continuous time $t\geq 0$ as $(Z^{(N)}, M_1^{(N)})$ is a discrete-time Markov process. The continuous mapping theorem implies that $f\left(\sqrt{N}M_1^{(N)}(\lfloor N(\cdot)\rfloor)\right)$ converges to $f(M_1)$ in $\mathbb{D}((0, \infty), \mathbb{R})$. Due to Lemma \ref{lemma: CLT2}, the sum converges as follows:
\begin{align*}
    &\sum_{k=0}^{\lfloor Nt\rfloor-1}\E\left[f\left(\sqrt{N}M_1^{(N)}(k+1)\right)-f\left(\sqrt{N}M_1^{(N)}(k)\right)\big|\mathcal{F}_k^{(N)}\right]\\
    &=\sum_{k=0}^{\lfloor Nt\rfloor-1}\biggl[\frac{N}{2(N+k+1)^2}f''\left(\sqrt{N}M_1^{(N)}(k)\right)p_1(N+k, \chi^{(N)}(k))\left(1-p_1(N+k, \chi^{(N)}(k))\right)\\ &\qquad\qquad+o(1/N)\biggr]\\
    &=\sum_{k=0}^{\lfloor Nt\rfloor-1}\!\!\frac{1}{2N(1+\frac{k}{N}+\frac{1}{N})^2}f''\left(\sqrt{N}M_1^{(N)}\left(N\frac{k}{N}\right))\right)p_1\left(N{+}k, Z^{(N)}\Big(\frac{k}{N}\Big)\right)\\
    &\quad\cdot\left(1{-}p_1\left(N{+}k, Z^{(N)}\Big(\frac{k}{N}\Big)\right)\right)+o(1)\\
    &\xrightarrow{N\to\infty}\int_0^t\frac{f''(M_1(s))}{2(1+s)^2}p_1(Z(s))\left(1-p_1(Z(s)\right)ds=\int_0^tL_sf(M_1(s))ds
\end{align*}
Convergence for $N\to\infty$ holds almost surely on an appropriate probability space by Skorochod's representation theorem, which implies weak convergence. 
Summing up, we have that (\ref{eq: martingaleSeq}) converges to
\begin{equation}
\label{eq: martingaleLimit}
    f(M_1(t))-f(0)-\int_0^tL_sf(M_1(s))ds
\end{equation}
for $N\to\infty$. As $f$ and $f''$ are bounded, the sequence in (\ref{eq: martingaleSeq}) is obviously uniformly integrable in $N$. Thus, \cite[Theorem 5.3]{Whitt} implies that (\ref{eq: martingaleLimit}) is a martingale as well. Moreover, the solution of the martingale problem \eqref{eq: martingaleLimit} is unique as a time-changed Brownian motion is always the unique solution if its corresponding martingale problem. Hence, $M_1$ is a time-inhomogeneous Markov-process with generator $(L_s)_{s\ge0}$.
\end{proof}

\begin{example}
\begin{enumerate}
     \item Let $F_i(k)=e^{\alpha_ik},\, \alpha_i>0,\, i\in[A] $ and suppose that $\chi_i(0)\alpha_i>\chi_j(0)\alpha_j$ for an $i\in[A]$ and all $j\ne i$. Then $M_1(t)=0$ almost surely for all $t\ge0$, since $p(x)=e^{(i)}$ for $x\in D_i$, in particular on the path of $Z$. This complies with the idea of a total monopoly described in Section \ref{sec: monopoly}.
     \item If $F_i(k)=k,\, i\in[A]$, then $Z(s)\equiv \chi(0)$ for all $s\ge0$ and $p(x)=x$ for all $x\in\Delta_{A-1}$. Hence, $\langle M_1\rangle (t)=\chi_1(0)(1-\chi_1(0))\left(1-\frac{1}{1+t}\right)$ for all $t\ge0$. Note that in this case the martingale part $M^{(N)}=\chi^{(N)}-\chi^{(N)}(0)$ encompasses the whole dynamic as $H^{(N)}(t)\equiv 0$ for all $t\ge0$.
     \item Let $F_i(k)=k^\beta,\,\chi_i(0)=\frac{1}{A}$ for all $i\in[A]$ and $\beta>0$. Since we start in a stable or unstable equilibrium point, we have $Z(t)\equiv \chi (0)$ and hence $\langle M_1\rangle (t)=\frac{A-1}{A^2}\left(1-\frac{1}{1+t}\right)$ for all $t\ge0$. In particular, $M_1$ does not depend on $\beta$.
\end{enumerate}
\end{example}

For non-linear, polynomial feedback functions and general initial market shares, the expressions for $Z$ are lengthy or even not explicit.  Figure \ref{fig: M} shows some realisations of the process $M_1$. It can be seen that the convergence of $M_1(t)$ for $t\to\infty$ is faster the faster the feedback functions grow. In the monopoly case, the variation of $M_1$ is small if $\chi(0)$ is already close to zero or one.


So far in this section, we only considered one fixed agent. Nevertheless, one can obtain an extension of Theorem \ref{thm: CLT} for all agents by a completely analogous, but lengthy argument, which we leave to the reader.

\begin{theorem}
\label{thm: CLT_A-dim.}
Suppose that the assumptions of Theorem \ref{thm: CLT} are fulfilled. Moreover, denote by $(M(t))_{t\ge0}$ an $A$-dimensional time-inhomogeneous Markov process with generator
\begin{align*}
    \tilde L_sf(x)&\coloneqq\sum_{i=1}^A\frac{ p_i(Z(s))(1- p_i(Z(s))}{2(1+s)^2}\frac{\partial^2}{(\partial x_i)^2}f(x)-\sum_{i, j=1\atop j\ne i}^A\frac{ p_i(Z(s)) p_j(Z(s))}{(1+s)^2}\frac{\partial^2}{\partial x_i \partial x_j}f(x)
\end{align*}
with $x\in\R^A$ and $M(0)=0$. Then
\begin{equation*}
    \sqrt{N}\left(M^{(N)}(\lfloor Nt\rfloor)\right)_{t\ge0}\xrightarrow{N\to\infty}( M(t))_{t\ge0}\quad\text{weakly on }\mathbb{D}([0, \infty), \mathbb{R}^A) .
\end{equation*}
\end{theorem}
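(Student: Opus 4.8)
The plan is to follow exactly the same scheme as in the proof of Theorem \ref{thm: CLT}, upgrading each scalar statement to its $A$-dimensional analogue. First I would establish tightness of the sequence $\bigl(\sqrt{N}M^{(N)}(\lfloor N(\cdot)\rfloor)\bigr)_N$ on $\mathbb{D}([0,\infty),\R^A)$. Since tightness of an $\R^A$-valued sequence is equivalent to tightness of each coordinate, this follows componentwise from Lemma \ref{lemma: CLT1} with $\beta=1$; alternatively one applies the Aldous criterion directly using $\|\xi^{(N)}(k)\|\le 1$ and the bound $\sum_{k}\frac{1}{(N+k+1)^2}=O(1/N)$ exactly as in Lemma \ref{lemma: CLT1}. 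As a byproduct, together with Theorem \ref{thm: dynamic}, the joint sequence $\bigl(Z^{(N)},\sqrt{N}M^{(N)}(\lfloor N(\cdot)\rfloor)\bigr)_N$ is tight, and every subsequential limit of $\sqrt{N}M^{(N)}$ has continuous paths.

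Next I would identify the limit via the martingale problem for the generator $\tilde L_s$. Fix a smooth compactly supported test function $f\colon\R^A\to\R$. For each $N$, the process
$$
f\bigl(\sqrt{N}M^{(N)}(\lfloor Nt\rfloor)\bigr)-f(0)-\sum_{k=0}^{\lfloor Nt\rfloor-1}\E\bigl[f\bigl(\sqrt{N}M^{(N)}(k+1)\bigr)-f\bigl(\sqrt{N}M^{(N)}(k)\bigr)\,\big|\,\mathcal F_k^{(N)}\bigr]
$$
is a martingale because $(Z^{(N)},M^{(N)})$ is a discrete-time Markov chain. The key computation is the $A$-dimensional analogue of Lemma \ref{lemma: CLT2}: a second-order Taylor expansion of $f$ with Lagrange remainder, using $\E[\xi^{(N)}(k)\mid\mathcal F_k^{(N)}]=0$ to kill the first-order term, and using the multinomial covariance structure of a single draw, namely $\E[\xi_i^{(N)}(k)\xi_j^{(N)}(k)\mid\mathcal F_k^{(N)}]=p_i(N+k,\chi^{(N)}(k))(\delta_{ij}-p_j(N+k,\chi^{(N)}(k)))$, to produce the second-order terms. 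This yields
$$
\E\bigl[f\bigl(\sqrt N M^{(N)}(k{+}1)\bigr)-f\bigl(\sqrt N M^{(N)}(k)\bigr)\,\big|\,\mathcal F_k^{(N)}\bigr]=\frac{1}{2N\bigl(1+\tfrac{k}{N}+\tfrac1N\bigr)^2}\sum_{i,j}D^2_{ij}f\bigl(\sqrt N M^{(N)}(k)\bigr)\,p_i(\cdots)\bigl(\delta_{ij}-p_j(\cdots)\bigr)+o(1/N),
$$
where the argument of $p$ is $(N+k,Z^{(N)}(k/N))$. Summing over $k\le \lfloor Nt\rfloor-1$, a Riemann-sum argument combined with the uniform convergence $p(k,\cdot)\to p$, the Lipschitz continuity of $p$, the weak (hence, after Skorochod's representation, almost sure) convergence $Z^{(N)}\to Z$ and $\sqrt N M^{(N)}(\lfloor N(\cdot)\rfloor)\to M$, gives the limit $\int_0^t \tilde L_s f(M(s))\,ds$, where the $\delta_{ij}-p_j$ split into the diagonal term $p_i(1-p_i)$ and the off-diagonal term $-p_ip_j$ appearing in $\tilde L_s$. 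Uniform integrability of the pre-limit martingales follows from boundedness of $f$ and its derivatives, so by \cite[Theorem 5.3]{Whitt} the limit $f(M(t))-f(0)-\int_0^t\tilde L_sf(M(s))\,ds$ is a martingale; hence any subsequential limit $M$ solves the martingale problem for $(\tilde L_s)_{s\ge0}$ started at $0$.

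Finally I would invoke uniqueness: the martingale problem for $(\tilde L_s)$ corresponds to the linear SDE $dM_i(t)=\sum_{j\ne i}\frac{\sqrt{p_i(Z(t))p_j(Z(t))}}{1+t}\,dB_{i,j}(t)$ with $B_{j,i}=-B_{i,j}$, i.e.\ $M$ is a deterministic-time-changed Gaussian process taking values in the tangent space $T\Delta_{A-1}$ (consistency of the components with $\sum_i M_i^{(N)}\equiv 0$ is automatic since $\sum_i\xi_i^{(N)}(k)=0$). A time-changed Gaussian martingale is the unique solution of its own martingale problem, so the limit is unique and the full sequence converges. The main obstacle is the combinatorial/covariance bookkeeping in the $A$-dimensional Taylor expansion — correctly producing the covariance matrix $p_i(\delta_{ij}-p_j)$ and checking the resulting second-order operator matches $\tilde L_s$ with its asymmetric off-diagonal sign — but this is routine once the one-agent case of Lemma \ref{lemma: CLT2} is in hand, which is why the paper leaves the details to the reader.
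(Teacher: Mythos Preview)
Your proposal is correct and follows exactly the approach the paper itself indicates: the paper does not give a full proof of Theorem~\ref{thm: CLT_A-dim.} but states that it is a ``completely analogous, but lengthy'' extension of the one-agent Theorem~\ref{thm: CLT}, and supplies only the conditional covariance computation $\E[\xi_i^{(N)}(k)\xi_j^{(N)}(k)\mid\mathcal F_k^{(N)}]=-p_i(k,\chi^{(N)}(k))p_j(k,\chi^{(N)}(k))$ for $i\ne j$, which is precisely the multinomial covariance $p_i(\delta_{ij}-p_j)$ you use in the multivariate Taylor step. The only minor caveat is your remark that tightness in $\mathbb{D}([0,\infty),\R^A)$ follows from coordinate-wise tightness: this is not true in general for c\`adl\`ag processes, but it does hold here because each coordinate is $C$-tight (all limits are continuous), and in any case your alternative via the Aldous criterion with $\|\xi^{(N)}(k)\|\le 1$ is unproblematic.
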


The specific form of the generator is due to the conditioned covariance matrix of the increments $\xi^{(N)}$, which is for $j\ne i$:
\begin{align*}
    \E&\left[\xi_i^{(N)}(k)\xi_j^{(N)}\big|\mathcal{F}_k^{(N)}\right]=- p_i(k, \chi^{(N)}(k))\left(1- p_i(k, \chi^{(N)}(k))\right) p_j(k, \chi^{(N)}(k))\\
    &\quad- p_j(k, \chi^{(N)}(k)) p_i(k, \chi^{(N)}(k))\left(1- p_j(k, \chi^{(N)}(k))\right)\\
    &\quad+\left(1- p_i(k, \chi^{(N)}(k))-\tilde p_i(k, \chi^{(N)}(k))\right) p_i(k, \chi^{(N)}(k)) p_j(k, \chi^{(N)}(k))\\
    &=- p_i(k, \chi^{(N)}(k)) p_j(k, \chi^{(N)}(k))
\end{align*}

\begin{figure}
  \centering
  \subfloat[][$F_1(k)=F_2(k)=F_3(k)=\sqrt{k},\,\chi(0)=\left(\frac{8}{10}, \frac{1}{10}, \frac{1}{10}\right),$\\ Here $\lim_{t\to\infty}\langle  M_1\rangle_t\approx0.2474$.]{\includegraphics[width=0.5\linewidth]{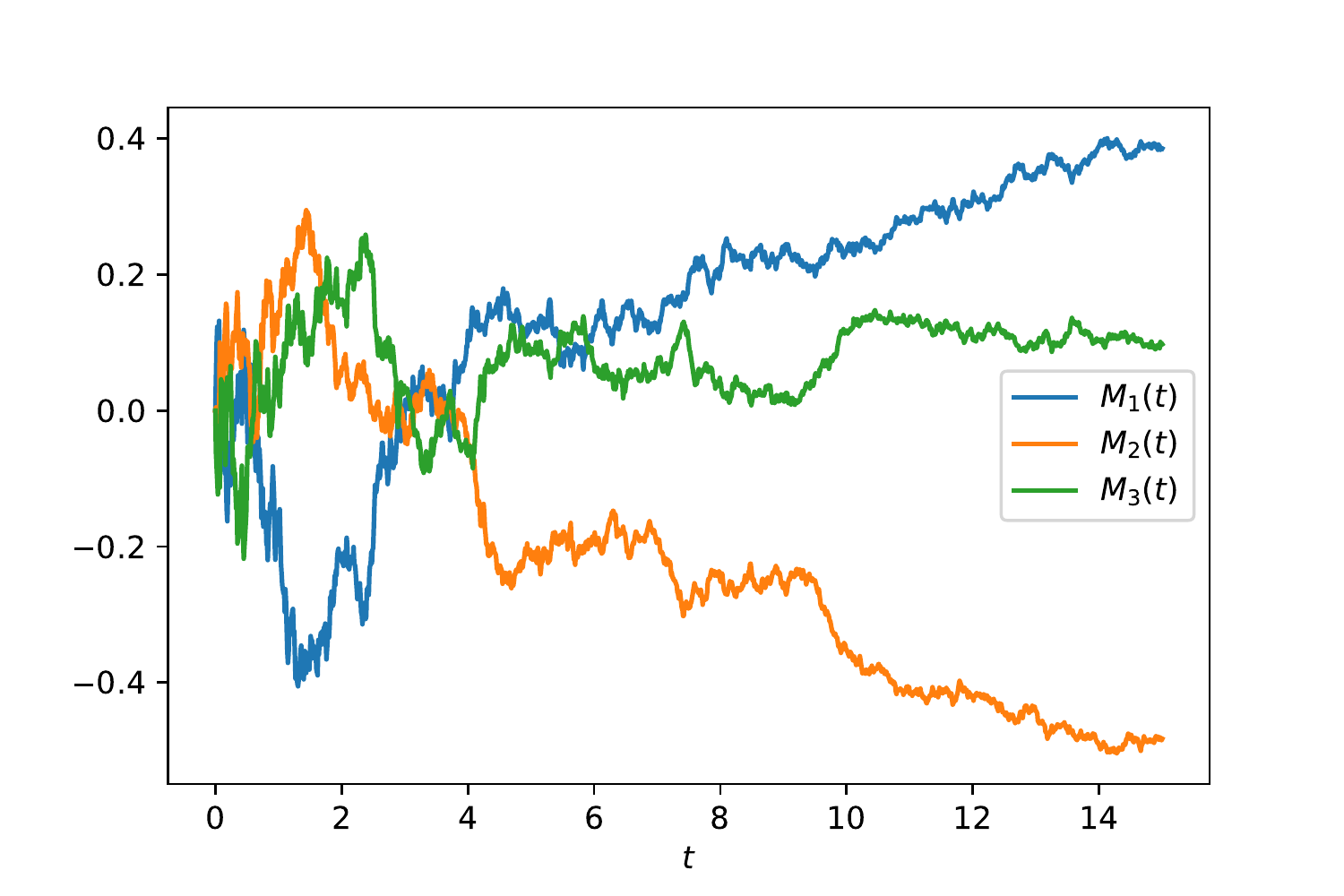}}
  \subfloat[][$F_1(k)=F_2(k)=F_3(k)=k^2,\,\chi(0)=\left(\frac{2}{5}, \frac{3}{10}, \frac{3}{10}\right)$,\\ Here $\lim_{t\to\infty}\langle M_1\rangle_t\approx0.1908$.]{\includegraphics[width=0.5\linewidth]{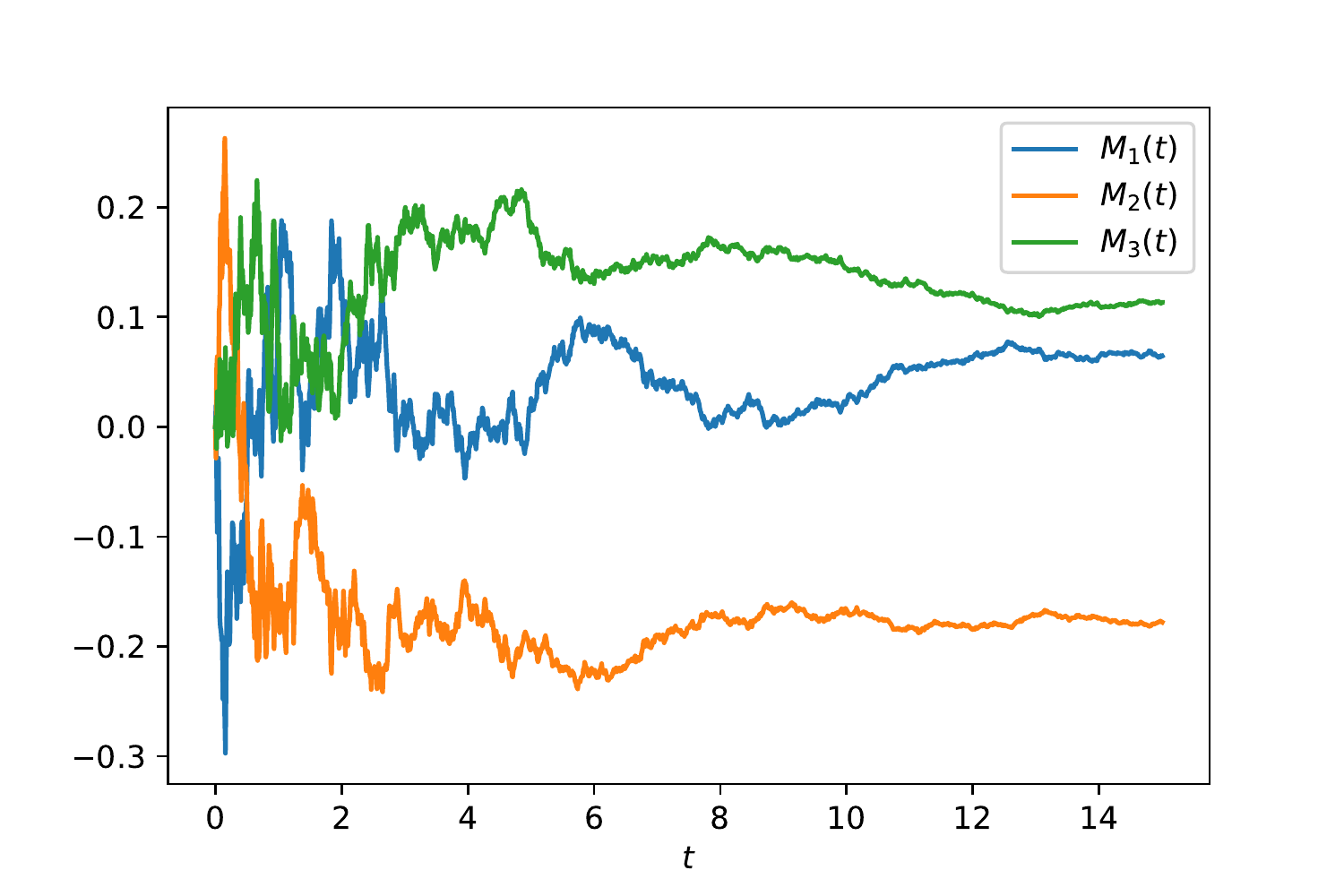}}\\
  \caption{Realisations of the process $M$ for different feedback functions and $A=3$ generated by the Euler-Maruyama method for \eqref{eq:sdes} with bandwidth $\frac{1}{100}$.}
  \label{fig: M}
\end{figure}

Alternatively, the $A$-dimensional generator $\tilde L_s$ can be rewritten as
\begin{equation*}
    \tilde L_sf(x)=\sum_{i,j=1\atop i<j}^A\frac{ p_i(Z(s)) p_j(Z(s))}{2(1+s)^2}\left(\frac{\partial}{\partial x_i}-\frac{\partial}{\partial x_j}\right)^2f(x),\quad x\in\Delta_{A-1},
\end{equation*}
where $\left(\frac{\partial}{\partial x_i}-\frac{\partial}{\partial x_j}\right)^2\coloneqq\frac{\partial^2}{(\partial x_i)^2}+\frac{\partial^2}{(\partial x_j)^2}-2\frac{\partial^2}{\partial x_i \partial x_j}$ is the second derivative along the diagonal $x_i=x_j$. From this form of the generator it is easy to see (e.g. by a coordinate transformation) that $M$ solves the system of stochastic differential equations
\begin{equation}\label{eq:sdes}
    d M_i(t)=\sum_{j\ne i}\frac{\sqrt{ p_i(Z(t)) p_j(Z(t))}}{1+t}dB_{i, j}(t)\ ,\quad i=1,\ldots ,A
\end{equation}
where $B_{i,j}$ is a standard Brownian motion, which is independent of $B_{k, l}$ if $\{i, j\}\ne\{k, l\}$ and $B_{j,i}=-B_{i, j}$ for $i\ne j$. It follows immediately that $\left(\sum_{i=1}^Ad M_i(t)\right)=0$ for all $t>0$. Hence, the sum $\sum_{i=1}^A M_i(t)=0$ is a conserved quantity. Consequently, the state space of $M$ is the tagent space $T\Delta_{A-1}$. This allows the following interpretation of the limit process $ M$: Each pair of agents exchanges mass according to a time-changed Brownian motion and the exchange of several distinct pairs of agents is independent. Figure \ref{fig: M} finally shows two simulations of the process $M$ with polynomial feedback.


\subsection{Convergence of the predictable part}\label{subsec: H}

In  order to complete the proof of Theorem \ref{cor: fclt}, let us now turn to the predictable part $H^{(N)}$ in the Doob decomposition (\ref{hmdef}), which accounts for the drift part of (\ref{eq: CLT}). It is important to recall that $H^{(N)}(\lfloor Nt\rfloor)$ is deterministic when $M^{(N)}(\lfloor Ns\rfloor)$ is given for $s\le t$. Because of that, it is possible to express the limit process of $\sqrt{N}\big(\chi(0)+H^{(N)}(\lfloor Nt\rfloor)-Z(t)\big)$ for $N\to\infty$ in terms of the limit $M$ of $\sqrt{N}M^{(N)}$. In Section \ref{sec:dynamics}, we derived that $\chi(0)+H^{(N)}$ converges to the deterministic process $Z$ for $N\to\infty$ and the following result describes the deviation under appropriate scaling.

\begin{theorem}
\label{thm: H}
Suppose that the assumptions of Theorem \ref{cor: fclt} are fulfilled. Then
\begin{equation*}
    \sqrt{N}\left(\chi(0)+H^{(N)}(\lfloor Nt\rfloor)-Z(t)\right)_{t\ge0}\xrightarrow{N\to\infty}(H(t))_{t\ge0}\quad\text{weakly on }\mathbb{D}([0, \infty), T\Delta_{A-1})\ ,
\end{equation*}
where $H$ is the solution of the system of random ordinary differential equations (RODE)
\begin{equation}
\label{eq: RODE}
    \frac{d}{dt}H(t)=\frac{DG(Z(t))}{1+t}\bullet(H(t)+M(t))\ ,\quad H(0)=0\ .
\end{equation}
\end{theorem}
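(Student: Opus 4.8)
The plan is to run the standard stochastic-approximation argument for the predictable part, feeding in the martingale limit $\sqrt N\,M^{(N)}(\lfloor N\cdot\rfloor)\to M$ from Theorem \ref{thm: CLT_A-dim.}. Write $W^{(N)}(t):=\sqrt N\,M^{(N)}(\lfloor Nt\rfloor)$ and $V^{(N)}(t):=\sqrt N\big(\chi(0)+H^{(N)}(\lfloor Nt\rfloor)-Z(t)\big)$, so that $\sqrt N(\chi^{(N)}(k)-Z(k/N))=V^{(N)}(k/N)+W^{(N)}(k/N)$ for every integer $k$; note that both live in $T\Delta_{A-1}$ since $G(k,\cdot)$ has coordinate sum zero. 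Combining \eqref{hmdef} with $Z(t)=\chi(0)+\int_0^t\frac{G(Z(s))}{1+s}\,ds$ gives the exact identity
\[
V^{(N)}(t)=\sqrt N\sum_{k=0}^{\lfloor Nt\rfloor-1}\frac{G(N+k,\chi^{(N)}(k))}{N+k+1}-\sqrt N\int_0^t\frac{G(Z(s))}{1+s}\,ds ,
\]
and the goal is to show that, up to errors vanishing uniformly on compacts in probability, the right-hand side equals $\int_0^t\frac{DG(Z(s))\bullet(V^{(N)}(s)+W^{(N)}(s))}{1+s}\,ds$.

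First I would replace $G(N+k,\cdot)$ by $G(\cdot)$: writing $\varepsilon_m:=\sqrt m\,\sup_{x}\|G(m,x)-G(x)\|$, which tends to $0$ precisely by \eqref{eq: condThmH}, the replacement error is bounded by $\sqrt N\,(\sup_{m\ge N}\varepsilon_m)\sum_{k\ge0}(N+k)^{-3/2}\le C\sup_{m\ge N}\varepsilon_m\to0$ uniformly in $t$ — this is the one point where the restrictive hypothesis \eqref{eq: condThmH} enters. Next I would Taylor-expand $G(\chi^{(N)}(k))=G(Z(k/N))+DG(Z(k/N))\bullet(\chi^{(N)}(k)-Z(k/N))+R^{(N)}(k)$, with $\|R^{(N)}(k)\|\le\omega\big(\|\chi^{(N)}(k)-Z(k/N)\|\big)\,\|\chi^{(N)}(k)-Z(k/N)\|$ and $\omega$ the modulus of continuity of $DG$ near the path of $Z$. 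The $G(Z(k/N))$-part differs from $\int_0^t\frac{G(Z(s))}{1+s}ds$ by a Riemann-sum error of size $\sqrt N\cdot O(1/N)=O(N^{-1/2})$, since $s\mapsto\frac{G(Z(s))}{1+s}$ is Lipschitz on $[0,T]$; the linear part equals $\sum_{k}\frac1N\,\frac{DG(Z(k/N))\bullet(V^{(N)}(k/N)+W^{(N)}(k/N))}{1+\frac kN+\frac1N}$, which up to a further $O(N^{-1/2})$ discretization error is $\int_0^t\frac{DG(Z(s))\bullet(V^{(N)}(s)+W^{(N)}(s))}{1+s}ds$. Collecting terms,
\[
V^{(N)}(t)=\int_0^t\frac{DG(Z(s))\bullet\big(V^{(N)}(s)+W^{(N)}(s)\big)}{1+s}\,ds+\mathcal E^{(N)}(t),\qquad \sup_{t\le T}\|\mathcal E^{(N)}(t)\|\xrightarrow{N\to\infty}0\ \text{in probability}
\]
(modulo the remainder term, discussed below).

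From this equation, Grönwall's inequality against $c(s):=\|DG(Z(s))\|_{\mathrm{op}}/(1+s)$, which is integrable on $[0,T]$, gives $\sup_{t\le T}\|V^{(N)}(t)\|\le C_T\big(\sup_{t\le T}\|\int_0^t\frac{DG(Z(s))\bullet W^{(N)}(s)}{1+s}ds\|+\sup_{t\le T}\|\mathcal E^{(N)}(t)\|\big)$, which is stochastically bounded because $W^{(N)}$ is tight; the same equation gives $\|V^{(N)}(t)-V^{(N)}(t')\|\le C_T|t-t'|+(\text{uniformly small})$, so $(V^{(N)})_N$ is tight on $\mathbb{D}([0,\infty),T\Delta_{A-1})$ with limits concentrated on continuous paths (exactly as in the proof of Theorem \ref{thm: dynamic}). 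For a joint subsequential limit $(V^{(N)},W^{(N)})\to(V,M')$, passing to an almost sure representation via Skorochod's theorem and using bounded convergence in the integral equation shows $V(t)=\int_0^t\frac{DG(Z(s))\bullet(V(s)+M'(s))}{1+s}ds$ with $M'$ distributed as $M$; since this linear RODE has a unique solution for a given driver, the law of $(V,M')$ is determined, which upgrades the subsequential convergence to full weak convergence of $V^{(N)}$ to the solution $H$ of \eqref{eq: RODE}. As $V^{(N)}=\sqrt N(\chi(0)+H^{(N)}(\lfloor N\cdot\rfloor)-Z)$, this is the claim, and $H$ (hence $\tilde Z=H+M$) is $T\Delta_{A-1}$-valued since every $V^{(N)}(t)$ has coordinate sum zero.

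The main obstacle is the Taylor remainder $\sqrt N\sum_k R^{(N)}(k)/(N+k+1)$: a priori it is only of order $\sqrt N\cdot\omega(\|\chi^{(N)}(k)-Z(k/N)\|)\cdot\|\chi^{(N)}(k)-Z(k/N)\|\cdot\sum_k(N+k)^{-1}$, and to make it small one needs $\|\chi^{(N)}(k)-Z(k/N)\|=N^{-1/2}\|V^{(N)}(k/N)+W^{(N)}(k/N)\|=O(N^{-1/2})$, i.e.\ precisely the stochastic boundedness of $V^{(N)}$ one is trying to establish. I would break this circularity by localization: for $L>0$ set $\theta_L^{(N)}:=\inf\{t\ge0\colon\|V^{(N)}(t)\|\ge L\}$, run the whole estimate on $[0,\theta_L^{(N)}\wedge T]$, where $\|\chi^{(N)}(k)-Z(k/N)\|\le(L+\sup_{s\le T}\|W^{(N)}(s)\|)N^{-1/2}$ forces $\omega(\cdot)\to0$ so the localized remainder is uniformly small, obtain a Grönwall bound uniform in $N$, and then let $L\to\infty$ to get $\sup_N\P(\theta_L^{(N)}<T)\to0$, removing the localization. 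A secondary point is that where $Z$ approaches $\partial\Delta_{A-1}$ (e.g.\ a corner in the monopoly case) one needs $DG$ to stay bounded along the path; I would run the argument on each compact $[0,T]$ under the boundedness of $\int_0^T\|DG(Z(s))\|\,ds$ — which is also what makes $H$ well defined and holds in all the examples, cf.\ Example \ref{ex: negDefinit} — and this suffices since weak convergence on $\mathbb{D}([0,\infty),\cdot)$ is checked interval by interval.
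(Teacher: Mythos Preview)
Your argument is correct and follows the same stochastic-approximation template as the paper (replace $G(N+k,\cdot)$ by $G$ via \eqref{eq: condThmH}, linearize, Gr\"onwall, Skorochod representation, identify the limit by uniqueness of the linear RODE). The one tactical difference is how the linearization error is handled. You Taylor-expand around $Z$, producing a remainder $R^{(N)}(k)$ whose control a priori requires $\|V^{(N)}\|$ bounded, hence the localization $\theta_L^{(N)}$. The paper avoids this bootstrap: it first proves a separate Gr\"onwall lemma (Lemma~\ref{lemma: Gronwall}) bounding increments of $H^{(N)}(\lfloor N\cdot\rfloor)-Z$ using only the Lipschitz constant of $G$, which gives tightness directly; then, for the identification step, it writes $G(\chi^{(N)})-G(Z)=DG(m^{(N)})\bullet(\chi^{(N)}-Z)$ with an intermediate point $m^{(N)}$ (integral mean-value theorem), so no residual remainder appears and $DG(m^{(N)})\to DG(Z)$ by continuity once $\chi^{(N)}\to Z$ is known. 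Your route works and is standard, but the paper's separation of ``tightness via Lipschitz'' from ``identification via $C^1$'' is slightly cleaner and sidesteps the stopping-time machinery.
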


Here, $DG(z)\colon T\Delta_{A-1}\to\R^A$ denotes the differential operator of $G$ at the point $z\in\Delta_{A-1}^o \subset\R^A$, i.e. $DG(z)\bullet x$ is the derivative of $G$ at $z$ in direction $x\in T\Delta_{A-1}$. Note that $H(t)$ as well as $M(t)$ (as described in the previous section) are in the tangent space $T\Delta_{A-1} \subset\R^A$ \eqref{eq:tspace}, and therefore also $H(t)+M(t)\in T\Delta_{A-1}$. If $G$ is well defined on an open neighbourhood of $\Delta_{A-1}$ in $\R^A$ (like in Example \ref{ex: negDefinit}), then $DG$ can be interpreted as the common differential matrix and $\bullet$ as the matrix-vector product.

The solution of a RODE is defined pathwise, in the sense that for any fixed realisation $\omega\in\Omega$ $M(t)=M(t, \omega)$ is a deterministic function, such that $H(t)=H(t, \omega)$ is the solution of the ordinary differential equation (\ref{eq: RODE}). Further details on the theory of RODEs can be found e.g. in \cite{Han}.

Consequently for fixed $\omega\in\Omega$,  (\ref{eq: RODE}) is a linear, time inhomogeneous ordinary differential equation, whose solution can be expressed as the matrix exponential
\begin{equation*}
    H(t)=e^{\int_0^t\frac{DG(Z(s))}{1+s}ds}\int_0^te^{-\int_0^s\frac{DG(Z(u))}{1+u}du}\frac{DG(Z(s))}{1+s}\bullet M(s)ds.
\end{equation*}

An important part of the proof of Theorem \ref{thm: H} will be the tightness of the sequence of processes $\sqrt{N}\left(\chi(0)+H^{(N)}(\lfloor Nt\rfloor)-Z(t)\right)_{t\ge0}$. For that, we bound its increments by the supremum of the martingale $M^{(N)}$.

\begin{lemma}
\label{lemma: Gronwall}
In the situation of Theorem \ref{thm: H} we have with probability one for all $0\le s<t\le T$
\begin{align*}
    \|H^{(N)}(\lfloor Nt\rfloor)-&Z(t)-H^{(N)}(\lfloor Ns\rfloor)+Z(s)\|\\
    &\le const.\left((t-s)\sup_{0\le u\le T}\|M^{(N)}(\lfloor Nu\rfloor)\|+\frac{t-s}{\sqrt{N}}+\frac{1}{N}\right),
\end{align*}
where $const.$ is a constant only depending on $G$ and $T$.
\end{lemma}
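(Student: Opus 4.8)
The plan is to write the left-hand side as a telescoping sum of increments of $H^{(N)}$, peel off two deterministic error terms (a feedback-convergence error governed by \eqref{eq: condThmH} and a Riemann-sum discretization error), compare what remains with the integral defining $Z$ via the Lipschitz continuity of $G$, and finally close the resulting self-referential estimate by Grönwall's inequality. Throughout I write $R^{(N)}(t):=\chi(0)+H^{(N)}(\lfloor Nt\rfloor)-Z(t)$, so that $R^{(N)}(0)=0$ and, since $\chi^{(N)}(n)=\chi^{(N)}(0)+H^{(N)}(n)+M^{(N)}(n)$, one has $Z^{(N)}(u)-Z(u)=R^{(N)}(u)+M^{(N)}(\lfloor Nu\rfloor)$; I also set $\bar M^{(N)}:=\sup_{0\le u\le T}\|M^{(N)}(\lfloor Nu\rfloor)\|$. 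Recall that $G=p-\mathrm{id}$ is bounded on $\Delta_{A-1}$ and Lipschitz with some constant $L$ by the hypotheses of Theorems~\ref{thm: dynamic} and~\ref{cor: fclt}, and that $\chi^{(N)}(k)\in\Delta_{A-1}$ for every $k$.

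First I would write $H^{(N)}(\lfloor Nt\rfloor)-H^{(N)}(\lfloor Ns\rfloor)=\sum_{k=\lfloor Ns\rfloor}^{\lfloor Nt\rfloor-1}\frac{G(N+k,\chi^{(N)}(k))}{N+k+1}$ and replace $G(N+k,\cdot)$ by $G(\cdot)$; the cost is at most $\sum_k \frac{\epsilon_{N+k}}{\sqrt{N+k}\,(N+k+1)}$ where $\epsilon_m:=\sqrt m\,\sup_{x\in\Delta_{A-1}}\|G(m,x)-G(x)\|$, which is a bounded sequence because $\epsilon_m\to0$ by \eqref{eq: condThmH}; using $\sqrt{N+k}\ge\sqrt N$, $N+k+1\ge N$ and that there are at most $N(t-s)+1$ terms, this contributes $\le C(\tfrac{t-s}{\sqrt N}+\tfrac1N)$. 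Next, since $Z^{(N)}$ is constant equal to $\chi^{(N)}(k)$ on $[\tfrac kN,\tfrac{k+1}N)$, a Taylor expansion of $u\mapsto\log(1+u)$ gives $|\tfrac1{N+k+1}-\int_{k/N}^{(k+1)/N}\tfrac{du}{1+u}|\le C/N^2$, and the mismatch between $\int_{\lfloor Ns\rfloor/N}^{\lfloor Nt\rfloor/N}$ and $\int_s^t$ is $O(1/N)$, so by boundedness of $G$ the sum $\sum_k\frac{G(\chi^{(N)}(k))}{N+k+1}$ differs from $\int_s^t\frac{G(Z^{(N)}(u))}{1+u}\,du$ by at most $C(\tfrac{t-s}{N}+\tfrac1N)$. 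Subtracting $Z(t)-Z(s)=\int_s^t\frac{G(Z(u))}{1+u}\,du$, using $\|G(Z^{(N)}(u))-G(Z(u))\|\le L\|Z^{(N)}(u)-Z(u)\|\le L(\|R^{(N)}(u)\|+\bar M^{(N)})$ and $\int_s^t\frac{du}{1+u}\le t-s$, I arrive at
\[
\|R^{(N)}(t)-R^{(N)}(s)\|\ \le\ L\int_s^t\frac{\|R^{(N)}(u)\|}{1+u}\,du\ +\ L(t-s)\,\bar M^{(N)}\ +\ C\Big(\frac{t-s}{\sqrt N}+\frac1N\Big).
\]

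Taking $s=0$ (so $R^{(N)}(0)=0$) turns this into $\|R^{(N)}(t)\|\le L\int_0^t\frac{\|R^{(N)}(u)\|}{1+u}\,du+\Phi$ with $\Phi:=LT\bar M^{(N)}+C(\tfrac T{\sqrt N}+\tfrac1N)$, and Grönwall's inequality with weight $\tfrac L{1+u}$ (whose integral over $[0,t]$ is $L\log(1+t)$) gives $\sup_{0\le t\le T}\|R^{(N)}(t)\|\le(1+T)^L\Phi\le C'(\bar M^{(N)}+\tfrac1{\sqrt N}+\tfrac1N)$ with $C'$ depending only on $G$ and $T$. Substituting this bound into the displayed inequality and once more using $\int_s^t\frac{du}{1+u}\le t-s$ and $(t-s)/N\le T/N$ yields $\|R^{(N)}(t)-R^{(N)}(s)\|\le C''((t-s)\bar M^{(N)}+\tfrac{t-s}{\sqrt N}+\tfrac1N)$, which is exactly the assertion. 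Since every step is a pathwise inequality, the bound holds with probability one (in fact surely). I expect the Grönwall closing to be the conceptual crux — the intermediate estimate is self-referential, so one must first bound $\sup_{u\le T}\|R^{(N)}(u)\|$ in terms of $\bar M^{(N)}$ and $N^{-1/2}$ before the increment bound can be extracted — while the main tedium is keeping the two deterministic errors in the sharp form $\tfrac{t-s}{\sqrt N}$ and $\tfrac1N$ rather than a cruder $N^{-1/2}$.
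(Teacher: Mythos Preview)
Your proof is correct and follows essentially the same route as the paper: peel off the feedback-convergence error via \eqref{eq: condThmH}, replace the Riemann sum by the integral, use the Lipschitz bound on $G$ to get a self-referential inequality, and close with Gr\"onwall. The only cosmetic difference is that you first bound $\sup_{u\le T}\|R^{(N)}(u)\|$ via the $s=0$ case and then substitute into the increment estimate, whereas the paper applies Gr\"onwall directly to the increment (picking up an extra $\|R^{(N)}(s)\|$ term) and bounds that afterwards; both orderings yield the same result.
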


\begin{proof}
Let $L>0$ be a Lipschitz constant for $G$. We use (\ref{hmdef}) and calculate:
\begin{align*}
    &\|H^{(N)}(\lfloor Nt\rfloor)-Z(t)-H^{(N)}(\lfloor Ns\rfloor)-Z(s)\|\\
    &=\left\|\sum_{k=\lfloor Ns\rfloor}^{\lfloor Nt\rfloor-1}\frac{G(N+k, \chi^{(N)}(k))}{N+k+1}-\int_s^t\frac{G(Z(u))}{1+u}du\right\|\\
    &\le\left\|\sum_{k=\lfloor Ns\rfloor}^{\lfloor Nt\rfloor-1}\frac{G( \chi^{(N)}(k))}{N+k+1}-\int_s^t\frac{G(Z(u))}{1+u}du\right\|\\
    &\quad+\left\|\sum_{k=\lfloor Ns\rfloor}^{\lfloor Nt\rfloor-1}\frac{G(N+k, \chi^{(N)}(k))}{N+k+1}-\sum_{k=\lfloor Ns\rfloor}^{\lfloor Nt\rfloor-1}\frac{G(\chi^{(N)}(k))}{N+k+1}\right\|\\
     &\le\left\|\int_s^{t}\frac{G(\chi^{(N)}(\lfloor Nu\rfloor))-G(Z(u))}{1+u}du\right\|+\frac{const.}{N}+\frac{const.}{\sqrt{N}}\sum_{k=\lfloor Ns\rfloor}^{\lfloor Nt\rfloor-1}\frac{1}{N+k+1}\\
     &\le\int_s^{t}\frac{\|G(\chi^{(N)}(\lfloor Nu\rfloor))-G(Z(u))\|}{1+u}du+\frac{const.}{N}+const.\frac{t-s}{\sqrt{N}}\\
     &\le L\int_s^{t}\frac{\|\chi^{(N)}(\lfloor Nu\rfloor)-Z(u)\|}{1+u}du+const.\frac{t-s}{\sqrt{N}}+\frac{const.}{N}\\
     &\le L\int_s^{t}\|\chi^{(N)}(\lfloor Nu\rfloor)-Z(u)\|du+const.\frac{t-s}{\sqrt{N}}+\frac{const.}{N}\\
     &\le L\int_s^{t}\|\chi(0)+H^{(N)}(\lfloor Nu\rfloor)-Z(u)\|du+L\int_s^{t}\|M^{(N)}(\lfloor Nu\rfloor)\|du+const.\frac{t-s}{\sqrt{N}}+\frac{const.}{N}\\
     &\le L\int_s^{t}\|H^{(N)}(\lfloor Nu\rfloor)-Z(u)-H^{(N)}(\lfloor Ns\rfloor)+Z(s)\|du+const.\frac{t-s}{\sqrt{N}}+\frac{const.}{N}\\
     &\quad+L\int_s^t\|\chi(0)+H^{(N)}(\lfloor Ns\rfloor)-Z(s)\|du+L(t-s)\sup_{0\le u\le T}\|M^{(N)}(\lfloor Nu\rfloor)\|\\
      &= L\int_s^{t}\|H^{(N)}(\lfloor Nu\rfloor)-Z(u)-H^{(N)}(\lfloor Ns\rfloor)+Z(s)\|du+const.\frac{t-s}{\sqrt{N}}+\frac{const.}{N}\\
      &\quad+L(t-s)\|\chi(0)+H^{(N)}(\lfloor Ns\rfloor)-Z(s)\|+L(t-s)\sup_{0\le u\le T}\|M^{(N)}(\lfloor Nu\rfloor)\|
\end{align*}
In line 2, the second summand is of order $1/\sqrt{N}$ due to assumption (\ref{eq: condThmH}). Now Grönwall's inequality yields:
\begin{align*}
   &\|H^{(N)}(\lfloor Nt\rfloor)-Z(t)-H^{(N)}(\lfloor Ns\rfloor)+Z(s)\|\le e^{L(t-s)}\cdot\Big(const.\frac{t-s}{\sqrt{N}}+\frac{const.}{N}\\
   &\quad+L(t-s)\|\chi(0)+H^{(N)}(\lfloor Ns\rfloor)-Z(s)\|+L(t-s)\sup_{0\le u\le T}\|M^{(N)}(\lfloor Nu\rfloor)\|\Big)
\end{align*}
Repeating the same calculation with 0 in the place of s and s instead of t yields:
\begin{align*}
    \|\chi(0)+H^{(N)}(\lfloor Ns\rfloor)-Z(s)\|\le e^{Ls}\cdot\left(Ls\sup_{0\le u\le T}\|M^{(N)}(\lfloor Nu\rfloor)\|+const.\frac{s}{\sqrt{N}}+\frac{const.}{N}\right)
\end{align*}
Combining these two inequalities proves the claim.
\end{proof}

We are now ready for the proof of Theorem \ref{thm: H}.

\begin{proof}
Via \cite[Proposition VI.3.26]{Jacod}, we get tightness of \linebreak
$\left(\sqrt{N}\left(\chi(0)+H^{(N)}(\lfloor Nt\rfloor)-Z(t)\right)_{t\ge0}\right)_N$ from Lemma \ref{lemma: Gronwall} and the stochastic boundedness of the sequence $(\sqrt{N}M^{(N)}(\lfloor Nt\rfloor))_{t\ge0}$ (see proof of Lemma \ref{lemma: CLT1}). Now we show that the limit of any convergent subsequence is as desired. For simplicity of notation, assume that the sequence is convergent itself. Since Theorem \ref{thm: CLT} applies we can take an appropriate probability space $\Omega$, such that the convergence $\sqrt{N}M^{(N)}(\lfloor Nt\rfloor, \omega)\xrightarrow{N\to\infty}M(t, \omega)$ holds locally uniformly almost surely. Note that this already implies $Z^{(N)}(\omega)\xrightarrow{N\to\infty}Z$ locally uniformly. Now, fix $\omega\in\Omega$. Using (\ref{hmdef}) and the mean value theorem, we get
\begin{align*}
    &\sqrt{N}\left(\chi(0)+H^{(N)}(\lfloor Nt\rfloor)-Z(t)\right)=\sqrt{N}\left(\sum_{k=0}^{\lfloor Nt\rfloor-1}\frac{G(N+k, \chi^{(N)}(k))}{N+k+1}-\int_0^t\frac{G(Z(s))}{1+s}ds\right)\\
    &=\sqrt{N}\left(\sum_{k=0}^{\lfloor Nt\rfloor-1}\frac{G(\chi^{(N)}(k))}{N+k+1}-\int_0^t\frac{G(Z(s))}{1+s}ds+\sum_{k=0}^{\lfloor Nt\rfloor-1}\frac{G(N+k, \chi^{(N)}(k))-G(\chi^{(N)}(k))}{N+k+1}\right)\\
    &=\sqrt{N}\left(\int_0^t\frac{G( \chi^{(N)}(\lfloor Ns\rfloor))-G(Z(s))}{1+s}ds+O\left(\frac{1}{N}\right)+o\left(\frac{1}{\sqrt{N}}\right)\sum_{k=0}^{\lfloor Nt\rfloor-1}\frac{1}{N+k+1}\right)\\
    &=\sqrt{N}\int_0^t\frac{G(\chi^{(N)}(\lfloor Ns\rfloor))-G(Z(s))}{1+s}ds+o(1)\\
    &=\sqrt{N}\int_0^t\frac{DG(m^{(N)}(s))\bullet(\chi^{(N)}(\lfloor Ns\rfloor)-Z(s))}{1+s}ds+o(1)\\
    &=\int_0^tDG(m^{(N)}(s))\bullet\frac{\sqrt{N}\left((\chi(0)+H^{(N)}(\lfloor Ns\rfloor)-Z(s)+M^{(N)}(\lfloor Ns\rfloor\right)}{1+s}ds+o(1)\\
    &\xrightarrow{N\to\infty}\int_0^t\frac{DG(Z(s))\bullet(H(s)+M(s))}{1+s}ds,
\end{align*}
where $m^{(N)}(s)$ is an intermediate value between $Z(s)$ and $\chi^{(N)}(\lfloor Ns\rfloor)$. In line 3, we used assumption (\ref{eq: condThmH}) once again. The claim follows since (\ref{eq: RODE}) has a unique solution due to the Theorem of Picard-Lindelöf, and $H(t)\in T\Delta_{A-1}$ since $H^{(N)} (k)\in T\Delta_{A-1}$ for all $N\geq 1$.
\end{proof}

Figure \ref{fig: H} shows a simulation of the process $\sqrt{N}\left(\chi(0)+H^{(N)}(\lfloor Nt\rfloor)-Z(t)\right)$ for large $N$ and small $t$.  Note that the limit process \eqref{eq: RODE} has continuously differentiable paths, their regularity is equivalent to that of integrated Brownian motion. As a consequence of Proposition \ref{prop: tildeZ}, $H(t)$ is convergent for $t\to\infty$ with random limit $-\lim_{t\to\infty}M(t)$ in generic examples.
Combining Theorem \ref{thm: CLT_A-dim.} and Theorem \ref{thm: H} yields the desired central limit theorem for the difference $Z^{(N)}-Z=\chi(0)+H^{(N)}-Z+M^{(N)}$.

\begin{figure}
  \centering
    \subfloat[][$F_1(k)=F_2(k)=F_3(k)=\sqrt{k}$,\\$\chi(0)=\left(\frac{1}{10}, \frac{1}{10}, \frac{4}{5}\right)$]{\includegraphics[width=0.5\linewidth]{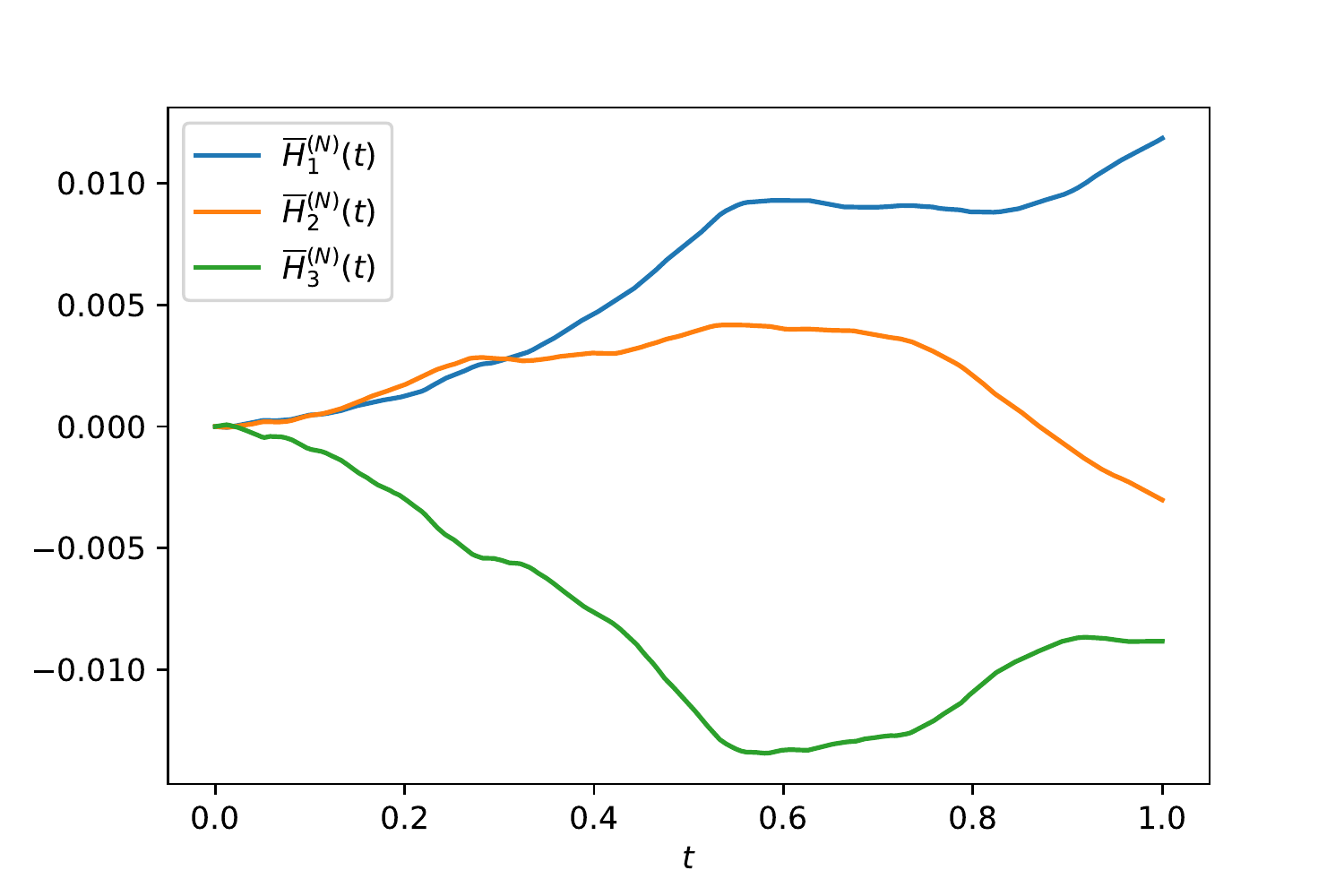}}
  \subfloat[][$F_1(k)=F_2(k)=F_3(k)=k^2$,\\$\chi(0)=\left(\frac{3}{10}, \frac{3}{10}, \frac{2}{5}\right)$]{\includegraphics[width=0.5\linewidth]{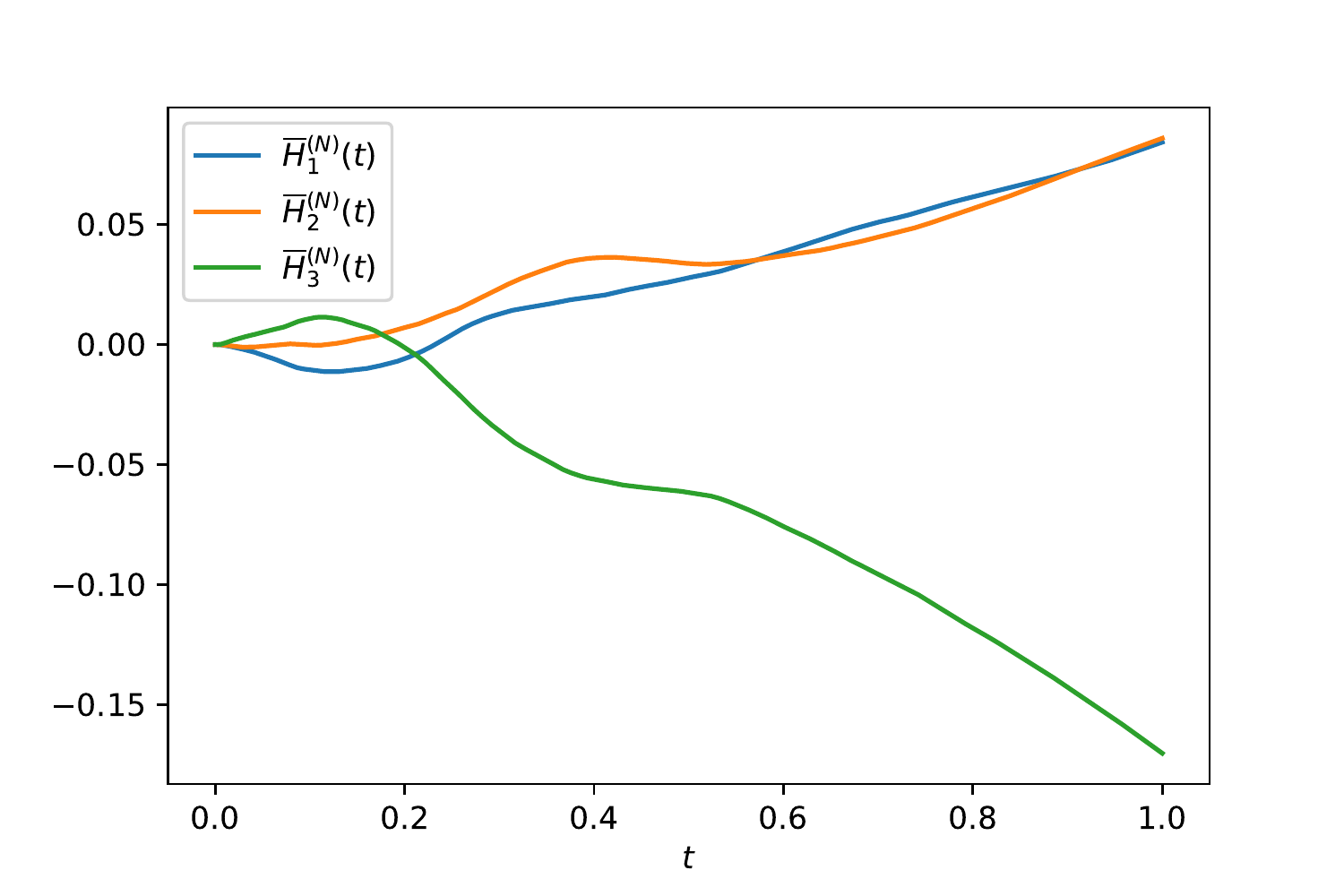}}
  \caption{The processes $\bar H^{(N)}(t)=(\bar H^{(N)}_1(t), \bar H^{(N)}_2(t), \bar H^{(N)}_3(t))\coloneqq\sqrt{N}\left(\chi(0)+H^{(N)}(\lfloor Nt\rfloor)-Z(t)\right)$ for $A=3$ and $N=100.000$.}
  \label{fig: H}
\end{figure}

\providecommand{\bysame}{\leavevmode\hbox to3em{\hrulefill}\thinspace}
\providecommand{\MR}{\relax\ifhmode\unskip\space\fi MR }
\providecommand{\MRhref}[2]{%
  \href{http://www.ams.org/mathscinet-getitem?mr=#1}{#2}
}
\providecommand{\href}[2]{#2}



\appendix

\section{Appendix}

\subsection{Functional Limit Theorems with non-linear time scale}\label{appendix: beta}

 From a stochastical point of view, the first steps of a generalized Pólya urn are of special interest because the randomness plays a significant role. In the later stages of the process, the market shares and thus the probability of winning in a certain step remain almost invariant, such that the sequence of winners $(X(n+1)-X(n))_n$ is almost independent and identically distributed for large $n$. Even in the Central Limit Theorem \ref{thm: CLT} the limiting process $M$ becomes virtually constant for large $t$. In order to particularly focus on the early stages of the process, we analyse the process $N^{1-\frac{\beta}{2}}\left(\chi^{(N)}(\lfloor N^\beta t\rfloor)\right)_{t>0}$ for large initial market size $N$ and $\beta\in(0,1)$. Recall the Doob decomposition (\ref{hmdef}) and the notations from Section \ref{sec:dynamics}.

\begin{theorem}
\label{thm: CLT2}
Suppose that the assumptions of Theorem \ref{thm: CLT} are fulfilled and denote by $(B_t )_{t\geq 0}$ a standard Brownian motion. Then for any $\beta\in(0, 1)$ we have weak convergence to a Brownian motion on $\mathbb{D}([0, \infty)$:
\begin{equation*}
    N^{1-\frac{\beta}{2}}\left(M_1^{(N)}(\lfloor N^\beta t\rfloor)\right)_{t\ge0}\xrightarrow{N\to\infty}\sqrt{p_1(\chi(0))(1-p_1(\chi(0)))}\, (B_t )_{t\geq 0}
\end{equation*}
\end{theorem}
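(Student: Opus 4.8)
The plan is to follow the same scheme as in the proof of Theorem \ref{thm: CLT}: establish tightness of the rescaled martingale sequence, then identify the limit of any convergent subsequence via a martingale-problem argument. Tightness is already available: Lemma \ref{lemma: CLT1} gives tightness of $\bigl(N^{1-\frac{\beta}{2}}M_1^{(N)}(\lfloor N^\beta t\rfloor)\bigr)_N$ for every $\beta\in(0,1]$, and by the argument after that lemma we also have tightness of the joint sequence $\bigl(Z^{(N)},\,N^{1-\frac{\beta}{2}}M_1^{(N)}(\lfloor N^\beta\cdot\rfloor)\bigr)_N$. So it remains to pin down the limit.

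The key new input compared to Theorem \ref{thm: CLT} is that on the time scale $n\sim N^\beta$ with $\beta<1$, the market shares do not move: since $\|Z^{(N)}(t)-Z^{(N)}(s)\|\le\frac{N|t-s|+1}{N}$, we have $\chi^{(N)}(\lfloor N^\beta t\rfloor)-\chi(0)=O(N^{\beta-1})\to0$ uniformly on compact $t$-intervals, hence $Z^{(N)}(N^{\beta-1}\cdot)$ collapses to the constant path $\chi(0)$. Consequently, by the assumed uniform convergence and Lipschitz continuity of $p$, we get $p_1(N+k,\chi^{(N)}(k))\to p_1(\chi(0))$ uniformly for $k\le N^\beta T$ as $N\to\infty$. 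Now I would run the generator computation exactly as in Lemma \ref{lemma: CLT2} (with the same $\beta$), obtaining for a smooth compactly supported test function $f$
\begin{align*}
&\sum_{k=0}^{\lfloor N^\beta t\rfloor-1}\E\!\left[f\bigl(N^{1-\frac{\beta}{2}}M_1^{(N)}(k+1)\bigr)-f\bigl(N^{1-\frac{\beta}{2}}M_1^{(N)}(k)\bigr)\,\big|\,\mathcal{F}_k^{(N)}\right]\\
&=\sum_{k=0}^{\lfloor N^\beta t\rfloor-1}\frac{N^{2-\beta}}{2(N+k+1)^2}f''\!\bigl(N^{1-\frac{\beta}{2}}M_1^{(N)}(k)\bigr)\,p_1(N{+}k,\chi^{(N)}(k))\bigl(1-p_1(N{+}k,\chi^{(N)}(k))\bigr)+o(1).
\end{align*}
On this time scale $\frac{N^{2-\beta}}{(N+k+1)^2}=N^{-\beta}\bigl(1+O(N^{\beta-1})\bigr)$ uniformly in $k\le N^\beta T$, so the prefactor $\frac{N^{2-\beta}}{2(N+k+1)^2}$ is $\sim\frac{1}{2}N^{-\beta}$ and the Riemann sum over $k\in\{0,\dots,\lfloor N^\beta t\rfloor-1\}$ with mesh $N^{-\beta}$ converges to $\int_0^t \frac12 f''(M_1(s))\,p_1(\chi(0))(1-p_1(\chi(0)))\,ds$; here I use the Skorochod representation to make the convergence almost sure and the continuous mapping theorem for $f''\bigl(N^{1-\frac{\beta}{2}}M_1^{(N)}(\lfloor N^\beta\cdot\rfloor)\bigr)\to f''(M_1(\cdot))$. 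Thus every subsequential limit $M_1$ solves the martingale problem for the \emph{time-homogeneous} generator $Lf=\tfrac12 p_1(\chi(0))(1-p_1(\chi(0)))f''$, i.e. $M_1$ is a Brownian motion with constant variance parameter $p_1(\chi(0))(1-p_1(\chi(0)))$; uniqueness of this martingale problem (a scaled Brownian motion) gives $M_1\stackrel{d}{=}\sqrt{p_1(\chi(0))(1-p_1(\chi(0)))}\,B$. Uniform integrability of the sequence in \eqref{eq: martingaleSeq} (now with $N^\beta t$ in place of $Nt$) follows from boundedness of $f$ and $f''$ exactly as before, so the limit \eqref{eq: martingaleLimit} is indeed a martingale.

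The main obstacle — really the only subtle point — is the bookkeeping that shows the time rescaling $k\mapsto N^{\beta-1}(\cdot)$ genuinely freezes $Z^{(N)}$ at $\chi(0)$ and that all the error terms ($o(N^{-\beta})$ per step accumulated over $N^\beta T$ steps, and the $O(N^{\beta-1})$ relative errors in the prefactor) still vanish in the limit; this is where $\beta<1$ is used in an essential way, since for $\beta=1$ the shares move and one recovers instead the time-inhomogeneous limit of Theorem \ref{thm: CLT}. Everything else is a direct transcription of the proofs of Lemmas \ref{lemma: CLT1}, \ref{lemma: CLT2} and Theorem \ref{thm: CLT}.
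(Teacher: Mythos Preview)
Your proposal is correct and follows essentially the same route as the paper: tightness from Lemma \ref{lemma: CLT1}, identification of the limit via the martingale problem using Lemma \ref{lemma: CLT2}, and the observation that for $\beta<1$ one has $k/N\to 0$ uniformly for $k\le N^\beta T$, which freezes $p_1(N+k,\chi^{(N)}(k))$ at $p_1(\chi(0))$ and collapses the prefactor $\frac{N^{2-\beta}}{(N+k+1)^2}$ to $N^{-\beta}$. Your write-up is in fact more explicit than the paper's sketch about why $\beta<1$ is essential and how the error terms are controlled.
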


\begin{proof}
We will only sketch the proof as it is quite analogous to the proof of Theorem \ref{thm: CLT}. We use the tightness given by Lemma \ref{lemma: CLT1} and assume that the sequence $ N^{1-\frac{\beta}{2}}\left(M_1^{(N)}(\lfloor N^\beta (\cdot)\rfloor)\right)_N$ converges to a process $\hat M_1$. Then we take a smooth test-function $f\colon\R\to\R$ with compact support and consider the martingales
\begin{equation*}
    f\left(N^{1-\frac{\beta}{2}}M_1^{(N)}(\lfloor N^\beta t\rfloor)\right)-f(0)-\sum_{k=0}^{\lfloor N^\beta t\rfloor-1}\E\left[f\left(N^{1-\frac{\beta}{2}}M_1^{(N)}(k+1)\right)-f\left(N^{1-\frac{\beta}{2}}M_1^{(N)}(k)\right)\big|\mathcal{F}_k^{(N)}\right].
\end{equation*}
Then we know that $f\left(N^{1-\frac{\beta}{2}}M_1^{(N)}(\lfloor N^\beta t\rfloor)\right)$ converges to $f\left(\hat M_1(t)\right)$ and via Lemma \ref{lemma: CLT2} we get:
\begin{align*}
    &\sum_{k=0}^{\lfloor N^\beta t\rfloor-1}\E\left[f\left(N^{1-\frac{\beta}{2}}M_1^{(N)}(k+1)\right)-f\left(N^{1-\frac{\beta}{2}}M_1^{(N)}(k)\right)\big|\mathcal{F}_k^{(N)}\right]\\
    &=\sum_{k=0}^{\lfloor N^\beta t\rfloor-1}\frac{N^{2-\beta}}{2(N+k+1)^2}f''\left(N^{1-\frac{\beta}{2}}M_1^{(N)}(k)\right)p_1(N+k, \chi^{(N)}(k))\left(1-p_1(N+k, \chi^{(N)}(k))\right)\\
    &=\sum_{k=0}^{\lfloor N^\beta t\rfloor-1}\frac{f''\left(N^{1-\frac{\beta}{2}}M_1^{(N)}\left(N^\beta\frac{k}{N^\beta}\right)\right)}{2N^\beta(1+\frac{k}{N}+\frac{1}{N})^2}p_1\left(N+k, Z^{(N)}\left(\frac{k}{N}\right)\right)\left(1-p_1\left(N+k, Z^{(N)}\left(\frac{k}{N}\right)\right)\right)\\
    &\xrightarrow{N\to\infty}\int_0^t\frac{f''(\hat M_1(s))}{2}p_1(Z(0))\left(1-p_1(Z(0)\right)ds\ ,
\end{align*}
where we have used $\beta <1$ and $k/N\to 0$ in the last step. This implies that $\hat M_1$ is a Markov process with generator $p_1(Z(0))\left(1-p_1(Z(0)\right)\frac{f''}{2}$. Hence, $\hat M_1$ is the desired Brownian motion.
\end{proof}

Note that Theorem \ref{thm: CLT2} is consistent with Theorem \ref{thm: CLT} for small $t$. This limiting Brownian motion can be understood as a consequence of Donsker's invariance principle, since the shares do barely change at the beginning of the process for large initial values. Again, a straight forward extension to higher dimensions is possible.

\begin{theorem}\label{thm: CLT2MultiDim}
Suppose that the assumptions of Theorem \ref{thm: CLT} are fulfilled and let  $\beta\in(0, 1)$. Then the sequence of processes $ N^{1-\frac{\beta}{2}}\left(M^{(N)}(\lfloor N^{\beta} t\rfloor)\right)_{t\ge0}$ converges for $N\to\infty$ to a time-homogeneous Markov process with generator
\begin{equation*}
    \hat Lf(x)\coloneqq\frac{1}{2}\sum_{i, j=1\atop i<j}^A p_i(Z(0)) p_j(Z(0))\left(\frac{\partial}{\partial x_i}-\frac{\partial}{\partial x_j}\right)^2f(x),\quad x\in\R^A
\end{equation*}
weakly on $\mathbb{D}([0, \infty), T\Delta_{A-1})$.
\end{theorem}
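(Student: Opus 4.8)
The plan is to run the martingale-problem argument used for Theorem~\ref{thm: CLT} (more precisely, its non-linear time-scale version Theorem~\ref{thm: CLT2}) componentwise, feeding in the conditional covariance of the increments $\xi^{(N)}(k)$ computed in the discussion following Theorem~\ref{thm: CLT_A-dim.}, and then passing to the ``frozen'' coefficient $p_i(Z(0))$ that is characteristic of the scale $\beta<1$.

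First I would establish tightness of $\big(N^{1-\beta/2}M^{(N)}(\lfloor N^\beta t\rfloor)\big)_N$ on $\D([0,\infty),\R^A)$. Since tightness of an $\R^A$-valued sequence follows from tightness of each coordinate, this is immediate from Lemma~\ref{lemma: CLT1} (stated for agent $1$, but the estimate is symmetric in $i$). Moreover $\sum_{i=1}^A\xi_i^{(N)}(k)=0$ — a consequence of $\sum_i\big(X_i^{(N)}(k+1)-X_i^{(N)}(k)\big)=1$, $\sum_iG_i(k,x)=0$ and $\sum_i\chi_i^{(N)}(k)=1$ — so $\sum_iM_i^{(N)}(n)=0$ identically, and every subsequential limit is concentrated on the tangent space $T\Delta_{A-1}$.

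Next I would take a convergent subsequence, with limit $\hat M$, and identify it. For a smooth compactly supported $f\colon\R^A\to\R$ the process
$$f\big(N^{1-\beta/2}M^{(N)}(\lfloor N^\beta t\rfloor)\big)-f(0)-\sum_{k=0}^{\lfloor N^\beta t\rfloor-1}\E\Big[f\big(N^{1-\beta/2}M^{(N)}(k+1)\big)-f\big(N^{1-\beta/2}M^{(N)}(k)\big)\,\big|\,\mathcal F_k^{(N)}\Big]$$
is a discrete-time martingale. A second-order Taylor expansion (with third-order Lagrange remainder $o(N^{-\beta})$ per step, summing to $o(1)$, exactly as in Lemma~\ref{lemma: CLT2}) annihilates the first-order term because $\E[\xi^{(N)}(k)\mid\mathcal F_k^{(N)}]=0$, and leaves
$$\frac{N^{2-\beta}}{2(N+k+1)^2}\sum_{i,j=1}^A\partial_i\partial_jf\big(N^{1-\beta/2}M^{(N)}(k)\big)\,\E\big[\xi_i^{(N)}(k)\xi_j^{(N)}(k)\mid\mathcal F_k^{(N)}\big]+o(N^{-\beta})$$
as the $k$-th summand, where the conditional covariance equals $p_i(1-p_i)$ on the diagonal and $-p_ip_j$ off it, all evaluated at $(N+k,\chi^{(N)}(k))$, as in the computation after Theorem~\ref{thm: CLT_A-dim.}. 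The point specific to $\beta<1$ is that $\|\chi^{(N)}(k)-\chi(0)\|\le (k+1)/N\le (N^\beta t+1)/N\to0$ uniformly for $k\le N^\beta t$, so by the assumed uniform convergence and continuity of $p$ one has $p_i(N+k,\chi^{(N)}(k))\to p_i(\chi(0))=p_i(Z(0))$, while $\frac{N^{2-\beta}}{(N+k+1)^2}=\frac1{N^\beta}\big(1+\tfrac kN+\tfrac1N\big)^{-2}$ has prefactor tending to $1$. Hence the Riemann sum $\sum_k\frac1{N^\beta}(\cdots)$ converges (on a Skorokhod representation where the convergence is almost sure, which suffices for weak convergence) to $\int_0^t\hat Lf(\hat M(s))\,ds$, and \cite[Theorem 5.3]{Whitt}, applicable since the increments are bounded hence uniformly integrable, shows that $f(\hat M(t))-f(0)-\int_0^t\hat Lf(\hat M(s))\,ds$ is a martingale; thus $\hat M$ solves the martingale problem for $\hat L$.

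Finally, $\hat L$ is a constant-coefficient degenerate diffusion generator, which one recognises (as in the discussion following Theorem~\ref{thm: CLT_A-dim.}) as the generator of the linear image of a Brownian motion solving $d\hat M_i(t)=\sum_{j\ne i}\sqrt{p_i(Z(0))p_j(Z(0))}\,dB_{i,j}(t)$ with the usual antisymmetry and independence conventions on the $B_{i,j}$; its martingale problem is well-posed, so the subsequential limit is unique and the whole sequence converges. The only genuine care needed — and the ``main obstacle'' in what is otherwise a routine merger of the arguments for Theorems~\ref{thm: CLT2} and~\ref{thm: CLT_A-dim.} — is the uniform freezing $p_i(N+k,\chi^{(N)}(k))\to p_i(Z(0))$ over the entire rescaled window $k\le N^\beta t$ and the verification that the rewritten form $\hat Lf(x)=\tfrac12\sum_{i<j}p_i(Z(0))p_j(Z(0))(\partial_i-\partial_j)^2f(x)$ indeed matches the covariance matrix above, both of which are already handled in the cited passages.
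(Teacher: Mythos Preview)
Your proposal is correct and follows precisely the route the paper indicates: the paper does not give a separate proof of Theorem~\ref{thm: CLT2MultiDim} but merely states that ``a straight forward extension to higher dimensions is possible,'' meaning the combination of the one-dimensional $\beta<1$ argument of Theorem~\ref{thm: CLT2} with the multi-dimensional covariance computation after Theorem~\ref{thm: CLT_A-dim.}, which is exactly what you carry out. Your identification of the freezing $p_i(N+k,\chi^{(N)}(k))\to p_i(Z(0))$ via $k/N\le N^{\beta-1}t\to0$ and the verification that the covariance matrix matches the rewritten generator $\hat L$ are the two substantive points, and both are handled correctly.
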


As in Subsection \ref{subsec: martingale}, the limit process can be interpreted as independent exchanges of mass between pairs of agents according to a Brownian motion. 

We already know from Theorem \ref{thm: dynamic} that $\chi^{(N)}(\lfloor N^\beta t\rfloor)$ converges to $\chi(0)$ for $N\to\infty$, when $\beta<1$. Moreover, Theorem \ref{thm: CLT2} states, that the process $\left(M^{(N)}(\lfloor N^\beta t\rfloor)\right)_{t\ge0}$ converges to zero at rate $N^{1-\frac{\beta}{2}}$. In addition, it follows from
\begin{align*}
&N^{1-\beta}H^{(N)}(\lfloor N^\beta t\rfloor)=N^{1-\beta}\sum_{k=0}^{\lfloor N^\beta t\rfloor-1}\frac{G(N+k, \chi^{(N)}(k))}{N+k+1}\\
&=\sum_{k=0}^{\lfloor N^\beta t\rfloor-1}\frac{1}{N^\beta}\cdot\frac{G(N+k, \chi^{(N)}(N^\beta\cdot\frac{k}{N^\beta}))}{1+\frac{k}{N}+\frac{1}{N}}\xrightarrow{N\to\infty}\int_0^t G(\chi(0))du =G(\chi(0))t,
\end{align*}
that $\left(H^{(N)}(\lfloor N^\beta t\rfloor)\right)_{t\ge0}$ converges to $\left(G(\chi(0))t\right)_{t\ge0}$ at rate $N^{1-\beta}$, which immediately implies the following law of large numbers.

\begin{corollary}\label{cor: LLNbeta}
    Under the assumptions of Theorem \ref{thm: CLT2} we have
    \begin{equation*}
    N^{1-\beta}\left(\chi^{(N)}(\lfloor N^\beta t\rfloor)-\chi(0)\right)_{t\ge0}\xrightarrow{N\to\infty}(G(\chi(0))t)_{t\ge0}\quad\text{weakly on }\mathbb{D}([0, \infty), \mathbb{R}^A)\ .
\end{equation*}
\end{corollary}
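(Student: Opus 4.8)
The plan is to read Corollary~\ref{cor: LLNbeta} off the decomposition $\chi^{(N)}=\chi(0)+H^{(N)}+M^{(N)}$ already established in \eqref{hmdef}, exactly as sketched in the paragraph immediately preceding the statement. Scaling by $N^{1-\beta}$ we have
\[
N^{1-\beta}\bigl(\chi^{(N)}(\lfloor N^\beta t\rfloor)-\chi(0)\bigr)=N^{1-\beta}H^{(N)}(\lfloor N^\beta t\rfloor)+N^{1-\beta}M^{(N)}(\lfloor N^\beta t\rfloor),
\]
and the two summands are treated separately. The martingale term $N^{1-\beta}M^{(N)}(\lfloor N^\beta t\rfloor)=N^{-\beta/2}\cdot N^{1-\beta/2}M^{(N)}(\lfloor N^\beta t\rfloor)$ converges weakly to zero: by Theorem~\ref{thm: CLT2MultiDim} (or just Lemma~\ref{lemma: CLT1}) the sequence $N^{1-\beta/2}M^{(N)}(\lfloor N^\beta\,\cdot\,\rfloor)$ is tight on $\mathbb{D}([0,\infty),\R^A)$, and multiplying a tight sequence by the deterministic null sequence $N^{-\beta/2}$ forces weak convergence to the zero process (e.g.\ via the maximal bound in step~1 of the proof of Lemma~\ref{lemma: CLT1}, which gives $\P(\sup_{t\le T}N^{1-\beta}\|M^{(N)}(\lfloor N^\beta t\rfloor)\|>\epsilon)\to 0$ directly).

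For the predictable part, the computation is the display already given above the corollary: writing $k=N^\beta\cdot(k/N^\beta)$ and using $N+k+1=N(1+k/N+1/N)$,
\[
N^{1-\beta}H^{(N)}(\lfloor N^\beta t\rfloor)=\sum_{k=0}^{\lfloor N^\beta t\rfloor-1}\frac{1}{N^\beta}\cdot\frac{G\bigl(N+k,\chi^{(N)}(k)\bigr)}{1+\frac{k}{N}+\frac1N}.
\]
Here $k\le N^\beta t$ with $\beta<1$ forces $k/N\to 0$ uniformly for $t$ in compacts, so the denominator tends to $1$; by Theorem~\ref{thm: dynamic} (applied with time scale $N^\beta$, so that $Z^{(N)}(\lfloor N^\beta t\rfloor)\to\chi(0)$) together with the assumed uniform convergence $G(k,\cdot)\to G$ and Lipschitz-continuity of $G$, we get $G(N+k,\chi^{(N)}(k))\to G(\chi(0))$ uniformly, and the Riemann sum $\frac{1}{N^\beta}\sum_{k=0}^{\lfloor N^\beta t\rfloor-1}$ converges to $\int_0^t G(\chi(0))\,du=G(\chi(0))\,t$. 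This convergence is locally uniform in $t$ and deterministic, hence weak on $\mathbb{D}([0,\infty),\R^A)$. Adding the two contributions and invoking the standard fact that weak convergence of $(A^{(N)})$ to a constant process plus weak convergence of $(B^{(N)})$ implies joint weak convergence of $(A^{(N)}+B^{(N)})$ to the sum (the constant limit is automatically independent of everything), we obtain the claim.

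The only genuinely delicate point is the uniform control of $G(N+k,\chi^{(N)}(k))$ over the range $0\le k<N^\beta t$: one must know that the argument $\chi^{(N)}(k)$ stays inside the neighbourhood $D$ on which $G(k,\cdot)\to G$ uniformly and $G$ is Lipschitz. This follows because $\|\chi^{(N)}(k)-\chi(0)\|\le (k+1)/N\le (N^\beta t+1)/N\to 0$ on compacts, so for every fixed $T$ and large $N$ the whole path $\{\chi^{(N)}(k):k\le N^\beta T\}$ lies in an arbitrarily small ball around $\chi(0)\in D$; hence the Skorokhod-representation argument used in the proof of Theorem~\ref{thm: dynamic} applies verbatim, and the error from replacing $G(N+k,\chi^{(N)}(k))$ by $G(\chi(0))$ is $o(1)$ uniformly. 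Everything else is the bookkeeping already carried out in Sections~\ref{sec:dynamics} and~\ref{subsec: martingale}, so the proof is a short corollary rather than a new argument.
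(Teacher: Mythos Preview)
Your proof is correct and follows exactly the approach the paper uses: the paper derives the corollary from the decomposition $\chi^{(N)}=\chi(0)+H^{(N)}+M^{(N)}$, handling the predictable part by the very Riemann-sum computation displayed immediately above the corollary and disposing of the martingale part via the scaling $N^{1-\beta}=N^{-\beta/2}\cdot N^{1-\beta/2}$ together with Theorem~\ref{thm: CLT2}. Your additional remark that $\|\chi^{(N)}(k)-\chi(0)\|\le const.\,k/N\to 0$ uniformly for $k\le N^\beta T$ is a useful clarification that the paper leaves implicit.
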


Combining these results for an analysis of the deviations of $\chi^{(N)}(\lfloor N^\beta t\rfloor)$ requires further distinction of $\beta$ as specified in the following functional CLT.

\begin{corollary}\label{cor: CLTbeta}
    Let $\beta\in(0, 1)$ and $\gamma>0$ as specified below. Suppose that 
\begin{equation*}
    \lim_{k\to\infty}k^\gamma\sup_{x\in\Delta_{A-1}}\|G(k, x)-G(x)\|=0.
\end{equation*}
Moreover, let $G$ be continuously differentiable. Then
\begin{equation*}
    N^{\gamma}\left(N^{1-\beta}\left(\chi^{(N)}(\lfloor N^\beta t\rfloor)-\chi(0)\right)-G(\chi(0))t\right)_{t\ge0}\xrightarrow{N\to\infty}(\hat Z(t))_{t\ge0}\quad\text{weakly on }\mathbb{D}([0, \infty), \mathbb{R}^A)\ ,
\end{equation*}
where the limiting process $\hat Z$ is defined as follows:
\begin{enumerate}
    \item For $\beta>\frac23$ set $\gamma=1-\beta$. Then:
    $$\hat Z(t)=\frac12 DG(\chi(0))G(\chi(0)) t^2.$$
    \item For $\beta=\frac23$ set $\gamma=\frac13$. Then
    $$\hat Z(t)=\frac12 DG(\chi(0))G(\chi(0)) t^2+\hat M(t),$$
    where $\hat M$ is the limiting process from Theorem \ref{thm: CLT2MultiDim}.
    \item For $\beta<\frac23$ set $\gamma=\frac{\beta}{2}$. Then $\hat Z=\hat M$.
\end{enumerate}
\end{corollary}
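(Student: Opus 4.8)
The plan is to treat Corollary~\ref{cor: CLTbeta} as a combination of the refined martingale estimate from Theorem~\ref{thm: CLT2MultiDim} with a Taylor expansion of $G$ around $\chi(0)$, keeping careful track of the orders in $N$ of the drift, the martingale, and the error coming from the non-uniformity assumption $k^\gamma\sup_x\|G(k,x)-G(x)\|\to 0$. First I would recall the Doob decomposition $\chi^{(N)}(\lfloor N^\beta t\rfloor)=\chi(0)+H^{(N)}(\lfloor N^\beta t\rfloor)+M^{(N)}(\lfloor N^\beta t\rfloor)$ from \eqref{hmdef} and note that by Corollary~\ref{cor: LLNbeta} the centred-and-rescaled object we want is
\begin{equation*}
N^\gamma\Big(N^{1-\beta}H^{(N)}(\lfloor N^\beta t\rfloor)-G(\chi(0))t\Big)+N^{\gamma+1-\beta}M^{(N)}(\lfloor N^\beta t\rfloor).
\end{equation*}
The martingale term is controlled by Theorem~\ref{thm: CLT2MultiDim}, which gives $N^{1-\beta/2}M^{(N)}(\lfloor N^\beta t\rfloor)\Rightarrow \hat M(t)$; hence $N^{\gamma+1-\beta}M^{(N)}(\lfloor N^\beta t\rfloor)=N^{\gamma-\beta/2}\cdot N^{1-\beta/2}M^{(N)}(\lfloor N^\beta t\rfloor)$, which vanishes if $\gamma<\beta/2$, converges to $\hat M$ if $\gamma=\beta/2$, and blows up if $\gamma>\beta/2$. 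This already dictates the three regimes in the statement.

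Next I would expand the predictable part. Writing $G(N+k,\chi^{(N)}(k))=G(\chi^{(N)}(k))+O(k^{-\gamma})$ by the hypothesis (uniformly in the spatial variable), and $G(\chi^{(N)}(k))=G(\chi(0))+DG(\chi(0))\bullet(\chi^{(N)}(k)-\chi(0))+o(\|\chi^{(N)}(k)-\chi(0)\|)$ by $C^1$-smoothness, one gets
\begin{equation*}
N^{1-\beta}H^{(N)}(\lfloor N^\beta t\rfloor)=G(\chi(0))t+N^{1-\beta}\sum_{k=0}^{\lfloor N^\beta t\rfloor-1}\frac{DG(\chi(0))\bullet(\chi^{(N)}(k)-\chi(0))}{N+k+1}+(\text{errors}).
\end{equation*}
By Corollary~\ref{cor: LLNbeta}, $\chi^{(N)}(k)-\chi(0)\approx G(\chi(0))\,k/N$ plus lower order, so the leading correction term is $N^{1-\beta}\sum_{k<N^\beta t}\frac{DG(\chi(0))G(\chi(0))\,k/N}{N}\approx \tfrac12 N^{-\beta}\cdot N^\beta \cdot N^{\beta-1}\cdot\ldots$; carrying out the Riemann-sum bookkeeping gives the deterministic quadratic term $\tfrac12 DG(\chi(0))G(\chi(0))t^2$ of order $N^{-(1-\beta)}$, hence picked out exactly when $\gamma=1-\beta$, i.e.\ when $\beta>2/3$ (so $1-\beta<1/3<\beta/2$). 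The crossover $\beta=2/3$ is precisely where $1-\beta=\beta/2=1/3$, so both the quadratic drift correction and the limiting Brownian motion $\hat M$ survive at the same scale $N^{1/3}$, giving part~2; and for $\beta<2/3$ the martingale fluctuation at scale $N^{\beta/2}$ dominates the drift correction, giving part~3 with $\hat Z=\hat M$. Throughout, the error from the non-uniform convergence contributes $N^{1-\beta}\sum_{k<N^\beta t}(N+k)^{-1}O((N+k)^{-\gamma})=O(N^{1-\beta}\cdot N^{\beta-1}\cdot N^{-\gamma})=O(N^{-\gamma})$ after rescaling by $N^\gamma$ this is $O(1)\cdot o(1)$ — wait, more carefully it is $o(1)$ precisely because of the stated hypothesis with that $\gamma$ — so it is negligible; I would make this estimate precise using $\sum_{k=0}^{m}(N+k)^{-1-\gamma}\le \int_N^{N+m}x^{-1-\gamma}dx\le \gamma^{-1}N^{-\gamma}$.

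For the rigorous argument I would follow the template already used in the proofs of Theorem~\ref{thm: CLT} and Theorem~\ref{thm: H}: establish tightness of the rescaled sequence on $\mathbb{D}([0,\infty),\R^A)$ via the increment bound (a Grönwall-type estimate analogous to Lemma~\ref{lemma: Gronwall}, now in the $N^\beta$ time scale and with the extra factor $N^\gamma$), then identify the limit of any convergent subsequence by passing to the Skorochod representation and taking $N\to\infty$ in the decomposition above, using Corollary~\ref{cor: LLNbeta} and Theorem~\ref{thm: CLT2MultiDim} to control the two summands. Uniqueness of the limit is immediate since in cases~1 and~2 the drift part is an explicit deterministic function and in cases~2 and~3 the stochastic part is the already-identified Gaussian process $\hat M$; so convergence of the full sequence follows. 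The main obstacle I anticipate is the bookkeeping in the Riemann-sum approximation for the quadratic correction term: one must show that replacing $\chi^{(N)}(k)-\chi(0)$ by its first-order approximation $G(\chi(0))k/N$ inside the sum produces only $o(N^{-\gamma})$ errors in each of the three regimes simultaneously, which requires combining the second-order information from Corollary~\ref{cor: LLNbeta} with the martingale size bound from Lemma~\ref{lemma: CLT1} and handling the remainder term $o(\|\chi^{(N)}(k)-\chi(0)\|)$ in the Taylor expansion of $G$ uniformly over $k<N^\beta t$; this is routine but delicate, and is the step where the precise threshold $\beta=2/3$ emerges from matching exponents.
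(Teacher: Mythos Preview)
Your proposal is correct and follows essentially the same route as the paper: decompose via \eqref{hmdef}, control the martingale part by Theorem~\ref{thm: CLT2MultiDim} to get the $N^{\gamma-\beta/2}$ prefactor, Taylor-expand $G$ around $\chi(0)$ in the predictable part, and feed in Corollary~\ref{cor: LLNbeta} to produce the quadratic correction $\tfrac12 DG(\chi(0))G(\chi(0))t^2$ with prefactor $N^{\gamma+\beta-1}$, so that matching exponents yields the three regimes. The paper's proof is in fact only a short sketch along exactly these lines; you have already filled in more of the error bookkeeping (the $o(N^{-\gamma})$ contribution from the hypothesis and the tightness via a Gr\"onwall estimate) than the paper does.
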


\begin{proof}
We only sketch the proof as it is widely analogous to the proof of Theorem \ref{thm: H}. Again, we use the decomposition (\ref{hmdef}) and rephrase as follows:
\begin{align*}
&N^{\gamma}\left(N^{1-\beta}\left(\chi^{(N)}(\lfloor N^\beta t\rfloor)-\chi(0)\right)-G(\chi(0))t\right)\\
&=N^{\gamma}\left(N^{1-\beta}H^{(N)}(\lfloor N^\beta t\rfloor)-G(\chi(0))t\right)+N^{1-\beta+\gamma}M^{(N)}(\lfloor N^\beta t\rfloor)\\
&=N^{\gamma}\int_0^tG(\chi^{(N)}(N^\beta u)-G(\chi(0)) du+o(1)+N^{1-\beta+\gamma}M^{(N)}(\lfloor N^\beta t\rfloor)\\
&=N^{\gamma}\int_0^tDG(\chi(0))\bullet\left(\chi^{(N)}(N^\beta u)-\chi(0)\right)du+o(1)+N^{1-\beta+\gamma}M^{(N)}(\lfloor N^\beta t\rfloor)\\
&=N^{\gamma+\beta-1}DG(\chi(0))\bullet\int_0^tN^{1-\beta}\left(\chi^{(N)}(N^\beta u)-\chi(0)\right)du+o(1)+N^{1-\beta+\gamma}M^{(N)}(\lfloor N^\beta t\rfloor)
\end{align*}
Finally, the claims follow via Theorem \ref{thm: CLT2MultiDim} and Corollary \ref{cor: LLNbeta}.
\end{proof}

\begin{figure}
  \centering
    \subfloat[][$F_1(k)=F_2(k)=F_3(k)=\sqrt{k}$,\\$\chi(0)=\left(\frac{1}{10}, \frac{1}{10}, \frac{4}{5}\right)$]{\includegraphics[width=0.5\linewidth]{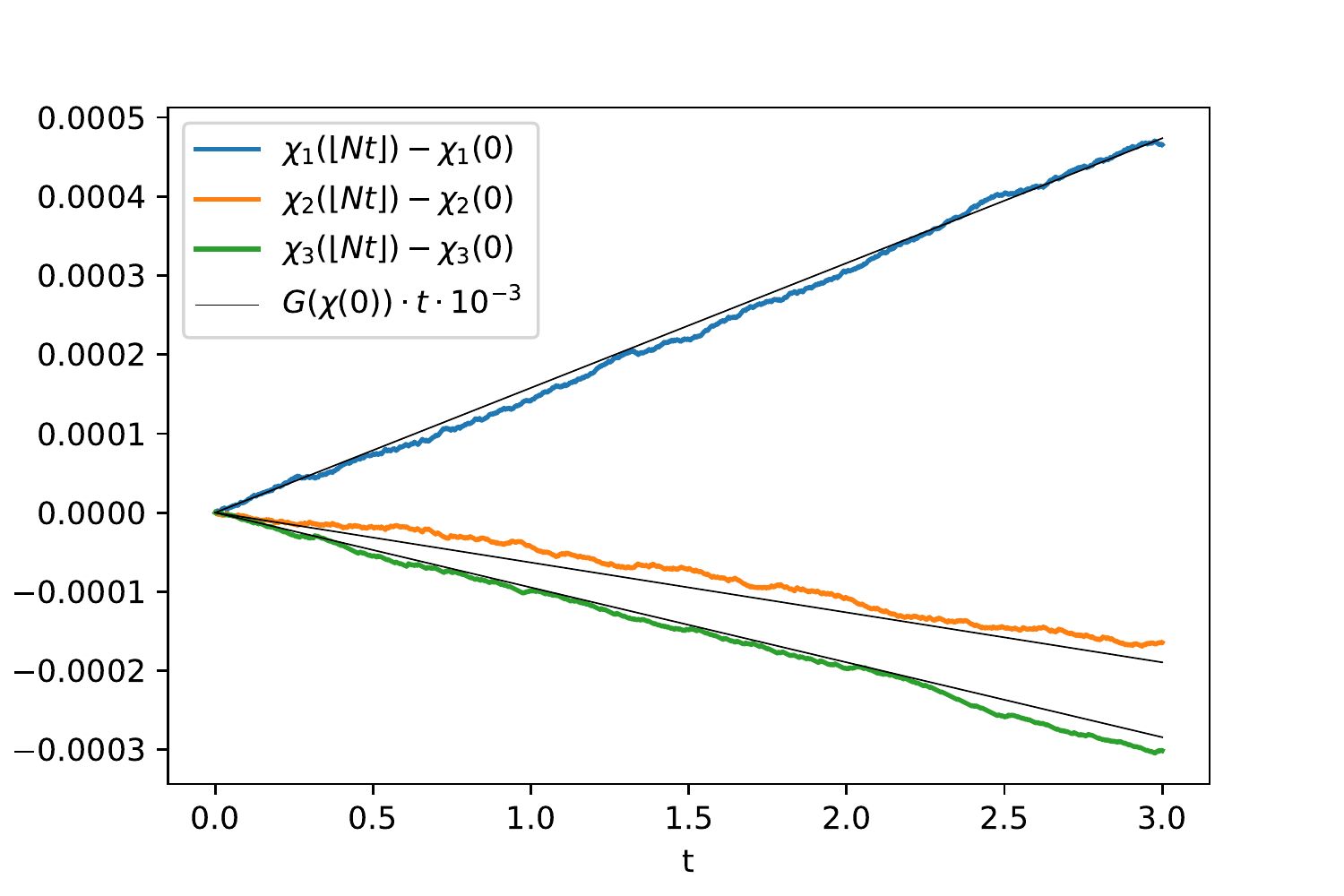}}
  \subfloat[][$F_1(k)=F_2(k)=F_3(k)=k^2$,\\$\chi(0)=\left(\frac{3}{10}, \frac{3}{10}, \frac{2}{5}\right)$]{\includegraphics[width=0.5\linewidth]{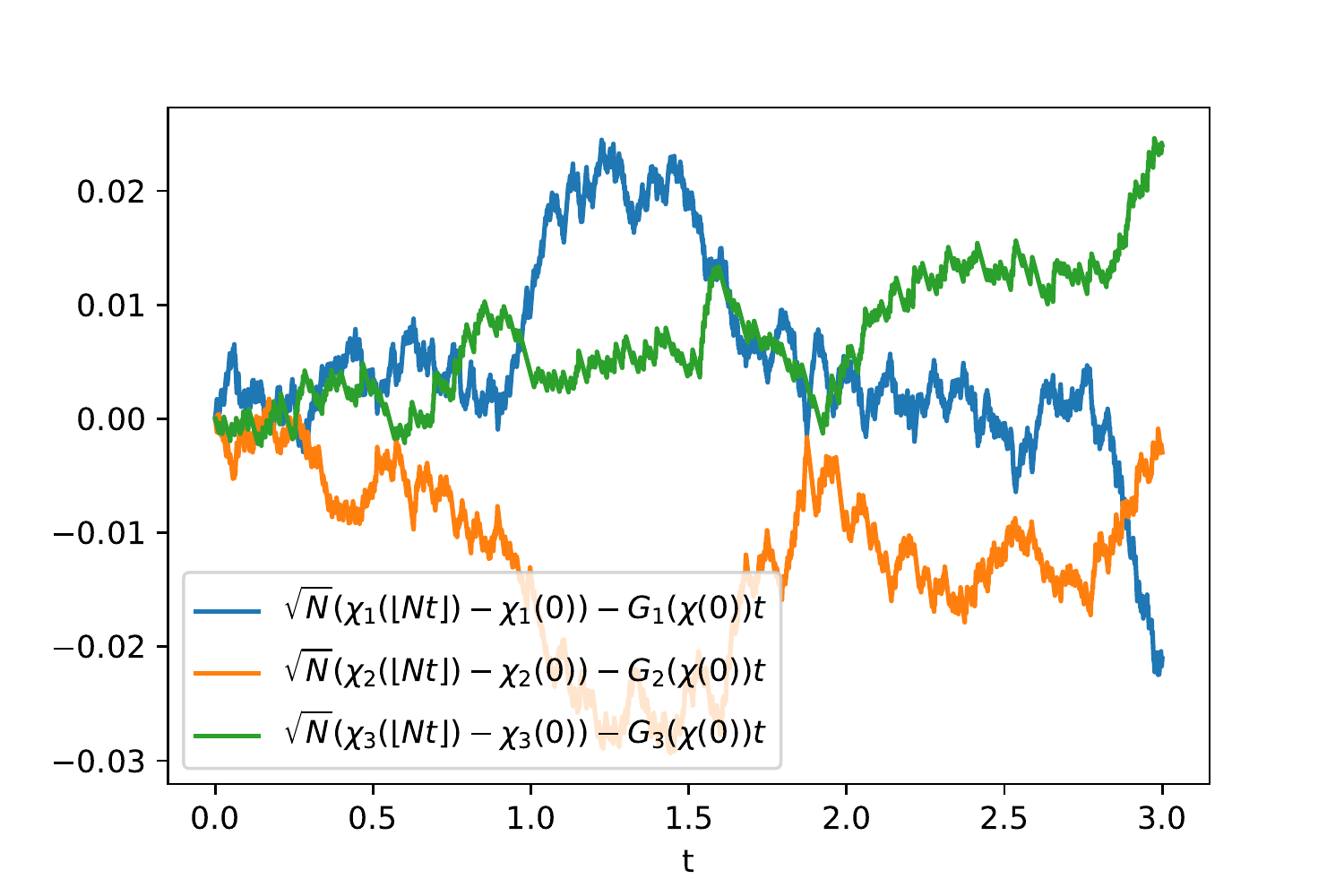}}
  \caption{Simulation of the processes $\chi^{(N)}(\lfloor N^\beta t\rfloor)-\chi(0)$ and $N^{1-\beta}\left(\chi^{(N)}(\lfloor N^\beta t\rfloor)-\chi(0)\right)-G(\chi(0))t$ for $\beta=\frac12$ and $N=10^6$. We took $A=3$, $F_1(k)=F_2(k)=F_3(k)=k^2$ and $\chi(0)=(0.5, 0.3, 0.2)$.}
  \label{fig: LLNbeta}
\end{figure}

The assumptions of Theorem $\ref{cor: CLTbeta}$ are satisfied e.g. for $F_i(k)=\alpha_i k^\beta$. To sum up, in the limit $N\to\infty$ the process $\chi^{(N)}(\lfloor N^\beta t\rfloor)$ stays at $\chi(0)$ for all time $t$. After scaling, Corollary \ref{cor: LLNbeta} reveals a linear drift into direction $G(\chi(0))$. The fluctuations around this linear drift can itself be described by a random SDE for $\beta\le\frac23$ and by a deterministic ODE for $\beta>\frac23$, since second order terms dominate the randomness for too large $\beta$. These findings are illustrated by Figure \ref{fig: LLNbeta}.

\subsection{Exponentially decreasing feedback}\label{sec: expDecreasingF}

Based on an example, this supplemental section discusses the long-time limits of a Pólya urn with exponentially decreasing feedback, since this case is not covered by our previous results.

\begin{example}\label{example: expfallend}
Let $A=2$ and $F_i(k)=\alpha_ie^{-\beta_ik}, \alpha_i, \beta_i>0, i=1, 2$. As explained in detail in Section \ref{sec:dynamics}, we can write
\begin{equation*}
\chi_1(n)=\chi_1(0)+H_1(n)+M_1(n)\quad\text{for }n\geq 0\ ,
\end{equation*}
where $(M_1(n))_{n\in\N_0}$ is an almost sure convergent martingale and 
\begin{equation*}
H_1(n)\coloneqq\sum_{k=0}^{n-1}\frac{G_1(N+k, \chi_1(k))}{N+k+1}
\end{equation*}
is predictable with $G_1(k, x)\coloneqq p_1(k, (x, 1-x))-x,\,x\in(0, 1)$ given by centered transition probabilities \eqref{eq:transpr}. In the case of exponentially decreasing feedback, we have the following convergence:
\begin{equation}
\label{eq: convG}
G_1(k, x)\xrightarrow{k\to\infty}G_1(x)\coloneqq\begin{cases}
1-x, & \text{if } x\beta_1<(1-x)\beta_2\\
\frac{\alpha_1}{\alpha_1+\alpha_2}-x, & \text{if } x\beta_1=(1-x)\beta_2\\
-x, &\text{otherwise} \end{cases}
\end{equation}
The convergence is locally uniform in $(0,1)$ apart from the point $x=x_0\coloneqq\frac{\beta_2}{\beta_1+\beta_2}$. Take $\epsilon>0$. For large enough $k$, $G_1(k, (\cdot))$ is sufficiently close to $G_1$ outside an $\epsilon$-neighborhood of $x_0$. If for a large $n$, $|\chi_1(n)-x_0|>\epsilon$, then the process $(\chi_1(n))_n$ enters the $\epsilon$-neighborhood of $x_0$ in finite time because of the convergence of the martingale. As the same holds for $\epsilon/2$ instead of $\epsilon$, we get that the process leaves this $\epsilon$-neighborhood only finitely often. This yields
\begin{equation*}
\chi_1(n)\xrightarrow{n\to\infty}x_0\quad\text{almost surely.}
\end{equation*}
Thus, the limit is not only independent of the initial market shares, but also of the fitness-parameters $\alpha_i$ (in contrast to polynomially decreasing feedback). Note that these findings are consistent with Corollary \ref{cor: main4}, i.e. (\ref{eq: limshare}) still holds. Because of the independence property in the exponential embedding in Section \ref{sec: model}, this can easily be extended to general A. For different (at least) exponentially decreasing feedback, we basically only need a convergence as in (\ref{eq: convG}) for an analogous result.
\end{example}

Remarkably, Example \ref{example: expfallend} reveals the following behavioural difference between exponentially decreasing and polynomial feedback. Suppose that there are agents $i, j$ such that
$$\lim_{k\to\infty}\frac{F_i(k)}{F_j(k)}=0.$$
Then agent $i$ is marginalized, i.e. $\lim_{n\to\infty}\chi_i(n)=0$, if $F_i$ satisfies (\ref{eq: sublin+}), in particular if $F_i(k)=\alpha_ik^{\beta_i}$ for $\beta_i<1$. On the other hand, for exponentially decreasing feedback like in Example \ref{example: expfallend}, we might still have $\lim_{n\to\infty}\chi_i(n)>0$.

\end{document}